\documentclass{amsart}
\usepackage{amssymb,amsfonts,amsxtra,amsmath,tikz-cd,xcolor,mathrsfs,verbatim,centernot,enumitem,mathtools}
\allowdisplaybreaks
\usepackage{quiver}
\usepackage{nicematrix}
\usepackage[all]{xy}
\usepackage{dsfont}
\usetikzlibrary{decorations.markings}
\tikzset{negated/.style={
        decoration={markings,
            mark= at position 0.5 with {
                \node[transform shape] (tempnode) {$\backslash$};
            }
        },
        postaction={decorate}
    }
}

\newcommand{\ind}{\mathds{1}}

\setlength{\textwidth}{6 in}
\setlength{\textheight}{8.75 in}
\setlength{\topmargin}{-0.25in}
\setlength{\oddsidemargin}{0.25in}
\setlength{\evensidemargin}{0.25in}

\newtheorem{theorem}{Theorem}[section]
\newtheorem{lemma}[theorem]{Lemma}
\newtheorem{corollary}[theorem]{Corollary}
\newtheorem{proposition}[theorem]{Proposition}

\newtheorem{question}[theorem]{Question}
\newtheorem{conj}[theorem]{Conjecture}
\newtheorem{notation}[theorem]{Notation}

\newtheorem{definition}[theorem]{Definition}
\newtheorem{remark}[theorem]{Remark}
\newtheorem{example}[theorem]{Example}





\begin{document}

\title{logarithmic versions of Ginzburg's sharp operation for free divisors}

\author{Xia Liao}
\email{xliao@hqu.edu.cn}
\address{Department of Mathematical Sciences, 
Huaqiao University, 
Chenghua North Road 269, 
Quanzhou, Fujian, China}

\author{Xiping Zhang}
\email{xzhmath@gmail.com}
\address{
School of Mathematical Sciences, Key Laboratory of Intelligent Computing and Applications (Ministry of Education),
Tongji University, 
1239 Siping Road, 
Shanghai, China
}

\begin{abstract}
Let $M$ be a complex manifold,  $D\subset M$  a free divisor and $U=M\setminus D$ its complement.   In this paper we study the characteristic cycle $\textup{CC}(\gamma\cdot \ind_U)$ of the restriction of a constructible function $\gamma$ on $U$.  
We globalise Ginzburg's local sharp construction and introduce the log transversality condition, which is a new transversality condition about the relative position of $\gamma$ and $D$. 
We prove that the  log transversality condition is satisfied if either $D$ is normal crossing and $\gamma$ is arbitrary, or $D$ is holonomic strongly Euler homogheneous and $\gamma$ is non-characteristic. Under the log transversality assumption we establish a logarithmic pullback formula for $\textup{CC}(\gamma\cdot \ind_U)$. Mixing Ginzburg's sharp construction with the logarithmic pullback, we obtain a double restriction formula for the Chern-Schwartz-MacPherson class $c_*(\gamma\cdot \ind_{D\cup V})$ where $V$ is any reduced hypersurface in $M$. Applications of our results include the non-negativity of Euler characteristics of effective constructible functions, and CSM classes of hypersurfaces in the open manifold $\mathbb{P}^n\setminus D$ when $D$ is a linear free divisor or a free hyperplane arrangement.
\end{abstract}

\maketitle  

\section{Introduction}
Let $M$ be a complex manifold of dimension $n$ and let $D$ be a reduced divisor on $M$. Given a constructible function $\gamma$ on $M$, in applications involving characteritic classes or D-modules on noncompact manifolds, it is quite useful to know the characteristic cycle of the constructible function $\gamma \cdot \ind_U$ where $U=M\setminus D$. Let $\textup{CC}(\gamma)=\sum n_i [\Lambda_i]$ where each $\Lambda_i$ is an irreducible conic Lagrangian subvariety of $T^*M$. 
In \cite{MR833194} Ginzburg  introduced a ``local sharp operation"  for irreducible conic Lagrangian subvarieties of $T^*M$. In the local setting where  $D=f^{-1}(0)$ is the zero locus of a  function $f\colon M\to \mathbb{C}$ and $\Lambda$ is such a Lagrangian variety, then $\overline{\Lambda^\sharp}\subset T^*M\times \mathbb{C}$ is an $(n+1)$-dimensional irreducible variety over $\mathbb{C}$ regarded as a one parameter family of cycles. Using the techniques of D-module theory  Ginzburg showed that
\begin{equation}
\label{G1local}
\textup{CC}(\gamma \cdot \ind_U) = \lim_{s\to 0}\sum n_i[\overline{\Lambda_i^\sharp}].
\end{equation}
where $s\in \mathbb{C}$ is the parameter for the family and $\lim_{s\to 0}$ is the specialisation of cycles.

When $D$ has irreducible components, a multi-parameter version of the sharp operation is sketched in \cite{MR1021512} Appendix A  and in \cite{MR1769729} the multi-parameter sharp operation was used to compute the Euler characteristics of bounded constructible complexes on semiabelian varieties. A  proof of the multi-parameter  specialization using logarithmic D-modules is given in \cite{WZ21}.
Though the sharp operation is an important construction on its own, it does not see much use in actual computations concerning the restriction of characteristic cycles to complements of divisors since the operation is exotic and is hard to describe in explicit terms (i.e., either by defining equations in the ambient space or by familiar operations such as blow-ups). A majority of later development concerning characteristic cycles on complements of divisors only consider the special case $\gamma=\ind_M$ where $\textup{CC}(\ind_U)$ can be constructed explicitly by blowing up the singular subscheme of $D$, see for example \cite{MR1697199}\cite{MR1795550}.

Evidences suggest that $\textup{CC}(\gamma \cdot \ind_U)$ (hence the sharp operation) is related to the logarithmic (co)tangent bundle. For example when $\gamma=\ind_M$ and $D$ is simple normal crossing (SNC), a classical result of Aluffi \cite{MR1717120} states that  
\begin{equation}\label{chernidentity}
c_*(\ind_U) = c(TM(-\log D)) \cap [M] \/.
\end{equation}
Here $c_*(\ind_U)$ is MacPherson's Chern class transformation of $\ind_U$ which can be computed from $\textup{CC}(\ind_U)$ by taking the dual of its shadow \cite{MR2097164}. For general $\gamma$, relating $\textup{CC}(\gamma \cdot \ind_U)$ to the logarithmic bundle was puzzling for a long time, and only until very recently the relation is clarified by the paper \cite{MRWW} for simple normal crossing $D$. To state this result, recall that there is a sheaf inclusion $\textup{Der}_M(-\log D) \to \textup{Der}_M$ for any  effective divisor $D$. When $D$ is normal crossing, $\textup{Der}_M(-\log D)$ is locally free, so that the sheaf inclusion induces the logarithmic map $j: T^*M \to T^*M(\log D)$ between the cotangent bundle and the logarithmic cotangent bundle. Let $\gamma$ be a constructible function on $M$. We decompose  
\begin{equation*}\label{decom1}
\textup{CC}(\gamma)=\sum_\alpha n_\alpha[T^*_{Z_\alpha}M] + \sum_\beta m_\beta [T^*_{Z_\beta}M]
\end{equation*}
such that each $Z_\alpha \not\subset D$ and each $Z_\beta \subset D$. In \cite{MRWW} the authors globalise the multi-parameter sharp construction for  SNC divisors  and use the  multi-parameter  specialisation in \cite{WZ21} to express $\textup{CC}(\gamma\cdot \ind_U)$ using logarithmic completions:
\begin{theorem}[\cite{MRWW}]\label{t1}
Let $M$ be a smooth complex algebraic variety of dimension $n$ and let $D$ be a   SNC divisor, then 
 $|j^{-1}(\overline{j(T^*_{Z_\alpha}M)}|$ is a union of conormal spaces for each $Z_\alpha$ and 
\begin{equation}\label{resformulacycle}
\textup{CC}(\gamma\cdot \ind_U) = \sum_\alpha n_\alpha [j^{-1}\overline{j(T^*_{Z_\alpha}M)}].
\end{equation}
\end{theorem}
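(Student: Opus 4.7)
The plan is to globalize Ginzburg's multi-parameter sharp construction for the SNC divisor $D$, invoke the multi-parameter specialization theorem of \cite{WZ21}, and then identify the resulting limit cycle with the logarithmic preimage $j^{-1}\overline{j(T^*_{Z_\alpha}M)}$. Working locally around a point of $D$, I would pick coordinates $(x_1,\ldots,x_n)$ so that the components of $D$ through that point are cut out by $x_1,\ldots,x_k$. In these coordinates $T^*M(\log D)$ is trivialized by $\tfrac{dx_1}{x_1},\ldots,\tfrac{dx_k}{x_k},dx_{k+1},\ldots,dx_n$, and $j$ becomes the diagonal bundle map multiplying the $i$-th logarithmic cotangent coordinate by $x_i$ for $i\leq k$. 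This explicit local form reduces both the set-theoretic description of $j^{-1}\overline{j(T^*_{Z_\alpha}M)}$ and the analysis of multiplicities to a direct coordinate computation.

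For each $\Lambda_\alpha = T^*_{Z_\alpha}M$ with $Z_\alpha\not\subset D$, I would form the multi-parameter sharp deformation
\begin{equation*}
\Lambda_\alpha^\sharp := \overline{\left\{\left(x,\xi + \sum_{i=1}^k s_i\tfrac{dx_i}{x_i}\big|_x;\, s_1,\ldots,s_k\right)\,:\,(x,\xi)\in\Lambda_\alpha|_U,\; s\in\mathbb{C}^k\right\}}\subset T^*M\times\mathbb{C}^k
\end{equation*}
as in \cite{MR1021512}, and verify that the local definitions glue to a global subvariety of $T^*M\times\mathbb{C}^r$ indexed by the global irreducible components $D_1,\ldots,D_r$ of $D$. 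Crucially, for any $\Lambda_\beta=T^*_{Z_\beta}M$ with $Z_\beta\subset D$, the restriction $\Lambda_\beta|_U$ is empty and hence $\Lambda_\beta^\sharp=\emptyset$; this is precisely why the $n_\beta$-terms disappear from the final formula. Invoking the multi-parameter specialization from \cite{WZ21} then yields
\begin{equation*}
\textup{CC}(\gamma\cdot\ind_U) = \lim_{s\to 0}\sum_\alpha n_\alpha[\Lambda_\alpha^\sharp].
\end{equation*}

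Finally I would identify the limit as $\lim_{s\to 0}[\Lambda_\alpha^\sharp] = [j^{-1}\overline{j(T^*_{Z_\alpha}M)}]$. Set-theoretically, the $s=0$ fiber consists of the $(x_0,\xi_0)$ arising as limits of $(x_\ell,\xi_\ell+\sum_i s_{\ell,i}\tfrac{dx_i}{x_{\ell,i}})$ with $(x_\ell,\xi_\ell)\in\Lambda_\alpha|_U$ and $s_\ell\to 0$; the explicit form of $j$ converts this precisely into the condition $j(x_0,\xi_0)\in\overline{j(T^*_{Z_\alpha}M)}$, giving the desired set-theoretic equality. Conicality, combined with the observation that $j^{-1}$ pulls closed conic subsets of $T^*M(\log D)$ back to unions of conormals to intersections of $Z_\alpha$ with strata of $D$, yields the union-of-conormal-spaces conclusion. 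I expect the main obstacle to be promoting this set-theoretic identification to an equality of cycles with the correct multiplicity on each component, and establishing independence of the limit from the path of approach to $0\in\mathbb{C}^r$; both ultimately reduce to a normal cone computation in the diagonal coordinate model above, following the template of \cite{MR833194} and \cite{WZ21}.
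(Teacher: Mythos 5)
Your architecture reproduces the route of \cite{MRWW} (globalised multi-parameter sharp construction plus the multi-parameter specialisation of \cite{WZ21}), which is not the route this paper takes. Here Theorem \ref{t1} is recovered from the one-parameter global sharp construction of \S\ref{sec; sharp} together with Theorem \ref{theo; NCimpliesLT}: in the SNC coordinates one factors $j'=\rho\circ i$ through the multi-parameter sharp $\Lambda_k^\sharp$, shows $\Lambda_k^\sharp=\rho^{-1}(\Lambda^{\log})$ by a dimension count, and imports the Cartesianness of the remaining square from \cite[Corollary 1]{BMM00}; this gives ${j'}^{-1}(\Lambda^{\log})=\overline{\Lambda^\sharp}$, i.e.\ log transversality, after which \eqref{resformulacycle} is a formal consequence of Proposition \ref{global-Ginzburg} and the pullback identities of \S\ref{sec:formal consequence}. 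The paper's point is precisely that this replaces the log D-module input of \cite{WZ21} by elementary Lagrangian geometry.

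The genuine gap in your proposal is the step you defer at the end. The set-theoretic identification of the $s=0$ fibre of $\overline{\Lambda_\alpha^\sharp}$ with $|j^{-1}\overline{j(\Lambda_\alpha)}|$ is indeed a direct coordinate check, but the theorem asserts an equality of cycles, and upgrading it amounts to showing that $\overline{\Lambda_\alpha^\sharp}$ is the \emph{unique} irreducible component of $j'^{-1}(\Lambda_\alpha^{\log})$ and occurs with multiplicity one, i.e.\ that $Z_\alpha$ is log transverse to $D$ in the sense of Definition \ref{defn; logtransverse}. This does not "ultimately reduce to a normal cone computation in the diagonal coordinate model": the delicate issue is the possible presence of excess components of $j'^{-1}(\Lambda_\alpha^{\log})$ supported over $D$, which depends on how $\Lambda_\alpha$ meets the boundary and is exactly the content of \cite[Corollary 1]{BMM00} (or of the log D-module argument in \cite{WZ21}); Example \ref{counterexample} shows that the identical formula already fails for a quasi-homogeneous free plane curve, so no purely formal or purely local-coordinate argument can close this step. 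A secondary inaccuracy: your claim that $j^{-1}$ carries closed conic subsets of $T^*M(\log D)$ to unions of conormal spaces is false in general ($j^{-1}$ preserves conicality, not the Lagrangian property); the union-of-conormals assertion should instead be deduced from the fact that the specialisation is a characteristic cycle, hence Lagrangian, once the set-theoretic and dimension identifications are in place.
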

The closures $\overline{j(T^*_{Z_\alpha}M)}$ are analytic subspaces of $T^*M(\log D)$, referred to as the logarithmic conormal spaces of $Z_\alpha$. 
In particular this theorem implies that the Chern class $c_*(\gamma\cdot \ind_U)$ can be computed using these logarithmic conormal spaces.
\begin{corollary}[\cite{MRWW}]\label{c1}
Under the assumption of Theorem \ref{t1},
\begin{equation}\label{rescsmformula}
c_*(\gamma\cdot \ind_U) = (-1)^{n}c\Big(TM(-\log D)\Big)\cap \sum_\alpha n_\alpha p_*\Big(c(\xi^{\vee})^{-1} \cap [\mathbf{P}(\overline{j(T^*_{Z_\alpha}M)}\oplus 1)]\Big)
\end{equation}
where   $p:\mathbf{P}(T^*M(\log D)\oplus 1) \to M$ is the natural projection and $\xi$ is the tautological line subbundle. 
When $\gamma=(-1)^n\cdot \ind_M$ this formula reduces to Equation $(\ref{chernidentity})$.
\end{corollary}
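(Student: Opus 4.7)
The plan is to combine Theorem~\ref{t1}, which makes $\textup{CC}(\gamma\cdot\ind_U)$ explicit, with the Kennedy--Aluffi recipe recovering MacPherson's Chern class transformation from the characteristic cycle, and then to transport the resulting expression from $T^*M$ to $T^*M(\log D)$ through the bundle map $j$. I would first recall (see \cite{MR2097164}) that for any constructible function $\delta$ on $M$ with $\textup{CC}(\delta)=\sum_i m_i[\Lambda_i]$, one has
\begin{equation*}
c_*(\delta) \;=\; (-1)^n c(TM)\cap \sum_i m_i\, q_*\bigl(c(\eta^{\vee})^{-1}\cap [\mathbf{P}(\Lambda_i\oplus 1)]\bigr),
\end{equation*}
where $q\colon \mathbf{P}(T^*M\oplus 1)\to M$ and $\eta$ is the tautological line subbundle. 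Setting $\delta=\gamma\cdot\ind_U$ and inserting Theorem~\ref{t1} gives
\begin{equation*}
c_*(\gamma\cdot\ind_U) \;=\; (-1)^n c(TM)\cap \sum_\alpha n_\alpha\, q_*\bigl(c(\eta^{\vee})^{-1}\cap [\mathbf{P}(j^{-1}\widetilde{\Lambda}_\alpha\oplus 1)]\bigr),
\end{equation*}
where I abbreviate $\widetilde{\Lambda}_\alpha=\overline{j(T^*_{Z_\alpha}M)}$.

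The corollary then reduces to the componentwise identity
\begin{equation*}
c(TM)\cap q_*\bigl(c(\eta^\vee)^{-1}\cap [\mathbf{P}(j^{-1}\widetilde{\Lambda}_\alpha\oplus 1)]\bigr) \;=\; c(TM(-\log D))\cap p_*\bigl(c(\xi^\vee)^{-1}\cap [\mathbf{P}(\widetilde{\Lambda}_\alpha\oplus 1)]\bigr).
\end{equation*}
To prove this, I would use the logarithmic residue sequence $0\to TM(-\log D)\to TM\to \bigoplus_i N_{D_i/M}\to 0$, from which $c(TM)/c(TM(-\log D))=\prod_i c(N_{D_i/M})$. The bundle map $j$ induces a rational map $\mathbf{P}(T^*M\oplus 1)\dashrightarrow \mathbf{P}(T^*M(\log D)\oplus 1)$ which is an isomorphism over $U$; taking its graph closure $\Gamma$ as a common resolution and applying the projection formula to the two projections from $\Gamma$, one matches the tautological classes $\eta$ and $\xi$ on $\Gamma$ up to twists produced by the normal bundles of the components of $D$.

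The hard part will be tracking the \emph{excess components} of $j^{-1}\widetilde{\Lambda}_\alpha$ lying in the rank-drop locus of $j$ over $D$. Because $j$ collapses the conormal directions dual to the defining coordinates of the $D_i$, the cycle $[\mathbf{P}(j^{-1}\widetilde{\Lambda}_\alpha\oplus 1)]$ differs from the proper transform of $[\mathbf{P}(\widetilde{\Lambda}_\alpha\oplus 1)]$ in $\Gamma$ by components supported over $D$; after pushing forward by $q_*$ and using the SNC local model $dx_i\mapsto x_i\cdot(dx_i/x_i)$, one must identify these excess contributions as Segre classes of the normal bundles $N_{D_i/M}$. Packaging those Segre classes against the ratio $\prod_i c(N_{D_i/M})$ produced by the residue sequence is where the bulk of the work lies.

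Finally, the specialisation $\gamma=(-1)^n\ind_M$ is a direct check: $\textup{CC}((-1)^n\ind_M)=[T^*_M M]$ has logarithmic closure equal to the zero section of $T^*M(\log D)$, so $\mathbf{P}(\widetilde{\Lambda}\oplus 1)$ is the section of $p$ at infinity, on which $\xi$ restricts trivially. Hence $p_*(c(\xi^\vee)^{-1}\cap [\mathbf{P}(\widetilde{\Lambda}\oplus 1)])=[M]$ and the corollary collapses to $c_*((-1)^n\ind_U)=(-1)^n c(TM(-\log D))\cap [M]$, recovering $(-1)^n$ times Aluffi's formula~\eqref{chernidentity}.
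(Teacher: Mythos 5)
Your reduction to the componentwise identity
\begin{equation*}
c(TM)\cap q_*\bigl(c(\eta^\vee)^{-1}\cap [\mathbf{P}(j^{-1}\widetilde{\Lambda}_\alpha\oplus 1)]\bigr) \;=\; c(TM(-\log D))\cap p_*\bigl(c(\xi^\vee)^{-1}\cap [\mathbf{P}(\widetilde{\Lambda}_\alpha\oplus 1)]\bigr)
\end{equation*}
is the right intermediate statement, and your check of the case $\gamma=(-1)^n\ind_M$ is fine. But the proposal has a genuine gap: this identity is exactly the content of the corollary, and you do not prove it. The paragraph about the graph closure $\Gamma$, the residue sequence, and the ``excess components'' over $D$ is a description of where the difficulty sits, ending with the admission that packaging the excess contributions ``is where the bulk of the work lies.'' Nothing in the proposal actually identifies those excess cycles or shows that their pushforwards cancel against the factor $\prod_i(1+D_i)$ coming from $c(TM)/c(TM(-\log D))$; as written, the argument could not be completed without supplying precisely the computation you defer. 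A second, subtler issue: a naive comparison of shadows obtained by intersecting homology classes with the zero section only controls the dimension-$0$ piece (this is why the paper's Theorem~\ref{theo;logCCtochern}(i) is stated only in dimension $0$ under weak log transversality); to get the identity in all homological degrees your graph-closure argument must be carried out at the level of cycles, which again is the part left undone.

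For comparison, the paper obtains this statement (as the SNC special case of Theorem~\ref{theo;logCCtochern}(ii), via the log transversality established in Theorem~\ref{theo; NCimpliesLT}) by an entirely formal route that avoids all excess-intersection bookkeeping: the cycle-level identity $\textup{CC}(\gamma\cdot\ind_U)=\sum_\alpha n_\alpha[j^{-1}\Lambda_\alpha^{\log}]$ is lifted to $\mathbb{C}^*$-equivariant Borel--Moore homology as \eqref{relation; CCequivariant}, one pulls back along the zero section using $j\circ s = s'$ (the zero section of $T^*M(\log D)$), and the dictionary of \cite{AMSS23} between equivariant classes of conic cycles and their shadows converts the two sides directly into the two dressed pushforwards $q_*$ and $p_*$. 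If you want to salvage your approach, the cleanest fix is to replace the graph-closure analysis by this equivariant argument; otherwise you must actually compute the components of $j^{-1}\widetilde{\Lambda}_\alpha$ supported over $D$ in the local model $dx_i\mapsto x_i\,(dx_i/x_i)$ and verify the cancellation, which is a substantial computation that the current text does not contain.
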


In practice, Theorem \ref{t1} is much easier to use than Ginzburg's original construction. In \cite{MRWW} it is the main input to prove the Huh-Stumfels involution conjecture and in \cite{FMS} it is the main input to relate the toric polar map to the Chern-Schwartz-MacPherson class of a very affine hypersurface. Unfortunately, Theorem~\ref{t1} fails when $D$ is not SNC:
.

\begin{example}\label{counterexample}
Let $M=\mathbb{C}^2$, $D=V(y^2-x^3)$ and let $Z$ be the plane curve parametrised by $x=t^a, y=t^b$ where $a$ and $b$ are positive integers. Set $\gamma=\textup{Eu}^\vee_Z$ so that $\textup{CC}(\gamma)=[T^*_ZM]$. Then Equation  
$(\ref{resformulacycle})$ fails if $1<\frac{b}{a}<2$ and $\frac{b}{a}\neq \frac{3}{2}$. 
\end{example}

Then it is  natural to ask  the following question: 
\begin{question}
Let  $L$ be a line bundle on a complex manifold $M$ and let $f\in H^0(M,L)$ be a nontrivial global section. Assume that the divisor $D=V(f)$ is free, i.e., the sheaf $\textup{Der}_M(-\log D)$ is locally free. Given a constructible function $\gamma$ on $M$, 
when  does Equation $(\ref{resformulacycle})$ hold?
\end{question}

In this paper we partially answer this question in the complex analytic category. 
First we discuss a globalisation of Ginzburg's sharp construction. 
We canonically construct   $(n+1)$-dimensional analytic subvarieties $\overline{J\Lambda^\sharp}$ and 
$\overline{\Lambda^\sharp}$ of the bundle of principal parts $P_M^1L$ and its $L^\vee$-twist $P_M^1L\otimes L^\vee$ (see Definition~\ref{defn; globaljetsharp} and Definition~\ref{defn; globalsharp}). 
They are called the global (jet) sharp constructions since both spaces agree  with Ginzburg's sharp construction in every local chart that trivialises $L$. 
Under the bundle isomorphisms in Corollary~\ref{cor; isoofbundles}  one can also realise the global (jet) sharp constructions  as canonical logarithmic completions  (Proposition~\ref{prop; sharp=logcompletion} and Proposition~\ref{prop; Jsharp=logcompletion}). In particular 
in Proposition~\ref{global-Ginzburg} we reformulate Gizburg's specialisation formula $(\ref{G1local})$ globally.

Based on the global sharp construction we discuss when Equation $(\ref{resformulacycle})$ holds. 
Motivated by the proved case in \cite{MR3830794} where $\gamma=\ind_M$ and $D$ is a free divisor of linear Jacobian type, we introduce the notion of (weak) log transversality in Definition~\ref{defn; logtransverse}. 
The (weak) log transversality condition guaranntees the good behavior of the logarithmic characteristic cycles under (the homology pullback $j^*$) the schematic pullback $j^{-1}$. At least in principle these transversality conditions can be checked by local equations in the algebraic case and be handled by computer softwares. As a part of the built-in mechanism, the Chern class formula \eqref{rescsmformula} is a formal consequence of the transversality condition. More precisely:  
\begin{enumerate} 
\item[$\bullet$] If $\gamma$ is weakly log transverse to $D$, then equation 
\eqref{relation; CCnonequivariant} (see \S \ref{sec:formal consequence} below) holds as homology classes and we have  equation~\eqref{rescsmformula} in  dimension $0$.
\item[$\bullet$] If $\gamma$ is log transverse to $D$, then  equation $(\ref{resformulacycle})$ holds as cycles and  we have equation~\eqref{rescsmformula} in all dimensions.
\end{enumerate}

In \S\ref{geo-test} we study the (weak) log transversality condition in the following cases.
\begin{enumerate}[label=(\roman*)]
\item $D$ is a free divisor of linear Jacobian type $($Definition~\ref{linearJacobianisEu}$)$ and $\gamma= \ind_M$.
\item $D$ is a normal crossing divisor and $\gamma$ is arbitrary.
\item $D$ is a strongly Euler homogeneous free divisor $($Definition~\ref{defn; EH}$)$ and $\gamma=  \ind_M$. 
\item $D$ is a strongly Euler homogeneous free divisor, and the logarithmic stratification of $D$ is non-characteristic  with respect to  $CC(\gamma)$. $($Cf. Definition~\ref{defn; lognonchar}$)$.  
\item $D$ is a  strongly Euler homogeneous free divisor, $M$ is a compact homogeneous space and  $\gamma$ is a `general' constructible function. (Cf. Corollary~\ref{coro; generaltranslate})
\end{enumerate}

\begin{theorem} 
\label{t2}
The constructible function $\gamma$ is  log transverse to $D$ in cases (i) and (ii), and is weakly log transverse to $D$ in cases (iii)-(v). 

If moreover $D$ is holonomic   then $\gamma$ is  log transverse to $D$ in cases (iii)-(v).
\end{theorem}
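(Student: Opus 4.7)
The plan is to reduce the (weak) log transversality condition of Definition~\ref{defn; logtransverse} in each case to a geometric statement that can be checked against the structure of $D$ and the support of $\textup{CC}(\gamma)$. Cases (i) and (ii) I would handle by direct local arguments, while cases (iii)--(v) rely on the Euler vector fields of strongly Euler homogeneous divisors. The holonomicity assumption would enter only at the last step, to upgrade an identity in homology to an identity of cycles.

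For case (ii), I would work locally in SNC coordinates $(x_1,\dots,x_n)$ where $D = \{x_1\cdots x_k = 0\}$, so that $\textup{Der}_M(-\log D)$ is freely generated by $x_1\partial_1,\dots,x_k\partial_k,\partial_{k+1},\dots,\partial_n$ and $j$ has an explicit monomial description. Theorem~\ref{t1} already supplies the cycle-level formula \eqref{resformulacycle} in this setting, so the task reduces to verifying Definition~\ref{defn; logtransverse} by inspection of the pre-images $j^{-1}\overline{j(T^*_{Z_\alpha}M)}$. Case (i) is analogous: when $D$ is of linear Jacobian type the generators of $\textup{Der}_M(-\log D)$ have linear symbols, which makes the relevant logarithmic conormals explicit. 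I would reformulate the proof in \cite{MR3830794} within the present framework and check that it yields log transversality for $\gamma = \ind_M$ rather than merely the Chern class identity.

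For cases (iii)--(v) the central geometric input is Saito's logarithmic stratification $\{S_\mu\}$ of $M$ along $D$, produced by the involutive distribution $\textup{Der}_M(-\log D)$. Strong Euler homogeneity says that at each point $p$ of $D$ there is a logarithmic vector field vanishing at $p$ whose action on a local defining equation is itself; this identifies $\overline{j(T^*_{S_\mu}M)}$ for each stratum $S_\mu\subset D$ as a distinguished subvariety of $T^*M(\log D)$ over $\overline{S_\mu}$. With this description in hand, case (iii) reduces to the observation that $\textup{CC}(\ind_M)$ is supported on the zero section of $T^*M$, whose logarithmic pull-back can then be described completely and shown to satisfy weak log transversality. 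Case (iv) extends (iii) by using Definition~\ref{defn; lognonchar} to force each $T^*_{Z_\alpha}M$ to intersect the logarithmic conormals of the strata in the expected dimension, yielding a dimension count that suffices for the homological identity. Case (v) reduces to (iv) by translating the support of $\gamma$ by a general element of the transitive group acting on $M$, a Kleiman--Bertini argument which is precisely what Corollary~\ref{coro; generaltranslate} makes rigorous.

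The main obstacle is the upgrade from weak log transversality (equality in homology) to full log transversality (equality of cycles) in cases (iii)--(v). Dimension counts alone give only the homological statement; to identify the scheme-theoretic preimage $j^{-1}\overline{j(T^*_{Z_\alpha}M)}$ as a reduced sum of logarithmic conormals with the correct multiplicities one needs the logarithmic stratification to be locally finite and each closure $\overline{j(T^*_{S_\mu}M)}$ to be an irreducible analytic subvariety of the expected dimension. Saito's holonomicity of $D$ supplies exactly this: the logarithmic stratification becomes locally finite, and a constructibility/specialisation argument along the lines of Proposition~\ref{global-Ginzburg} eliminates possible embedded components in the pull-back. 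Without holonomicity the logarithmic stratification may fail to be locally finite and the above identification can acquire spurious components, which accounts for the conditional statement in cases (iii)--(v).
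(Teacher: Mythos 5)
Your overall architecture matches the paper's: (iii) is a special case of (iv), (v) reduces to (iv) by Kleiman's theorem, and holonomicity is what upgrades weak log transversality to log transversality via a dimension bound on $j^{-1}(\Lambda^{\log})$ over the (locally finite) logarithmic strata. However, two of your key steps do not work as stated. First, for cases (iii)--(v) you claim that the non-characteristic hypothesis yields ``a dimension count that suffices''. Weak log transversality is not a dimension statement: by Definition~\ref{definition:weaklylog at p} it asserts that the \emph{support} of $j'^{-1}(\Lambda^{\log})$ over every point $p\in Z\cap D$ is contained in the slice $\{s=0\}=T^*M$, and a bound $\dim j'^{-1}(\Lambda^{\log})\le n+1$ cannot exclude extra $(n+1)$-dimensional components sitting over $D$ with $s\neq 0$. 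The paper's proof of Proposition~\ref{prop; genericZ} is a pointwise computation: non-characteristicity first guarantees (via Lemma~\ref{lemma:noncharacteristic} and projectivisation) that $j(\Lambda)$ is already closed, so $\Lambda^{\log}=j(\Lambda)$ and no extra boundary points appear in the closure; then, in Saito coordinates where the local equation $h(x_1,\dots,x_k)$ is strongly Euler homogeneous, equating $j'_p(\xi,s)$ with $j_p(\eta)$ and using the linear independence of the dual logarithmic basis $\omega_1,\dots,\omega_n$ forces $s\,\delta_i(\tfrac{dh}{h})(p)=0$ for $i\le k$, and strong Euler homogeneity supplies an index with $\tfrac{\delta_i(h)}{h}(p)\neq 0$, whence $s=0$. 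Both the closedness of $j(\Lambda)$ and this explicit $s=0$ argument are absent from your plan, and without them the claim fails; Example~\ref{counterexample} shows exactly what goes wrong when the closure acquires extra points over $D$.

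Second, for case (ii) you propose to read off log transversality ``by inspection'' from Theorem~\ref{t1}. That theorem only describes the fibre of $j'^{-1}(\Lambda^{\log})$ over $s=0$ as a cycle; it says nothing about irreducible components of $j'^{-1}(\Lambda^{\log})$ lying over $D$ with $s\neq 0$, which is precisely what log transversality must rule out (and it is logically backwards, since Theorem~\ref{t1} is itself a consequence of this transversality). The paper's proof of Theorem~\ref{theo; NCimpliesLT} instead introduces the multi-parameter sharp construction $\Lambda_k^\sharp$ and invokes the Cartesian-square result of Brian\c{c}on--Maisonobe--Merle to identify $\overline{\Lambda^\sharp}$ with the diagonal restriction $i^{-1}(\Lambda^\sharp_k)$; this external input is the real content of case (ii) and your proposal supplies no substitute for it. Your treatment of case (i) and of the role of holonomicity is essentially in line with the paper (Proposition~\ref{prop; lineartype} and Lemma~\ref{lemma:inverse image Lambda} respectively), except that ``linear Jacobian type'' refers to $\textup{Sym}^\bullet(\mathcal{J}_D)\to\textup{Rees}^\bullet(\mathcal{J}_D)$ being an isomorphism, not to linearity of the generators of $\textup{Der}_M(-\log D)$.
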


The divisors in all cases are strongly Euler homogeneous and this assumption is essential (see Proposition~\ref{prop; whyEH}). In particular,  $M$ is weakly log transverse to $D$  if and only if $D$ is strongly Euler homogeneous  (Corollary~\ref{coro; equiSEH}). 
Since the weak log transversality is independent of the choice of the local equation of $D$, this gives a coordinate free characterisation of strong Euler homogeneity. In a recent preprint \cite{Rodriguez25a}, Rodr\'{i}guez gives a criteron for strong Euler homogeneity (for not necessarily free divisors) using matrix rank loci, and it is easy to show that his criterion \cite[Proposition 2.6]{Rodriguez25a} essentially concides with ours in the free case.

We note that in the list above only those reults concerning (iii), (iv) and (v) are new, and among them only (iv) is essential. Since any submanifold of $M$ is  non-characteristic with respect to the zero section $T_M^*M$, (iii) is a special case of (iv). 
Case (i) was first studied in \cite{MR3830794} by the first author, and by \cite[Proposition 1.9]{MR3366865} it is also  a special case of (iii).  Case (v) is a special case of (iv) because of Kleiman's transversality theorem \cite{Kleiman74}.

The major ingredient for proving the log transversality of $\gamma$ and $D$ in (ii) was contained in \cite{WZ21}, stated in the language of log D-modules. Reference \cite{MRWW} exploited the homological consequence of this result, leading to Theorem \ref{t1}. In hindsight, we realise that the log D-module machinary is not necessary for establishing the log transversality of $\gamma$ and $D$ in (ii). Instead, a much ealier paper \cite[\S 1]{BMM00} did the job and proved the nessesary result using only basic tools from complex analytic geometry. We will explain this view in Theorem~\ref{theo; NCimpliesLT}. On the other hand, we prove that weak log transversalities can be tested by analytic paths (Lemma~\ref{lemma:curvetest}) which gives a simple proof of the weak log transversality of $\gamma$ and $D$ in (ii). In a followup paper \cite{LZ24-2}  we will  discuss  more general algebraic and geometric criteria for the weak log transversality condition. 
We also conjecture that the weak log transversality condition characterises normal crossing divisors.

\begin{conj}[Conjecture~\ref{conj; SNC}]
Let $D$ be a reduced free divisor of $M$. Then $D$ is normal crossing if and only if any constructible function $\gamma$ is weakly log transverse to $D$.
\end{conj}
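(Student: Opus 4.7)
The ``if'' direction is precisely case (ii) of Theorem~\ref{t2}, so the substance of the conjecture lies in the ``only if'' direction: for every free divisor $D$ that is not normal crossing, I must exhibit a constructible function $\gamma$ that fails to be weakly log transverse to $D$. A first structural observation is that, under the hypothesis that every $\gamma$ is weakly log transverse, Corollary~\ref{coro; equiSEH} (applied to $\gamma=\ind_M$) already forces $D$ to be strongly Euler homogeneous. One may therefore restrict attention to strongly Euler homogeneous free divisors, where Theorem~\ref{t2}(iii)--(v) already guarantee weak log transversality for broad classes of $\gamma$; the challenge is to locate a $\gamma$ outside those classes that detects the non-NC geometry of $D$.

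My plan is to argue by contraposition, working analytic-locally near a point $p\in D$ at which $D$ is not simple normal crossing, and to produce $\gamma=\textup{Eu}^\vee_Z$ for a carefully chosen analytic germ $Z$ through $p$. The template is Example~\ref{counterexample}: for the cusp $D=V(y^2-x^3)$ a monomial curve $Z=(t^a,t^b)$ with $1<b/a<2$, $b/a\neq 3/2$ gives a witness because the logarithmic conormal of $Z$ carries numerical contributions encoding the order of tangency between $Z$ and $D$ that the right-hand side of~(\ref{resformulacycle}) cannot see. To generalise, I would invoke Lemma~\ref{lemma:curvetest}, which reduces weak log transversality to an arc-test: it then suffices to construct, at each non-NC point $p\in D$, an analytic germ $Z$ whose tangent data misaligns with the logarithmic stratification in a numerically detectable manner. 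Concretely, the hope is that at a non-NC point the logarithmic stratum through $p$ fails to sit in product form relative to $\textup{Der}_M(-\log D)$, and one can then calibrate the exponents of a Puiseux-type parametrisation of $Z$ so that the arc-test fails.

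The principal obstacle, I expect, is to carry out this construction uniformly across the zoo of free-divisor singularities. For plane curves a refinement of Example~\ref{counterexample} via Puiseux expansions should go through: the critical ratios $b/a$ can be read off from the Puiseux pairs of the branches of $D$. In higher dimension free divisors include linear free divisors, discriminants of finite groups, and reflection arrangements, and there is no obvious single curve-germ prescription detecting non-NC behaviour in every case; one may even have to allow $Z$ of higher dimension. A possible route is to first exploit the Euler derivation supplied by strong Euler homogeneity to reduce to a quasi-homogeneous model of the singularity, and then produce $Z$ along a distinguished direction on which the logarithmic stratification degenerates. A structural understanding of how logarithmic stratifications of free divisors behave near non-NC points---which the present paper does not establish---appears to be the missing ingredient, and I expect that progress on this conjecture will dovetail with the general algebraic/geometric criteria for weak log transversality announced in the follow-up \cite{LZ24-2}.
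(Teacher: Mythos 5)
The statement you are asked to prove is stated in the paper only as a conjecture (Conjecture~\ref{conj; SNC}); the paper supplies no proof of the ``only if'' direction, so there is nothing to compare your argument against on that side. Your handling of the ``if'' direction is correct and matches the paper: normal crossing implies (weak) log transversality for arbitrary $\gamma$ by Theorem~\ref{theo; NCimpliesLT}, and your observation that weak log transversality of $\ind_M$ already forces strong Euler homogeneity via Corollary~\ref{coro; equiSEH} is a sound reduction. But for the ``only if'' direction what you have written is a research plan, not a proof: you acknowledge yourself that the uniform construction of a witness $Z$ at an arbitrary non-normal-crossing point of a free divisor is the missing ingredient, and no such construction is given. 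As it stands, the conjecture remains open after your proposal.

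There is also a concrete logical error in the proposed mechanism. Lemma~\ref{lemma:curvetest} is a \emph{sufficient} condition for weak log transversality: if every arc germ $\phi$ pulls logarithmic $1$-forms back to logarithmic $1$-forms on the disc, then $Z$ is weakly log transverse. It is not an equivalence, and the paper's remark following Example~\ref{coro; NCimpliesWLT} points this out explicitly: $M$ itself is always weakly log transverse to a strongly Euler homogeneous free divisor, yet $M$ contains many curves violating the arc condition. Hence exhibiting an arc (or a Puiseux parametrisation) that fails the curve test does \emph{not} certify failure of weak log transversality, so your plan to ``calibrate the exponents of a Puiseux-type parametrisation of $Z$ so that the arc-test fails'' cannot, by itself, produce the required counterexample. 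To disprove weak log transversality one must directly exhibit a point of $j'^{-1}(\Lambda^{\log})$ with nonzero parameter $s$, as is done by explicit computation in Example~\ref{counterexample}; a genuinely different tool (perhaps the algebraic criteria announced in \cite{LZ24-2}) is needed to carry this out in general.
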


An immediate consequence of (iii) is the following statement about Euler characteristic.

\begin{corollary}[Corollary~\ref{coro; SEHeulercharacteristic}] 
Let  $D$ be a  strongly Euler homogeneous free divisor in a complex compact manifold $M$ and $U=M\setminus D$ be its complement. Then we have
\begin{equation}\label{SEH;eulercharacteristic}
\chi(U)= \int_M c(\textup{Der}_M(-\log D))\cap [M] \/.
\end{equation}

\end{corollary}

When $D$ is free, holonomic and strongly Euler homogeneous, $\ind_M$ is log transver to $D$ by Theorem \ref{t2} case (iii). Therefore equation \eqref{chernidentity} holds. Since for holonomic free divisors, strong Euler homogeneity is equivalent to linear Jacobian type \cite[Theorem 4.7]{MR3366865}, this gives another proof that equation \eqref{chernidentity} holds for free divisors of linear Jacobian type \cite{MR3830794}. 
 
Corollary \ref{coro; SEHeulercharacteristic} also touches an interesting point about the relation among the Chern class identity  \eqref{chernidentity}, strongly Euler homogeneity (SEH) and the logarithmic comparison theorem (LCT). A divisor $D$ is said to satisfy LCT if the inclusion of the logarithmic de Rham complex $\Omega^\bullet_M(\log D)$ into the meromorphic de Rham complex $\Omega^\bullet_M(\star \textup{D})$  is quasi-isomorphic. By \cite[Theorem 4.7]{MR3366865}, LCT is equivalent to SEH for holonomic free divisors. Therefore we have the following implications for a free divisor $D$.
\[
\text{holonomic LCT} \Leftrightarrow \text{holonomic SEH} \Rightarrow \text{equation \eqref{chernidentity}}.
\]

Dropping down the holonomicity assumption, it is still a conjecture \cite{MR1898392} that for free divisors LCT implies SEH. This conjecture was proved in \cite{MR1898392} and \cite{MR2231201} when $\dim M=2,3$. Recently Rodr\'{i}guez proves the conjecture in \cite{Rodriguez25b} when $\dim M=4$. It is proved in \cite{MR3847339} that LCT implies the equallity of \eqref{chernidentity} in degree $0$ for compact $M$ and free $D$.
In summary, we have the following implications and conjectures for free divisors. Here by WLT we mean $M$ is weakly log transverse to $D$. 
\[
\text{LCT} \xRightarrow{?}    \text{SEH} \Leftrightarrow  \text{WLT}, \quad \text{SEH} \centernot\implies \text{LCT}.
\]
If $M$ is furthermore compact, then
\[
\text{LCT} \Rightarrow  \text{equality \eqref{SEH;eulercharacteristic} holds} \Leftarrow \text{SEH}.
\]

In \S \ref{sec; doubleres} we study the restriction of  $\ind_M$  to  the complement of the union of a free divisor $D$  and a reduced hypersurface $V$.  We introduce a logarithmic  variant of  Ginzburg's sharp construction (Definition~\ref{defn:JsharplogD}) and prove the following formula in Theorem~\ref{theo; csmdoubleres}.
\begin{theorem}[Double Restriction Formula]
\label{t3}
Assume  $\ind_{V}$ is log transverse to the holonomic strongly Euler homogeneous free divisor $D$.
Then we have  
\[c_{*}(\ind_{M\setminus(V\cup D)}) = c(TM(-\log D))c(\mathscr{O}_M(V))^{-1}\cap \Big([M]-s\left(\mathcal{J}_V(\log D), M\right)^\vee\otimes_M \mathscr{O}_M(V) \Big).
\]
Here  $\mathcal{J}_V(\log D)$ is the logarithmic Jacobian ideal sheaf defined in Definition~\ref{defn; J_VlogD}.
\end{theorem}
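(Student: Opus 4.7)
The plan is to expand $\ind_{M\setminus(V\cup D)} = \ind_{M\setminus D} - \ind_V\cdot \ind_{M\setminus D}$, apply MacPherson's natural transformation $c_*$ term by term, and reduce the two resulting pieces to identities already available earlier in the paper. The first summand $c_*(\ind_{M\setminus D})$ is handled directly by Corollary~\ref{coro; c2} (which applies because $D$ is a holonomic strongly Euler homogeneous free divisor) and equals $c(TM(-\log D))\cap [M]$. After factoring out $c(\mathscr{O}_M(V))^{-1}$ on the right-hand side of the target formula, this matches the leading $[M]$-piece up to an explicit correction that must be produced by the second summand.

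For the second summand I invoke Corollary~\ref{c1} with $\gamma=\ind_V$, which is legitimate because $\ind_V$ is assumed to be log transverse to $D$. The corollary expresses $c_*(\ind_V\cdot \ind_{M\setminus D})$ as a pushforward from $\mathbf{P}(T^*M(\log D)\oplus 1)$ supported on the logarithmic conormal cycles of the strata of $V$. The crucial geometric step is to repackage this sum of conormal contributions as a single Segre class by identifying the logarithmic conormal of $V$, via the logarithmic variant of Ginzburg's sharp construction in Definition~\ref{defn:JsharplogD}, with the blow-up of $M$ along the logarithmic Jacobian ideal $\mathcal{J}_V(\log D)$. Locally this amounts to taking the closure inside $\mathbf{P}(T^*M(\log D)\oplus 1)$ of the section $d\log g$ associated with a local equation $g$ of $V$: the closure realises the blow-up, and the twist by $\mathscr{O}_M(V)$ arises because $d\log g$ is naturally a section of $T^*M(\log D)\otimes \mathscr{O}_M(V)$ rather than of $T^*M(\log D)$ itself. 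This is the logarithmic analogue of the classical identification of the projectivised conormal variety of a hypersurface with the blow-up of its Jacobian ideal.

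With the identification in place, the rest is a formal Aluffi-style Segre class manipulation: the projection formula on $p\colon \mathbf{P}(T^*M(\log D)\oplus 1)\to M$ combined with the standard dual and tensor operations converts the pushed-forward Segre class of the exceptional divisor into $s(\mathcal{J}_V(\log D), M)^\vee \otimes_M \mathscr{O}_M(V)$. Assembling this with the first summand and collecting the common factor $c(TM(-\log D))c(\mathscr{O}_M(V))^{-1}$ yields the stated formula. The main obstacle is the geometric identification in the previous paragraph: one must verify that the logarithmic conormal cycle of $V$ is globally modelled by $\textup{Bl}_{\mathcal{J}_V(\log D)}M$ and that no spurious components appear over $D$. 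This is precisely where holonomicity of $D$ and the log transversality of $\ind_V$ are used, ensuring that the limits computed by the logarithmic sharp construction detect exactly the logarithmic Jacobian ideal.
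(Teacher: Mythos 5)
Your proposal follows essentially the same route as the paper's proof of Theorem~\ref{theo; csmdoubleres}: the decomposition $c_*(\ind_{M\setminus(V\cup D)})=c_*(\ind_{M\setminus D})-c_*(\ind_{V\setminus D})$, the Chern class formula for log transverse constructible functions, the identification of the logarithmic conormal data of $V$ with $\textup{Spec}(\textup{Rees}_{\mathscr{O}_M}\mathcal{J}_V(\log D))$ (carried out in the paper via the $(\log D)$-jet sharp construction of Definition~\ref{defn:JsharplogD} together with Lemmas~\ref{lemma; i2log} and~\ref{lemma; hyper-log}, which is exactly where holonomicity and log transversality enter, as you note), and the concluding Aluffi-style Segre class manipulation. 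One small correction: the formula you want for $c_*(\ind_V\cdot\ind_{M\setminus D})$ is Theorem~\ref{theo;logCCtochern}(ii), which holds under the log transversality hypothesis, rather than Corollary~\ref{c1}, which is stated only for SNC divisors; also the relevant section is the full $1$-jet $j^1(g)$ in the bundle of logarithmic principal parts $P^1_ML(\log D)$ (so that $g$ itself is among the generators of $\mathcal{J}_V(\log D)$), not merely a twisted logarithmic differential.
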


In \S \ref{sec; nonnegative} we discuss a few results about the non-negativity of $\chi(M\setminus D,\gamma)$, where $\gamma$ is a constructible function on $M$ with effective characteristic cycle. The basic observation is that when $\textup{CC}(\gamma)$ is effective, its log completion $\textup{CC}(\gamma)^{\log}$ is also effective. Hence if $T^*M(\log D)$ is nef and $\gamma$ is log transverse to $D$, we have $\chi(M\setminus D,\gamma)\geq 0$. We also discuss the consequences of effective $\textup{JCC}(\gamma)$.

In \S \ref{generalisation-FMS} we consider the special case $M=\mathbb{P}^n$. We apply the double restriction formula to generalise a result \cite[Theorem 1.2]{FMS} of Fassarella, Medeiros and Salom\~{a}o to holonomic linear free divisors 
(Corollary~\ref{coro; linearfreecsm}). We also give a numerical criterion (Corollary~\ref{coro; linearfree}) for the non-LCT of holonomic linear free divisors. 
The formula \cite[Theorem 1.2]{FMS} concerns the multi-degrees of the toric polar map  while a similar formula of Aluffi  \cite[Theorem 2.1]{Aluffi03} concerns the multi-degrees  of gradient map. In an earlier draft of \cite{FMS} the authors explained the  equivalence of the  two formulas  under the normal crossing intersection hypothesis. 
In  Theorem~\ref{theo; Segreimpliesintegral} we  provide a general explanation of  the equivalence  by showing that different ideal sheaves given by the gradient map  and the toric polar map have  the same integral closure, and therefore produce  the  same Segre class.
The case of centrally free projective hyperplane arrangement  is considered  in \S\ref{sec; hyperplanes}, where we discuss a variant of a recent result \cite[Theorem 1.1]{RW24} of Reinke and Wang.

\vspace{0.5cm}
\noindent
{\bf Acknowledgements}: 
The second author is grateful for the helpful comments and discussions for \S\ref{sec; nonnegative} from  Jie Liu, Yongqiang Liu,  Botong Wang, Dingxin Zhang and Ziwen Zhu.

\section{Preliminaries}
\label{sec; pre}
\subsection{Chern class transformations}
\label{sec; chern}
Let $X$ be a complex algebraic variety. A constructible function on $X$ is a function $\gamma: X \to \mathbb{Z}$ such that $\gamma^{-1}(i)$ is a constructible subset of $X$ for any $i$. In his seminal paper \cite{MR361141}, MacPherson associated a homology class $c_*(\gamma)$ to any constructible function $\gamma$, and this association is functorial with respect to proper maps between algebraic  varieties. More precisely, there is a commutative square for any proper map $X\to Y$ between algebraic  varieties

\begin{equation*}
\begin{tikzcd}
CF(X) \arrow[d] \arrow[r,"c_*"] & H_*(X) \arrow[d]\\ 
CF(Y) \arrow[r,"c_*"] & H_*(Y)
\end{tikzcd}
\end{equation*}
where $CF(\bullet)$ is the group of constructible functions and $H_*(\bullet)$ is the Borel-Moore homology. The transformation $c_*$ also satisfies the normalisation condition $c_*(\ind_X)=c(TX)\cap [X]$ for any nonsingular $X$. Below we present the bare minimum of the Chern class transformation which matters to this paper. Readers are directed to \cite{MR2143077} for proofs and details.

When $X$ is embedded as a closed subvariety of a complex manifold $M$ of dimension $n$, the Chern class transformation $c_*(\gamma)$ can be computed by characteristic cycles. First, for any closed subvariety $Z$ of $M$, the conormal space $T^*_ZM$ is the closure in $T^*M$ of the set
\begin{equation*}
\{(z,\alpha)\in T^*M \ | \ z\in Z_{reg}, \ \alpha(T_zZ)=0\}.
\end{equation*}
This is a conical Lagrangian subvariety.
The dual local Euler obstruction $\textup{Eu}_Z^\vee$ is a constructible function supported on $Z$ whose value at $p\in Z$ is the local intersection number $\#_z(dr(B_\epsilon)\cap T^*_ZM)$ where $B_\epsilon$ is a small Euclidean ball around $z$ and $r$ is the squared Euclidean distance function to $z$. Dual local Euler obstructions form a basis for the free abelian group $CF(M)$. Indeed, $\textup{Eu}^\vee_Z(p)=(-1)^{\dim Z}$ at a point $p\in Z_{reg}$. Therefore, one may find closed subvarieties $Z_\alpha$ and integers $n_\alpha$ for some finite index set $I$ such that 
\begin{equation*}
\gamma = \sum_{\alpha\in I} n_\alpha \textup{Eu}_{Z_\alpha}^\vee.
\end{equation*}
This expansion of $\gamma$ determines its characteristic cycle.
\begin{definition}\label{CC-def}
The characteristic cycle $\textup{CC}(\gamma)$ of the constructible function $\gamma$ is 
\begin{equation*}
\textup{CC}(\gamma):=\sum_{\alpha\in I} n_\alpha [T^*_{Z_\alpha}M].
\end{equation*}
\end{definition} 

Let $\eta$ be the tautological line subbundle of $\mathbf{P}(T^*M\oplus 1)$ and let $q:\mathbf{P}(T^*M\oplus 1)\to M$ be the natural projection. The Chern class transformation $c_*(\gamma)$ can be computed by the following formula
\begin{equation*}
c_*(\gamma)= (-1)^{n}c(TM)\cap \sum_{i\in I}n_i q_*\Big(c(\eta^\vee)^{-1}\cap [\mathbf{P}(T^*_{Z_i}M\oplus 1)]\Big).
\end{equation*}

One can give a convenient reformulation of the Chern class transformation via its dual $\breve{c}_*(\gamma)$. By definition, $\breve{c}_*(\gamma)$ is given by
\begin{equation*}
\breve{c}_i(\gamma)=(-1)^ic_i(\gamma)
\end{equation*}
where $\breve{c}_i(\gamma)$ (respectively $c_i(\gamma)$) are the $i$-dimensional component of $\breve{c}_*(\gamma)$ (respectively $c_*(\gamma)$). One has the formula (see \cite{MR2097164} or \cite{MR2143077})
\begin{equation*}
\breve{c}_*(\gamma)= c(T^*M)\cap \sum_{i\in I}n_i\cdot s(T^*_{Z_i}M)
\end{equation*}
where   
\begin{equation*}
s(T^*_{Z_i}M)=q_*\Big(c(\eta)^{-1}\cap [\mathbf{P}(T^*_{Z_i}M\oplus 1)]\Big)
\end{equation*}
 is the Segre class of the cone $T^*_{Z_i}M$.

By \cite{MR1483328}, MacPherson's graph construction can also be carried out in the complex analytic category. This implies that the theory of Chern class transformation can be extended to the complex analytic category. For each complex analytic variety $X$ and any analytically constructible function $\gamma$, one can associate a characteristic cycle $\textup{CC}(\gamma)$ and a homology class $c_*(\gamma)$ by the same formulas as presented above, except that  the family $Z_\alpha \ (\alpha\in I)$ is now locally finite in the analytic context.

\subsection{Free divisors}\label{freedivisor}
Let $M$ be a complex manifold of dimension $n$ and let $L$ be a line bundle on $M$. Let $h
\in H^0(M,L)$ be a nontrivial global section and  $D=V(h)$ be an effective reduced divisor. For any local representative of $h$ on an open subset $U\subset M$, one can define the module of derivations preserving $h$ as follows.
\begin{equation*}
\textup{Der}_U(-\log h)=\{\delta \in \textup{Der}_U \ | \  \delta(h) \in \mathscr{O}_U\cdot h\}.
\end{equation*}
These locally defined modules of derivations glue together. The result is the sheaf of logarithmic derivations along $D$, denoted by $\textup{Der}_M(-\log D)$. Saito introduced  $\textup{Der}_M(-\log D)$ in \cite{MR586450} and proved it is coherent and reflexive. The dual of $\textup{Der}_M(-\log D)$ is the sheaf of logarithmic 1-forms and is denoted by $\Omega^1_M(\log D)$.

\begin{definition}[\cite{MR586450}]
We say $D$ is free at $p\in D$ if $\textup{Der}_{M,p}(-\log D)$ is a free $\mathscr{O}_{M,p}$-module. We say $D$ is a free divisor if it is free at every $p\in D$. 
\end{definition}

Given a free divisor $D$, the logarithmic tangent bundle $TM(-\log D)$ is the vector bundle whose sheaf of sections is $\textup{Der}_M(-\log D)$. The logarithmic cotangent bundle $T^*M(\log D)$ is the dual of $TM(-\log D)$, whose sheaf of sections is $\Omega^1_M(\log D)$. The natural inclusion $\textup{Der}_M(-\log D) \hookrightarrow \textup{Der}_M$ induces a linear map between rank $n$ vector bundles $j:T^*M \to T^*M(\log D)$. We  call it the logarithmic map of the pair $(M,D)$.  

Next we recall the definition of a divisor being  of linear Jacobian type, being strongly Euler homogeneous and  being holonomic. These definitions make sense without assuming $D$ is a free divisor, but these properties are often studied in the context of free divisors.

\begin{definition}[\cite{MR3366865}]
\label{definition:Jacobianlineartype}
Let $\mathcal{J}_D$ be ideal sheaf of $\mathscr{O}_M$ that defines the singular subscheme $\textup{ Sing}(D)$ of $D$. Locally $\mathcal{J}_D$  is represented by the ideal $(h,\partial_{x_1}(h),\ldots,\partial_{x_n}(h))$ for a local equation $h$ of $D$ and a local coordinate system $(x_1, \cdots ,x_n)$ of $M$. The divisor $D$ is of linear Jacobian type if the natural morphism
$\textup{Sym}^\bullet_{\mathscr{O}_M}(\mathcal{J}_D)\rightarrow \textup{Rees}^\bullet_{\mathscr{O}_M}(\mathcal{J}_D)$ is an isomorphism.
\end{definition}

\begin{definition}[\cite{MR3366865}]
\label{defn; EH}
The divisor $D$ is  Euler homogeneous if for any $p\in D$, there exists a  local equation $h\in \mathscr{O}_{M,p}$ of $D$  and  a local vector field $\chi\in Der_{M,p}$ such that $\chi(h)=h$. Such a vector field   is called a  Euler vector field of $h$ at $p$. 

The divisor $D$ is strongly Euler homogeneous if for any $p\in D$, there exists a local equation $h\in \mathscr{O}_{M,p}$ of $D$ and a local Euler vector field $\chi\in \mathfrak{m}_{M,p}\cdot Der_{M,p}$  such that $\chi(h)=h$.  Such a vector field   is called a strongly Euler vector field of $h$ at $p$.
\end{definition}

Similar to quasihomogeneity of divisors, the notion of strongly Euler homogeneity involves a choice of local defining equations. The local strongly Euler vector field $\chi$ only exists for a carefully chosen local equation of the divisor. Such a local equation is said to be strongly Euler homogeneous.

\begin{proposition}[{\cite[Proposition 1.9]{MR3366865}}]\label{linearJacobianisEu}
If $D$ is of linear Jacobian type, then $D$ is strongly Euler homogeneous.
\end{proposition}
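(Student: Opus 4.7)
The plan is to work locally at a singular point $p\in D$ and reduce the conclusion to a single algebraic containment. Let $R=\mathscr{O}_{M,p}$ with maximal ideal $\mathfrak{m}$, fix coordinates $x_1,\dots,x_n$ centred at $p$ and a local equation $h\in R$ of $D$, and write $J(h):=(\partial_1 h,\dots,\partial_n h)$ so that $\mathcal{J}_D=(h)+J(h)$. A strong Euler vector field $\chi=\sum c_i\partial_i$ with $\chi(h)=h$ and $c_i\in\mathfrak{m}$ is tautologically equivalent to the containment $h\in\mathfrak{m}\cdot J(h)$, and at smooth points of $D$ the ideal $\mathcal{J}_D$ is the unit ideal so there is nothing to prove. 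Thus everything reduces to establishing $h\in\mathfrak{m}\cdot J(h)$ at every singular point of $D$.

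The first concrete step is to extract syzygy information from the hypothesis. The standard consequence of linear type that I would use is the Fitting condition $(F_0)$: for every prime $\mathfrak{p}\supseteq\mathcal{J}_D$ one has $\mu(\mathcal{J}_{D,\mathfrak{p}})\le\mathrm{ht}(\mathfrak{p})$. Specialising to $\mathfrak{p}=\mathfrak{m}$ yields $\mu(\mathcal{J}_D)\le n$, so the $n+1$ natural generators $h,\partial_1 h,\dots,\partial_n h$ are linearly dependent in $\mathcal{J}_D/\mathfrak{m}\mathcal{J}_D$ over the residue field. Lifting such a dependence and absorbing the part that lies in $\mathfrak{m}\mathcal{J}_D$ produces a genuine syzygy $\alpha_0 h+\sum_i\alpha_i\partial_i h=0$ in which at least one coefficient $\alpha_i$ is a unit of $R$.

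The heart of the argument is to upgrade this rough syzygy to one of the special form $(1,-c_1,\dots,-c_n)$ with all $c_i\in\mathfrak{m}$; equivalently, to show that the class $\bar h$ vanishes in $\mathcal{J}_D/\mathfrak{m}\mathcal{J}_D$. Here the full strength of linear type, not just $(F_0)$, must be used: the isomorphism $\mathrm{Sym}^\bullet_R(\mathcal{J}_D)\cong\mathrm{Rees}^\bullet_R(\mathcal{J}_D)$ forces every Koszul-type element $f_i T_j - f_j T_i\in R[T_0,\dots,T_n]$, which automatically lies in the kernel of $\mathrm{Sym}\to\mathrm{Rees}$, to already belong to the ideal generated by the linear syzygies. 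Taking $f_0=h$ and $f_i=\partial_i h$, the resulting factorisation of $h\,T_i - \partial_i h\,T_0$ through a linear syzygy converts into a presentation of $h$ as a combination of the $\partial_i h$, whose coefficients can be pushed into deeper powers of $\mathfrak{m}$ by a Nakayama descent, using that the symmetric algebra is a domain because it agrees with $\mathrm{Rees}(\mathcal{J}_D)\subset R[t]$.

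The hard part will be exactly this final upgrade. Linear type readily gives redundancy among the generators of $\mathcal{J}_D$, but forcing the redundancy onto $h$ itself, and with coefficients in $\mathfrak{m}$ rather than arbitrary units, requires delicate use of the domain property together with an analysis of the degree-two piece of $\mathrm{Sym}^\bullet_R(\mathcal{J}_D)$. A possible geometric alternative is to pass through the blow-up $\mathrm{Bl}_{\mathrm{Sing}(D)}(M)=\mathrm{Proj}(\mathrm{Rees}(\mathcal{J}_D))$: under linear type it embeds in $M\times\mathbb{P}^n$ as the zero locus of the linear syzygies, and the way the strict transform of $D$ meets the exceptional divisor there should encode the sought vanishing of $\bar h$ in $\mathcal{J}_D/\mathfrak{m}\mathcal{J}_D$.
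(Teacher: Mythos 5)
The paper does not actually prove this proposition; it is quoted from the cited reference, so I am measuring your sketch against the standard argument. Your reduction is correct: strong Euler homogeneity at $p$ (for the given equation $h$) is exactly the containment $h\in\mathfrak{m}\cdot J(h)$, and ideals of linear type do satisfy the local generator bound $\mu(\mathcal{J}_{D,\mathfrak{p}})\le\mathrm{ht}(\mathfrak{p})$ (usually called $G_\infty$ or $F_1$), which yields a syzygy of $(h,\partial_1h,\dots,\partial_nh)$ with some unit coefficient. But, as you yourself flag, this is where your proof stops, and the stopping point is a genuine gap rather than a technicality: the generator count is completely symmetric in the $n+1$ generators and cannot single out $h$ as the redundant one, let alone force the coefficients into $\mathfrak{m}$. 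The mechanisms you propose for the ``final upgrade'' (Koszul relations $f_iT_j-f_jT_i$ lying in the ideal of linear syzygies, a ``Nakayama descent'', the domain property of the symmetric algebra) are not carried out, and nothing in your argument uses any property of $h$ that distinguishes it from the partial derivatives; an argument with that symmetry cannot conclude.

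The missing input is the classical valuative fact that $h\in\overline{\mathfrak{m}\cdot J(h)}$ for any germ with $h(p)=0$: for an arc $\phi$ through $p$ one has $\mathrm{ord}_t(h\circ\phi)=1+\mathrm{ord}_t\bigl(\sum_i\phi_i'\cdot(\partial_ih\circ\phi)\bigr)\ge\mathrm{ord}_t\,\phi^*\bigl(\mathfrak{m}J(h)\bigr)$, and the curve criterion for integral closure applies. An equation of integral dependence of $h$ over $\mathfrak{m}\mathcal{J}_D$ then gives $h^N\in\mathfrak{m}\mathcal{J}_D^N$, i.e.\ the class $\bar h\in\mathcal{J}_D/\mathfrak{m}\mathcal{J}_D$ is nilpotent in the fibre cone $\bigoplus_k\mathcal{J}_D^k/\mathfrak{m}\mathcal{J}_D^k$. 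Linear type enters exactly here: it identifies the fibre cone with $\mathrm{Sym}_{k(p)}\bigl(\mathcal{J}_D\otimes k(p)\bigr)$, a polynomial ring, hence reduced, so $\bar h=0$ and $h\in\mathfrak{m}\mathcal{J}_D=\mathfrak{m}h+\mathfrak{m}J(h)$; absorbing the unit $1-m$ gives $h\in\mathfrak{m}J(h)$, which is the desired strong Euler vector field. So your outline points at the right object (it is the positive-degree part of the fibre cone, not the syzygy module, that decides the question), but without the integral-dependence lemma and the reducedness of the fibre cone the decisive step is missing and the proof as proposed would not go through.
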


At any point $p$, the evaluations at $p$ of the vector field germs $\delta\in \textup{Der}_{M,p}(-\log D)$ form a  linear subspace of $T_pM$, which we denote by $\textup{Der}_M(-\log D)(p)$. There exists a unique stratification $\{D_\alpha,\alpha\in I\}$ of $M$ with the properties that
\begin{enumerate}[label=(\roman*)]
\item Stratum $D_\alpha$ is a smooth connected immersed submanifold of $M$, and $M$ is the disjoint union of the strata.
\item If $p\in D_\alpha$, then $T_pD_\alpha=\textup{Der}_M(-\log D)(p)$.
\end{enumerate}
The stratification $\{D_\alpha,\alpha\in I\}$ is called the logarithmic stratification of $M$. A stratum is called a logarithmic stratum. By \cite[(3.6)]{MR586450} each $D_\alpha$ is a locally closed connected analytic submanifold of $M$.  

\begin{definition}[\cite{MR586450}]
\label{defn; holonomic}
$D$ is called a holonomic divisor if the logarithmic stratification $\{D_\alpha,\alpha\in I\}$ is locally finite.
\end{definition}


\begin{definition}
\label{defn; lognonchar}
Let $CC(\gamma)=\sum_\beta n_\beta [T^*_{Z_\beta}M]$ be a characteristic cycle. We say a logarithmic stratum $D_\alpha$   is non-characteristic with respect to  $CC(\gamma)$, if 
$T^*_{Z_\beta}M\cap T_{D_\alpha}^*M\subset T_M^*M$ 
for any $Z_\beta\not\subset D_\alpha$. 
We say the logarithmic stratification is non-characteristic with respect to  $CC(\gamma)$ if every logarithmic stratum   is non-characteristic with respect to  $CC(\gamma)$. 
\end{definition}

\subsection{The Multi-degrees of  Rational Morphisms between Projective Spaces}
\label{sec; multidegree}
Given homogeneous polynomials $f_0, \ldots, f_n \in \mathbb{C}[x_0, \cdots ,x_n]$ of the same degree, we define a rational morphism 
\[
\varphi\colon \mathbb{P}^n \dashrightarrow \mathbb{P}^n \/, x\mapsto [f_0(x): f_1(x): \cdots :f_n(x)]  \/. 
\]
The  multi-degrees of $\varphi$ is defined as follows. 
We consider the image closure of $\varphi$:
\[
\Gamma_\varphi:= \overline{
\left\lbrace
\left(x, [f_0(x): f_1(x): \cdots :f_n(x)]\right)\big|  x\notin V(f_0, f_1,\cdots ,f_n) 
\right\rbrace
}\subset \mathbb{P}^n\times \mathbb{P}^n
\]
This is nothing but the blowup of $\mathbb{P}^n$ along the ideal $\mathcal{I}_\varphi=(f_0, \cdots ,f_n)$. 
Let $\pi_1$ and $\pi_2$ be the first and second projections to $\mathbb{P}^n$ from $\mathbb{P}^n\times \mathbb{P}^n$ 
and $H_i$ be the pull-back  hyperplane classes from $\pi_i$ for $i=1,2$. 
The fundamental class $[\Gamma_\varphi]$ is then written as
\[
[\Gamma_\varphi]=\Big(\sum_{j=0}^n d_j H_1^j H_2^{n-j}\Big) \cap [\mathbb{P}^n\times \mathbb{P}^n].
\]  

\begin{definition}
\label{defn; multidegree}
We call the integer $d_k$  the $k$-th multi-degree of the rational morphism $\varphi$.
\end{definition}

The last multi-degree $d_n$ is the usual degree of $\phi$.

\begin{remark}
we may also view $\mathcal{I}_\varphi$ as an ideal of $\mathscr{O}_{\mathbb{C}^{n+1}}$.  One can show that 
\[
d_k=\textup{Muti}_0 \left(\pi_1\left(\tilde{H}_1\cap \cdots \cap \tilde{H}_k\cap Bl_{\mathcal{I}_\varphi}\mathbb{C}^{n+1}\right)\right)
\]
where $\tilde{H}_1,\ldots, \tilde{H}_k$ are pullbacks of general hyperplanes in $\mathbb{P}^n$ along $\pi_2: Bl_{\mathcal{I}_\varphi}\mathbb{C}^{n+1} \to \mathbb{P}^n$. This number is also called the $k$-th relative polar multiplicity $m_k(\mathcal{I}_\varphi, \mathbb{C}^{n+1})$ in \cite[\S 2.1]{G-G99}.   
\end{remark}
\begin{notation}
Let $\mathcal{I}$ be an ideal sheaf on $\mathbb{P}^{n}$. We use the notation $s(\mathcal{I}, \mathbb{P}^n)$ for the Segre class of the subscheme defined by $\mathcal{I}$ in $\mathbb{P}^n$. 
\end{notation}
The following relation between multi-degrees and characteristic classes is observed by Aluffi in \cite[Proposition 3.1]{Aluffi03}. 
\begin{proposition}
\label{prop; muldegandsegre}
If the common degree of  $f_0,\ldots, f_n$ is $r$, then
\[
\sum_{i=0}^n (-1)^id_i [\mathbb{P}^{n-i}]=c(\mathscr{O}(r))^{-1}\cap \left([\mathbb{P}^n]-s(\mathcal{I}_{\varphi}, \mathbb{P}^n)^\vee\otimes  \mathscr{O}(r) \right) \/.
\] 
\end{proposition}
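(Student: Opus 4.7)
The plan is to express the multidegrees $d_k$ through the blow-up structure of $\Gamma_\varphi$, push down to $\mathbb{P}^n$, and rearrange the resulting expression into the language of Aluffi's dual $(\cdot)^\vee$ and twist $\otimes\mathscr{O}(r)$ operations. The only geometric input I need is the universal property of the blow-up: since $\Gamma_\varphi = Bl_{\mathcal{I}_\varphi}\mathbb{P}^n$ with exceptional divisor $E$, and $\pi_2$ is classified by the line bundle quotient $\mathscr{O}_{\Gamma_\varphi}^{n+1}\twoheadrightarrow \pi_1^{*}\mathscr{O}(r)\otimes\mathscr{O}(-E)$, one has the relation $H_2 = rH_1 - E$ in $A^1(\Gamma_\varphi)$.

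Define integers $e_i$ by $(\pi_1)_{*}(E^i\cap[\Gamma_\varphi]) = e_i[\mathbb{P}^{n-i}]$ (in particular $e_0 = 1$). The two ingredients I would then use are: Fulton's blow-up formula for Segre classes, giving $s(\mathcal{I}_\varphi,\mathbb{P}^n)=\sum_{i\geq 1}(-1)^{i-1}e_i[\mathbb{P}^{n-i}]$; and the expansion, via $H_2=rH_1-E$ and the projection formula,
\[
d_k = \int_{\Gamma_\varphi}H_1^{n-k}(rH_1-E)^k = \sum_{i=0}^{k}\binom{k}{i}r^{k-i}(-1)^i e_i.
\]

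Substituting this into $\sum_k(-1)^k d_k[\mathbb{P}^{n-k}]$, swapping the order of summation, and setting $\ell=k-i$, the inner sum is a binomial series $\sum_{\ell\geq 0}\binom{i+\ell}{i}(-rh)^\ell=(1+rh)^{-(i+1)}$. Collecting terms and separating out the $i=0$ contribution yields
\[
\sum_{k=0}^{n}(-1)^k d_k[\mathbb{P}^{n-k}] = c(\mathscr{O}(r))^{-1}\cap\Big([\mathbb{P}^n] + \sum_{i\geq 1}e_i\,\frac{h^i}{(1+rh)^i}\cap[\mathbb{P}^n]\Big).
\]
Under Aluffi's conventions $[\mathbb{P}^{n-i}]\otimes\mathscr{O}(r) = h^i[\mathbb{P}^n]/(1+rh)^i$ and $s(\mathcal{I}_\varphi,\mathbb{P}^n)^\vee=-\sum_{i\geq 1}e_i[\mathbb{P}^{n-i}]$ (the Segre sign $(-1)^{i-1}$ and the codimension-based dual sign $(-1)^i$ combine to $-1$), the parenthesised expression is exactly $[\mathbb{P}^n]-s(\mathcal{I}_\varphi,\mathbb{P}^n)^\vee\otimes\mathscr{O}(r)$, which completes the proof.

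The main obstacle is purely bookkeeping of signs and conventions: one must use Aluffi's codimension-based convention for the dual $(\cdot)^\vee$ and the corresponding convention $[V]\otimes\mathscr{L} = [V]/c(\mathscr{L})^{\textup{codim}\,V}$ for the twist. A quick sanity check on $\mathbb{P}^1$ with a base locus of degree $k$ (where $d_0=1$, $d_1 = r-k$, and $s(\mathcal{I}_\varphi,\mathbb{P}^1) = k[\mathbb{P}^0]$) pins these choices down; once that is fixed, the remaining computation is a formal power-series identity inside $A_*(\mathbb{P}^n) = \mathbb{Z}[h]/(h^{n+1})$.
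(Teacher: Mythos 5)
Your argument is correct: the identities $H_2=rH_1-E$, $d_k=\sum_{i=0}^{k}\binom{k}{i}r^{k-i}(-1)^i e_i$, the resummation to $\sum_i e_i h^i(1+rh)^{-(i+1)}$, and the sign bookkeeping for Aluffi's $(\cdot)^\vee$ and $\otimes\,\mathscr{O}(r)$ all check out. The paper itself gives no proof of this proposition -- it is quoted from \cite[Proposition 3.1]{Aluffi03} -- and your derivation is essentially the standard one found in that reference, so there is nothing further to compare.
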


The reader should refer to \cite{Aluffi03} for the precise meaning of the the dual and tensor notation in the formula.
 
\subsection{Centrally Free Divisors in the Projective Space}
\label{sec; centralfree}
Let $f\in \mathbb{C}[x_0,\ldots,x_n]$ be a homogeneous polynomial. It defines a projective hypersurface $D=V(f)\subset \mathbb{P}^n$ and its affine cone $\hat{D}=V(f)\subset \mathbb{C}^{n+1}$.

\begin{definition}
We say $D$ is  centrally free  if its affine cone $\hat{D}$ is free at $0\in \mathbb{C}^{n+1}$.
\end{definition}
The definition implies that $\hat{D}$ is free at a neighbourhood of $0\in \mathbb{C}^{n+1}$, hence is free everywhere because $\hat{D}$ is $\mathbb{C}^*$-invariant. By the Quillen-Suslin theorem $H^0(\mathbb{C}^{n+1}, \textup{Der}_{\mathbb{C}^{n+1}}(-\log \hat{D}))$ is a free module of the form
$
\bigoplus_{i=0}^{n} \mathbb{C}[x_0,\ldots,x_n] \delta_i \/,
$
where $\delta_i$ is a homogeneous derivation of degree $d_i-1$, i.e. $\delta_i=\sum_j\delta_{i,j}\partial_{x_j}$ such that $\delta_{i,j}$ is a homogeneous polynomial of degree $d_i$ (the differential operator $\partial_{x_j}$ has degree $-1$). The degrees $d_0,\ldots, d_n$ are independent of the choice of the basis. We always choose $\delta_0=\sum_i x_i\partial_{x_i}$ the Euler derivation.

\begin{proposition}\label{prop:coneproperties}\hfill
\begin{enumerate}
\item If $D$ is centrally free, then $D$ is a free divisor. 
\item If  $\hat{D}$ is strongly Euler homogeneous, then $D$ is strongly Euler homogeneous.
\item If $\hat{D}$ is holonomic, then $D$ is holonomic.
\end{enumerate} 
\end{proposition}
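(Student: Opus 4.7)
All three parts are deduced from a single dehomogenization picture. Fix $p\in D$; after relabelling assume $p\in U_0=\{x_0\neq 0\}$ and lift it to $\hat p=(1,p)\in \hat D$. On the $\mathbb{C}^*$-saturated open set $\{x_0\neq 0\}\subset \mathbb{C}^{n+1}$, work in the coordinates $(x_0,y_1,\ldots,y_n)$ with $y_j=x_j/x_0$. A direct computation gives $\sum_i x_i\partial_{x_i}=x_0\partial_{x_0}$, and writing the defining equation of $\hat D$ as $f=x_0^d g(y)$ the function $g$ becomes a local defining equation of $D$ at $p$ that is independent of $x_0$. The engine of the proof is the observation that a derivation $\chi=a(x_0,y)\partial_{x_0}+\sum_j b_j(x_0,y)\partial_{y_j}$ is log along $\hat D$ at $\hat p$ if and only if $\sum_j b_j\,\partial_{y_j}g\in (g)\subset \mathcal{O}_{\mathbb{C}^{n+1},\hat p}$, a condition independent of the vertical coefficient $a$.

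\textbf{Proof of (1).} Quillen--Suslin combined with the hypothesis yields a homogeneous Saito basis $\delta_0=\sum_i x_i\partial_{x_i}$, $\delta_1,\ldots,\delta_n$ of $H^0(\mathbb{C}^{n+1},\textup{Der}(-\log\hat D))$ with $\deg\delta_k=d_k-1$. On $\{x_0\neq 0\}$, $\delta_0=x_0\partial_{x_0}$, and the rescaled derivations $\tilde\delta_k:=x_0^{-(d_k-1)}\delta_k$ take the form $a_k(y)\,x_0\partial_{x_0}+\xi_k(y)$ where $\xi_k=\sum_j b_{k,j}(y)\partial_{y_j}$; by the engine each $\xi_k$ lies in $\textup{Der}_{U_0,p}(-\log D)$. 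I claim $\xi_1,\ldots,\xi_n$ is a free $\mathcal{O}_{U_0,p}$-basis. Linear independence is inherited from that of $\delta_0,\tilde\delta_1,\ldots,\tilde\delta_n$ over $\mathcal{O}_{\mathbb{C}^{n+1},\hat p}$. For generation, any $\zeta=\sum c_j(y)\partial_{y_j}\in\textup{Der}_{U_0,p}(-\log D)$ is a log derivation of $\hat D$ by the engine, so $\zeta=g_0\delta_0+\sum_k g_k\tilde\delta_k$ for some $g_0,g_k\in\mathcal{O}_{\mathbb{C}^{n+1},\hat p}$; matching the absence of a $\partial_{x_0}$-component in $\zeta$ forces $g_0+\sum_k g_k a_k=0$, and applying $\partial_{x_0}$ to the surviving identity $\zeta=\sum_k g_k\xi_k$ forces $\partial_{x_0}g_k=0$, so each $g_k$ lies in $\mathcal{O}_{U_0,p}$.

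\textbf{Proofs of (2) and (3).} For (2), strong Euler homogeneity of $\hat D$ at $\hat p$ provides a local defining equation $\tilde h$ and a vector field $\chi\in \mathfrak m_{\hat p}\cdot\textup{Der}_{\mathbb{C}^{n+1},\hat p}$ with $\chi(\tilde h)=\tilde h$. Writing $\tilde h=vg$ with $v$ a unit and rearranging gives $\chi(g)=\lambda g$ where $\lambda=1-\chi(v)/v$. Because the coefficients of $\chi$ vanish at $\hat p$, $\chi(v)(\hat p)=0$ and $\lambda(\hat p)=1$, so $\lambda$ is invertible; then $\chi':=\lambda^{-1}\chi\in \mathfrak m_{\hat p}\cdot\textup{Der}_{\mathbb{C}^{n+1},\hat p}$ satisfies $\chi'(g)=g$. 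Restricting the $\partial_{y_j}$-coefficients of $\chi'$ to $x_0=1$ produces $\xi\in \mathfrak m_p\cdot\textup{Der}_{U_0,p}$ with $\xi(g)=g$, since $g$ is independent of $x_0$. For (3), under the identification of horizontal tangent vectors at $\hat p$ with $T_p\mathbb{P}^n$ via $d\pi_{\hat p}$, the engine yields $\textup{Der}_{\mathbb{C}^{n+1},\hat p}(-\log\hat D)(\hat p) = \mathbb{C}\cdot\partial_{x_0}|_{\hat p}\oplus \textup{Der}_{\mathbb{P}^n,p}(-\log D)(p)$, so $d\pi_{\hat p}$ maps the former onto the latter. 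By $\mathbb{C}^*$-invariance every log stratum of $\hat D$ is $\mathbb{C}^*$-saturated; the evaluation identity then forces $\pi$ to send log strata of $\hat D\setminus\{0\}$ bijectively onto log strata of $D$, and local finiteness transfers via the local triviality of the $\mathbb{C}^*$-bundle $\pi$.

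\textbf{Main obstacle.} The technical heart is the explicit Saito basis construction in (1); (2) and (3) then follow rapidly from the engine. The subtlest point is the invertibility of $\lambda$ in (2), which fails for a merely Euler (non-strongly) vector field $\chi$, and explains why the statement requires \emph{strongly} Euler homogeneity rather than plain Euler homogeneity.
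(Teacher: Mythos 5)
Your proof is correct and rests on exactly the same observation as the paper's: away from the origin the cone germ $(\hat D,\hat p)$ is the product of $(D,p)$ with a trivial factor $\mathbb{C}$, exhibited here by the dehomogenization $f=x_0^d g(y)$. The only difference is that the paper outsources (1) and (2) to product lemmas in the literature (Lemma 2.2 of the reference for freeness of products and Lemma 3.2 of the reference for strong Euler homogeneity of products) and calls (3) trivial, whereas you verify these facts directly; your explicit Saito basis $\xi_1,\dots,\xi_n$ and the unit-rescaling argument for $\lambda$ are both sound.
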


\begin{proof}
At any $p\in \hat{D}$ different from $0\in \mathbb{C}^{n+1}$, the germ $(\hat{D},p)$ is a product of $D$ with a trivial factor $\mathbb{C}$. Property (1) follows from \cite{MR1363009} Lemma 2.2, and (2) follows from \cite{MR2231201} Lemma 3.2. Property (3) is trivial.
\end{proof}
\begin{proposition}
\label{prop; chernderlog}
For a centrally free divisor $D$, there is a short exact sequence
\[
\begin{tikzcd}
0  \arrow{r} & \mathscr{O}_{\mathbb{P}^n} \arrow{r} & \oplus_{i=0}^n \mathscr{O}_{\mathbb{P}^n}(1-d_i) \arrow{r} & \textup{Der}_{\mathbb{P}^n}(-\log D)\arrow{r} & 0
\end{tikzcd}
\]
In particular, $\textup{Der}_{\mathbb{P}^n}(-\log D)\cong \oplus_{i=1}^n \mathscr{O}_{\mathbb{P}^n}(1-d_i)$.
\end{proposition}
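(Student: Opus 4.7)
The plan is to obtain the claimed sequence as the $\mathbb{C}^*$-invariant descent of a logarithmic Euler sequence on $V^*:=\mathbb{C}^{n+1}\setminus\{0\}$ along the quotient map $\pi: V^*\to \mathbb{P}^n$, in complete analogy with how the classical Euler sequence $0\to\mathscr{O}_{\mathbb{P}^n}\to\mathscr{O}_{\mathbb{P}^n}(1)^{\oplus (n+1)}\to T\mathbb{P}^n\to 0$ arises from $0\to\mathscr{O}_{V^*}\cdot E\to TV^*\to \pi^*T\mathbb{P}^n\to 0$, where $E=\sum_i x_i\partial_{x_i}$ is the Euler vector field generating the relative tangent sheaf.

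First, I would set up the log tangent sequence on $V^*$. Since $\hat D$ is $\mathbb{C}^*$-invariant and $\pi^{-1}(D)=\hat D\cap V^*$, the morphism $\pi$ is a strict, log smooth principal $\mathbb{C}^*$-bundle for the divisorial log structures, so standard log geometry yields the short exact sequence
\[
0\to \mathscr{O}_{V^*}\cdot E\to \textup{Der}_{V^*}(-\log\hat D)\to \pi^*\textup{Der}_{\mathbb{P}^n}(-\log D)\to 0.
\]
Moreover, the centrally free hypothesis combined with Saito's criterion at the origin shows that $\det(\delta_{i,j})$ is a nonzero constant multiple of $f$, hence a unit at every point of $V^*$; this propagates $\delta_0,\ldots,\delta_n$ to an $\mathscr{O}_{V^*}$-basis of $\textup{Der}_{V^*}(-\log \hat D)$.

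Next I would apply the $\mathbb{C}^*$-invariants functor $\pi_*^{\mathbb{C}^*}$, which is exact because $\pi$ is a principal $\mathbb{C}^*$-bundle (local trivialisations identify $\mathbb{C}^*$-equivariant sheaves with $\mathbb{Z}$-graded sheaves and the invariants with the degree-zero piece). The left term $\mathscr{O}_{V^*}\cdot E$ becomes $\mathscr{O}_{\mathbb{P}^n}$ since $E$ has weight zero; the right term becomes $\textup{Der}_{\mathbb{P}^n}(-\log D)$ by the projection formula; and the $\delta_i$-basis lets me write any invariant local section of the middle term uniquely as $\sum g_i\delta_i$ where $g_i$ is a weight-$(1-d_i)$ holomorphic function, i.e.\ a local section of $\mathscr{O}_{\mathbb{P}^n}(1-d_i)$. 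This identifies the middle invariant with $\bigoplus_{i=0}^n\mathscr{O}_{\mathbb{P}^n}(1-d_i)$ and produces the claimed short exact sequence.

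Finally, the leftmost map sends $1\in\mathscr{O}_{\mathbb{P}^n}$ to $E=\delta_0$, which under the above identification is the unit section of the $i=0$ summand $\mathscr{O}_{\mathbb{P}^n}(1-d_0)=\mathscr{O}_{\mathbb{P}^n}$ (since $\delta_0$ has degree zero, forcing $d_0=1$). Hence the map is a split inclusion onto a direct summand, and $\textup{Der}_{\mathbb{P}^n}(-\log D)\cong \bigoplus_{i=1}^n\mathscr{O}_{\mathbb{P}^n}(1-d_i)$. The main technical point to justify carefully is the exactness of the log tangent sequence on $V^*$ together with the exactness of $\pi_*^{\mathbb{C}^*}$; both reduce via local trivialisations of $\pi$ to elementary calculations on $U\times \mathbb{C}^*$, so this is bookkeeping rather than a genuine obstacle. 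A purely calculational alternative would be to work chart by chart on the standard affine charts $U_i\subset\mathbb{P}^n$, where the normalised generators $x_i^{1-d_j}\delta_j$ become explicit log derivations and the sequence is reproduced directly.
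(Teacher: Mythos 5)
Your proof is correct and follows essentially the same route as the paper's: the paper's (very brief) argument is precisely that degree-zero rational vector fields on $\mathbb{C}^{n+1}$ logarithmic along $\hat D$ are the combinations $\sum_i f_i\delta_i$ with $\deg f_i = 1-d_i$ and that these descend to $\mathbb{P}^n$ with kernel generated by the Euler field; you have merely repackaged this as taking $\mathbb{C}^*$-invariants of the relative log tangent sequence on $\mathbb{C}^{n+1}\setminus\{0\}$. One sentence to fix: Saito's criterion gives $\det(\delta_{i,j})=c\cdot f$ with $c\in\mathbb{C}^*$, so the determinant is \emph{not} a unit on $V^*$ (it vanishes along $\hat D$); what you actually want is that the constant $c$ is nonzero, so Saito's criterion applies at every point of $\mathbb{C}^{n+1}$ and the $\delta_i$ form a basis everywhere (equivalently, this is the global freeness via Quillen--Suslin already recorded in \S\ref{sec; centralfree}).
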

\begin{proof}
Observe that $\mathbb{C}^*$-invariant vector fields on $\mathbb{C}^{n+1}$ descends to vector fields on $\mathbb{P}^n$, and a rational vector field on $\mathbb{C}^{n+1}$ is $\mathbb{C}^*$-invariant if and only if it is of degree $0$. Degree 0 rational vector fields on $\mathbb{C}^{n+1}$ logarithmic along $\hat{D}$ are of the form $\sum_i f_i\delta_i$ where $f_i$ is a rational function of degree $1-d_i$, and this gives the middle term $\oplus_{i=0}^n \mathscr{O}_{\mathbb{P}^n}(1-d_i)$.
\end{proof}

\begin{example}
\label{exam; logcothyparr}
Assume that $k\leq n$.
Let $D=V(l_0\cdots l_k)\subset \mathbb{P}^n$ be the disjoint union of $k+1$ hyperplanes such that $(l_0, \cdots ,l_k)$ are independent linear functions on $\mathbb{C}^{n+1}$. Then $D$ is centrally free and 
\[
c(T^*\mathbb{P}^n(\log D))=(1-H)^{n-k}.
\]
\end{example}

\begin{definition}[\cite{MR2228227}\cite{MR2521436}]
\label{defn; linearfreedivisor}
Let $D$ be a centrally free divisor. We say $\hat{D}$ is linear free if $d_0=\ldots = d_n=1$. 
\end{definition}

A basic example of linear free divisor is the SNC divisor $D=V(x_0\cdots x_n)$ consisting of all coordinate hyperplanes. We deduce from Proposition \ref{prop; chernderlog} that $T\mathbb{P}^n(-\log D)$ is trivial when $\hat{D}$ is linear free. Linear free divisors have many interesting properties. In \cite{MR2521436} the authors showed that global  logarithmic comparison theorem always holds for reductive linear  free divisors. 
When the reductive linear  free divisor is holonomic, it is strongly Euler
homogeneous if and only if  the logarithmic comparison theorem holds \cite[Theorem 1.6]{GS10}.

\section{Ginzburg's sharp operation}
\label{sec; sharp} 
Let $M$ be a complex manifold of dimension $n$ and let $f: M\to \mathbb{C}$ be a non-constant global holomorphic function. Set $D=f^{-1}(0)$, $U=M\setminus D$ and let $\gamma$ be a constructible function on $M$. First, we review Ginzburg's construction of $\textup{CC}(\gamma\cdot \ind_U)$ and relate it to the logarithmic completion of $\textup{CC}(\gamma)$. To accomplish this purpose, it is enough to concentrate on the case $\gamma=\textup{Eu}^\vee_Z$ where $Z\not\subset D$ is a closed subvariety. To simplify notation, let $\Lambda=T^*_ZM$ from now. 

In \cite{MR833194} Ginzburg defined a $(n+1)$-dimensional analytic subspace $\Lambda^\sharp_f \subset T^*U \times \mathbb{C}^*$:
\begin{equation}\label{equation:localginz}
\Lambda^\sharp_f := \Big\{(\xi + s\frac{df}{f}(x),s)\ \vert \ (x,\xi) \in \Lambda, x\in U, s\in\mathbb{C}^* \Big\}.
\end{equation}
Let $\overline{\Lambda^\sharp_f}$ be the closure of $\Lambda^\sharp_f$ in $T^*M \times \mathbb{C}$ and  let $\pi_2 : \overline{\Lambda^\sharp_f} \to \mathbb{C}$ be the natural projection. When there's no confusion, we then use the shorthand notation $\overline{\Lambda^\sharp}$. 

\begin{proposition}[{\cite[Theorem 3.2]{MR833194}}]\label{CC-G}
The characteristic cycle of the restricted constructible function $\textup{Eu}^\vee_Z\cdot \ind_U$ is given by
\begin{equation*}
\textup{CC}(\textup{Eu}^\vee_Z\cdot \ind_U) = [\pi_2^{-1}(0)].
\end{equation*}
\end{proposition}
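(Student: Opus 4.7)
The natural approach is a $\mathscr{D}$-module computation combined with the Kashiwara-Malgrange theory of the $b$-function of $f^s$. By Riemann-Hilbert one obtains a regular holonomic $\mathscr{D}_M$-module $\mathcal{M}_Z$ with $\textup{CC}(\mathcal{M}_Z)=[\Lambda]$ corresponding to $\textup{Eu}^\vee_Z$. Introduce the $\mathscr{D}_M[s]$-module
\[
\mathcal{N} := \mathcal{M}_Z\otimes_{\mathscr{O}_M}\mathscr{O}_M[f^{-1}][s]\cdot f^s,
\]
in which derivations act by the Leibniz rule with $\partial_i(f^s)=s(\partial_i f/f)f^s$. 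I equip $\mathcal{N}$ with the good filtration induced from a good filtration on $\mathcal{M}_Z$: a direct symbol calculation over $T^*U\times\mathbb{C}^*$ shows that the associated graded is supported on $\Lambda^\sharp_f$ with multiplicity one, giving
\[
\textup{CC}(\mathcal{N}) = [\overline{\Lambda^\sharp_f}] \quad \text{as cycles in } T^*M\times\mathbb{C}.
\]
On the other hand, the quotient $\mathcal{N}/s\mathcal{N}$ is canonically identified with the localisation $\mathcal{M}_Z[f^{-1}]$, which under Riemann-Hilbert represents $\textup{Eu}^\vee_Z\cdot\ind_U$.

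These two facts reduce the proposition to the cycle identity
\[
\textup{CC}(\mathcal{N}/s\mathcal{N}) = [\pi_2^{-1}(0)].
\]
The key input here is the Bernstein-Sato polynomial of $f^s$ acting on $\mathcal{M}_Z$: its integer roots form a finite set, so after replacing $s$ by $s+N$ for $N$ sufficiently large, $s=0$ avoids every root of the shifted $b$-function and multiplication by $s$ becomes injective on $\textup{gr}^F\mathcal{N}$. This $s$-flatness of $\mathcal{N}$ near $s=0$ ensures that taking the characteristic cycle commutes with reduction modulo $s$, so the left-hand side equals the scheme-theoretic fibre of $\textup{CC}(\mathcal{N})$ at $s=0$, namely $[\pi_2^{-1}(0)]$.

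The principal obstacle is this final specialisation step: justifying that the $\mathscr{D}_M[s]$-module $\mathcal{N}$ is sufficiently flat in the $s$-direction near $s=0$ for the characteristic cycle to commute with setting $s=0$. The computation of $\textup{CC}(\mathcal{N})$ is essentially formal once the filtration is fixed, and the identification of $\mathcal{N}/s\mathcal{N}$ with $\mathcal{M}_Z[f^{-1}]$ is immediate; but the $b$-function argument lies at the heart of Ginzburg's approach, and is where the substantive work takes place, either via a V-filtration analysis along $s=0$ or via verifying Cohen-Macaulayness of the projection $\overline{\Lambda^\sharp_f}\to\mathbb{C}$ in a neighbourhood of the special fibre.
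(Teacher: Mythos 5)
A preliminary remark on the comparison itself: the paper offers no proof of this proposition. It is quoted verbatim from Ginzburg (\cite[Theorem 3.2]{MR833194}) and used as a black box; the text that follows it only rewrites the statement as the homological specialisation $i^*[\overline{\Lambda^\sharp}]$ and illustrates it with a picture. So your attempt can only be measured against Ginzburg's original $\mathscr{D}$-module argument, which is indeed the strategy you are reconstructing.

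Measured that way, there are concrete gaps beyond the one you flag. First, a regular holonomic $\mathcal{M}_Z$ with $\textup{CC}(\mathcal{M}_Z)=[T^*_ZM]$ on the nose need not exist: for singular $Z$ the intersection cohomology module generally carries extra conormal components, so one must instead argue by additivity of both sides over the Grothendieck group of holonomic modules. Second, and more seriously, your two ``formal'' steps are incompatible for the same object. The module $\mathcal{N}=\mathcal{M}_Z\otimes_{\mathscr{O}_M}\mathscr{O}_M[f^{-1}][s]f^s$ is not $\mathscr{D}_M[s]$-coherent, so the filtration ``induced from $\mathcal{M}_Z$'' is not a good filtration and does not compute $\textup{CC}(\mathcal{N})$; the coherent object for which the symbol calculation gives $[\overline{\Lambda^\sharp_f}]$ (and for which even the absence of extra components of the closure over $D$ requires an argument) is $\mathcal{N}_0=\mathscr{D}_M[s]\cdot(\mathcal{M}_Z\otimes f^s)$. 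But $\mathcal{N}_0/s\mathcal{N}_0=\mathscr{D}_M\cdot(\mathcal{M}_Z\otimes f^0)$ is $\mathcal{M}_Z$ (already for $\mathcal{M}_Z=\mathscr{O}_{\mathbb{C}}$, $f=x$ one gets $\mathscr{O}$, not $\mathscr{O}[x^{-1}]$), and the localisation is only recovered as $\mathcal{N}_0/(s+N)\mathcal{N}_0$ for $N\gg 0$ below the roots of the $b$-function --- at which point the fibre of $\overline{\Lambda^\sharp_f}$ over $s=-N$ is visibly not $[\pi_2^{-1}(0)]$. Reconciling the fibre at $s=-N$ with the flat limit at $s=0$ is precisely the content of the theorem, and it is precisely the step you defer as ``the principal obstacle.'' Since that step is declared rather than carried out, what you have is an outline of Ginzburg's proof, not a proof. (A smaller point: $\mathcal{M}_Z[f^{-1}]$ realises $Rj_*j^*$ rather than $j_!j^*$; its constructible function does coincide with $\textup{Eu}^\vee_Z\cdot \ind_U$, but this uses the vanishing of compactly supported Euler characteristics of punctured cones in complex analytic sets and deserves a sentence rather than ``immediate.'')
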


If we set Cartesian diagram  
$$
\begin{tikzcd}
\pi_2^{-1}(0) \arrow[d] \arrow[r] & \overline{\Lambda^\sharp} \arrow[d,"\pi_2"']\\
0 \arrow[r,"i"] & \mathbb{C}
\end{tikzcd}\/,
$$
the above formula can be reformulated as 
$$
\textup{CC}(\textup{Eu}^\vee_Z\cdot \ind_U)=i^*([\overline{\Lambda^\sharp}])
$$ 
where $i^*$ is the specialisation map in Borel-Moore homology (see \cite{MR1644323} Example 19.2.1, cf. also \S 10.1 in loc.cit.). It is helpful to understand this construction using the following pictures.

\begin{figure}[!h]
\begin{tikzpicture}
\draw (-2,0) -- (2,0) ;
\draw [dashed, -latex] (0,0) -- (0,2) ;

\filldraw [gray] (0,0) circle (1.5pt);
\draw (0,0) node[below]{\footnotesize $D$};
\draw (2,0) node[below]{\footnotesize $M$};
\draw (0,2) node[above]{\footnotesize $dx_1,\ldots,dx_m$};

\draw (-2,0.5) .. controls (-1,0.6) and (-0.3,1.2) ..  (-0.2,2);

\draw (2,0.3) .. controls (1,0.4) and (0.3,1) .. (0.2,2);

\begin{scope}[xshift=5cm]
\draw (-2,0) -- (2,0) ;
\draw [dashed, -latex] (0,0) -- (0,2) ;

\filldraw [gray] (0,0) circle (1.5pt);
\draw (0,0) node[below]{\footnotesize $D$};
\draw (2,0) node[below]{\footnotesize $M$};
\draw (0,2) node[above]{\footnotesize $dx_1,\ldots,dx_m$};

\draw (-2,0.3) .. controls (-1,0.6) and (-0.3,1.2) ..  (-0.2,2)
  \foreach \p in {20,40,60,80} {
   node[pos=\p*0.01]
      (N \p){}
      }
      ;
      
\foreach \p in {20,40,60,80} 
{
\draw [red] (N \p.center) --  ++(0.1,0.2);
\draw [red] (N \p.center) --  ++(-0.1,-0.2);
\draw [red] (N \p.center) --  ++(-0.1,0.2);
\draw [red] (N \p.center) --  ++(0.1,-0.2);
}

\draw (2,0.3) .. controls (1,0.4) and (0.3,1) .. (0.2,2)
  \foreach \p in {20,40,60,80} {
   node[pos=\p*0.01]
      (M \p){}
      }
      ;
      
\foreach \p in {20,40,60,80} 
{
\draw [red] (M \p.center) --  ++(0.2,0.2);
\draw [red] (M \p.center) --  ++(-0.2,-0.2);
\draw [red] (M \p.center) --  ++(-0.2,0.2);
\draw [red] (M \p.center) --  ++(0.2,-0.2);
}

\end{scope}

\begin{scope}[xshift=10cm]
\draw (-2,0) -- (2,0) ;
\draw [dashed, -latex] (0,0) -- (0,2) ;

\filldraw [gray] (0,0) circle (1.5pt);
\draw (0,0) node[below]{\footnotesize $D$};
\draw (2,0) node[below]{\footnotesize $M$};
\draw (0,2) node[above]{\footnotesize $dx_1,\ldots,dx_m$};

\draw (-2,0.25) .. controls (-1,0.3) and (-0.3,0.6) ..  (-0.2,1)
  \foreach \p in {20,40,60,80} {
   node[pos=\p*0.01]
      (R \p){}
      }
      ;
      
\foreach \p in {20,40,60,80} 
{
\draw [red] (R \p.center) --  ++(0.1,0.2);
\draw [red] (R \p.center) --  ++(-0.1,-0.2);
\draw [red] (R \p.center) --  ++(-0.1,0.2);
\draw [red] (R \p.center) --  ++(0.1,-0.2);
}

\draw (2,0.15) .. controls (1,0.2) and (0.3,0.5) .. (0.2,1)
  \foreach \p in {20,40,60,80} {
   node[pos=\p*0.01]
      (S \p){}
      }
      ;
      
\foreach \p in {20,40,60,80} 
{
\draw [red] (S \p.center) --  ++(0.2,0.2);
\draw [red] (S \p.center) --  ++(-0.2,-0.2);
\draw [red] (S \p.center) --  ++(-0.2,0.2);
\draw [red] (S \p.center) --  ++(0.2,-0.2);
}

\end{scope}
\end{tikzpicture}
\caption{Ginzburg's construction of $\textup{CC}(\textup{Eu}^\vee_Z\cdot \ind_U)$}
\end{figure}

\begin{itemize}
\item the image of $Z\cap U$ under the map $\frac{df}{f}: U \to T^*U$,
\item the image of $\Lambda\vert_U$ under the map $+\frac{df}{f}:\Lambda\vert_U \to T^*U$ where the red `cones' are representative conormal vectors to $Z$,
\item the image of $\Lambda\vert_U$ under the map $+\frac{df}{2f}: \Lambda\vert_U \to T^*U$, i.e. $\pi^{-1}_2(\frac{1}{2})$.
\end{itemize}

Since  $\pi_2$ is flat, $\pi^{-1}_2(0) = \lim_{s\to 0} \pi^{-1}_2(s)$. The meaning of Ginzburg's construction of $\textup{CC}(\textup{Eu}^\vee_Z\cdot 1_U)$ is that we have to draw the diverging conormal vectors near the boundary $D$ back towards the zero section. In applications, Proposition \ref{CC-G} is not easy to use partly because the construction `pulling back from infinity' is not immediately rephrased as those standard constructions in algebraic geometry such as blowup or deformation to the normal cone. In our next paper \cite{LZ24-3} we will relate Ginzburg's construction to certain Rees algebras of modules.

Suppose $D$ is free for a moment. Motivated by \cite{MRWW} we consider the following diagram.
\begin{equation}
\label{diagram; sharp}
\begin{tikzcd}
T^*M \arrow[r, hook,"i"] \arrow[rr, bend left, "j"]& T^*M \times \mathbb{C} \arrow[r,"j'"]& T^*M(\log D) \\
\pi_2^{-1}(0) \arrow[r,hook,""] \arrow[u,hook',""]& \overline{\Lambda^\sharp} \arrow[u,hook', ""]&.
\end{tikzcd}
\end{equation}
Here  the map $j'$ sends
$(x, \xi, s)$ to $\left(x, j(\xi)-s \frac{df}{f}(x)\right)$. 

\begin{proposition}\label{analyticity:logimage}
$\overline{j(\Lambda)}$ is a complex analytic variety of dimension $n$.
\end{proposition}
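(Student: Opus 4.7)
The plan is to realize $\overline{j(\Lambda)}$ locally as an irreducible component of the preimage of $\Lambda$ under a holomorphic bundle morphism $\tilde\varphi: T^*M(\log D)\to T^*M$ built from the classical adjugate of the Saito matrix. Since analyticity and dimension are local properties and $\overline{j(\Lambda)}$ is defined intrinsically (as the closure of $j(\Lambda\cap T^*U)$ in $T^*M(\log D)$), I will work in a coordinate patch $(U_0;x_1,\dots,x_n)$ on which $L$ is trivialized by a local equation $f$ of $D$ and on which $\textup{Der}_M(-\log D)$ admits a Saito basis $\delta_i=\sum_j a_{ij}\partial_{x_j}$. Setting $A=(a_{ij})$, Saito's criterion yields $\det A=uf$ for some unit $u$; a direct computation in the induced fiber trivializations of $T^*U_0$ and $T^*U_0(\log D)$ shows that $j$ is multiplication by $A$, so in particular $j$ is an isomorphism precisely on $U$.

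I will then define $\tilde\varphi(\eta):=u^{-1}B(x)\eta$, where $B=\mathrm{adj}(A)$. Because $u$ is a unit and $B$ is a matrix of holomorphic cofactors, $\tilde\varphi$ is everywhere a holomorphic morphism $T^*U_0(\log D)\to T^*U_0$, and over $U_0\cap U$ it agrees with $f\cdot j^{-1}$; in particular $\tilde\varphi(j(\xi))=f(x)\xi$. Since $\Lambda=T^*_ZM$ is conic in the fibers of $T^*M$ and $f(x)\ne 0$ on $U$, conicity of $\Lambda$ gives the key identity
\[
j(\Lambda)\cap T^*M(\log D)|_U \;=\; \tilde\varphi^{-1}(\Lambda)\cap T^*M(\log D)|_U,
\]
exhibiting the left-hand side as the $U$-locus of the closed analytic subvariety $\tilde\varphi^{-1}(\Lambda)\subset T^*U_0(\log D)$.

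Because $Z$ is irreducible with $Z\not\subset D$, $\Lambda=T^*_ZM$ is irreducible of dimension $n$ and $\Lambda\cap T^*U$ is open dense in $\Lambda$. Under the isomorphism $j|_U$, this maps to an irreducible analytic subvariety of dimension $n$ inside $T^*M(\log D)|_U$. Its closure in $T^*U_0(\log D)$ is therefore the unique irreducible component of $\tilde\varphi^{-1}(\Lambda)$ containing this $n$-dimensional dense subset, and is an analytic subvariety of dimension exactly $n$. Covering $M$ by such coordinate patches and invoking the intrinsic definition of $\overline{j(\Lambda)}$ promotes the local statement to a global one.

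The main subtlety is controlling the dimension: $\tilde\varphi$ drops rank along $D$, so $\tilde\varphi^{-1}(\Lambda)$ may carry additional irreducible components supported entirely over $D$ of dimension strictly greater than $n$. These components do not meet $T^*M(\log D)|_U$, so they are disjoint from $\overline{j(\Lambda)}$; the density of the $n$-dimensional irreducible set $j(\Lambda)\cap T^*M(\log D)|_U$ in $\overline{j(\Lambda)}$ is precisely what singles out the correct component and pins down the dimension count.
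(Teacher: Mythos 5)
Your argument is correct, but it is genuinely different from the paper's. You work chart by chart: trivialising $L$ and choosing a Saito basis $\delta_i=\sum_j a_{ij}\partial_{x_j}$, you note that $j$ is multiplication by $A=(a_{ij})$ with $\det A=uf$ (Saito's criterion), and the adjugate gives a holomorphic bundle morphism $\tilde\varphi=u^{-1}\mathrm{adj}(A)$ with $\tilde\varphi\circ j=f\cdot\mathrm{id}$; conicity of $\Lambda$ then yields $j(\Lambda)|_U=\tilde\varphi^{-1}(\Lambda)|_U$, so $\overline{j(\Lambda)}$ is locally a union of irreducible components of the closed analytic set $\tilde\varphi^{-1}(\Lambda)$, each of dimension $n$ because it has a dense open subset inside the $n$-dimensional locus $j(\Lambda|_U)$. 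The paper instead constructs the logarithmic Nash blowup $\tilde Z\subset G_d(TM(-\log D))$, shows it is locally a monoidal transform along an ideal of minors of the logarithmic Jacobian matrix, and exhibits $\overline{j(\Lambda)}$ as the proper (Remmert) image of the dual $F^\vee$ of the tautological quotient bundle over $\tilde Z$. Your route is more elementary (only Saito's criterion, conicity, and the structure of irreducible components of analytic sets), and it realises $\overline{j(\Lambda)}$ by explicit local equations, namely the pullback of the equations of $\Lambda$ under $\tilde\varphi$, which is potentially useful for computation; the paper's route is coordinate-free, produces the logarithmic Nash modification as an object in its own right, and gives $\overline{j(\Lambda)}$ together with a resolution $F^\vee\to\overline{j(\Lambda)}$. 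Two points of your write-up deserve tightening, though neither is a real gap: (a) the restriction of the irreducible set $j(\Lambda\cap T^*U)$ to a single chart $T^*U_0(\log D)$ need not remain irreducible, so ``the unique irreducible component'' should read ``the union of those irreducible components of $\tilde\varphi^{-1}(\Lambda)$ meeting the locus over $U$'' (the dimension count survives, since each such component is the closure of an open subset of $j(\Lambda|_U)$); and (b) the spurious components of $\tilde\varphi^{-1}(\Lambda)$ supported over $D$ need not be \emph{disjoint} from $\overline{j(\Lambda)}$ — they may well meet it over $D$ — they are merely not contained in it, which is all your argument needs.
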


\begin{proof}
Let $\dim Z=d$. The canonical map $j^\vee: TM(-\log D) \to TM$ is an isomorphism on $U$. We can define $\tilde Z$ the logarithmic Nash blowup of $Z$ as the closure of 
\[
\Big\{ (x,V)\in G_d(TM(-\log D)) \ | \ x\in Z_{reg}\cap U \text{ and } j^\vee(V)=T_xZ  \Big\}
\] 
in $G_d(TM(-\log D))$. We prove $\tilde{Z}$ is locally a monoidal transform, hence is complex analytic, and we follow the line of argument given in \cite[Theorem 1]{MR409462} where the ordinary Nash blowup is locally a monoidal transform is proved. Let $W\subset M$ be an open subset on which $\textup{Der}_M(-\log D)$ is trivialized by derivations $\delta_1,\ldots, \delta_n \in H^0(W,\textup{Der}_M(-\log D))$. Suppose $Z$ is defined by $r$ equations $g_1=\ldots = g_{r}=0$ in $W$ and $r\geq n-d$. Consider the logarithmic Jacobian matrix $J$ which is the $r\times n$ matrix $(\delta_j(g_i))$. If $Z\cap W$ is irreducible, then at least one $(n-d) \times (n-d)$ minor of $J$, say $\Delta$, is not identically zero on $Z\cap W$. We may assume that the rows of $\Delta$ are the first $n-d$ rows of $J$. Then $\tilde{Z}$ is locally the blowup of $Z\cap W$ along the ideal generated by all $(n-d)\times (n-d)$ minors of the first $n-d$ rows of $J$. If $Z\cap W$ has more than one irreducible compoments, we can use a partition of unity with respect to irreducible components of $Z\cap W$ to modify the argument in the irreducible case (cf. the proof of Theorem 1 in \cite{MR409462}). The detail is left to the reader. Let $p:\tilde{Z} \to Z$ be the natural projection, let $E$ be the tautological subbundle of $p^*TM(-\log D)$ and let $F$ be the quotient bundle (of rank $n-d$). The Cartesian diagram
\[
\begin{tikzcd}
F^\vee \arrow[d] \arrow[r] & \overline{j(\Lambda)} \arrow[d]\\
\tilde{Z} \arrow[r,"p"] & Z
\end{tikzcd}\
\]
shows that $\overline{j(\Lambda)}$ is the proper image of $F^\vee$. Therefore $\overline{j(\Lambda)}$ is analytic.
\end{proof}

\begin{definition}
\label{defn; logconormal}
We call the analytic variety $\overline{j(\Lambda)}$ the \textbf{logarithmic conormal space} of $Z$ along $D$. We denote it by $\Lambda^{log}_D$ or $\Lambda^{log}$ when there is no confusion.
\end{definition}

\begin{proposition}\label{sharpintolog}
$\overline{\Lambda^\sharp}$ is an irreducible component of $j'^{-1}\Lambda^{log}$ with multiplicity 1. Moreover, $\overline{\Lambda^\sharp}$ is the only irreducible component of $j'^{-1}\Lambda^{log}$ outside $D$.
\end{proposition}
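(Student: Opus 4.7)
The plan is to identify $\overline{\Lambda^\sharp}$ as an irreducible component of $j'^{-1}\Lambda^{log}$ of multiplicity $1$, by first checking the inclusion $\overline{\Lambda^\sharp}\subset j'^{-1}\Lambda^{log}$ set-theoretically, then analysing the scheme structure of $j'^{-1}\Lambda^{log}$ on the open locus $T^*U\times\mathbb{C}$, and finally ruling out other components away from $D$ via a dimension argument.

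To begin, I would introduce the holomorphic map
\[
\phi\colon \Lambda|_U\times\mathbb{C}\longrightarrow T^*U\times\mathbb{C},\qquad (x,\eta,s)\mapsto\bigl(x,\eta+s\tfrac{df}{f}(x),s\bigr),
\]
which is a biholomorphism onto its image (the inverse restricted to the image is $(x,\xi,s)\mapsto(x,\xi-s\tfrac{df}{f}(x),s)$, which is holomorphic), and whose image for $s\neq 0$ is exactly $\Lambda^\sharp$. Since $Z\not\subset D$, $\Lambda|_U$ is nonempty and irreducible of dimension $n$, hence $\overline{\Lambda^\sharp}$ is irreducible of dimension $n+1$. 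Furthermore, over $U$ the logarithmic map $j$ is the canonical identification $T^*U\cong T^*M(\log D)|_U$, so $\Lambda^{log}|_U=\Lambda|_U$ and $j'$ restricts to $(x,\xi,s)\mapsto\bigl(x,\xi-s\tfrac{df}{f}(x)\bigr)$. Composing gives $j'\circ\phi(x,\eta,s)=(x,\eta)\in\Lambda|_U$, which yields $\phi(\Lambda|_U\times\mathbb{C})\subset j'^{-1}\Lambda^{log}$ and, after passing to closures, $\overline{\Lambda^\sharp}\subset j'^{-1}\Lambda^{log}$.

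To upgrade this to the scheme-theoretic statement I would work on the open locus $T^*U\times\mathbb{C}$. The restricted map $j'|_{T^*U\times\mathbb{C}}$ is a holomorphic submersion onto $T^*U$, so the scheme-theoretic preimage of the reduced analytic subvariety $\Lambda|_U$ is reduced of pure dimension $n+1$. By construction $\phi$ realises $\Lambda|_U\times\mathbb{C}$ as this preimage scheme-theoretically, so $j'^{-1}\Lambda^{log}\cap(T^*U\times\mathbb{C})$ is an irreducible reduced analytic subvariety of dimension $n+1$. Set-theoretically it decomposes as $\Lambda^\sharp\cup(\Lambda|_U\times\{0\})$, and every point $(x,\eta,0)$ of the slice is the limit as $s\to 0$ of $(x,\eta+s\tfrac{df}{f}(x),s)\in\Lambda^\sharp$, so the closure of this open piece in $T^*M\times\mathbb{C}$ is exactly $\overline{\Lambda^\sharp}$. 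Therefore $\overline{\Lambda^\sharp}$ is an irreducible component of $j'^{-1}\Lambda^{log}$ with multiplicity $1$.

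For uniqueness, let $W$ be an irreducible component of $j'^{-1}\Lambda^{log}$ whose projection to $M$ is not contained in $D$. Then $W\cap(T^*U\times\mathbb{C})$ is nonempty and open in $W$, hence dense and of dimension $n+1$; it is a closed subvariety of the irreducible variety $j'^{-1}\Lambda^{log}\cap(T^*U\times\mathbb{C})$ of the same dimension, so the two coincide, and taking closures in $T^*M\times\mathbb{C}$ gives $W=\overline{\Lambda^\sharp}$. The main technical point is the scheme-theoretic identification in the third paragraph, where reducedness of the preimage and the closed embedding property of $\phi$ must be combined; once the submersivity of $j'|_{T^*U\times\mathbb{C}}$ is noted this is routine.
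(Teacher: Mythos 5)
Your proof is correct and follows essentially the same route as the paper's: both rest on the observation that over $U$ the map $j'$ realises $T^*U\times\mathbb{C}$ as a rank-one bundle over $T^*M(\log D)|_U\cong T^*U$, so that $j'^{-1}\Lambda^{log}$ restricted to $T^*U\times\mathbb{C}$ is the irreducible reduced $(n+1)$-fold $\Lambda|_U\times\mathbb{C}$ whose closure is $\overline{\Lambda^\sharp}$ --- you merely spell out the details the paper leaves implicit. (One cosmetic point: in the uniqueness step you assert $\dim W=n+1$ before having justified it; either invoke the lower bound $\dim W\geq n+1$ from $j'$ being a local complete intersection morphism, or avoid dimensions altogether by noting that $W=\overline{W\cap(T^*U\times\mathbb{C})}\subset\overline{\Lambda^\sharp}$ and maximality of components forces $W=\overline{\Lambda^\sharp}$.)
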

\begin{proof}
By the definition of $j'$, one can see that $j'(\Lambda^\sharp) \subset j(\Lambda)$, hence $j'(\overline{\Lambda^\sharp})\subset \Lambda^{log}$. Over $U$, the map $j'$ makes $T^*M\times \mathbb{C}$ a rank 1 vector bundle over $T^*M(\log D)$ (the discussion below Proposition \ref{logsplitting} provides a coordinate free explanation for this fact). Therefore $\dim \overline{\Lambda^\sharp}=\dim j'^{-1}\Lambda^{log}=n+1$.
\end{proof}

Since there is no nonconstant global holomorphic function on a compact complex manifold, Ginzburg's sharp construction cannot be directly applied to many interesting cases (e.g. $M=\mathbb{P}^n$) and it causes some inconvenience in applications. The situation considered above is essentially a local one.

In the rest of this section we will generalise diagram \eqref{diagram; sharp} in the  global setup. We slightly modify earlier notations. From now on let $L$ be a holomorphic line bundle over $M$, let $f\in H^0(M,L)$ be a nontrivial global section and let $D=V(f)$ be the divisor cut by $f$. The sheaf of sections of $L$ is denoted by $\mathcal{L}\cong \mathscr{O}_M(D)$.

Following Atiyah \cite[\S 4]{MR86359} we recall the geometric definition of the  bundle of principal parts $P^1_ML$. Equivalently we describe its sheaf of sections $\mathcal{P}^1_M\mathcal{L}$.  
As a $\mathbb{C}_M$-module, $\mathcal{P}^1_M\mathcal{L}\cong \mathcal{L}\oplus (\mathcal{L}\otimes\Omega^1_M)$. 
The
$\mathscr{O}_M$-structure is given as follows. For $h\in \mathscr{O}_M(W), s\in \mathcal{L}(W), \alpha \in \mathcal{L}\otimes\Omega^1_M(W)$ where $W\subset M$ is any open subset,
\[
h\cdot (s,\alpha) := (hs, s\otimes dh + h\alpha).
\]
By definition $\mathcal{P}^1_M\mathcal{L}$ is locally free of rank $n+1$ and admits a short exact sequence
\begin{equation}
\label{diagram; P^1Lexact}
\begin{tikzcd}
0 \arrow{r} & \mathcal{L}\otimes\Omega^1_M  \arrow[r]  & \mathcal{P}^1_M\mathcal{L}  \arrow[r, ""]  & \mathcal{L} \arrow{r}  & 0 . 
\end{tikzcd}
\end{equation}   
One can see $P^1_M\mathcal{L} \cong \mathcal{L}\oplus (\mathcal{L}\otimes \Omega^1_M)$ as $\mathscr{O}_M$-modules if and only if there exists a connection $\nabla: \mathcal{L} \to \mathcal{L}\otimes \Omega^1_M$. When this happens, the isomorphism is explicitly given by $(s,\alpha) \mapsto (s, \alpha -\nabla s)$.

\begin{lemma}\label{lemma; logconnection} 
When $D$ is a free divisor, there exists a logarithmic connection $\nabla$ on $L$.
\end{lemma}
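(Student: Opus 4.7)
My approach is to build $\nabla$ locally in a trivialising cover and glue, using that $f$ distinguishes a canonical frame in each chart. Choose an open cover $\{W_\alpha\}$ of $M$ on which $L$ is trivialised by frames $\tilde e_\alpha$, and write $f|_{W_\alpha}=f_\alpha\tilde e_\alpha$, so that $f_\alpha\in \mathscr{O}_M(W_\alpha)$ is a local equation for $D$. The transition functions of $L$ in these frames are $g_{\alpha\beta}=f_\beta/f_\alpha\in \mathscr{O}_M^*(W_{\alpha}\cap W_\beta)$. The key observation is that the logarithmic derivative $d\log f_\alpha=df_\alpha/f_\alpha$ is a section of $\Omega^1_M(\log D)$ on $W_\alpha$: indeed, by the very definition of $\textup{Der}_M(-\log D)$, for any $\delta\in \textup{Der}_M(-\log D)$ we have $\delta(f_\alpha)\in (f_\alpha)\mathscr{O}_M$, so that $\langle d\log f_\alpha,\delta\rangle =\delta(f_\alpha)/f_\alpha \in \mathscr{O}_M$. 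Freeness of $D$ is then used precisely to guarantee that $\Omega^1_M(\log D)$ is locally free, so that the output of $\nabla$ lies in a genuine vector bundle.

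With this in place, I would declare
\[
\nabla_\alpha(h\cdot \tilde e_\alpha):=\bigl(dh-h\,d\log f_\alpha\bigr)\otimes \tilde e_\alpha \quad (h\in \mathscr{O}_M(W_\alpha)),
\]
which manifestly satisfies the Leibniz rule and takes values in $\mathcal{L}\otimes \Omega^1_M(\log D)$ by the previous paragraph. To check that the $\nabla_\alpha$ glue, I compute both sides on $W_\alpha\cap W_\beta$: writing $h\tilde e_\alpha=h g_{\alpha\beta}\tilde e_\beta$, one gets
\[
\nabla_\beta(h g_{\alpha\beta}\tilde e_\beta)=\bigl(d(hg_{\alpha\beta})-hg_{\alpha\beta}\,d\log f_\beta\bigr)\otimes \tilde e_\beta,
\]
whereas $\nabla_\alpha(h\tilde e_\alpha)=g_{\alpha\beta}\bigl(dh-h\,d\log f_\alpha\bigr)\otimes \tilde e_\beta$. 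The difference equals $h\bigl(dg_{\alpha\beta}-g_{\alpha\beta}(d\log f_\beta-d\log f_\alpha)\bigr)\otimes \tilde e_\beta$, and this vanishes thanks to the elementary identity $d\log g_{\alpha\beta}=d\log f_\beta-d\log f_\alpha$ coming from $g_{\alpha\beta}=f_\beta/f_\alpha$.

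A more conceptual repackaging, which I would include as a remark, is that $f$ trivialises $L$ over $U=M\setminus D$ and therefore defines a holomorphic connection $\nabla_0$ on $L|_U$ characterised by $\nabla_0(f)=0$. In the frame $\tilde e_\alpha$ on $W_\alpha\cap U$ this $\nabla_0$ takes precisely the form $(dh-h\,d\log f_\alpha)\otimes \tilde e_\alpha$, so the $\nabla_\alpha$ above are the unique log extensions of the single connection $\nabla_0$ from $U$ to $W_\alpha$; this automatically forces compatibility on overlaps and removes any bookkeeping. There is no genuine obstacle in the proof: the only nontrivial input is the statement that $d\log f_\alpha$ is a logarithmic $1$-form, and this is tautological from the definition of $\textup{Der}_M(-\log D)$—freeness enters only to ensure that $\Omega^1_M(\log D)$ is locally free, so that ``logarithmic connection on $L$'' has its usual meaning.
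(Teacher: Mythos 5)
Your proof is correct and is essentially the same construction as the paper's: both produce the connection that annihilates $f$ on $U=M\setminus D$ and observe that in a trivialising frame it is $h\mapsto dh-h\,d\log f_\alpha$, which lands in $\Omega^1_M(\log D)$ because $\delta(f_\alpha)\in(f_\alpha)$ for every logarithmic derivation $\delta$. The only presentational difference is that the paper obtains the gluing for free by embedding $\mathcal{L}$ into the sheaf of meromorphic functions via $\mathcal{L}|_{U_\alpha}=\frac{1}{f_\alpha}\mathscr{O}_{U_\alpha}$ and restricting the global $d$, whereas you verify the cocycle condition on overlaps by hand (your closing remark is precisely the paper's viewpoint).
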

\begin{proof}
Let $\{U_\alpha\}$ be an open cover of $M$ such that $\mathcal{L}\vert_{U_\alpha} \cong \mathscr{O}_{U_\alpha}$ and let $f_\alpha \in \mathscr{O}_M(U_\alpha)$ be the image of $f\vert_{U_\alpha}$ under this isomorphism. The functions $\{f_\alpha\}$ allow us to embed $\mathcal{L}$ as a subsheaf of $\mathcal{M}_M$ the sheaf of meromorphic functions on $M$, i.e. $\mathcal{L}\vert_{U_\alpha}=\frac{1}{f_\alpha}\mathscr{O}_{U_\alpha}$ under this identification. The usual differentiation $d: \mathcal{M}_M \to \mathcal{M}_M \otimes \Omega^1_M$ restricted to $\mathcal{L}$ gives the desired logarithmic connection. Indeed, over $U_\alpha$ we have
\[
\nabla  \colon  \frac{g}{f_\alpha} \mapsto d(\frac{g}{f_\alpha} )= \frac{dg}{f_\alpha} - \frac{df_\alpha}{f^2_\alpha}g =\frac{1}{f_\alpha}\otimes (dg -g \frac{df_\alpha}{f_\alpha}) \/.
\]
for any $g\in \mathscr{O}_M(U_\alpha)$. Apparently $\nabla(\frac{g}{f_\alpha})$ is an element in $(\mathcal{L}\otimes\Omega_M^1(\log D))(U_\alpha)$. 
\end{proof}

\begin{proposition}\label{mapj'L}
When $D$ is a free divisor, the logarithmic connection $\nabla$ induces a $\mathscr{O}_M$-linear morphism $j'_L\colon \mathcal{P}^1_M\mathcal{L}\to \mathcal{L}\otimes\Omega^1_M(\log D)$.
\end{proposition}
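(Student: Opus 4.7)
The plan is to write down $j'_L$ explicitly in terms of the $\mathbb{C}_M$-module decomposition $\mathcal{P}^1_M\mathcal{L}\cong \mathcal{L}\oplus(\mathcal{L}\otimes \Omega^1_M)$ given in \eqref{diagram; P^1Lexact} and then check that the formula is in fact $\mathscr{O}_M$-linear with respect to the twisted action. Concretely, I define
\[
j'_L\colon \mathcal{P}^1_M\mathcal{L}\longrightarrow \mathcal{L}\otimes \Omega^1_M(\log D), \qquad (s,\alpha)\longmapsto \nabla(s)-\alpha,
\]
where on the right $\alpha\in \mathcal{L}\otimes \Omega^1_M$ is regarded as a section of $\mathcal{L}\otimes \Omega^1_M(\log D)$ via the natural inclusion $\Omega^1_M\hookrightarrow \Omega^1_M(\log D)$. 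This is a sheaf morphism because $\nabla$ is a global object on $M$ (as produced by Lemma~\ref{lemma; logconnection}), so the local formulas automatically glue; in particular no choices of local trivialisations of $L$ enter the definition.

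The only point requiring verification is $\mathscr{O}_M$-linearity, and this is precisely where the Leibniz rule for the logarithmic connection is used. For $h\in \mathscr{O}_M(W)$ and $(s,\alpha)$ a section of $\mathcal{P}^1_M\mathcal{L}$ over an open $W\subset M$, the twisted action recalled in the excerpt reads $h\cdot(s,\alpha)=(hs,\, s\otimes dh+h\alpha)$. Using $\nabla(hs)=h\nabla(s)+s\otimes dh$ I compute
\[
j'_L\bigl(h\cdot(s,\alpha)\bigr)=\nabla(hs)-\bigl(s\otimes dh+h\alpha\bigr)=\bigl(h\nabla(s)+s\otimes dh\bigr)-s\otimes dh-h\alpha=h\bigl(\nabla(s)-\alpha\bigr),
\]
which is exactly $h\cdot j'_L(s,\alpha)$. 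The cancellation of the two $s\otimes dh$ terms is the content of the proposition: the $\mathscr{O}_M$-twist built into $\mathcal{P}^1_M\mathcal{L}$ is tailored so that a map satisfying the Leibniz rule lands in the target as an $\mathscr{O}_M$-linear map. $\mathbb{C}$-linearity on each of the two summands is automatic from the $\mathbb{C}$-linearity of $\nabla$ and of the inclusion $\Omega^1_M\hookrightarrow \Omega^1_M(\log D)$.

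There is no real obstacle in this proof, and I would expect the only point worth commenting on afterwards is that the map $j'_L$ depends on the chosen logarithmic connection $\nabla$ on $L$, but that its restriction over $U=M\setminus D$ is canonical: over $U$, $\Omega^1_M(\log D)=\Omega^1_M$ and any two logarithmic connections on $L$ differ by a section of $\mathcal{L}\otimes \Omega^1_M(\log D)\otimes \mathcal{L}^\vee=\Omega^1_M(\log D)$ that is logarithmic along $D$, so the ambiguity is absorbed into the kernel behaviour along $D$. This mirrors the local description of the map $j'$ entering diagram \eqref{diagram; sharp}, and it is what allows $j'_L$ to be used subsequently in the global reformulation of Ginzburg's sharp construction.
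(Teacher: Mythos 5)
Your proof is correct and is essentially the paper's argument: the paper also works with the $\mathbb{C}_M$-splitting $\mathcal{P}^1_M\mathcal{L}\cong\mathcal{L}\oplus(\mathcal{L}\otimes\Omega^1_M)$ and verifies $\mathscr{O}_M$-linearity by exactly the cancellation of the $s\otimes dh$ terms between the Leibniz rule for $\nabla$ and the twisted module structure; you merely state intrinsically what the paper writes in a chart as $(\tfrac{g}{f_\alpha},\tfrac{1}{f_\alpha}\otimes dh)\mapsto -d(\tfrac{g}{f_\alpha})+j_\alpha(dh)\otimes\tfrac{1}{f_\alpha}$. Two minor remarks. First, your map is the negative of the paper's ($(s,\alpha)\mapsto\nabla(s)-\alpha$ versus $-\nabla(s)+\alpha$); this is harmless for the proposition, but only the paper's sign reproduces the local formula $j'(x,\xi,s)=\bigl(x,j(\xi)-s\frac{df}{f}(x)\bigr)$ of diagram \eqref{diagram; sharp}, so you should align conventions. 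Second, your closing aside that $j'_L|_U$ is ``canonical'' is not right as stated: two logarithmic connections differ by a global logarithmic $1$-form $\omega$, and the corresponding maps then differ by $(s,\alpha)\mapsto s\otimes\omega$, which need not vanish on $U$; what is actually true (and proved later in the paper) is that the induced notion of (weak) log transversality is independent of the choice of the defining section of $D$.
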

\begin{proof}
Recall that $\mathcal{P}^1_M\mathcal{L}\cong \mathcal{L}\oplus (\mathcal{L}\otimes\Omega^1_M)$ as $\mathbb{C}_M$-modules.
As  in the proof of Lemma~\ref{lemma; logconnection}, for any $g\in \mathcal{L}(U_\alpha)$ and any $h\in \mathscr{O}_M(U_\alpha)$,  we define $j'_L$  by 
\[ 
j'_L(U_\alpha)\colon (\frac{g}{f_\alpha},  \frac{1}{f_\alpha}\otimes dh ) \mapsto  -d(\frac{g}{f_\alpha})+ j_\alpha (dh)\otimes \frac{1}{f_\alpha}  \/,
\]
where $j_\alpha$ is the inclusion $\Omega^1_{U_\alpha} \to  \Omega^1_{U_\alpha}(\log D\cap U_\alpha)$.  Using the $\mathscr{O}_M$-structure of $\mathcal{P}^1_M\mathcal{L}$ one can  check that $j'_L$ is $\mathscr{O}_M$-linear.
\end{proof}

We claim that $j'_L$ globalises $j'$ in the local setup. Indeed, since $\mathcal{L}\vert_{U_\alpha} \cong \mathscr{O}_{U_\alpha}$, the differentiation $d: \mathscr{O}_{U_\alpha} \to \Omega^1_{U_\alpha}$ induces a cannonical local splitting $\mathcal{P}^1_{U_\alpha}\mathscr{O}_{U_\alpha} \cong \mathscr{O}_{U_\alpha}\oplus\Omega^1_{U_\alpha}$ and henthforth  $\mathcal{P}^1_{U_\alpha}\mathcal{L}\cong \mathcal{L}\vert_{U_\alpha} \oplus (\mathcal{L}\vert_{U_\alpha}\otimes\Omega^1_{U_\alpha})$. The composition 
\begin{equation*}
\mathscr{O}_{U_\alpha}\oplus \Omega^1_{U_\alpha} \cong \mathcal{P}^1_{U_\alpha}\mathscr{O}_{U_\alpha} \cong \mathcal{P}^1_{U_\alpha}\mathcal{L} \to \mathcal{P}^1_{U_\alpha}\mathcal{L}(\log D) \to \mathcal{L}\vert_{U_\alpha}\otimes\Omega^1_{U_\alpha}(\log D) \cong \Omega^1_{U_\alpha}(\log D) \\
\end{equation*}
is given by
\begin{align*}
(g,\omega) &\mapsto (g,\omega+dg) \in  \mathcal{P}^1_{U_\alpha}\mathscr{O}_{U_\alpha} \\
&\mapsto \frac{1}{f_\alpha}\otimes (g,\omega+dg) \in \mathcal{P}^1_{U_\alpha}\mathcal{L} \\
&\mapsto (\frac{g}{f_\alpha}, \frac{\omega+dg}{f_\alpha}) \in \mathcal{P}^1_{U_\alpha}\mathcal{L}(\log D) \\
&\mapsto \frac{\omega+dg}{f_\alpha}-d(\frac{g}{f_\alpha}) = \frac{\omega}{f_\alpha}-\frac{gdf_\alpha}{f^2_\alpha} \in  \mathcal{L}\vert_{U_\alpha}\otimes\Omega^1_{U_\alpha}(\log D) \\
&\mapsto \omega-g\frac{df_\alpha}{f_\alpha} \in \Omega^1_{U_\alpha}(\log D),
\end{align*}
agreeing with the earlier definition of $j'$ in \eqref{diagram; sharp}.

In the global setting, it is sometimes convenient to consider a variant of the conormal space which we call jet conormal space first introduced in \cite{MR4749159}. For any local section $g$ of $L$, denote by $j^1(g)$ the local section of $P_M^1L$ induced by the $\mathbb{C}_M$-module inclusion $\mathcal{L} \to \mathcal{P}^1_M\mathcal{L}$. Let $Z$ be any irreducible subvariety of $M$ whose conormal space is $\Lambda=T^*_ZM$.

\begin{definition}\label{defn;jetconormal}
The \textbf{jet conormal space of $Z$} denoted by $J\Lambda$, is the closure of   
\[
\Big\{ (x,j^1(g)(x)) \ \vert \ x\in Z_{reg}, \ g(x)=0 \text{ and }   dg(x) \textup{ annihilate } T_xZ \Big\}
\]
in $T^*M\otimes L$.  
The fibre of $J\Lambda$ over $x\in Z$, which is the \textbf{space of jets conormal to $Z$ at $x$}, is denoted by $J\Lambda_x$. Clearly $J\Lambda_x=T^*_{Z,x}M\otimes L_x$.

Let $\gamma=\sum_\alpha m_\alpha \textup{Eu}^\vee_{Z_{\alpha}}$ and let $\Lambda_\alpha=T^*_{Z_\alpha}M$. The \textbf{jet characteristic cycle} of $\gamma$ is 
\[
\textup{JCC}(\gamma):= \sum_{\alpha}m_\alpha \cdot [J\Lambda_\alpha].
\]
\end{definition}

\begin{definition}\label{defn; logjetconormal}
Let $D$ be a free divisor. The \textbf{logarithmic jet conormal space to $Z$ along the divisor $D$}, denoted by $J\Lambda^{log}_D$, is the image closure of $J\Lambda$ under the $L$-twisted logarithmic map $j_L\colon T^*M\otimes L \to T^*M(\log D)\otimes L$. When $D$ is understood from the context we write it as $J\Lambda^{log}$. By Proposition \ref{analyticity:logimage}, $J\Lambda^{log}$ is analytic.
\end{definition}

We are ready to describe the global sharp operation. Suppose $Z\not\subset D$.
\begin{definition}
\label{defn; globaljetsharp}
Given a nontrivial $f\in H^0(M,L)$, the \textbf{global jet sharp construction} $\overline{J\Lambda^{\sharp}_f}$ is the closure of
\[
\Big\{ \Big(x,\xi+ t\cdot j^1(f)(x)\Big) \ \vert  \ x\in Z, \ \xi \in J\Lambda_x, \ t \in \mathbb{C} \ \Big\}
\]
in $P^1_ML$. When  $f$ is clear from the context, we write it as $\overline{J\Lambda^{\sharp}}$.
\end{definition}

The following results will be frequently used in local computations.

\begin{lemma}
$\mathcal{P}^1_M\mathscr{O}_M\cong \mathscr{O}_M\oplus \Omega^1_M$.
\end{lemma}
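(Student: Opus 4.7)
The plan is to exhibit an explicit $\mathscr{O}_M$-linear isomorphism, using the $\mathbb{C}_M$-module description of $\mathcal{P}^1_M\mathcal{L}$ given just before \eqref{diagram; P^1Lexact}. Recall that for $\mathcal{L}=\mathscr{O}_M$ we have $\mathcal{P}^1_M\mathscr{O}_M=\mathscr{O}_M\oplus\Omega^1_M$ as $\mathbb{C}_M$-modules, but the $\mathscr{O}_M$-structure is twisted:
\[
h\cdot(s,\alpha)=(hs,\,s\,dh+h\alpha).
\]
The target $\mathscr{O}_M\oplus\Omega^1_M$ in the lemma, by contrast, is understood with its untwisted $\mathscr{O}_M$-structure $h\cdot(s,\alpha)=(hs,h\alpha)$. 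So the content of the lemma is that the short exact sequence
\[
0\to\Omega^1_M\to\mathcal{P}^1_M\mathscr{O}_M\to\mathscr{O}_M\to0
\]
of \eqref{diagram; P^1Lexact} splits canonically when $\mathcal{L}=\mathscr{O}_M$.

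First I would define the map
\[
\phi\colon \mathscr{O}_M\oplus\Omega^1_M\longrightarrow \mathcal{P}^1_M\mathscr{O}_M,\qquad (s,\alpha)\longmapsto (s,\,ds+\alpha),
\]
where on the right the pair is read inside the $\mathbb{C}_M$-module decomposition above. Then I would check $\mathscr{O}_M$-linearity by a direct computation with the twisted action: for $h\in\mathscr{O}_M$,
\[
h\cdot\phi(s,\alpha)=h\cdot(s,\,ds+\alpha)=\bigl(hs,\,s\,dh+h(ds+\alpha)\bigr)=\bigl(hs,\,d(hs)+h\alpha\bigr)=\phi(hs,h\alpha).
\]
Thus $\phi$ intertwines the untwisted and twisted $\mathscr{O}_M$-actions. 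Bijectivity is clear at the level of $\mathbb{C}_M$-modules, with explicit inverse $(s,\beta)\mapsto(s,\beta-ds)$; hence $\phi$ is the desired $\mathscr{O}_M$-module isomorphism.

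Equivalently, one may phrase this as a splitting: the map $\sigma\colon\mathscr{O}_M\to\mathcal{P}^1_M\mathscr{O}_M$, $s\mapsto(s,ds)$, is an $\mathscr{O}_M$-linear section of the projection in \eqref{diagram; P^1Lexact}, since $h\cdot\sigma(s)=(hs,s\,dh+h\,ds)=(hs,d(hs))=\sigma(hs)$. Combined with the natural inclusion of $\Omega^1_M$, this section produces the claimed direct sum decomposition. There is no genuine obstacle; the only subtlety is to keep track of which $\mathscr{O}_M$-action is meant on each side, and the verification above handles that in one line.
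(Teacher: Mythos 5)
Your proof is correct and takes essentially the same route as the paper, which simply observes that the usual derivation $d\colon\mathscr{O}_M\to\Omega^1_M$ provides the splitting connection; your explicit isomorphism $\phi(s,\alpha)=(s,ds+\alpha)$ and the section $s\mapsto(s,ds)$ are just that observation written out in full detail.
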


\begin{proof}
The splitting connection $\mathscr{O}_M\to \Omega^1_M$ is given by the usual derivation.
\end{proof}

Take any trivialization $\{(U_\alpha, f_\alpha)\}$  for $\mathcal{L}\cong \mathscr{O}_M(D)$ as in the proof of Proposition \ref{lemma; logconnection}.

\begin{proposition}\label{prop:trivialisation of JLambda}
On $U_\alpha$, $\overline{J\Lambda^\sharp}|_{U_\alpha}$ is isomorphic to $\overline{(T^*_{Z\cap U_\alpha}U_\alpha)^\sharp_{f_\alpha}}$.
\end{proposition}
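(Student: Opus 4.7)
The plan is to unwind definitions locally. Fix a trivialising section $e_\alpha$ of $L|_{U_\alpha}$, so the isomorphism $\mathcal{L}|_{U_\alpha} \cong \mathscr{O}_{U_\alpha}$ sends $f|_{U_\alpha}$ to $f_\alpha$ and induces compatible identifications $(T^*M \otimes L)|_{U_\alpha} \cong T^*U_\alpha$, $J\Lambda|_{U_\alpha} \cong T^*_{Z \cap U_\alpha} U_\alpha$, and $\mathcal{P}^1_M \mathcal{L}|_{U_\alpha} \cong \mathcal{P}^1_{U_\alpha} \mathscr{O}_{U_\alpha}$. By the preceding lemma, the canonical derivation $d$ provides an $\mathscr{O}$-linear splitting $\mathcal{P}^1_{U_\alpha} \mathscr{O}_{U_\alpha} \cong \mathscr{O}_{U_\alpha} \oplus \Omega^1_{U_\alpha}$. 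Passing to total spaces this exhibits $P^1_M L|_{U_\alpha}$ as $T^*U_\alpha \times \mathbb{C}$ over $U_\alpha$, with the $\mathbb{C}$-factor representing the $\mathscr{O}$-component.

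Next I would translate the ingredients of the global jet sharp construction. Under the inclusion $\mathcal{L} \otimes \Omega^1_M \hookrightarrow \mathcal{P}^1_M \mathcal{L}$ from \eqref{diagram; P^1Lexact}, a covector $\xi \in J\Lambda_x = T^*_{Z,x} U_\alpha$ becomes $(0, \xi) \in \mathscr{O}_{U_\alpha} \oplus \Omega^1_{U_\alpha}$. The section $j^1(f)$ becomes $j^1(f_\alpha)$, and under the derivation splitting its value at $x$ is $(f_\alpha(x), df_\alpha(x))$ up to sign. Consequently, a generic element $\xi + t \cdot j^1(f)(x)$ of $J\Lambda^\sharp$ corresponds in $T^*U_\alpha \times \mathbb{C}$ to the point $\bigl(x,\, \xi + t\, df_\alpha(x),\, t f_\alpha(x)\bigr)$.

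The last step is the reparametrisation $s := t f_\alpha(x)$. On the open locus $U_\alpha \setminus D$ this substitution is a bijection on the $t$-variable for each fixed $x$, and the displayed point becomes $\bigl(x,\, \xi + s\, df_\alpha(x)/f_\alpha(x),\, s\bigr)$. Referring to \eqref{equation:localginz}, this is precisely a point of $(T^*_{Z\cap U_\alpha} U_\alpha)^\sharp_{f_\alpha}$. The identification on the open locus extends by taking closures on both sides in $P^1_M L|_{U_\alpha} \cong T^*U_\alpha \times \mathbb{C}$, yielding the desired isomorphism $\overline{J\Lambda^\sharp}|_{U_\alpha} \cong \overline{(T^*_{Z\cap U_\alpha} U_\alpha)^\sharp_{f_\alpha}}$. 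The main obstacle I expect is keeping track of sign conventions: the $\mathscr{O}$-module structure on $\mathcal{P}^1_M \mathcal{L}$ in \eqref{diagram; P^1Lexact} is twisted by the Leibniz term $s \otimes dh$, so one must carefully compute the image of $j^1(f_\alpha)$ under the $\mathscr{O}$-linear derivation splitting of $\mathcal{P}^1_{U_\alpha} \mathscr{O}_{U_\alpha}$. If the sign comes out opposite to Ginzburg's formula, post-composing the identification with the involution $s \mapsto -s$ on the $\mathbb{C}$-factor absorbs it without affecting the resulting isomorphism of closures.
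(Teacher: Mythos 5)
Your proposal is correct and follows essentially the same route as the paper's proof: trivialise $L$ on $U_\alpha$, use the derivation splitting $\mathcal{P}^1_{U_\alpha}\mathscr{O}_{U_\alpha}\cong \mathscr{O}_{U_\alpha}\oplus\Omega^1_{U_\alpha}$ so that $j^1(f_\alpha)=f_\alpha+df_\alpha$, and reparametrise by $s=tf_\alpha$ to recover the expression \eqref{equation:localginz} before taking closures. The sign caveat you raise does not arise in the paper's computation, but your proposed fix would handle it harmlessly in any case.
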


\begin{proof}
Using the trivialization of $L$ on $U_\alpha$ we may consider the closure of 
\[
\Big\{ \Big(x,\xi+t\cdot j^1(f_\alpha)(x)\Big) \ \vert \ x\in Z\cap U_\alpha, \ \xi \in T^*_{Z,x}M, \ t \in \mathbb{C} \ \Big\}
\]
in $P^1_{U_\alpha}L\cong \mathbb{C}\times T^*U_\alpha$. Since $j^1(f_\alpha)=f_\alpha+df_\alpha$ in this decomposition, for any $x\in U\cap U_\alpha$
\begin{align*}
\xi +t\cdot j^1(f_\alpha)(x)&=\xi +t(f_\alpha(x)+df_\alpha(x)) \\
&=\xi + tf_\alpha(x)\cdot \frac{df_\alpha}{f_\alpha}(x) +tf_\alpha(x).
\end{align*}
So letting $s=tf_\alpha$ we return to the expression \eqref{equation:localginz} defining the local sharp operation.
\end{proof}

To summarise, we have the following diagram in the global setting. Proposition \ref{sharpintolog} implies that $j'_L$ maps $\overline{J\Lambda^\sharp}$ dominantly into $J\Lambda^{log}$, and $\overline{J\Lambda^\sharp}$ is one irreducible component of $j'^{-1}_L(J\Lambda^{log})$. The square on the right is only defined when $D$ is a free divisor.

\begin{equation}
\label{diagram; globaljetsharp}
\begin{tikzcd}
T^*M\otimes L \arrow[r, hook,"i_L"] \arrow[rr, bend left, "j_L"]& P^1_ML \arrow[r,"j'_L"]& T^*M(\log D)\otimes L \\
i^{-1}_L(\overline{J\Lambda^\sharp}) \arrow[r,hook,""] \arrow[u,hook',""]& \overline{J\Lambda^\sharp} \arrow[u,hook', ""] \arrow[r]& J\Lambda^{log}\arrow[u,hook',""].
\end{tikzcd}
\end{equation}

There is a more direct way to globalize Ginzburg's construction. The global section $f\in H^0(M,L)$ determines a meromorphic section $\frac{1}{f}$ of the line bundle $L^\vee$ whose polar locus is $D$, so that the expression $\frac{j^1(f)}{f}$ makes sense as a meromorphic section of $P^1_ML\otimes L^\vee$. 

\begin{definition}
\label{defn; globalsharp}
We define $\overline{\Lambda^\sharp_f}$, the \textbf{global sharp construction} for $\Lambda=T^*_ZM$ as the closure of 
\[
\Big\{ (x,\xi + s\frac{j^1(f)}{f}) \ \vert \ x\in Z\cap U, \ \xi\in T^*_{Z,x}M, \ s\in\mathbb{C} \Big\}
\]
in $P^1_ML\otimes L^\vee$.   When the section $f$ is clear from the context we write it as $\overline{\Lambda^\sharp}$. 
Clearly $\overline{\Lambda^\sharp}|_x=\overline{J\Lambda^\sharp}|_x\otimes L^\vee_x$ for any $x\in M$. 
\end{definition}

\begin{proposition}\label{global-Ginzburg}
Ginzburg's theorem becomes 
\[
\textup{CC}(\textup{Eu}_Z^\vee\cdot \ind_U)=i^*[\overline{\Lambda^{\sharp}}]=[i^{-1}(\overline{\Lambda^{\sharp}})]
\]
where $i$ is the natural inclusion $T^*M \to P^1_ML\otimes L^\vee$. Equivalently
\[
\textup{JCC}(\textup{Eu}_Z^\vee\cdot \ind_U)=i_L^*[\overline{J\Lambda^{\sharp}}]=[i_L^{-1}(\overline{J\Lambda^{\sharp}})].
\]
\end{proposition}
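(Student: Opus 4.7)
The plan is to deduce Proposition~\ref{global-Ginzburg} from Ginzburg's local theorem (Proposition~\ref{CC-G}) by means of the trivialization compatibility already recorded in Proposition~\ref{prop:trivialisation of JLambda}. Since both characteristic cycles and cycle-theoretic pullbacks along closed embeddings are intrinsically local operations, it suffices to verify the formula on each chart of an open cover $\{U_\alpha\}$ of $M$ trivializing $L$, together with local defining equations $f_\alpha$ of $D$.

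First I would fix, on each such $U_\alpha$, compatible trivializations $L|_{U_\alpha}\cong \mathscr{O}_{U_\alpha}$ and $L^\vee|_{U_\alpha}\cong \mathscr{O}_{U_\alpha}$, yielding
\[
P^1_ML\otimes L^\vee|_{U_\alpha} \;\cong\; \mathcal{P}^1_{U_\alpha}\mathscr{O}_{U_\alpha} \;\cong\; \mathscr{O}_{U_\alpha}\oplus \Omega^1_{U_\alpha} \;\cong\; \mathbb{C}\times T^*U_\alpha,
\]
matching the target of the expression \eqref{equation:localginz}. Under these identifications the inclusion $i\colon T^*M\hookrightarrow P^1_ML\otimes L^\vee$ becomes $T^*U_\alpha\hookrightarrow \mathbb{C}\times T^*U_\alpha$, $(x,\xi)\mapsto (0,x,\xi)$, and the meromorphic section $j^1(f)/f$ reduces to $1+df_\alpha/f_\alpha$, so that $\xi+s\cdot j^1(f)/f$ becomes $(s,\xi+s\cdot df_\alpha/f_\alpha)$. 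Consequently, by Proposition~\ref{prop:trivialisation of JLambda} together with Definition~\ref{defn; globalsharp}, $\overline{\Lambda^\sharp}|_{U_\alpha}$ corresponds to Ginzburg's local cycle $\overline{(T^*_{Z\cap U_\alpha}U_\alpha)^\sharp_{f_\alpha}}$, and $i^{-1}(\overline{\Lambda^\sharp})|_{U_\alpha}$ corresponds to the fiber $\pi_2^{-1}(0)$.

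Next I would invoke Proposition~\ref{CC-G} on each chart to obtain $\textup{CC}(\textup{Eu}^\vee_{Z\cap U_\alpha}\cdot \ind_{U\cap U_\alpha})=[\pi_2^{-1}(0)]=[i^{-1}(\overline{\Lambda^\sharp})]|_{U_\alpha}$. To upgrade this to $i^*[\overline{\Lambda^\sharp}]=[i^{-1}(\overline{\Lambda^\sharp})]$ as cycles, I would check that $\overline{\Lambda^\sharp}$ is pure of dimension $n+1$ and that the projection to the $L^\vee$-parameter is flat with generic fiber of dimension $n$, so that the refined Gysin specialization along the regular embedding $i$ coincides with the scheme-theoretic fiber. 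Gluing over the open cover then gives the first asserted equality. The jet version is obtained immediately by tensoring throughout with $L$: by Definition~\ref{defn; globalsharp} one has $\overline{\Lambda^\sharp}=\overline{J\Lambda^\sharp}\otimes L^\vee$ fibrewise, the inclusion $i$ is the $L^\vee$-twist of $i_L$, and $\textup{JCC}$ relates to $\textup{CC}$ in precisely the same way, so the cycle identity transfers.

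I expect the main obstacle to be the careful bookkeeping of the $\mathscr{O}_M$-module structure on $P^1_ML$ when passing to a trivialization, since the decomposition $\mathcal{P}^1_M\mathcal{L}\cong \mathcal{L}\oplus(\mathcal{L}\otimes\Omega^1_M)$ in \eqref{diagram; P^1Lexact} is only $\mathbb{C}_M$-linear, and the parameter $s$ in Ginzburg's construction must be correctly matched with the principal-part direction. This matching is essentially already handled by Proposition~\ref{prop:trivialisation of JLambda} (once extended via the $L^\vee$-twist), after which the remainder of the argument reduces to routine gluing plus a dimension/flatness check.
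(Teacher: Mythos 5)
Your proposal is correct and follows exactly the route the paper intends: the paper states Proposition~\ref{global-Ginzburg} without proof precisely because, by Proposition~\ref{prop:trivialisation of JLambda} and Definition~\ref{defn; globalsharp}, the global (jet) sharp construction restricts on each trivializing chart to Ginzburg's local $\overline{(T^*_{Z\cap U_\alpha}U_\alpha)^\sharp_{f_\alpha}}$, so Proposition~\ref{CC-G} together with the locality of characteristic cycles and the flatness of $\pi_2$ gives the statement. Your added checks (matching the parameter $s=tf_\alpha$, the flatness/dimension argument identifying $i^*$ with the scheme-theoretic fiber, and the $L^\vee$-twist relating the two displayed identities) are exactly the bookkeeping the paper leaves implicit.
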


Finally we provide an alternative description of $\overline{\Lambda^\sharp_f}$ based on  graph embeddings which  will be used in the followup paper \cite{LZ24-3}.
Let $\iota_f, \iota_0\colon M\to L$  be the closed embeddings induced by $f$ and  of the zero section.
We denote their images  by $M_f=\iota_f(M)$ and $M_0=\iota_0(M)$ and let $\pi: L \to M$ be the natural projection.    

\begin{lemma}
$\mathscr{O}_L(M_0) \cong \pi^*\mathcal{L}$.
\end{lemma}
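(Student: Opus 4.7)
The plan is to exhibit the isomorphism via a canonical tautological section of $\pi^*\mathcal{L}$. Recall that for any vector bundle $\pi\colon E\to M$ with sheaf of sections $\mathcal{E}$, the pullback $\pi^*\mathcal{E}$ carries a tautological section $\tau$ defined by $\tau(v) = v$ for $v\in E$, using the canonical identification $(\pi^*\mathcal{E})_v = \mathcal{E}_{\pi(v)}$. In our rank-one situation, I claim that $\tau\in H^0(L,\pi^*\mathcal{L})$ vanishes exactly along $M_0$, and to first order. Granting this, the section $\tau$ defines an injection $\mathscr{O}_L\hookrightarrow \pi^*\mathcal{L}$ whose cokernel is supported on $M_0$, which is equivalent to an isomorphism $\mathscr{O}_L(-M_0)\cong(\pi^*\mathcal{L})^{\vee}$, and dualising gives $\mathscr{O}_L(M_0)\cong \pi^*\mathcal{L}$.

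To verify the order-one vanishing, I would work in a local trivialisation $\{(U_\alpha, e_\alpha)\}$ of $\mathcal{L}$ with transition relations $e_\alpha = g_{\alpha\beta}e_\beta$, so that $L\vert_{U_\alpha}\cong U_\alpha\times\mathbb{C}$ with fibre coordinate $z_\alpha$ given by $z_\alpha e_\alpha(x)\in L_x$, and $M_0\cap L\vert_{U_\alpha} = \{z_\alpha = 0\}$. By definition $\tau\vert_{L\vert_{U_\alpha}} = z_\alpha\cdot \pi^*e_\alpha$, which vanishes simply along the zero section. This also verifies compatibility on overlaps: $z_\beta = g_{\alpha\beta}\,z_\alpha$ and $\pi^*e_\alpha = g_{\alpha\beta}\,\pi^*e_\beta$, so both expressions of $\tau$ agree.

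Alternatively, one can finish by transition-function bookkeeping: with the above frames, the canonical meromorphic generator $1/z_\alpha$ of $\mathscr{O}_L(M_0)$ on $L\vert_{U_\alpha}$ satisfies $1/z_\alpha = g_{\alpha\beta}\cdot (1/z_\beta)$, which are exactly the transition functions of $\pi^*\mathcal{L}$ in the frame $\{\pi^*e_\alpha\}$.

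There is no essential obstacle here; the only thing to be careful about is matching the sign of transition functions so that one ends up with $\pi^*\mathcal{L}$ rather than its dual. Using the tautological section presentation makes this automatic and coordinate-free, so I would favour that formulation in the write-up.
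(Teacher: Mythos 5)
Your proposal is correct and is essentially the paper's own argument: the paper also uses the tautological (diagonal) section of $\pi^*L = L\times_M L \to L$, which vanishes exactly along the zero section $M_0$, to produce the isomorphism. Your write-up just makes explicit the order-one vanishing and the transition-function check that the paper leaves implicit.
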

\begin{proof}
The first projection $p_1: L\times_M L \to L$ makes $L\times_M L$ a line bundle over $L$. The diagonal map given by $v \to (v, v)$ is a global section of $p_1$ whose set of zeros is $M$.
\end{proof}

By the proof of Lemma \ref{lemma; logconnection}, there exists a logarithmic connection $\pi^*\mathcal{L} \to \Omega^1_L(\log M_0)\otimes \pi^*\mathcal{L}$. Restricting the connection to $M_0$, we obtain a connection
\[
\nabla: \mathcal{L} \to \iota^*_0\Omega^1_L(\log M_0)\otimes\mathcal{L}. 
\] 
Restricting the residue map $\rho\colon \Omega^1_L(\log M_0)\to \mathscr{O}_{M_0}$ to $M$ by $\iota_0$ and tensor by $\mathcal{L}$ induces a short exact sequence on $M$:
\[
\begin{tikzcd}
0  \arrow{r} & \Omega^1_{M}\otimes\mathcal{L} \arrow{r} & \iota^*_0\Omega^1_L(\log M_0)\otimes \mathcal{L} \arrow[r,"\rho"] & \mathcal{L} \arrow{r} & 0.
\end{tikzcd}
\]

\begin{lemma}
$\rho\circ\nabla=-1$.
\end{lemma}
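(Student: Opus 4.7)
The identity is local on $M$, so the plan is to pick a point of $M$, trivialise $L$ as $L\vert_{U_\alpha}\cong U_\alpha\times \mathbb{C}_t$ with $t$ the fibre coordinate, and compute both $\nabla$ and $\rho$ explicitly in these coordinates. In this chart $t$ becomes a defining equation of $M_0\cap \pi^{-1}(U_\alpha)$, which will allow us to invoke the explicit formula for the logarithmic connection supplied by the proof of Lemma~\ref{lemma; logconnection}.

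The first step is to make the isomorphism $\pi^*\mathcal{L}\cong \mathscr{O}_L(M_0)$ explicit. Tracing through the previous lemma, in the chosen chart the diagonal section of $p_1\colon L\times_M L\to L$ sends $(x,t)\mapsto (x,t,t)$, so in the trivialisation of $p_1$ in which the second copy of the fibre coordinate is $s$, the diagonal $\Delta$ is the function $t$. Hence, writing $e_\alpha$ for the local frame of $\mathcal{L}$ induced by the trivialisation, the isomorphism $\pi^*\mathcal{L}\cong \mathscr{O}_L(M_0)$ identifies $\pi^*e_\alpha$ with $1/t$. Consequently a local section $s=g\cdot e_\alpha$ of $\mathcal{L}$ (with $g$ a holomorphic function on $U_\alpha$) lifts to $\pi^*s=g/t$ as a section of $\mathscr{O}_L(M_0)$, where $g$ is viewed as a function on $L\vert_{U_\alpha}$ that is constant along fibres of $\pi$.

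Next I would apply the formula for the logarithmic connection from the proof of Lemma~\ref{lemma; logconnection} with the local defining function $f_\alpha=t$ of $M_0$. On $L\vert_{U_\alpha}$ this gives
\[
\nabla(g/t)\;=\;d(g/t)\;=\;\frac{1}{t}\otimes\Big(dg-g\,\frac{dt}{t}\Big),
\]
an element of $\Omega^1_L(\log M_0)\otimes \pi^*\mathcal{L}$. Restricting along $\iota_0$ and applying $\rho\otimes \mathrm{id}$: because $g$ is pulled back from $M$, its differential $dg$ lies in $\pi^*\Omega^1_M$ and therefore has vanishing residue, while $\rho(dt/t)=1$. Under the identification $\iota_0^*(1/t)=e_\alpha$ from the previous paragraph this yields
\[
(\rho\circ\nabla)(s)\;=\;-g\cdot e_\alpha\;=\;-s,
\]
as desired.

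There is no hard step: the computation is bookkeeping, and the only point that needs care is the identification $\pi^*e_\alpha\leftrightarrow 1/t$, since without pinning down the correct local defining equation one cannot legitimately invoke the formula from Lemma~\ref{lemma; logconnection}. The sign in $\rho\circ\nabla=-1$ arises precisely from the $-g\,df_\alpha/f_\alpha$ term of that formula paired with the normalisation $\rho(df_\alpha/f_\alpha)=+1$ of the residue.
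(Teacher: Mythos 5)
Your proposal is correct and follows essentially the same route as the paper: both apply the explicit formula for the logarithmic connection from Lemma~\ref{lemma; logconnection} with the fibre coordinate $t$ as the local equation of $M_0$, compute $\nabla(g/t)=\frac{1}{t}\otimes(dg-g\,\frac{dt}{t})$, and read off the residue $-g$. The only difference is that you spell out the identification $\pi^*e_\alpha\leftrightarrow 1/t$ coming from $\mathscr{O}_L(M_0)\cong\pi^*\mathcal{L}$, which the paper leaves implicit.
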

\begin{proof}
Let $t$ be a local equation of $M$ in $L$ and let $g$ be any locally defined holomorphic function on $M$, the computation 
\[
\rho\circ\nabla(\frac{g}{t}) = \rho(\frac{-gdt}{t^2}+\frac{dg}{t})=\frac{1}{t}\textup{res}_t(dg-\frac{gdt}{t})=-\frac{g}{t}
\]
implies the asserted equality.
\end{proof}

\begin{proposition}
$\iota^*_0\Omega^1_L(\log M)\otimes \mathcal{L} \cong \mathcal{P}^1_{M}\mathcal{L}$.
\end{proposition}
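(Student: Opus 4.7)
The plan is to produce an explicit $\mathscr{O}_M$-linear isomorphism
\[
\phi \colon \mathcal{P}^1_M\mathcal{L} \longrightarrow \iota^*_0\Omega^1_L(\log M_0)\otimes \mathcal{L}
\]
fitting into a commutative ladder between two short exact sequences sharing the same outer terms, and then invoke the five lemma. The two extensions are the defining sequence for the bundle of principal parts
\[
0\to \mathcal{L}\otimes\Omega^1_M \to \mathcal{P}^1_M\mathcal{L} \to \mathcal{L}\to 0
\]
and the residue sequence
\[
0\to \mathcal{L}\otimes\Omega^1_M \to \iota^*_0\Omega^1_L(\log M_0)\otimes \mathcal{L} \xrightarrow{\rho} \mathcal{L}\to 0
\]
described immediately above the proposition. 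Both are extensions of $\mathcal{L}$ by $\mathcal{L}\otimes\Omega^1_M$.

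The key observation is that both extensions come with canonical $\mathbb{C}_M$-linear (but not $\mathscr{O}_M$-linear) splittings. On the principal-parts side, this is the $\mathbb{C}_M$-decomposition $\mathcal{P}^1_M\mathcal{L}\cong \mathcal{L}\oplus(\mathcal{L}\otimes\Omega^1_M)$ spelled out below \eqref{diagram; P^1Lexact}. On the residue side, the previous lemma gives $\rho\circ\nabla=-1$, so $-\nabla$ furnishes a $\mathbb{C}_M$-linear section of $\rho$. Using these two splittings, define
\[
\phi(s,\alpha) := -\nabla(s) + \alpha,
\]
where $(s,\alpha)\in\mathcal{L}\oplus(\mathcal{L}\otimes\Omega^1_M)$ is a section of $\mathcal{P}^1_M\mathcal{L}$ in its $\mathbb{C}_M$-decomposition and $\alpha$ on the right is viewed inside $\iota^*_0\Omega^1_L(\log M_0)\otimes\mathcal{L}$ via the subsheaf inclusion of the residue sequence.

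The main thing to verify is that the a priori only $\mathbb{C}_M$-linear map $\phi$ is in fact $\mathscr{O}_M$-linear. This is a direct computation comparing the twisted $\mathscr{O}_M$-action on $\mathcal{P}^1_M\mathcal{L}$ with the Leibniz rule $\nabla(hs)=h\nabla(s)+s\otimes dh$ for the connection $\nabla$:
\[
\phi\bigl(h\cdot(s,\alpha)\bigr) = \phi(hs,\, s\otimes dh + h\alpha) = -\nabla(hs) + s\otimes dh + h\alpha = -h\nabla(s)+h\alpha = h\,\phi(s,\alpha).
\]
The cancellation of the $s\otimes dh$ term is precisely what forces the use of $-\nabla$ rather than $\nabla$; once this is set up, $\mathscr{O}_M$-linearity is automatic and amounts to the statement that the $\mathscr{O}_M$-structure on the principal parts bundle is, tautologically, the one inherited from any choice of $\mathbb{C}_M$-splitting of any extension.

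It remains to check that $\phi$ fits into the ladder with identity maps on both ends: on $\mathcal{L}\otimes\Omega^1_M$ the map $\phi$ restricts to the identity by construction, and $\rho\circ\phi=\rho\circ(-\nabla)=1$ on the quotient $\mathcal{L}$ by the preceding lemma. The five lemma then yields the isomorphism. The only obstacle in this argument is bookkeeping with signs and conventions, which is resolved by the negative sign in the definition of $\phi$.
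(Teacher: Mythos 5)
Your proof is correct and is essentially the paper's argument run in the opposite direction: the paper defines the map $\omega\mapsto(\rho(\omega),\,\omega+\nabla\circ\rho(\omega))$ from $\iota^*_0\Omega^1_L(\log M_0)\otimes\mathcal{L}$ to $\mathcal{P}^1_M\mathcal{L}$, and your $\phi(s,\alpha)=-\nabla(s)+\alpha$ is precisely its inverse, relying on the same lemma $\rho\circ\nabla=-1$ and the same Leibniz-rule cancellation against the twisted $\mathscr{O}_M$-structure. Your explicit five-lemma ladder also cleanly fills in the isomorphism check that the paper leaves to the reader.
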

\begin{proof}
Define a morphism $\iota^*_0\Omega^1_L(\log M)\otimes \mathcal{L} \to \mathcal{P}^1_{M}\mathcal{L}$ by 
\[
\omega \mapsto \Big(\rho(\omega),\omega+\nabla\circ\rho(\omega)\Big)
\]
where $\omega$ is any local section of $\iota^*_0\Omega^1_L(\log M)\otimes \mathcal{L}$. The lemma above implies that $\omega+\nabla\circ\rho(\omega) \in \ker(\rho)=\Omega^1_{M}\otimes \mathcal{L}$ so the morphism is well-defined and $\mathbb{C}_{M}$-linear. It is straight forward to check it is also $\mathscr{O}_{M}$-linear using the definition of the $\mathscr{O}_{M}$-structure of $\mathcal{P}^1_{M}\mathcal{L}$. We leave it to the reader to check the morphism is an isomorphism.
\end{proof}

\begin{proposition}
$\Omega^1_L(\log M_0)\otimes \pi^*\mathcal{L} \cong \pi^*(\mathcal{P}^1_M\mathcal{L})$.
\end{proposition}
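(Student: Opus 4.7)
Both sheaves are locally free of rank $n+1$ on $L$ and fit into short exact sequences of $\mathscr{O}_L$-modules with matching outer terms. On one side, pulling back the defining sequence of $\mathcal{P}^1_M\mathcal{L}$ yields
\[0\to \pi^*(\mathcal{L}\otimes \Omega^1_M)\to \pi^*(\mathcal{P}^1_M\mathcal{L})\to \pi^*\mathcal{L}\to 0,\]
while on the other side, tensoring with $\pi^*\mathcal{L}$ the contraction sequence
\[0\to\pi^*\Omega^1_M\to \Omega^1_L(\log M_0)\xrightarrow{\,i_E\,}\mathscr{O}_L\to 0\]
obtained from the Euler vector field $E$ scaling the fibers of $\pi$ gives
\[0\to \pi^*(\mathcal{L}\otimes\Omega^1_M)\to \Omega^1_L(\log M_0)\otimes\pi^*\mathcal{L}\to \pi^*\mathcal{L}\to 0.\]
The plan is to construct an isomorphism between these two extensions by matching local frames and verifying the transition cocycles coincide.

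First I would fix a trivialization $\{(U_\alpha, f_\alpha)\}$ of $\mathcal{L}$ and let $t_\alpha$ denote the fiber coordinate on $\pi^{-1}(U_\alpha)$ induced by $f_\alpha$. Over $\pi^{-1}(U_\alpha)$ the source admits the free frame $\{\tfrac{dt_\alpha}{t_\alpha}\otimes\pi^*f_\alpha,\ \pi^*dx_i\otimes\pi^*f_\alpha\}_{i=1}^n$, while the target admits the free frame $\{\pi^*j^1(f_\alpha),\ \pi^*(f_\alpha\otimes dx_i)\}_{i=1}^n$. I would define a local $\mathscr{O}_L$-linear map $\Phi_\alpha$ by matching these frames in order; each $\Phi_\alpha$ is manifestly an isomorphism.

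The main work is to verify $\Phi_\alpha=\Phi_\beta$ on $\pi^{-1}(U_\alpha\cap U_\beta)$ when $f_\beta = g\cdot f_\alpha$ for a non-vanishing $g\in\mathscr{O}_M(U_\alpha\cap U_\beta)$. Using $t_\beta = t_\alpha/g$, on the source one finds
\[\tfrac{dt_\beta}{t_\beta}\otimes \pi^*f_\beta = g\cdot\tfrac{dt_\alpha}{t_\alpha}\otimes\pi^*f_\alpha - \pi^*(dg)\otimes\pi^*f_\alpha,\qquad \pi^*dx_i\otimes\pi^*f_\beta = g\cdot \pi^*dx_i\otimes\pi^*f_\alpha.\]
On the target, the $\mathscr{O}_M$-module structure of $\mathcal{P}^1_M\mathcal{L}$ gives $j^1(f_\beta)=j^1(g f_\alpha)=g\cdot j^1(f_\alpha)-f_\alpha\otimes dg$ and $f_\beta\otimes dx_i = g\cdot f_\alpha\otimes dx_i$, so after pulling back these transitions coincide term by term with the source transitions under $\Phi_\alpha$. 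Hence the local maps glue to a global $\mathscr{O}_L$-linear isomorphism $\Phi$, and being a local isomorphism it is a global one.

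I expect this bookkeeping to be the main obstacle only in terms of notational care; conceptually both transition cocycles are governed by the same matrix $\left(\begin{smallmatrix} g & 0 \\ -\nabla g & gI\end{smallmatrix}\right)$. A more coordinate-free derivation mimicking the formula $\omega\mapsto (\rho(\omega),\omega+\nabla\circ\rho(\omega))$ of the preceding proposition is tempting, but a direct check shows that the global Euler contraction $\tilde\rho:=i_E$ and the logarithmic connection $\tilde\nabla$ on $\pi^*\mathcal{L}\cong\mathscr{O}_L(M_0)$ only satisfy $\tilde\rho\circ\tilde\nabla = E-\operatorname{id}$ on $L$, the discrepancy $E$ vanishing precisely along $M_0$. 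This is consistent with the fact that $\iota_0^*\Phi$ must recover the canonical isomorphism of the previous proposition.
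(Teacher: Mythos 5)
Your proof is correct, but it proceeds by a different route than the paper. You establish the isomorphism by an explicit \v{C}ech computation: choose local frames $\{\tfrac{dt_\alpha}{t_\alpha}\otimes\pi^*f_\alpha,\ \pi^*dx_i\otimes\pi^*f_\alpha\}$ and $\{\pi^*j^1(f_\alpha),\ \pi^*(f_\alpha\otimes dx_i)\}$ and check that the two transition cocycles agree, both being governed by $\left(\begin{smallmatrix} g & 0 \\ -dg & g\,I\end{smallmatrix}\right)$; your computations of $j^1(gf_\alpha)=g\cdot j^1(f_\alpha)-f_\alpha\otimes dg$ (from the twisted $\mathscr{O}_M$-structure on $\mathcal{P}^1_M\mathcal{L}$) and of $\tfrac{dt_\beta}{t_\beta}=\tfrac{dt_\alpha}{t_\alpha}-\tfrac{dg}{g}$ are both right, so the local frame-matching maps glue. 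The paper instead exploits the fibrewise $\mathbb{C}^*$-action on $L$: the invariant subsheaf $\mathscr{F}$ of $\Omega^1_L(\log M_0)\otimes\pi^*\mathcal{L}$ is determined by the restriction to the zero section, which is $\mathcal{P}^1_M\mathcal{L}$ by the immediately preceding proposition, and then $\Omega^1_L(\log M_0)\otimes\pi^*\mathcal{L}\cong\mathscr{F}\otimes_{\pi^{-1}\mathscr{O}_M}\mathscr{O}_L\cong\pi^*(\mathcal{P}^1_M\mathcal{L})$. The paper's argument is shorter and conceptually tied to the homogeneity of logarithmic forms along $M_0$, and it reuses the previous proposition rather than reproving it; yours is self-contained and makes the transition data completely explicit, at the cost of the bookkeeping, and it automatically restricts along $\iota_0$ to the canonical isomorphism of the preceding proposition, as you observe. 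One minor caveat: your closing aside about $\tilde\rho\circ\tilde\nabla=E-\operatorname{id}$ depends on a trivialization-dependent interpretation of ``$E$ acting on sections of $\mathscr{O}_L(M_0)$'' (in an invariant reading one gets the Lie derivative along $E$ instead), but since that remark concerns an alternative you explicitly do not pursue, it does not affect the proof.
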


\begin{proof}
The $\mathbb{C}^*$-action along the direction of $L$ induces a $\mathbb{C}^*$-action on $\Omega^1_L(\log M_0)\otimes \pi^*\mathcal{L}$. Denote by $\mathscr{F}$ the invariant subsheaf of $\Omega^1_L(\log M_0)\otimes \pi^*\mathcal{L}$. Clearly $\mathscr{F}$ is completely determined by the restriction of $\Omega^1_L(\log M_0)\otimes \pi^*\mathcal{L}$ to $M$, i.e.
\[
\mathscr{F} \cong \pi^{-1}\Big(\iota_0^*\Omega^1_L(\log M_0)\otimes \mathcal{L}\Big)\cong  \pi^{-1}\mathcal{P}^1_{M}\mathcal{L}.
\]
On the other hand, one may easily check by a local computation that $\Omega^1_L(\log M_0)\otimes \pi^*\mathcal{L} \cong \mathscr{F} \otimes_{\pi^{-1}\mathscr{O}_M} \mathscr{O}_L  \cong \pi^*(\mathcal{P}^1_M\mathcal{L})$.
\end{proof}

\begin{corollary}
\label{cor; isoofbundles}
$\mathcal{P}^1_{M}\mathcal{L}\otimes \mathcal{L}^\vee \cong \iota^*_f\Omega^1_L(\log M_0)\cong \iota^*_0\Omega^1_L(\log M_f) $.
\end{corollary}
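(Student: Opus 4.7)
The plan is to derive both isomorphisms directly from the previous proposition $\Omega^1_L(\log M_0)\otimes \pi^*\mathcal{L} \cong \pi^*(\mathcal{P}^1_M\mathcal{L})$ on the total space $L$, by pulling it back along suitable sections of $\pi$ and, where necessary, transporting the log pole from $M_0$ to $M_f$ via a fibrewise translation of $L$.

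For the first isomorphism $\mathcal{P}^1_{M}\mathcal{L}\otimes \mathcal{L}^\vee \cong \iota^*_f\Omega^1_L(\log M_0)$, I would apply $\iota_f^*$ to both sides of the previous proposition. Since $\pi \circ \iota_f = \textup{id}_M$, one has $\iota_f^*\pi^*\mathcal{L}\cong \mathcal{L}$ and $\iota_f^*\pi^*(\mathcal{P}^1_M\mathcal{L})\cong \mathcal{P}^1_M\mathcal{L}$, so the pullback reads $\iota_f^*\Omega^1_L(\log M_0)\otimes \mathcal{L} \cong \mathcal{P}^1_M\mathcal{L}$, and tensoring with $\mathcal{L}^\vee$ yields the first claim. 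Note that this argument works verbatim with $\iota_f$ replaced by any holomorphic section of $\pi$; in particular $\iota_0^* \Omega^1_L(\log M_0) \cong \mathcal{P}^1_M\mathcal{L}\otimes \mathcal{L}^\vee$, which agrees with the preceding proposition.

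For the second isomorphism $\mathcal{P}^1_{M}\mathcal{L}\otimes \mathcal{L}^\vee \cong \iota^*_0\Omega^1_L(\log M_f)$, I would introduce the fibrewise translation $\tau \colon L \to L$, $v \mapsto v + f(\pi(v))$. This is a biholomorphic automorphism of $L$ over $M$ satisfying $\tau(M_0) = M_f$ and $\tau\circ \iota_0 = \iota_f$; in local coordinates $(x, z)$ trivialising $L$ it is simply $(x,z)\mapsto (x, z + f_\alpha(x))$. Because $\tau^{-1}(M_f) = M_0$, pullback gives $\tau^* \Omega^1_L(\log M_f) \cong \Omega^1_L(\log M_0)$, equivalently $\Omega^1_L(\log M_f) \cong (\tau^{-1})^* \Omega^1_L(\log M_0)$. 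Since $\tau^{-1}\circ \iota_0 = \iota_{-f}$, this yields
\[
\iota_0^* \Omega^1_L(\log M_f) \cong \iota_{-f}^* \Omega^1_L(\log M_0),
\]
and applying the already-established first isomorphism with the section $-f$ in place of $f$ (which defines the same divisor $D$) completes the proof.

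The only point requiring care is checking that the translation $\tau$ is globally well-defined and that its action on log forms genuinely exchanges the log pole along $M_0$ with the log pole along $M_f$; this is immediate in local trivialisations. The rest is formal manipulation of sheaf pullbacks, exploiting the fact that $\mathcal{P}^1_M\mathcal{L}$ is intrinsic to $\mathcal{L}$ and is independent of the chosen section, so that both targets $\iota_f^*\Omega^1_L(\log M_0)$ and $\iota_0^*\Omega^1_L(\log M_f)$ are forced to coincide with $\mathcal{P}^1_M\mathcal{L}\otimes \mathcal{L}^\vee$.
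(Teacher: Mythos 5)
Your proof is correct and follows essentially the same route as the paper: the first isomorphism is obtained by pulling back the preceding proposition $\Omega^1_L(\log M_0)\otimes \pi^*\mathcal{L}\cong \pi^*(\mathcal{P}^1_M\mathcal{L})$ along a section of $\pi$, and the second is reduced to the first via an affine automorphism of $L$ over $M$ that moves the log pole from $M_f$ to $M_0$. The only (cosmetic) difference is that the paper uses the involution $v\mapsto -(v-f(\pi(v)))$, which swaps $M_0$ with $M_f$ and $\iota_0$ with $\iota_f$ simultaneously, whereas you use the translation $v\mapsto v+f(\pi(v))$ and compensate by observing that the first isomorphism holds for the section $-f$ as well.
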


\begin{proof}
The first isomorphism follows directly from the proposition above. Since subtracting $f$ gives an affine isomorphism $(L,M_f,M_0) \to (L,M_0,M_{-f})$, and multiplication by $-1$ gives an isomorphism $(L,M_0,M_{-f}) \to (L,M_0,M_f)$, their combination is an isomorphism of triples $(L,M_f,M_0) \to (L,M_0,M_f)$ switching $M_0$ and $M_f$.
\end{proof}
 
Local computation also shows that
\begin{proposition}
The  differentials of the morphisms $\iota_f$ and $\iota_0$ induce morphisms  
$$d\iota_f \colon \iota_f^*T^*L(\log M_0) \to T^*M(\log D)\/;
\ 
 d\iota_0  \colon \iota_0^*T^*L(\log M_f)  \to T^*M(\log D) \/.
$$ 
Then under  the isomorphisms in Corollary~\ref{cor; isoofbundles} both $d\iota_f$  and  $d\iota_0$  are identified to the $L^\vee$-twist of the morphism $j'_L$  constructed in Proposition~\ref{mapj'L}.
\end{proposition}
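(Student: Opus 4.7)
Since the statement concerns $\mathscr{O}_M$-linear morphisms between locally free sheaves of the same rank, my plan is to verify the identification on a trivialising chart $U_\alpha$ where $\mathcal{L}|_{U_\alpha}\cong \mathscr{O}_{U_\alpha}$ and $f|_{U_\alpha}$ corresponds to $f_\alpha\in\mathscr{O}_M(U_\alpha)$. This gives $L|_{U_\alpha}\cong U_\alpha\times\mathbb{C}$ with fiber coordinate $t$, so that $M_0\cap \pi^{-1}(U_\alpha) = V(t)$ and $M_f\cap\pi^{-1}(U_\alpha)=V(t-f_\alpha)$. First I would check that $d\iota_f$ and $d\iota_0$ in fact take values in $T^*M(\log D)$: because $\iota_f^{-1}(M_0)=V(f_\alpha)=D\cap U_\alpha$ and $\iota_0^{-1}(M_f)=V(f_\alpha)=D\cap U_\alpha$ as schemes, the ordinary pullback of logarithmic $1$-forms along the smooth divisors $M_0$ and $M_f$ lands in logarithmic $1$-forms along $D$. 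Explicitly, using the local basis $(dx_1,\ldots,dx_n, dt/t)$ of $T^*L(\log M_0)$ one computes
\[
d\iota_f(dx_i)=dx_i,\qquad d\iota_f(dt/t) = df_\alpha/f_\alpha,
\]
while for $d\iota_0$ the coordinate $s = t-f_\alpha$ near $M_f$ gives the identical formulas $d\iota_0(dx_i)=dx_i$ and $d\iota_0(ds/s)=df_\alpha/f_\alpha$.

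Next I would track the isomorphism $\iota_f^*\Omega^1_L(\log M_0)\cong \mathcal{P}^1_M\mathcal{L}\otimes\mathcal{L}^\vee$ of Corollary~\ref{cor; isoofbundles} in the same chart. By construction it is obtained by pulling back the $\mathbb{C}^*$-equivariant iso $\Omega^1_L(\log M_0)\otimes\pi^*\mathcal{L}\cong \pi^*\mathcal{P}^1_M\mathcal{L}$ along $\iota_f$ and then twisting by $\mathcal{L}^\vee$. Using the canonical splitting $\mathcal{P}^1_{U_\alpha}\mathscr{O}_{U_\alpha}\cong\mathscr{O}_{U_\alpha}\oplus\Omega^1_{U_\alpha}$ induced by the standard derivation together with the trivialisation of $\mathcal{L}$, I would read off the correspondence of local frames and then apply the formula for $j'_L$ from Proposition~\ref{mapj'L}: a direct computation shows that the $\mathcal{L}^\vee$-twist of $j'_L$ sends the local frame corresponding to $(dx_1,\ldots,dx_n, dt/t)$ to $(dx_1,\ldots,dx_n, df_\alpha/f_\alpha)$, exactly matching $d\iota_f$ from the first step. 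For $d\iota_0$ the same calculation applies through the second identification $\mathcal{P}^1_M\mathcal{L}\otimes\mathcal{L}^\vee\cong \iota_0^*\Omega^1_L(\log M_f)$, invoking the affine swap $(L,M_f,M_0)\cong(L,M_0,M_f)$ used in the proof of Corollary~\ref{cor; isoofbundles}.

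The main obstacle will be the careful bookkeeping in the second step: the $\mathscr{O}_M$-module structure on $\mathcal{P}^1_M\mathcal{L}$ is \emph{not} the naive product structure on $\mathcal{L}\oplus(\mathcal{L}\otimes\Omega^1_M)$, so I must translate consistently between the $\mathbb{C}_M$-module decomposition used in the formula for $j'_L$, the derivative splitting of $\mathcal{P}^1_{U_\alpha}\mathscr{O}_{U_\alpha}$, and the local trivialisation of $\mathcal{L}$. Once this conversion is executed properly, both $d\iota_f$ and $d\iota_0$ end up described by the single universal formula $\omega\mapsto \omega + g\,df_\alpha/f_\alpha$ on $\mathscr{O}_{U_\alpha}\oplus \Omega^1_{U_\alpha}$, which is precisely the local form of the $L^\vee$-twist of $j'_L$, completing the identification.
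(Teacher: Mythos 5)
Your proposal is correct and is precisely the ``local computation'' that the paper invokes without writing out (the paper supplies no proof of this proposition beyond that phrase): trivialise $L$ on a chart, compute $d\iota_f$ and $d\iota_0$ on the frames $dx_i$, $dt/t$ (resp.\ $d(t-f_\alpha)/(t-f_\alpha)$), and match the result against the local form of $j'_L$ through the chart description of the isomorphisms of Corollary~\ref{cor; isoofbundles}. The only caveat is the sign of the $g\,df_\alpha/f_\alpha$ term: the paper's own two local descriptions of $j'$ disagree here (a correct expansion of the formula in Proposition~\ref{mapj'L} gives $\omega+g\,df_\alpha/f_\alpha$, whereas diagram~\eqref{diagram; sharp} uses $j(\xi)-s\,\frac{df}{f}$), so your $+$ is consistent with the actual definition of $j'_L$, and the discrepancy is harmless since it amounts to composing with the automorphism $s\mapsto -s$ of the trivial summand, which affects none of the subsequent uses of the proposition.
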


Converting back to our earlier setting in Definition \ref{defn; globalsharp}. Let $\Lambda_{\iota_f}\subset T^*L$ (resp. $\Lambda_{\iota_0}\subset T^*L$) be the conormal space of $\iota_f(Z)$ (resp. $\iota_0(Z)$) in $L$. We denote by $j_0\colon T^*L\to T^*L(\log M_0)$ and $j_f\colon T^*L\to T^*L(\log M_f)$  the logarithmic maps.

\begin{proposition}
\label{prop; sharp=logcompletion}
$\overline{\Lambda^\sharp_f}\cong \overline{j_0(\Lambda_{\iota_f})}\cong \overline{j_f(\Lambda_{\iota_0})}$. 
\end{proposition}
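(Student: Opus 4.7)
The plan is to transport the two sides into a common ambient bundle using the canonical isomorphisms of Corollary~\ref{cor; isoofbundles}, and to verify the resulting identifications by a local coordinate calculation together with an affine involution symmetry argument.

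For the first isomorphism, I would view $\overline{\Lambda^\sharp_f}\subset P_M^1L\otimes L^\vee$ inside $\iota_f^*T^*L(\log M_0)$ via Corollary~\ref{cor; isoofbundles}, and view $\overline{j_0(\Lambda_{\iota_f})}$ in the same bundle by noting that it lives over $\iota_f(Z)\subset M_f$ and hence inside $T^*L(\log M_0)|_{M_f}=\iota_f^*T^*L(\log M_0)$. Both are irreducible analytic subvarieties of dimension $n+1$ projecting to $Z$, and both are closures of their restrictions over $Z\cap U$ (where $j_0$ is a fibrewise isomorphism and the defining parametrisations are generic). Thus it suffices to match the open dense parts under the identification.

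To match them, fix a trivialisation $(U_\alpha,f_\alpha)$ of $L$ with fibre coordinate $t$ on $L|_{U_\alpha}$. A direct tangent-space computation at $(x,f_\alpha(x))$ with $x\in Z_{reg}\cap U_\alpha$ gives
\[
\Lambda_{\iota_f}\big|_{U_\alpha}=\bigl\{\,(\xi-\tau\,df_\alpha,\,\tau\,dt)\,:\,\xi\in T^*_{Z,x}M,\ \tau\in\mathbb{C}\,\bigr\},
\]
and applying $j_0$ (which sends $dt\mapsto t\cdot\tfrac{dt}{t}$) and restricting to $\iota_f(Z)$ (where $t=f_\alpha$) yields, after the substitution $s=\tau f_\alpha$,
\[
j_0(\Lambda_{\iota_f})\big|_{U_\alpha}=\bigl\{\,\bigl(\xi-\tfrac{s}{f_\alpha}df_\alpha\bigr)+s\tfrac{dt}{t}\,\bigr\}.
\]
On the other side, after trivialising $L$ and using the canonical splitting $\mathcal{P}_{U_\alpha}^1\mathscr{O}_{U_\alpha}\cong\mathscr{O}_{U_\alpha}\oplus\Omega^1_{U_\alpha}$, the meromorphic section $\tfrac{j^1(f)}{f}$ equals $(1,\tfrac{df_\alpha}{f_\alpha})$, so $\Lambda_f^\sharp|_{U_\alpha}$ is parametrised by $(s,\,\xi+s\tfrac{df_\alpha}{f_\alpha})$. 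Unwinding the explicit form of the isomorphism from Corollary~\ref{cor; isoofbundles}---built from the residue $\rho\colon \Omega^1_L(\log M_0)\to\mathscr{O}_{M_0}$ and the logarithmic connection $\nabla$ normalised so that $\rho\circ\nabla=-1$---one verifies that the two parametrisations correspond, and taking closures gives $\overline{\Lambda^\sharp_f}\cong\overline{j_0(\Lambda_{\iota_f})}$.

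For the second isomorphism, the affine automorphism $\sigma\colon L\to L$, $v\mapsto f(\pi(v))-v$, used in the proof of Corollary~\ref{cor; isoofbundles}, exchanges $M_0\leftrightarrow M_f$ and $\iota_0\leftrightarrow\iota_f$. Hence $\sigma$ identifies the triples $(T^*L(\log M_0),\,\Lambda_{\iota_f},\,j_0)$ and $(T^*L(\log M_f),\,\Lambda_{\iota_0},\,j_f)$, so $\overline{j_0(\Lambda_{\iota_f})}\cong\overline{j_f(\Lambda_{\iota_0})}$, and the required second isomorphism follows. The main obstacle is the sign and trivialisation bookkeeping in the central step: one must carefully verify that the abstractly-defined chain of isomorphisms $\mathcal{P}_M^1\mathcal{L}\otimes\mathcal{L}^\vee\cong\iota_f^*\Omega^1_L(\log M_0)$ (derived from $\Omega^1_L(\log M_0)\otimes\pi^*\mathcal{L}\cong\pi^*\mathcal{P}_M^1\mathcal{L}$) sends the canonical meromorphic section $\tfrac{j^1(f)}{f}$ to the image of the conormal direction $dt$ under $j_0$ and $\iota_f^*$, with compatible signs coming from $\rho\circ\nabla=-1$.
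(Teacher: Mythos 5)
Your proposal is correct and follows essentially the same route as the paper: a local trivialisation $(U_\alpha,f_\alpha)$, an explicit computation of $\Lambda_{\iota_f}|_{U_\alpha}$ as the conormal of the graph, application of $j_0$ with the substitution $s=\tau f_\alpha$ to recover the local sharp construction, and the affine involution $v\mapsto f(\pi(v))-v$ for the second isomorphism. The sign discrepancy you flag ($\xi-s\tfrac{df_\alpha}{f_\alpha}$ versus $\xi+s\tfrac{df_\alpha}{f_\alpha}$) also appears in the paper's computation and is harmless, since $s\mapsto -s$ is an automorphism of the parameter line and only the isomorphism class is claimed.
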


\begin{proof}
Let $U_\alpha$    and $f_\alpha$ be as in the proof of Proposition \ref{lemma; logconnection}. We make the identifications $L|_{U_\alpha}\cong  U_\alpha\times \mathbb{C}_s$ and $\Lambda|_{U_\alpha}= T_{Z\cap U_\alpha}^*U_\alpha$.    
The graph embedding $\iota_f$ is given by
\[ 
U_\alpha \longrightarrow  U_\alpha \times \mathbb{C}_s \/; \quad 
x\mapsto (x, f_\alpha(x)). 
\]
 
Let $I$ be the  ideal of $Z\cap U_\alpha$ in $U_\alpha$, then the ideal of $Z\cap U_\alpha$ in $L|_{U_\alpha}$ is generated by $I$ and $s-f_\alpha$.  We have 
\[
{\Lambda_{\iota_f}}|_{U_\alpha}  =\overline{ \left\lbrace 
( x,f_\alpha(x), \xi-t\cdot df_\alpha(x) , tds) \ | \ 
(x,\xi)\in T_{Z\cap U_\alpha}^*U_{\alpha} \text{ and } t\in \mathbb{C}
\right\rbrace} \/.
\] 

Since $\Omega^1_L(\log M_0)(U_\alpha)$ is generated by $\Omega^1_{U_\alpha}$ and $\frac{ds}{s}$, the map ${j_0}|_{U_\alpha}$ is given by 
$$j_\alpha\colon T^*U_\alpha\times T^*\mathbb{C}_s \to T^*U_\alpha\times T^*\mathbb{C}_s(\log \{0\})\/; \quad (x,  \xi,l,  t ) \mapsto (x, \xi, l,  lt) \/.
$$ 
Thus we have:
\begin{align*}
\overline{j_0(\Lambda_{\iota_f})}|_{U_\alpha}&= 
\overline{j_\alpha({\Lambda_{\iota_f}}|_{U_\alpha})} \\
&=
\overline{
\left\lbrace
( x,\xi-tdf_\alpha(x)  , f_\alpha(x) t) \ | \ 
(x,\xi)\in T_{Z\cap U_\alpha}^*U_{\alpha} \text{ and } t\in \mathbb{C}
\right\rbrace} \\
&=
\overline{
\left\lbrace
( x,\xi-s\frac{df_\alpha(x)}{f_\alpha(x)}  , s) \ | \ 
(x,\xi)\in T_{Z\cap U_\alpha}^*U_{\alpha} \text{ and } s\in \mathbb{C}^*
\right\rbrace} \\
&=\overline{(\Lambda\vert_{U_\alpha})_{f_\alpha}^\sharp}  \/. 
\end{align*}
This gives the first isomorphism.
The isomorphism $(L,M_0,M_f) \to (L,M_f,M_0)$ in the proof of Corollary \ref{cor; isoofbundles} gives the second isomorphism.
\end{proof}

Let $k_{0}$ be the $\pi^*L$-twisted logarithmic map along $M_0$
\[
T^*L\otimes_L  \pi^*L\to 
T^*L(\log M_0)\otimes_L \pi^*L \cong 
\pi^*P_M^1L \/,
\]
and let $k_f$ be the $\pi^*L$-twisted logarithmic map along $M_f$
\[
T^*L\otimes_L  \pi^*L\to 
T^*L(\log M_f)\otimes_L \pi^*L \cong 
\pi^*P_M^1L \/.
\]
Let $J\Lambda_{\iota_f}\subset T^*L\otimes \pi^*L$ (resp. $J\Lambda_{\iota_0}\subset T^*L\otimes \pi^*L$) be the jet conormal space of $\iota_f(Z)$ (resp. $\iota_0(Z)$) in $L$. The jet parallel of Proposition \ref{prop; sharp=logcompletion} is the following.
\begin{proposition}
\label{prop; Jsharp=logcompletion}
$\overline{J\Lambda^\sharp_f}\cong \overline{k_0(J\Lambda_{\iota_f})}\cong \overline{k_f(J\Lambda_{\iota_0})}$. 
\end{proposition}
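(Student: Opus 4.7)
The plan is to deduce Proposition~\ref{prop; Jsharp=logcompletion} from Proposition~\ref{prop; sharp=logcompletion} by tensoring through the appropriate line bundle. Indeed, comparing Definition~\ref{defn; globalsharp} and Definition~\ref{defn; globaljetsharp} shows that fiberwise $\overline{\Lambda_f^\sharp}|_x = \overline{J\Lambda_f^\sharp}|_x \otimes L_x^\vee$, so $\overline{J\Lambda_f^\sharp}$ is the $L$-twist of $\overline{\Lambda_f^\sharp}$ inside $\bigl(P_M^1L\otimes L^\vee\bigr)\otimes L \cong P_M^1L$. Likewise, by Definition~\ref{defn;jetconormal}, $J\Lambda_{\iota_f} = \Lambda_{\iota_f}\otimes_L \pi^*L$ as subvarieties of $T^*L\otimes_L \pi^*L$. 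Since $k_0$ is by construction the $\pi^*L$-twist of $j_0$ (through the isomorphism $T^*L(\log M_0)\otimes_L\pi^*L\cong \pi^*P_M^1L$), the image operation commutes with the twist, yielding $\overline{k_0(J\Lambda_{\iota_f})} = \overline{j_0(\Lambda_{\iota_f})}\otimes_L\pi^*L$. The first isomorphism of Proposition~\ref{prop; sharp=logcompletion}, twisted by $\pi^*L$, then becomes exactly $\overline{J\Lambda^\sharp_f}\cong \overline{k_0(J\Lambda_{\iota_f})}$.

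To make the above rigorous, I would mimic the local computation in the proof of Proposition~\ref{prop; sharp=logcompletion}. Fix a trivializing chart $(U_\alpha, f_\alpha)$ for $L$ with $L|_{U_\alpha}\cong U_\alpha\times \mathbb{C}_s$; then also $\pi^*L$ is trivialized on $\pi^{-1}(U_\alpha)$. Under these trivializations the twist becomes invisible, so $J\Lambda_{\iota_f}|_{\pi^{-1}(U_\alpha)}$ is identified with $\Lambda_{\iota_f}|_{\pi^{-1}(U_\alpha)}$, the twisted log map $k_0$ is identified with $j_0$ (via the local expression $(x,\xi,l,t)\mapsto(x,\xi,l,lt)$), and by Proposition~\ref{prop:trivialisation of JLambda} the restriction $\overline{J\Lambda^\sharp_f}|_{U_\alpha}$ is identified with $\overline{(T^*_{Z\cap U_\alpha}U_\alpha)^\sharp_{f_\alpha}}$. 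The chain of equalities at the end of the proof of Proposition~\ref{prop; sharp=logcompletion} therefore applies verbatim, giving the first isomorphism locally.

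The local isomorphisms patch together because all ingredients are canonical: the bundle isomorphism $T^*L(\log M_0)\otimes_L\pi^*L\cong \pi^*P_M^1L$ established in the Propositions preceding Corollary~\ref{cor; isoofbundles} is canonical, and the definitions of $J\Lambda_{\iota_f}$, $\overline{J\Lambda^\sharp_f}$ and $k_0$ are coordinate-free. Hence the local identifications glue to a global isomorphism $\overline{J\Lambda^\sharp_f}\cong \overline{k_0(J\Lambda_{\iota_f})}$. The second isomorphism $\overline{J\Lambda^\sharp_f}\cong \overline{k_f(J\Lambda_{\iota_0})}$ then follows formally from the affine automorphism of $L$ swapping $M_0$ and $M_f$ (subtract $f$, then multiply by $-1$), exactly as was used in the proof of Proposition~\ref{prop; sharp=logcompletion} via Corollary~\ref{cor; isoofbundles}; this automorphism respects the $\pi^*L$-twist because it is fiberwise affine with constant linear part.

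The main technical point, and the only place where care is needed, is the bookkeeping of the three canonical identifications: (a) the bundle isomorphism $\pi^*P_M^1L \cong T^*L(\log M_0)\otimes_L \pi^*L$ used to define $k_0$; (b) the identification of $J\Lambda$ with the $\pi^*L$-twist of $\Lambda$; and (c) the compatibility of the fiberwise $L$-twist with taking image closures under $k_0$ vs.\ $j_0$. Once these are confirmed to be mutually compatible (which can be checked in a local trivialization using the explicit formulas given in the proof of Proposition~\ref{prop; sharp=logcompletion}), the rest is a routine line-bundle twist of an already-proved statement.
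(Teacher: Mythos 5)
Your proof is correct and matches the paper's intent: the paper states this proposition without proof, presenting it merely as ``the jet parallel'' of Proposition~\ref{prop; sharp=logcompletion}, and the intended argument is exactly the line-bundle twist you carry out (made legitimate by the conicality of all the subvarieties involved, so that twisting by $\pi^*L$ is well defined and invisible in local trivialisations). Your bookkeeping of the three canonical identifications and of the $M_0\leftrightarrow M_f$ automorphism supplies precisely the details the paper leaves implicit.
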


\section{Log Transversality and Chern Classes}\label{sec:formal consequence}
Let $D$ be a free divisor on $M$ defined by $f\in H^0(M,L)$. The maps obtained by tensoring   $L^{\vee}$ to $i_L,j_L,j'_L$ (see diagram \eqref{diagram; globaljetsharp}) will be denoted by $i,j,j'$ in the sequel, and they agree with the maps with the same names in diagram \eqref{diagram; sharp} on any chart trivialising $L$. We will use the notation $\Lambda_\alpha$ to indicate $T^*_{Z_\alpha}M$ when $Z_\alpha$ is an irreducible subvariety of $M$. Let $\gamma$ be a constructible function on $M$ with characteristic cycle 
\begin{equation*}\label{decom2}
\textup{CC}(\gamma)=\sum_\alpha n_\alpha[T^*_{Z_\alpha}M] + \sum_\beta m_\beta[T^*_{Z_\beta}M]=\sum_\alpha n_\alpha[\Lambda_\alpha] + \sum_\beta m_\beta[\Lambda_\beta]
\end{equation*}
where the index is arranged such that each $Z_\alpha \not\subset D$ and each $Z_\beta \subset D$.

\begin{definition}
\label{defn; logtransverse}
We say $\gamma$ is \textbf{log transverse to $D$} if $\overline{\Lambda^\sharp_\alpha}$ is the only irreducible component of $j'^{-1}(\Lambda^{log}_\alpha)$ for every $\alpha$. 
We say $\gamma$ is \textbf{weakly log transverse to $D$} if $\overline{\Lambda^\sharp_\alpha}$ is the only irreducible component of $j'^{-1}(\Lambda^{log}_\alpha)$ outside $T^*M$ for every $\alpha$.  When $\gamma=\textup{Eu}^\vee_Z$ with $Z\not\subset D$, we simply say $Z$ is (weakly) log transverse to $D$.
\end{definition}

\begin{remark}
The map $j'$ and $j$ are local complete intersection morphisms in the sense of \cite{MR1644323} \S 6.6. The dimension of every irreducible component of $j'^{-1}(\Lambda^{log})$ is no less than $n+1$, and the dimension of every irreducible component of $j^{-1}(\Lambda^{log})$ is no less than $n$. 
\end{remark}

\begin{remark}\label{remark:logtrans}
The definition easily implies that $Z$ is log transverse to $D$ if and only if $Z$ is weakly log transverse to $D$ and $\dim j^{-1}(\Lambda^{log})\leq n$. 
\end{remark}

The definition of $j'_L$ (hence $j'$) clearly depends on the choice of $f\in H^0(M,L)$ (note that the splitting of Proposition \ref{logsplitting} depends on the embedding $\mathcal{L} \to \mathcal{M}_M$). However, as the next proposition will show, the definition of (weakly) log transversality is independent of the choice of $f$. This will be particularly important when we want to utilise properties of $D$ which relies on the choice of local equations, e.g. strongly Euler homogeneity.

\begin{proposition}
The definition of (weakly) log trasnsversality depends only on $D$ and $\gamma$, but not on the equation of $D$.
\end{proposition}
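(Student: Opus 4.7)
The plan is to show that if $f,f'\in H^0(M,L)$ are two nontrivial global sections with $V(f)=V(f')=D$, then one can construct an $\mathscr{O}_M$-linear automorphism $\varphi$ of the bundle $P^1_ML\otimes L^\vee$ which (a) restricts to the identity on $T^*M\hookrightarrow P^1_ML\otimes L^\vee$, (b) satisfies $j'_{f'}=j'_f\circ \varphi$, and (c) sends $\overline{\Lambda^\sharp_{\alpha,f}}$ to $\overline{\Lambda^\sharp_{\alpha,f'}}$ for every component $\Lambda_\alpha$ of $\textup{CC}(\gamma)$ with $Z_\alpha\not\subset D$. Since the logarithmic conormal space $\Lambda^{log}_\alpha\subset T^*M(\log D)$ is manifestly independent of $f$, the existence of such a $\varphi$ implies that the number and location (inside or outside $T^*M$) of the irreducible components of $j'^{-1}(\Lambda^{log}_\alpha)$ are the same for $f$ and $f'$, yielding the two statements simultaneously.

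To build $\varphi$, set $u:=f'/f\in H^0(M,\mathscr{O}_M^*)$, so that $\omega:=du/u$ is a globally defined holomorphic $1$-form. Using the $\mathbb{C}_M$-module splitting $\mathcal{P}^1_M\mathcal{L}\cong \mathcal{L}\oplus(\mathcal{L}\otimes \Omega^1_M)$ of \eqref{diagram; P^1Lexact}, define $\widetilde{\varphi}\colon \mathcal{P}^1_M\mathcal{L}\to \mathcal{P}^1_M\mathcal{L}$ by $(s,\alpha)\mapsto (s,\alpha+\omega\otimes s)$. Its $\mathscr{O}_M$-linearity is the only nontrivial check and follows from a direct computation with the $\mathscr{O}_M$-structure $h\cdot (s,\alpha)=(hs,s\otimes dh+h\alpha)$: both sides of $\widetilde{\varphi}(h\cdot(s,\alpha))=h\cdot \widetilde{\varphi}(s,\alpha)$ equal $(hs,\,s\otimes dh+h\alpha+hs\otimes \omega)$. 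The map is visibly invertible (swap $u\leftrightarrow u^{-1}$). Tensoring with $\mathcal{L}^\vee$ gives the required automorphism $\varphi:=\widetilde{\varphi}\otimes\textup{id}_{\mathcal{L}^\vee}$.

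For (b), I verify $j'_{L,f'}=j'_{L,f}\circ \widetilde{\varphi}$ locally on a trivialization $U_\alpha$ of $L$ with $f\leftrightarrow f_\alpha$, $f'\leftrightarrow uf_\alpha$. The two logarithmic connections built in Lemma~\ref{lemma; logconnection} satisfy $\nabla_{f'}(e_\alpha)-\nabla_f(e_\alpha)=-\omega\otimes e_\alpha$ because $d(uf_\alpha)/(uf_\alpha)=du/u+df_\alpha/f_\alpha$. Reading the explicit formula in Proposition~\ref{mapj'L} as $j'_L(s,\alpha)=-\nabla(s)+\alpha$ inside $\mathcal{L}\otimes \Omega^1_M(\log D)$, one obtains
\[
j'_{L,f'}(s,\alpha)-j'_{L,f}(s,\alpha)=\omega\otimes s=j'_{L,f}(\widetilde{\varphi}(s,\alpha))-j'_{L,f}(s,\alpha),
\]
as desired. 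Tensoring with $\mathcal{L}^\vee$ gives $j'_{f'}=j'_f\circ \varphi$. For (a), observe that the subbundle $T^*M\cong \Omega^1_M\hookrightarrow \mathcal{P}^1_M\mathcal{L}\otimes \mathcal{L}^\vee$ corresponds to elements of the form $(0,\alpha)$, on which $\widetilde{\varphi}(0,\alpha)=(0,\alpha)$; so $\varphi|_{T^*M}=\textup{id}$. For (c), the same local trivialization gives
\[
\tfrac{j^1(uf_\alpha)}{uf_\alpha}-\tfrac{j^1(f_\alpha)}{f_\alpha}=(0,\omega)\in \mathscr{O}_{U_\alpha}\oplus \Omega^1_{U_\alpha},
\]
so $\varphi$ sends the point $(x,\xi+s\cdot j^1(f)/f)$ of $\Lambda^\sharp_{\alpha,f}$ to $(x,\xi+s\cdot j^1(f')/f')\in \Lambda^\sharp_{\alpha,f'}$; taking closures gives $\varphi(\overline{\Lambda^\sharp_{\alpha,f}})=\overline{\Lambda^\sharp_{\alpha,f'}}$.

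Combining (a), (b), (c), the automorphism $\varphi$ of $P^1_ML\otimes L^\vee$ restricts to a bijection between $j'^{-1}_f(\Lambda^{log}_\alpha)$ and $j'^{-1}_{f'}(\Lambda^{log}_\alpha)$ that preserves $T^*M$ pointwise and matches the respective sharp components, so the defining conditions of Definition~\ref{defn; logtransverse} hold for $f$ if and only if they hold for $f'$. The main subtle point is verifying that $\widetilde{\varphi}$ is $\mathscr{O}_M$-linear; this is where the unusual $\mathscr{O}_M$-structure on $\mathcal{P}^1_M\mathcal{L}$ has to be handled with care, and is essentially the coordinate-free reflection of the identity $d\log(uf)=d\log u+d\log f$.
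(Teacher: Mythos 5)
Your proof takes essentially the same route as the paper's: the paper simply writes down the affine automorphism $g(v)=v-\pi_2(v)\cdot i\bigl(\tfrac{du}{u}(\pi_1(v))\bigr)$ of $P^1_ML\otimes L^\vee$, which is exactly your $\varphi^{\pm1}$, notes that it intertwines the two maps $j'$ and fixes $\{\pi_2=0\}=T^*M$ pointwise, and concludes as you do. One bookkeeping point is worth flagging: for any fixed consistent sign convention (say that of diagram \eqref{diagram; sharp}, where $j'(x,\xi,s)=(x,j(\xi)-s\tfrac{df}{f})$ while $\Lambda^{\sharp}$ is built with $+s\tfrac{df}{f}$), the automorphism satisfying your (b) is the \emph{inverse} of the one satisfying your (c) — with the paper's $\mathscr{O}_M$-structure one computes $\tfrac{j^1(f')}{f'}-\tfrac{j^1(f)}{f}=(0,-\omega)$ rather than $(0,\omega)$ — so (b) and (c) cannot both hold for the same $\varphi$ as written. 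This is harmless for the argument: from (b) alone one gets $j'^{-1}_{f'}(\Lambda^{log}_\alpha)=\varphi^{-1}\bigl(j'^{-1}_{f}(\Lambda^{log}_\alpha)\bigr)$, and it is then $\varphi^{-1}$ that carries $\overline{\Lambda^{\sharp}_{\alpha,f}}$ to $\overline{\Lambda^{\sharp}_{\alpha,f'}}$; in fact this matching need not be checked by hand at all, since by Proposition \ref{sharpintolog} each sharp variety is the unique irreducible component not lying over $D$, and $\varphi$ is a bundle automorphism over $M$ restricting to the identity on $T^*M$, so it must match them. With that one sign corrected (or with the matching of sharp components deduced rather than computed), your proof is complete and agrees with the paper's.
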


\begin{proof}
Suppose $f_1, f_2\in H^0(M,L)$ both define the divisor $D$, i.e. $f_1=uf_2$ for some $u\in H^0(M,\mathscr{O}_M^*)$, and the maps $j'$ constructed out of $f_1,f_2$ are denoted by $j'_{f_1},j'_{f_2}$ respectively. Let $\pi_1,\pi_2$ be the compositions of $P^1_ML\otimes L^\vee \to M\times \mathbb{C}$ and the two projections respectively. Define an affine automorphism $g$ of $P^1_ML\otimes L^\vee$ by 
\[
g(v)=v-\pi_2(v)\cdot i\Big(\frac{du}{u}(\pi_1(v))\Big).
\]
Examining diagram \eqref{diagram; sharp} we obtain $j'_{f_1}=j'_{f_2}\circ g$. Since $g$ sends the subset $\{\pi_2\neq 0\}$ isomorphically to itself, $\overline{\Lambda^\sharp_{f_1}}$ is the only irreducible component of $j'^{-1}_{f_1}(\Lambda^{log})$ 
(in $\{\pi_2\neq 0\}$) if and only if $\overline{\Lambda^\sharp_{f_2}}$ is the only irreducible component of $j'^{-1}_{f_2}(\Lambda^{log})$ (in $\{\pi_2\neq 0\}$).
\end{proof}

\begin{remark}\label{remark:choice of function}
Let $\{U_\alpha\}$ be an open cover of $M$  such that $\mathcal{L}\vert_{U_\alpha}\cong \mathscr{O}_{U_\alpha}$. It is clear that $Z$ is (weakly) log transverse to $D$ if and only if $Z\cap U_\alpha$ is (weakly) log transverse to $D\cap U_\alpha$ for every $\alpha$. The discussion above implies that we may use any defining equation of $D\cap U_\alpha$ in $U_\alpha$ for the purpose of checking (weakly) log transversality.
\end{remark}

Suppose $\gamma$ is weakly log transverse to $D$. Proposition \ref{sharpintolog} implies 
\[
j'^*\sum_\alpha n_\alpha[\Lambda^{log}_\alpha]=\sum_\alpha n_\alpha[\overline{\Lambda^{\sharp}_\alpha}]+i_*\Theta
\] 
where $\Theta\in H_{2n+2}(T^*M)$. Since $T^*M$ has trivial normal bundle in $P^1_ML\otimes L^\vee$, combining Proposition \ref{global-Ginzburg} we have
\begin{equation}
\label{relation; CCnonequivariant}
[\textup{CC}(\gamma \cdot \ind_U)] =i^*\sum_\alpha n_\alpha[\overline{\Lambda^{\sharp}_\alpha}]=\sum_\alpha n_\alpha j^*[\Lambda^{log}_\alpha] \ \in \ H_{2n}\Big(\bigcup_\alpha j^{-1}\Lambda^{log}_\alpha\Big) \/.
\end{equation}

With further assumption that $\gamma$ is log transverse to $D$, we may consider characteristic cycles in equivariant homology groups. Recall that the complex torus $T=\mathbb{C}^*$  acts trivially on $M$ and acts by scalar multiplication on vector bundles on $M$. Proposition \ref{sharpintolog} implies
\[
j'^*_T\sum_\alpha n_a[\Lambda^{log}_\alpha]_T=\sum_\alpha [\overline{\Lambda^\sharp_\alpha}]_T.
\]
Applying the equivariant version of Proposition \ref{global-Ginzburg} we obtain
\begin{equation}
\label{relation; CCequivariant}
[\textup{CC}(\gamma \cdot \ind_U)]_T =\sum_\alpha n_\alpha j_T^*[\Lambda^{log}_\alpha]_T \ \in \ H^T_{2n}\Big(\bigcup_\alpha j^{-1}\Lambda^{log}_\alpha\Big)  \/.
\end{equation}

Our expectation to compute $\textup{CC}(\gamma\cdot\ind_U)$ using logarithmic conormal spaces is reflected in the following   formula. 
\begin{theorem}\label{theo;logCCtochern}
Let $p: \mathbf{P}(T^*M(\log D)\oplus 1)\to M$ be the natural projection and  $\xi$ be the tautological line subbundle of $\mathbf{P}(T^*M(\log D)\oplus 1)$. 
\begin{enumerate}[label=(\roman*)]
\item If $\gamma$ is weakly log transverse to $D$, then for the dimension $0$ piece we have
\begin{equation*}
\{c_*(\gamma\cdot \ind_U)\}_0 = (-1)^{n} \sum_\alpha n_\alpha\cdot  \left( \sum_{k=0}^nc_{n-k}\Big(TM(-\log D) \Big)\cap     p_*\Big(c_1(\xi)^{k} \cap [\mathbf{P}(\Lambda^{log}_\alpha \oplus 1)]\Big)\right) \/.
\end{equation*}  
\item If $\gamma$ is log transverse to $D$, then
\begin{equation*}
c_*(\gamma\cdot \ind_U) = (-1)^{n}c\Big(TM(-\log D) \Big)\cap \sum_\alpha n_\alpha p_*\Big(c(\xi^{\vee})^{-1} \cap [\mathbf{P}(\Lambda^{log}_\alpha \oplus 1)]\Big) \/.
\end{equation*}
\end{enumerate}
\end{theorem}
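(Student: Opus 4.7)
I would prove (ii) first and deduce (i) as the dimension-zero specialization. Under the log transversality hypothesis, the equivariant identity \eqref{relation; CCequivariant} lifts to a cycle identity
\[
\textup{CC}(\gamma\cdot\ind_U) = \sum_\alpha n_\alpha\, j^*[\Lambda^{log}_\alpha]
\]
in $T^*M$, since both sides are $\mathbb{C}^*$-equivariant conical Lagrangian cycles of pure dimension $n$, and for such cycles the equivariant fundamental class determines the underlying cycle. Substituting this expression into the MacPherson formula from Section~\ref{sec; chern} yields
\[
c_*(\gamma\cdot\ind_U) = (-1)^n c(TM) \cap \sum_\alpha n_\alpha\, q_*\bigl(c(\eta^\vee)^{-1} \cap [\mathbf{P}(j^*\Lambda^{log}_\alpha \oplus 1)]\bigr),
\]
so (ii) reduces to the comparison identity
\[
c(TM) \cap q_*\bigl(c(\eta^\vee)^{-1} \cap [\mathbf{P}(j^*C \oplus 1)]\bigr) = c(TM(-\log D)) \cap p_*\bigl(c(\xi^\vee)^{-1} \cap [\mathbf{P}(C \oplus 1)]\bigr)
\]
for every conical $C \subset T^*M(\log D)$ whose refined pullback $j^*[C]$ is of pure dimension $n$.

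To prove this comparison I would analyze the rational map $\Phi = \mathbf{P}(j\oplus 1)\colon \mathbf{P}(T^*M \oplus 1) \dashrightarrow \mathbf{P}(T^*M(\log D)\oplus 1)$, which is an isomorphism over $U$ and whose indeterminacy locus lies in the projectivization of $\ker(j)$ at infinity over $D$. A common resolution $\tilde B$ with proper birational projections $\pi_1,\pi_2$ to source and target over $M$ realizes a relation $\pi_1^*\eta \cong \pi_2^*\xi \otimes \mathscr{O}_{\tilde B}(-E)$ for some effective exceptional divisor $E$. Applying the projection formula to this diagram, and combining with the short exact sequence $0 \to TM(-\log D) \to TM \to Q \to 0$ (where $Q$ is supported on $D$), should convert the $T^*M$-side expression into the $T^*M(\log D)$-side, with $E$ precisely accounting for the Chern class discrepancy coming from $Q$.

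For (i), only the homological identity \eqref{relation; CCnonequivariant} is available under weak log transversality. However, the dimension-zero component of $c_*(\gamma\cdot\ind_U)$ depends only on the homology class of $\textup{CC}(\gamma\cdot\ind_U)$ in $H_{2n}$, which the weak hypothesis already controls. Extracting the $H_0$ piece from the right side of (ii) by matching $c_{n-k}(TM(-\log D)) \in H^{2(n-k)}(M)$ with the $H_{2n-2k}(M)$-component of $\sum_k p_*(c_1(\xi)^k \cap [\mathbf{P}(\Lambda^{log}_\alpha \oplus 1)])$ reproduces the formula in (i). The main obstacle is the comparison identity in the second paragraph: $Q$ fails to be locally free outside the SNC case, so matching the exceptional divisor $E$ against the Chern class of $Q$ requires care. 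A natural alternative would be to work directly inside the bundle of principal parts $P^1_M L \otimes L^\vee$, which interpolates between $T^*M$ and $T^*M(\log D)$ via the inclusion $i$ and the log map $j'$, and to run a specialization-at-$s=0$ argument analogous to the one behind Ginzburg's formula in Proposition~\ref{global-Ginzburg}.
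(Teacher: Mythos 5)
Your overall strategy---lift \eqref{relation; CCequivariant} to a cycle identity, feed it into MacPherson's formula over $T^*M$, and then convert to the logarithmic side---is not the route the paper takes, and it stalls exactly where you say it does: the ``comparison identity'' between the two projective bundles is the entire content of the theorem in your formulation, and you do not prove it. The proposed mechanism (resolve the rational map $\Phi$, write $\pi_1^*\eta\cong\pi_2^*\xi\otimes\mathscr{O}(-E)$, and match $E$ against the Chern class of the quotient $Q=TM/TM(-\log D)$) is not carried out, and it is far from clear that it can be: $[\mathbf{P}(j^{-1}\Lambda^{log}_\alpha\oplus 1)]$ is not the proper transform of $[\mathbf{P}(\Lambda^{log}_\alpha\oplus 1)]$ under $\Phi$, the indeterminacy locus $\mathbf{P}(\ker j)$ sits over the arbitrarily singular divisor $D$, and $c(Q)$ is only defined through the resolution $0\to TM(-\log D)\to TM\to Q\to 0$, so there is no a priori identification of the exceptional divisor $E$ with any Chern datum of $Q$. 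Your fallback (``work inside $P^1_ML\otimes L^\vee$'') is a direction, not an argument.

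The step you are missing is much simpler and is what the paper actually does: apply the (equivariant) Gysin pullback along the zero section $s\colon M\to T^*M$ directly to \eqref{relation; CCnonequivariant}, respectively \eqref{relation; CCequivariant}. Functoriality of refined pullbacks gives $s^*\circ j^*=(j\circ s)^*$, and $j\circ s$ is precisely the zero section of $T^*M(\log D)$; hence the right-hand side becomes $\sum_\alpha n_\alpha\, (j\circ s)^*[\Lambda^{log}_\alpha]$, computed entirely inside the single bundle $T^*M(\log D)$, where the standard cone-class/Segre-class formula for zero-section pullbacks (in its equivariant form, Propositions 2.7 and 3.3 of \cite{AMSS23}, followed by dehomogenising and dualising) produces exactly the stated expressions involving $c(TM(-\log D))$, $p$ and $\xi$. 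No comparison between $\mathbf{P}(T^*M\oplus 1)$ and $\mathbf{P}(T^*M(\log D)\oplus 1)$ is ever needed. Your preliminary reductions are correct as far as they go: under log transversality the supports $j^{-1}\Lambda^{log}_\alpha$ are purely $n$-dimensional, so the class in $H_{2n}$ of the support does determine the cycle, and the observation that (i) only requires the homology-level identity \eqref{relation; CCnonequivariant} is sound; but without the zero-section functoriality the proof does not close.
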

 
\begin{proof}
Let $s: M \to T^*M$ be the zero section embedding. Applying $s^*$ to equality \eqref{relation; CCnonequivariant} we obtain (i).  According to Proposition 3.3 and 2.7 in \cite{AMSS23}, applying $s^*_T$ to equality \eqref{relation; CCequivariant}, dehomogenising, and dualising the homology class we obtain (ii). 
\end{proof}

Formula (ii) is the same as the formula of Corollary \ref{c1}. The point is that we do not ask $D$ to be simple normal crossing, but only impose the log transversality condition on $\gamma$.

\begin{corollary}\label{coro; chernclass}
If $M$ is compact and weakly log transverse to $D$, then
\[
\chi(U) = \int_M c_n(TM(-\log D))\cap [M]\/.
\]
If $M$ (not necessarily compact) is log transverse to $D$, then
\[
c_*(\ind_U) = c(TM(-\log D))\cap [M]\/.
\]
\end{corollary}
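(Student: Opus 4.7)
The plan is to specialise Theorem~\ref{theo;logCCtochern} to the constructible function $\gamma=\ind_M$. Since $M$ is smooth of dimension $n$, the dual local Euler obstruction satisfies $\textup{Eu}^\vee_M\equiv (-1)^n$, so $\ind_M=(-1)^n\textup{Eu}^\vee_M$ and
\[
\textup{CC}(\ind_M)=(-1)^n[T^*_MM].
\]
Thus the sum over $\alpha$ in Theorem~\ref{theo;logCCtochern} collapses to a single term with $Z_\alpha=M$, $\Lambda_\alpha=T^*_MM$ and multiplicity $n_\alpha=(-1)^n$. The transversality hypotheses needed to apply the theorem are delivered exactly by the standing assumptions: when $D$ is strongly Euler homogeneous, $\ind_M$ is weakly log transverse to $D$ by Theorem~\ref{t2}(iii); adding holonomicity upgrades this to full log transversality.

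Next I would identify the relevant logarithmic conormal space. The zero section $T^*_MM\subset T^*M$ is sent by the logarithmic map $j$ to the zero section of $T^*M(\log D)$, so $\Lambda^{log}_M$ is exactly the zero section of $T^*M(\log D)$. Therefore $\mathbf{P}(\Lambda^{log}_M\oplus 1)\cong M$ (the projectivisation of a line bundle is canonically its base), the projection $p$ restricts to the identity on this locus, and the tautological line subbundle $\xi$ restricts to the trivial ``$1$''-summand. In particular $c_1(\xi)=0$ and $c(\xi^\vee)^{-1}=1$ along this copy of $M$.

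Now I would substitute. In the holonomic case where $\ind_M$ is log transverse to $D$, Theorem~\ref{theo;logCCtochern}(ii) reads
\[
c_*(\ind_U)=(-1)^n\,c\bigl(TM(-\log D)\bigr)\cap (-1)^n\,p_*([M])=c\bigl(\textup{Der}_M(-\log D)\bigr)\cap [M],
\]
which is the second asserted formula. In the compact case one invokes Theorem~\ref{theo;logCCtochern}(i); the vanishing of $c_1(\xi)$ on our locus forces every $k\geq 1$ term to vanish, leaving
\[
\{c_*(\ind_U)\}_0=(-1)^{2n}\,c_n\bigl(TM(-\log D)\bigr)\cap [M]=c_n\bigl(\textup{Der}_M(-\log D)\bigr)\cap [M].
\]
Taking degrees and using the standard identity $\chi(U)=\deg\{c_*(\ind_U)\}_0$ for compact $M$ yields $\chi(U)=\int_M c\bigl(\textup{Der}_M(-\log D)\bigr)\cap [M]$, since the integral automatically picks out the top-degree component.

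The argument is a direct specialisation, so there is no real obstacle. The only mildly subtle point is recognising that along the zero-section copy of $M$ sitting in $\mathbf{P}(T^*M(\log D)\oplus 1)$ the tautological subbundle is the trivial ``$1$''-summand, which is what collapses the double sum of Theorem~\ref{theo;logCCtochern}(i) to a single Chern-class term. All the genuine content is upstream: in the log-transversality criteria packaged into Theorem~\ref{t2}, and in the derivation of Theorem~\ref{theo;logCCtochern} from the (equivariant) specialisation identities \eqref{relation; CCnonequivariant} and \eqref{relation; CCequivariant}.
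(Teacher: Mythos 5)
Your proof is correct and is essentially the intended argument: the paper states Corollary~\ref{coro; chernclass} as an immediate specialisation of Theorem~\ref{theo;logCCtochern} to $\gamma=\ind_M=(-1)^n\textup{Eu}^\vee_M$, where $\Lambda^{log}$ is the zero section of $T^*M(\log D)$, $\xi$ restricts to the trivial summand, and the signs cancel exactly as you compute. One small remark: the corollary's hypotheses already \emph{are} the (weak) log transversality of $M$ to $D$, so the appeal to Theorem~\ref{t2}(iii) and strong Euler homogeneity is unnecessary here (that input is what upgrades this corollary to Corollary~\ref{coro; c2}).
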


\section{Geometric Tests for log Transversality} 
\label{geo-test}

Testing log transversality is in general difficult. Before giving any general criterion, we start from the following result to whet the reader's appetite.

\begin{proposition} 
\label{prop; lineartype}
$M$ is log transverse to $D$, if $D$ is a free divisor with  Jacobian ideal of linear type. 
\end{proposition}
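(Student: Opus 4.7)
The plan is to combine two ingredients: weak log transversality, which is automatic from strong Euler homogeneity, with a dimension estimate on the kernel of the logarithmic bundle map $j$, which is essentially the content of the linear Jacobian type hypothesis.

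First, by Proposition~\ref{linearJacobianisEu}, a free divisor of linear Jacobian type is strongly Euler homogeneous. Specializing to $\gamma = \ind_M$ we have $\Lambda = T^*_M M$, and $\Lambda^{\log}$ is the zero section of $T^*M(\log D)$. Applying case (iii) of Theorem~\ref{t2}, which we treat as an independent input proved elsewhere in \S\ref{geo-test}, we obtain weak log transversality of $M$ to $D$. By Remark~\ref{remark:logtrans}, it then suffices to establish the dimension bound
\[
\dim j^{-1}(\Lambda^{\log}) \;=\; \dim \ker(j) \;\leq\; n.
\]

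To verify this estimate, I work locally. Let $h$ be a reduced local equation of $D$ on an open $W \subset M$, and let $\delta_i = \sum_j a_{ij}\partial_j$ be a Saito basis of $\textup{Der}_W(-\log D)$. Using dual bases, $j$ is represented by the matrix $(a_{ij})$, so $\ker(j)$ is the subscheme of $T^*W = \textup{Spec}\,\mathscr{O}_W[\xi_1,\dots,\xi_n]$ cut out by the linear forms $\sum_j a_{ij}\xi_j$ for $i = 1,\dots,n$. The key algebraic step is to identify the coordinate ring of $\ker(j)|_W$ with $\textup{Sym}^\bullet_{\mathscr{O}_W}(\mathcal{J}_D)/(y_0)$, where $y_0, y_1, \dots, y_n$ correspond to the generators $h, \partial_1 h, \dots, \partial_n h$ of $\mathcal{J}_D$. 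Indeed, each log relation $\delta_i(h) = e_i h$ supplies a linear syzygy $\sum_j a_{ij}y_j - e_i y_0 = 0$ in $\textup{Sym}^\bullet(\mathcal{J}_D)$, which reduces modulo $y_0$ to the defining equations of $\ker(j)|_W$; for a free divisor these syzygies generate all linear relations among the Jacobian generators.

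Granting this identification, the linear Jacobian type hypothesis $\textup{Sym}^\bullet_{\mathscr{O}_M}(\mathcal{J}_D) \cong \textup{Rees}^\bullet_{\mathscr{O}_M}(\mathcal{J}_D)$ finishes the argument. The Rees algebra is of Krull dimension $n+1$, and $y_0$ maps to $ht$ in $\mathscr{O}_M[t]$; since $h$ is a non-zero-divisor in $\mathscr{O}_M$, the element $y_0$ is a non-zero-divisor in the Rees algebra, so $\textup{Rees}^\bullet(\mathcal{J}_D)/(y_0)$ has dimension $n$. This forces $\dim \ker(j) \leq n$, and together with weak log transversality yields log transversality of $M$ to $D$.

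The main obstacle is the algebraic identification in the middle step: one must show that for a free divisor the module of linear syzygies of $(h, \partial_1 h, \dots, \partial_n h)$ is generated by those coming from the Saito basis. This is where the reflexivity of $\textup{Der}_M(-\log D)$, or the treatment in the first author's earlier paper \cite{MR3830794}, enters. Once this is in hand, the remainder is dimension-theoretic bookkeeping with Rees algebras.
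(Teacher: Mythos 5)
Your proof is correct, but it is packaged differently from the paper's. The paper proves the statement in one stroke inside $P^1_ML$: the short exact sequence $0\to \textup{Der}_M(-\log D)\otimes\mathcal{L}^\vee\to(\mathcal{P}^1_M\mathcal{L})^\vee\to\mathcal{J}_D\to 0$ identifies $j'^{-1}_L(J\Lambda^{\log})$ with $\textup{Spec}(\textup{Sym}^\bullet\mathcal{J}_D)$, the linear-type hypothesis turns this into $\textup{Spec}(\textup{Rees}^\bullet\mathcal{J}_D)$, and the latter is recognised as the deformation to the normal cone, i.e.\ $\overline{J\Lambda^\sharp}$ --- so one gets the scheme-theoretic equality $j'^{-1}(\Lambda^{\log})=\overline{\Lambda^\sharp}$, which is strictly stronger than log transversality. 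You instead use Remark~\ref{remark:logtrans} to split the claim into weak log transversality (imported from Proposition~\ref{linearJacobianisEu} plus the strongly-Euler-homogeneous case, Proposition~\ref{prop; genericZ} --- this is legitimate and non-circular, since those results are proved independently, and it correctly replaces the holonomicity that Theorem~\ref{theo; generic and holonomic} would otherwise require) and the dimension bound $\dim j^{-1}(M)\le n$, which you obtain by cutting the same Rees algebra by the degree-one element $y_0\mapsto ht$, a non-zero-divisor in a domain of dimension $n+1$. The ``main obstacle'' you flag is not really one: the syzygy module of $(h,\partial_{x_1}h,\dots,\partial_{x_n}h)$ is $\textup{Der}_M(-\log D)$ essentially by definition of logarithmic derivations, and freeness says the Saito basis generates it --- this is exactly the exact sequence above. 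The trade-off: your argument is modular and makes visible that linear Jacobian type supplies precisely the missing dimension estimate on $\ker(j)$, whereas the paper's argument is self-contained, never invokes strong Euler homogeneity, and yields the scheme-theoretic identity $j'^{-1}(\Lambda^{\log})=\overline{\Lambda^\sharp}$ that your route does not recover.
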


Proposition \ref{prop; lineartype} combined with Corollary \ref{coro; chernclass} gives a short and transparent proof of the main theorem of \cite{MR3830794}. 
 
\begin{proof}
Suppose the divisor $D$ is defined by $f=0$ where $f\in H^0(M,L)$ and $\mathcal{L}\cong \mathscr{O}(D)$. The global section $j^1(f) \in H^0(M,P^1_ML)$ corresponds to a morphism of sheaves
\[
(\mathcal{P}^1_M\mathcal{L})^\vee \to \mathscr{O}_M.
\]
A local computation shows that the image of the morphism is $\mathcal{J}_D$ (see Definition \ref{definition:Jacobianlineartype}) and the kernel of the morphism is $j'^\vee:\textup{Der}_M(-\log D)\otimes \mathcal{L}^\vee \to (\mathcal{P}^1_M\mathcal{L})^\vee$ (see diagram \eqref{diagram; globaljetsharp}). Therefore there is a short exact sequence
\[
\begin{tikzcd}
0  \arrow{r} & \textup{Der}_M(-\log D)\otimes \mathcal{L}^\vee \arrow{r} & (\mathcal{P}^1_M\mathcal{L})^\vee \arrow{r} & \mathcal{J}_D \arrow{r} & 0.
\end{tikzcd}
\]
This sequence induces a surjective morphism
\[
\textup{Sym}^\bullet_{\mathscr{O}_M}(\mathcal{P}^1_M\mathcal{L})^\vee \to \textup{Sym}^\bullet_{\mathscr{O}_M}(\mathcal{J}_D)
\]
whose kernel is generated by the image of $\textup{Der}_M(-\log D)\otimes \mathcal{L}^\vee$ in $\textup{Sym}^\bullet_{\mathscr{O}_M}(\mathcal{P}^1_M\mathcal{L})^\vee$. Since $\textup{Der}_M(-\log D)\otimes \mathcal{L}^\vee$ defines the zero section of $T^*M\otimes L$, we have 
\[
j'^{-1}_L(M)=j'^{-1}_L(J\Lambda^{log})=\textup{Spec}\Big(\textup{Sym}^\bullet_{\mathscr{O}_M}(\mathcal{J}_D)\Big)=\textup{Spec}\Big(\textup{Rees}^\bullet_{\mathscr{O}_M}(\mathcal{J}_D)\Big)
\]
where $M=JT^*_MM=J\Lambda=J\Lambda^{log}$ is the zero section of both $T^*M\otimes L$ and $T^*M(\log D)\otimes L$, and the last equation is the only place in the proof where the linear type Jacobian assumption is used. Finally we notice that
\[
\textup{Spec}\Big(\textup{Rees}^\bullet_{\mathscr{O}_M}(\mathcal{J}_D)\Big)=\overline{J\Lambda^\sharp}
\]
by Definition \ref{defn; globaljetsharp} since the closure of 
\[
\Big\{ \Big(x, t\cdot j^1(f)(x)\Big) \ \vert  \ x\in M,  \ t \in \mathbb{C} \ \Big\}
\]
in $P^1_ML$ is exactly the total space of the deformation to the normal cone associated to the global section $j^1(f)$ (see \cite{MR1644323} Remark 5.1.1). To summarise, we have proved that $j'^{-1}_L(J\Lambda^{log})=\overline{J\Lambda^\sharp}$ as subschemes of $P^1_ML$ which is equivalent to $j'^{-1}(\Lambda^{log})=\overline{\Lambda^\sharp}$.

\end{proof}

Given $Z\not\subset D$ and $\Lambda=T^*_ZM$, the agreement of $j'^{-1}(\Lambda^{log})$ and $\overline{\Lambda^\sharp}$ as subschemes of $P^1_ML\otimes L^\vee$ is the best case but this may rarely happen. However, the (weak) log transversality condition only concerns the support $|j'^{-1}(\Lambda^{log})|$. Recall that $\pi_1,\pi_2$ are the compositions of $P^1_ML\otimes L^\vee \to M\times \mathbb{C}$ and the two projections respectively.

\begin{definition}\label{definition:weaklylog at p}
We say that $Z$ is \textbf{weakly log transverse to $D$ at  $p\in Z\cap D$} if
\[
|j'^{-1}(\Lambda^{log})\cap \pi_1^{-1}(p)| \subset  \pi^{-1}_2(0)=T^*M  \/.
\]
\end{definition}

By Proposition \ref{sharpintolog}, $Z$ is weakly log transverse to $D$ if and only if $Z$ is weakly log transverse to $D$ at every point $p\in Z\cap D$. For $p\in Z\cap D$, choose an analytic neighborhood $\mathcal{U}_p$ of $p$ such that $L\vert_{\mathcal{U}_p}$ is trivial. In such situation $j'$ is given by the formula in diagram \eqref{diagram; sharp}. Note that by Remark \ref{remark:choice of function} we may choose any convenient local equation of $D$ at $p$. 

Following \cite{MR586450} it is usually convenient to choose an expression of the map $j'$ using the logarithmic stratification. Let $S$ be the logarithmic stratum of $D$ containing $p$ and $\dim S=n-k$. By the triviality lemma (\cite{MR586450} (3.6)), the neighborhood $\mathcal{U}_p$ can be chosen together with coordinate functions $x_1,\ldots,x_n$ with properties
\begin{enumerate}[label=(\roman*)]
\item the equation $h(x)$ of $D\cap \mathcal{U}_p$ in $\mathcal{U}_p$ involves only $x_1,\ldots,x_{k}$;
\item $D_\alpha \cap \mathcal{U}_p=\{x_1=\ldots=x_{k}=0\}$;
\item $D$ is strongly Euler homogeneous at $p$ if and only if the function $h(x_1,\ldots,x_k)$ in (i) can be chosen to be strongly Euler homogeneous according to Lemma 3.2 in \cite{MR2231201}.
\end{enumerate} 

Geometrically this means that $(\mathcal{U}_p,D,p) \cong (\mathbb{C}^n,D'\times \mathbb{C}^{n-k},0)$ for some divisor germ $(D',0) \subset(\mathbb{C}^k,0)$. strongly Euler homogeneities of $(D,p)$ and $(D',0)$ are equivalent.

Therefore, we can assume the basis $(\delta_1,\ldots,\delta_n)$ for $\textup{Der}_{\mathcal{U}_x}(-\log D\cap \mathcal{U}_x)$ takes the form
\[
\left\lbrace \delta_1=\sum_{j=1}^k \delta_{1,j}\partial_{x_j},\ \ldots, \ \delta_k=\sum_{j=1}^k \delta_{k,j}\partial_{x_j}, \  \delta_{k+1}=\partial_{x_{k+1}},\  \ldots , \  \delta_n=\partial_{x_{n}} \right\rbrace \/
\]
where $\delta_{i,j}\in \mathcal{O}_{M,p}$ are functions in $x_1, \cdots ,x_k$ only. Let $\omega_1,\ldots,\omega_n\in \Omega^1_{M,p}(\log D)$ be the dual basis of $\delta_1,\ldots,\delta_n$. In  particular, $\omega_i=dx_i$ for $i=k+1,\ldots, n$. We identify $T^*_pS$ with the $\mathbb{C}$-vector space spanned by $\omega_{k+1},\ldots,\omega_n$ and $T^*_pM(\log D)$ with the $\mathbb{C}$-vector space spanned by $\omega_1,\ldots,\omega_n$. Using these bases, it is easy to see that the linear map $j_p: T^*_pM \to T^*_pM(\log D)$ induced by $j$ is
\[
j_p(\xi)=\sum_{i=k+1}^n \delta_{i}(\xi)\cdot\omega_{i}. 
\]
In coordinate, the map is given by $j_p(\xi_1,\ldots,\xi_n)=(0,\ldots,0,\xi_{k+1},\ldots,\xi_n)$. Equivalently, regarding $T^*S$ as a subspace of $T^*M(\log D)$, the map $j\vert_S$ can be identified with the natural restriction map $T^*M\vert_S \to T^*S$. Similarly, the map $j'_p: T^*_pM\times \mathbb{C} \to T^*_pM(\log D)$ induced by $j'$ (defined using $h$) is 
\begin{equation}\label{equation:j'}
j'_p(\xi_1,\ldots,\xi_n,s)=\sum_{i=k+1}^n \xi_i\omega_{i} -s\cdot\sum_{i=1}^k \delta_i(\frac{dh}{h})(p)\cdot\omega_i .
\end{equation}

The following proposition implies that it is necessary to impose strongly Euler homogeneity condition on $D$ when studying weakly log transversality.

\begin{proposition}
\label{prop; whyEH}
If the divisor $D$ is not strongly Euler homogeneous at $p$, then any subvariety $Z\not\subset D$ containing $p$ is not weakly log transverse to $D$ at $p$. 
\end{proposition}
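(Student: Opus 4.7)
The plan is to exhibit the point $(0,1)\in T_p^*M\times\mathbb{C}$ already lying in $j'^{-1}(\Lambda^{\log})\cap \pi_1^{-1}(p)$, which would violate weak log transversality at $p$ since its $\pi_2$-coordinate is $1\neq 0$. The entire argument rests on the following algebraic characterization of strong Euler homogeneity: $D$ is SEH at $p$ if and only if the class of $\frac{dh}{h}$ in $T_p^*M(\log D)$ is nonzero, equivalently $c_i(p)\neq 0$ for some $i\leq k$, where $c_i:=\delta_i(h)/h$.

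For the characterization I would first verify that $\delta_i|_p=0$ for $i\leq k$: the image of $\textup{Der}_M(-\log D)_p\to T_pM$ coincides with $T_pS=\textup{span}(\partial_{x_{k+1}}|_p,\ldots,\partial_{x_n}|_p)$ by the definition of the logarithmic stratum, and since $\delta_i$ for $i\leq k$ involves only $\partial_{x_1},\ldots,\partial_{x_k}$, the evaluation $\delta_i|_p=\sum_{l\leq k}\delta_{i,l}(p)\partial_{x_l}|_p$ must vanish, forcing $\delta_{i,l}(p)=0$. A direct corollary is $\delta_i(u)(p)=0$ for any $u\in\mathscr{O}_{M,p}$ and $i\leq k$, which shows that $c_i(p)$ depends only on $D$ and not on the chosen defining equation. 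Given a strongly Euler vector field $\chi=\sum b_j\delta_j\in\mathfrak{m}_p\textup{Der}_{M,p}$ with $\chi(h)=h$, the identity $\sum b_jc_j=1$ evaluates at $p$ to $\sum_{j\leq k}b_j(p)c_j(p)=1$, since the terms with $j>k$ drop out: $b_j(p)=0$ because $\delta_j=\partial_{x_j}$ and the $\partial_{x_j}$-component of $\chi$ lies in $\mathfrak{m}_p$, while $c_j\equiv 0$ because $h$ is independent of $x_j$. Hence some $c_{j_0}(p)\neq 0$. Conversely, if some $c_{j_0}(p)\neq 0$, then $c_{j_0}$ is a unit and $\chi:=\delta_{j_0}/c_{j_0}$ is a strongly Euler vector field belonging to $\mathfrak{m}_p\cdot\textup{Der}_{M,p}$, since $\delta_{j_0}|_p=0$.

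With the characterization in hand, the proposition is immediate. If $D$ is not SEH at $p$, all $c_i(p)=0$ for $i\leq k$, so formula \eqref{equation:j'} reduces to $j'_p(\xi,s)=\sum_{i>k}\xi_i\omega_i$, independent of $s$. Taking $\xi=0$ gives $j'_p(0,s)=0\in\Lambda^{\log}_p$ for every $s\in\mathbb{C}$, using that the zero covector lies in $\Lambda^{\log}_p$ for any $Z\ni p$ (the zero section of $T^*M$ restricted to $Z_{reg}$ lies in $\Lambda$, and descends through $j$ and the closure to $\Lambda^{\log}$). In particular $(0,1)\in j'^{-1}(\Lambda^{\log})\cap\pi_1^{-1}(p)$ has $\pi_2(0,1)=1\neq 0$, contradicting Definition~\ref{definition:weaklylog at p}.

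The main obstacle is the forward direction of the SEH characterization, specifically confirming $\delta_i|_p=0$ for $i\leq k$ and that when evaluating $\sum b_j(p)c_j(p)$ at $p$ the $j>k$ contributions disappear. Both facts rest on Saito's triviality reduction from \cite{MR586450} and on a careful bookkeeping of which parts of $\chi\in\mathfrak{m}_p\textup{Der}_{M,p}$ vanish at $p$; they are standard but deserve to be made explicit.
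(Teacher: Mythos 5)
Your proof is correct and follows essentially the same route as the paper: both arguments reduce to showing that failure of strong Euler homogeneity at $p$ forces $\delta_i(h)/h$ to vanish at $p$ for $i\le k$, so that $j'_p$ becomes independent of $s$ and $(0,1)$ maps into $\Lambda^{\log}$. The paper asserts this key vanishing in one line, whereas you justify it via the characterization ``some $c_i(p)\neq 0$ iff SEH at $p$'' (using $\delta_i|_p=0$ for $i\le k$ and the independence of $c_i(p)$ from the defining equation), which is a useful elaboration but not a different method.
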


\begin{proof}
Set things up as above and let $\Lambda=T^*_ZM$. By the assumption, $h$ is not an Euler homogeneous equation, therefore $\delta_i(h)\in \mathfrak{m}_ph$ for any $i=1, \cdots ,k$. Equivalently $\frac{\delta_i(h)}{h}(p)=0$ and $j_p'(\xi_1,\ldots,\xi_n,s)=(0,\ldots,0,\xi_{k+1},\ldots,\xi_n)$. Thus
\[
j_p'(0,\ldots,0,1)=(0,\ldots,0)\in \Lambda^{log}.
\]
Since $\pi_2(0,\ldots,0,1)=1$, $Z$ is not weakly log transverse to $D$ at $p$.
\end{proof}

Recently Rodr\'{i}guez established a criterion for strong Euler homogeneity of any divisor germ $(D,0)$. When $(D,0)$ is free, it is easy to see that his criterion \cite[Proposition 2.6]{Rodriguez25a} coincides with ours. Indeed, with our notation, his criterion for strong Euler homogeniety at $p\in D$ is $\textup{rank} j'_p= \textup{rank} j_p+1$, which is equivalent to $\ker j'_p=\ker j_p$. The last equality is exactly the definition of weak log transversality of $M$ to $D$.

In the rest of the section we will study (weak) log transversality in two diametrically opposed situations. In the first, the divisor $D$ is strongly Euler homogeneous which is logically the weakest assumption we must impose, but the variety $Z$ is in generic position. In the second, $D$ is normal crossing but the variety $Z\not\subset D$ is arbitrary.

\begin{lemma}\label{lemma:noncharacteristic}
All logarithmic strata of $D$ are non-charactersitic with respect to $\Lambda=T^*_ZM$ (see Definition \ref{defn; lognonchar}) if and only if $j^{-1}(M)\cap \Lambda \subset M=T^*_MM$.
\end{lemma}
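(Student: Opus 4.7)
The plan is to reduce both sides of the equivalence to a fiberwise statement about the linear map $j_p\colon T^*_pM\to T^*_pM(\log D)$, via the identification
\[
\ker j_p \;=\; N^*_{D_\alpha,p}M,
\]
where $D_\alpha$ is the unique logarithmic stratum containing $p$. This fiberwise fact is immediate from the local normal form recorded just above the lemma: in Saito's adapted coordinates, where $D_\alpha=\{x_1=\cdots=x_k=0\}$ and $\delta_{k+1}=\partial_{x_{k+1}},\ldots,\delta_n=\partial_{x_n}$, setting $s=0$ in \eqref{equation:j'} collapses to $j_p(\xi_1,\ldots,\xi_n)=\sum_{i=k+1}^n\xi_i\omega_i$, whose kernel is spanned by $dx_1,\ldots,dx_k$, i.e., the annihilator of $T_pD_\alpha$ inside $T^*_pM$. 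Crucially, $N^*_{D_\alpha,p}M$ is the fiber over $p$ of the conormal space $T^*_{D_\alpha}M$.

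For the forward implication, I take $(p,\xi)\in j^{-1}(M)\cap\Lambda$ and let $D_\alpha$ be the logarithmic stratum through $p$. The fiberwise identification puts $\xi\in\ker j_p=N^*_{D_\alpha,p}M\subset T^*_{D_\alpha}M$, so $(p,\xi)\in T^*_ZM\cap T^*_{D_\alpha}M$. In the relevant case $Z\not\subset D$ one has $Z\not\subset D_\alpha$ for every stratum $D_\alpha\subset D$, while the open stratum contributes nothing since $\ker j_p=0$ off $D$. The non-characteristic hypothesis therefore forces $\xi=0$.

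For the backward implication, I fix a stratum $D_\alpha$ with $Z\not\subset D_\alpha$ and pick $(p,\xi)\in T^*_ZM\cap T^*_{D_\alpha}M$. By definition of $T^*_{D_\alpha}M$ as the closure in $T^*M$ of the conormal bundle of the smooth stratum $D_\alpha$, there is a sequence $(q_n,\eta_n)$ with $q_n\in D_\alpha$ and $\eta_n\in N^*_{D_\alpha,q_n}M$ converging to $(p,\xi)$. The fiberwise identification yields $j_{q_n}(\eta_n)=0$, and continuity of $j$ gives $j_p(\xi)=0$. Combined with $(p,\xi)\in T^*_ZM$, the standing hypothesis on the RHS delivers $\xi=0$.

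The only mild subtlety is handling the backward direction at points $p\in\overline{D_\alpha}\setminus D_\alpha$, where the stratum at $p$ differs from $D_\alpha$ and the fiberwise identification does not directly apply; this is resolved cleanly by the limiting argument above, which uses only the continuity of $j$ and the closedness of $T^*_ZM$ in $T^*M$. Beyond that, the entire proof is a direct translation of the non-characteristic condition through the formula for $\ker j_p$.
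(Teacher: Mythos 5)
Your proof is correct and follows essentially the same route as the paper's: both hinge on identifying $j|_{D_\alpha}$ with the restriction $T^*M|_{D_\alpha}\to T^*D_\alpha$, so that $\ker j_p$ is exactly the conormal fiber of the logarithmic stratum through $p$, and then translating the two conditions through this identification. The only difference is that the paper dispatches the converse with ``follows easily,'' whereas you spell it out, including the limiting argument needed at points of $\overline{D_\alpha}\setminus D_\alpha$ where $T^*_{D_\alpha}M$ is a closure; that extra care is welcome but does not change the argument.
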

\begin{proof}
Let $(p,\xi) \in j^{-1}(M)\cap \Lambda$ and let $S$ be the logarithmic stratum of $D$ containing $p$. Since $j\vert_S$ can be identified with $T^*M\vert_S \to T^*S$, $j_p(\xi)=0$ implies that $\xi\in T^*_SM$. If the inclusion $S\to M$ is non-characteristic with respect to $\Lambda$, then $\xi=0$. The conclusion of the lemma follows easily from this observation.
\end{proof}

\begin{remark}
The non-characteristic assumption is satisfied if a Whitney stratification $\{Z_i\}$ of $Z$ is transverse to all logarithmic strata of $D$, i.e. $T_pZ_i+T_pD_\alpha=T_pM$ for any $p\in Z_i\cap D_\alpha$. In particular if $Z\neq M$ then it should avoid the $0$-dimensional strata of $D$, which are the most singular parts of $D$. 
\end{remark} 

\begin{remark}
We note that our non-characteristic condition differs from the one discussed in \cite{MR3706222}.  The latter is formulated with respect to  a Whitney stratification of $D$, whereas ours concerns the logarithmic  stratification of $D$. In general there is no direct comparison between Whitney stratifications  of $D$ and its logarithmic stratification.  
\end{remark}

\begin{proposition}\label{prop; genericZ}
Let $D$ be a strongly Euler homogeneous free divisor such that all logarithmic strata of $D$ are non-charactersitic with respect to $\Lambda=T^*_ZM$. Then $Z$ is weakly log transverse to $D$.
\end{proposition}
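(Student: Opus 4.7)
The plan is to verify weak log transversality of $Z$ at each point $p\in Z\cap D$, by combining the explicit local description of $j'$ given in equation~\eqref{equation:j'} with an analytic curve selection argument that exploits the non-characteristic hypothesis. First I would adopt the local set-up preceding Proposition~\ref{prop; whyEH}: coordinates $x_1,\ldots,x_n$ centered at $p$ with the logarithmic stratum $S$ through $p$ given by $\{x_1=\cdots=x_k=0\}$, a local equation $h(x_1,\ldots,x_k)$ of $D$ chosen to be strongly Euler homogeneous, the free basis $\delta_1,\ldots,\delta_k,\partial_{x_{k+1}},\ldots,\partial_{x_n}$ of $\mathrm{Der}_M(-\log D)$, and its dual logarithmic basis $\omega_1,\ldots,\omega_n$. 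Set $W_p=\mathrm{span}(\omega_1,\ldots,\omega_k)$ and $V_p=\mathrm{span}(\omega_{k+1},\ldots,\omega_n)$. Since $\delta_i$ for $i\le k$ involves only $\partial_{x_1},\ldots,\partial_{x_k}$ and $\delta_{i,l}(p)=0$ for $i,l\le k$, the local formula reads $j'_p(\xi,s)=j_p(\xi)-s\cdot\tfrac{dh}{h}(p)$ with $j_p(\xi)\in V_p=\mathrm{image}(j_p)$ and $\tfrac{dh}{h}(p)=\sum_{i\le k}b_i(p)\,\omega_i\in W_p$, where $b_i=\delta_i(h)/h$. Strong Euler homogeneity yields a vector field $\chi=\sum c_i\delta_i+\sum_{j>k}c_j\partial_{x_j}$ with $c_j(p)=0$ for $j>k$ and $\sum_{i\le k}c_i(p)b_i(p)=1$, so $\tfrac{dh}{h}(p)\neq 0$ in $W_p$.

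Under these preparations, the entire statement reduces to the key claim $\Lambda^{\log}_p\subset V_p$. Once the claim is granted, any $(\xi,s)\in\pi_1^{-1}(p)\cap j'^{-1}(\Lambda^{\log})$ has $j'_p(\xi,s)\in V_p$, so the $W_p$-component $-s\cdot\tfrac{dh}{h}(p)$ vanishes; combined with $\tfrac{dh}{h}(p)\neq 0$ this forces $s=0$, which by Definition~\ref{definition:weaklylog at p} is exactly weak log transversality at $p$.

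The heart of the argument is the proof of $\Lambda^{\log}_p\subset V_p$ via curve selection. Given $\eta\in\Lambda^{\log}_p$, since $\Lambda^{\log}=\overline{j(\Lambda|_U)}$ by Proposition~\ref{analyticity:logimage}, the analytic curve selection lemma produces an analytic arc $\eta_t\in\Lambda^{\log}$ with $\eta_0=\eta$ and $\eta_t\in j(\Lambda|_U)$ for $t\neq 0$. Writing $q_t=\pi(\eta_t)\in Z\cap U$ and $\xi_t=j_{q_t}^{-1}(\eta_t)\in T^*_{Z,q_t}M$, I split according to the behaviour of $\xi_t$ in a chosen Hermitian norm on $T^*M$ near $p$. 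If $\xi_t$ remains bounded, then the $\omega_i$-coordinate of $\eta_t$ for $i\le k$, namely $\sum_{l\le k}\delta_{i,l}(q_t)\xi_{t,l}$, tends to $0$ since $\delta_{i,l}(p)=0$, so the limit $\eta$ lies in $V_p$. If $|\xi_t|\to\infty$ along a subsequence, normalising to $\tilde\xi_t=\xi_t/|\xi_t|$ and passing to a convergent subsequence, I obtain a unit-norm limit $\tilde\xi\in\Lambda|_p$ satisfying $j_p(\tilde\xi)=\lim_t\eta_t/|\xi_t|=0$; hence $\tilde\xi\in\Lambda|_p\cap\ker j_p=\Lambda|_p\cap T^*_SM|_p$, which must vanish by the non-characteristic hypothesis and Lemma~\ref{lemma:noncharacteristic}, contradicting $|\tilde\xi|=1$.

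The main obstacle is exactly this unbounded case in the curve argument: because $j_{q_t}$ degenerates as $q_t$ approaches the stratum $S$, bounded limits of $\eta_t=j_{q_t}(\xi_t)$ could a priori arise from conormals $\xi_t$ with $|\xi_t|\to\infty$, and the non-characteristic hypothesis is exactly what is needed to exclude this possibility. The second indispensable ingredient is strong Euler homogeneity, which guarantees $\tfrac{dh}{h}(p)\neq 0$ and thereby converts the subspace containment $\Lambda^{\log}_p\subset V_p$ into the quantitative conclusion $s=0$; Proposition~\ref{prop; whyEH} shows that without strong Euler homogeneity weak log transversality genuinely fails.
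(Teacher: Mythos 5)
Your proof is correct, and its overall architecture coincides with the paper's: both reduce to the local description of $j'_p$ in equation~\eqref{equation:j'}, both show that the fibre $\Lambda^{\log}_p$ lies in the span of $\omega_{k+1},\ldots,\omega_n$, and both then use strong Euler homogeneity to force the $\tfrac{dh}{h}$-coefficient $s$ to vanish. The one step you implement differently is the containment $\Lambda^{\log}_p\subset V_p$. The paper observes that the non-characteristic hypothesis (via Lemma~\ref{lemma:noncharacteristic}) makes the induced map $\mathbf{P}(\Lambda)\to\mathbf{P}(T^*M(\log D))$ everywhere defined, hence proper with closed image, so that $j(\Lambda)$ is already closed and $\Lambda^{\log}=j(\Lambda)$; the containment is then immediate since $j_p(\Lambda_p)\subset V_p$. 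You instead prove only the fibrewise statement by curve selection: approximate $\eta\in\Lambda^{\log}_p$ by $j_{q_t}(\xi_t)$ with $q_t\in Z\cap U$, and rule out $|\xi_t|\to\infty$ by normalising and producing a unit vector in $\Lambda_p\cap\ker j_p=\Lambda_p\cap T^*_{S,p}M$, which non-characteristicity forbids. The two arguments are equivalent in content — your exclusion of the unbounded case is a hands-on proof of the properness the paper invokes — but the paper's version yields the stronger global identity $\Lambda^{\log}=j(\Lambda)$, which it reuses in Theorem~\ref{theo; generic and holonomic}, whereas yours is more elementary and purely local. Both uses of the hypotheses are correctly placed, and your concluding linear-algebra step (nonvanishing of some $\tfrac{\delta_i(h)}{h}(p)$ from $\chi(h)=h$ with $h$ depending only on $x_1,\ldots,x_k$) is exactly the paper's.
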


\begin{proof}
Since $\Lambda$ is conical, Lemma \ref{lemma:noncharacteristic} implies that $j$ induces a map $\mathbf{P}(\Lambda) \to \mathbf{P}(T^*M(\log D))$. Because the image of $\mathbf{P}(\Lambda)$ in $\mathbf{P}(T^*M(\log D))$ is closed, we conclude that $j(\Lambda)$ is closed in $T^*M(\log D)$, i.e. $\Lambda^{log} =j(\Lambda)$.

Suppose $(p,\xi,s) \in j'^{-1}j(\Lambda)$. Let $S$ be the logarithmic stratum of $D$ containing $p$ and $\dim S=n-k$. Thus there exists $(p,\eta)\in \Lambda$ such that $j'(p,\xi,s)=j(p,\eta)$. 
\begin{equation*}
-s\sum_{i=1}^k \delta_i(\frac{dh}{h})(p)\omega_i + \sum_{i=k+1}^n \xi_i\omega_i = \sum_{i=k+1}^{n}\eta_i\omega_i.
\end{equation*}

Since $\omega_1,\ldots,\omega_n$ are linearly independent, $s\delta_i(\frac{dh}{h})(p)=0$ for $i=1,\ldots k$. Since $h$ is strongly Euler homogeneous, the existence of $\chi\in \textup{Der}_{M,p}(-\log D)$ such that $\chi(h)=h$ implies that $\frac{\delta_(h)}{h}(p)$ cannot be zero for all $i$. Therefore $s=0$.
\end{proof}

\begin{lemma}\label{lemma:inverse image Lambda}
Let $S$ be a logarithmic stratum of any free divisor $D$, and let $Z\not\subset D$ be a subvariety whose conormal space is $\Lambda$. Then $\dim T^*M\vert_S \cap j^{-1}j(\Lambda) \leq n$. In particular, if $D$ is holonomic, $\dim j^{-1}j(\Lambda)\leq n$.
\end{lemma}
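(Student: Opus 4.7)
The plan is to describe $T^*M\vert_S \cap j^{-1}j(\Lambda)$ as the preimage of an isotropic subvariety of $T^*S$ under a linear vector-bundle map, so the dimension bound becomes a consequence of symplectic reduction along a coisotropic submanifold.

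First I would use the local coordinates $(x_1,\ldots,x_n)$ adapted to $S$ from the paragraph preceding equation \eqref{equation:j'}: these have $S=\{x_1=\cdots=x_k=0\}$ and a dual cobasis $\omega_1,\ldots,\omega_n$ of $T^*M(\log D)$ in which
\[
j_p(\xi_1,\ldots,\xi_n)=\sum_{i=k+1}^n \xi_i\omega_i.
\]
Hence along $S$ the map $j$ factors as $T^*M\vert_S \xrightarrow{\rho} T^*S \hookrightarrow T^*M(\log D)\vert_S$, where $\rho$ is the ordinary restriction of covectors and has kernel $T^*_SM$. For $(p,\xi)\in T^*M\vert_S\cap j^{-1}j(\Lambda)$, I pick $(p,\eta)\in \Lambda$ with $j_p(\xi)=j_p(\eta)$; then $\rho(\xi)=\rho(\eta)\in \rho(\Lambda\vert_S)$. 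Setting $A:=\rho(\Lambda\vert_S)\subset T^*S$, this gives
\[
T^*M\vert_S \cap j^{-1}j(\Lambda) = \rho^{-1}(A),
\]
and since $\rho$ is a surjective vector-bundle map with $k$-dimensional fibres,
\[
\dim \bigl(T^*M\vert_S \cap j^{-1}j(\Lambda)\bigr) = \dim A + k.
\]

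The main step is to prove $\dim A \leq \dim S = n-k$, which I would obtain from symplectic reduction. A direct computation shows that $T^*M\vert_S \subset T^*M$ is coisotropic of codimension $k$, that its characteristic foliation is exactly the system of fibres of $\rho$, and that $\rho^*\omega_{T^*S}=\omega_{T^*M}\vert_{T^*M\vert_S}$. Since $\Lambda$ is Lagrangian, the closed intersection $\Lambda\vert_S=\Lambda \cap T^*M\vert_S$ is isotropic in $T^*M$. Working with one irreducible component $C$ of $\Lambda\vert_S$ at a time and applying generic smoothness to the dominant morphism $C \to \overline{\rho(C)}$, I pick a smooth point $(p,\xi)\in C$ above a smooth point $q \in \rho(C)$ at which $d\rho\vert_C$ surjects onto $T_q\overline{\rho(C)}$. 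Since $T_{(p,\xi)}C$ is isotropic in $T^*M$, the pullback identity $\rho^*\omega_{T^*S}=\omega_{T^*M}\vert_{T^*M\vert_S}$ forces $T_q\overline{\rho(C)}$ to be isotropic in $T^*S$. Hence $\dim \rho(C)\leq \dim S$ for every component, giving $\dim A\leq n-k$ and therefore $\dim (T^*M\vert_S \cap j^{-1}j(\Lambda))\leq n$. For the holonomic case, the logarithmic stratification is locally finite, so $j^{-1}j(\Lambda) = \bigcup_\alpha (T^*M\vert_{S_\alpha}\cap j^{-1}j(\Lambda))$ is a locally finite union of pieces each of dimension at most $n$, yielding the global bound.

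The main obstacle is that $\Lambda$, and a fortiori $\Lambda\vert_S$, is typically singular, so the principle that \emph{symplectic reduction sends Lagrangians to isotropics} must be established at the level of closed analytic subvarieties rather than smooth manifolds. The generic-smoothness argument outlined above bridges that gap; once it is in hand, the rest of the proof is formal bookkeeping of dimensions.
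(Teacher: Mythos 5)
Your reduction to bounding $\dim\rho(\Lambda\vert_S)$ by $\dim S$, where $\rho\colon T^*M\vert_S\to T^*S$ is restriction of covectors, coincides with the paper's first step, but from there the two arguments genuinely diverge. The paper chooses a stratification $\{S_i\}$ of $Z\cap S$ such that each pair $((Z\setminus D)_{reg},S_i)$ satisfies Whitney's condition (a); since every point of $\Lambda\vert_{S_i}$ is a limit of covectors conormal to $Z_{reg}\setminus D$, condition (a) forces these limits to annihilate $TS_i$, so $\rho(\Lambda\vert_{S_i})\subset T^*_{S_i}S$ and the bound follows because each $T^*_{S_i}S$ has dimension exactly $\dim S$. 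You instead view $T^*M\vert_S$ as a coisotropic submanifold whose symplectic reduction is $T^*S$ and argue that the reduction of the isotropic set $\Lambda\vert_S$ is isotropic, hence of dimension at most $\dim S$. Your route is more abstract and slightly more general (it uses only that $\Lambda$ is Lagrangian, not that it is a conormal variety), while the paper's argument yields the sharper geometric conclusion that $\rho(\Lambda\vert_S)$ lies in a union of conormal varieties of $T^*S$; both deliver the lemma, and your treatment of the holonomic case matches the paper's.

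There is, however, one step you assert without justification, and it is exactly the one your closing remark claims to have bridged: the isotropy of $T_{(p,\xi)}C$ at a smooth point $(p,\xi)$ of an irreducible component $C$ of $\Lambda\vert_S$. Your generic-smoothness device handles the passage from ``$C$ isotropic'' to ``$\rho(C)$ isotropic'', but not the prior question of why $C$ is isotropic: $C$ may lie entirely in $\Lambda_{\mathrm{sing}}$, in which case a smooth point of $C$ is a singular point of $\Lambda$ and one cannot write $T_{(p,\xi)}C\subset T_{(p,\xi)}\Lambda$. The fact you need --- every closed analytic subset of an isotropic analytic set is isotropic --- is true and standard (e.g.\ Kashiwara--Schapira), but its proof runs through Whitney's condition (a) for a stratification of $\Lambda$: generic tangent spaces of $C$ are contained in limits of tangent planes of $\Lambda_{reg}$, and limits of isotropic planes are isotropic. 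Once this is cited or proved your argument is complete; it is a little ironic that the missing ingredient is the same Whitney (a) input the paper's proof uses directly, applied there to a stratification of $Z$ rather than of $\Lambda$.
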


\begin{proof}
We explained that $j\vert_S$ can be identified with the natrual restriction $T^*M\vert_S \to T^*S$. Since the rank of $\ker(j\vert_S)$ is $n-\dim S$, it is enough to prove that $\dim T^*S \cap j(\Lambda\vert_S) \leq \dim S$. Take a stratification $\{S_i\}$ of $S\cap Z$ such that all pairs $((Z\setminus D)_{reg},S_i)$ satisfy the Whitney A condition. The Whiney condition guarantees that the image of $\Lambda\vert_{S_i}$ under the restriction map $T^*M\vert_{S_i} \to T^*S_i$ is contained in the zero section of $T^*S_i$. Since there is a factorisation
\[
T^*M\vert_{S_i} \to T^*S\vert_{S_i} \to T^*S_i,
\]
the image of $\Lambda\vert_{S_i}$ under the map $T^*M\vert_{S_i} \to T^*S\vert_{S_i}$ is contained in $T^*_{S_i}S$, so $\dim j(\Lambda\vert_{S_i}) \leq \dim T^*_{S_i}S=\dim S$.
\end{proof}

\begin{theorem}\label{theo; generic and holonomic}
If all logarithmic strata of $D$ are non-characteristic with respect to $\Lambda=T^*_ZM$ and $D$ is holonomic, then $Z$ is log transverse to $D$.
\end{theorem}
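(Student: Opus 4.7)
Since the section operates throughout under the assumption that $D$ is a strongly Euler homogeneous free divisor (needed for weak log transversality by Proposition~\ref{prop; whyEH}), I treat SEH as the ambient hypothesis. The strategy is to verify the two conditions characterising log transversality in Remark~\ref{remark:logtrans}: weak log transversality of $Z$ to $D$, together with the dimension bound $\dim j^{-1}(\Lambda^{\log})\leq n$.

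The first step is immediate: under the non-characteristic hypothesis and SEH, Proposition~\ref{prop; genericZ} already yields that $Z$ is weakly log transverse to $D$. Moreover, the argument in the proof of that proposition shows that $j\colon \Lambda\to T^*M(\log D)$ is proper onto its image (it descends to a morphism $\mathbf{P}(\Lambda)\to \mathbf{P}(T^*M(\log D))$ with closed image because $\Lambda$ is conic and $j^{-1}(M)\cap \Lambda\subset T^*_MM$ by Lemma~\ref{lemma:noncharacteristic}), so in fact
\[
\Lambda^{\log}=\overline{j(\Lambda)}=j(\Lambda).
\]

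The second step is the dimension bound. Apply Lemma~\ref{lemma:inverse image Lambda} to each logarithmic stratum $S$ of $D$ to obtain $\dim\bigl(T^*M|_S\cap j^{-1}j(\Lambda)\bigr)\leq n$. Since $D$ is holonomic, the logarithmic stratification is locally finite, and the decomposition
\[
j^{-1}(\Lambda^{\log})=j^{-1}j(\Lambda)=\bigcup_{S}\bigl(T^*M|_S\cap j^{-1}j(\Lambda)\bigr)
\]
is therefore a locally finite union of analytic subsets of dimension at most $n$, yielding $\dim j^{-1}(\Lambda^{\log})\leq n$.

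Combining the two steps via Remark~\ref{remark:logtrans} upgrades weak log transversality to log transversality, completing the proof. The argument is essentially an assembly of previously established results; the only delicate point — that $j(\Lambda)$ is already closed, so that the statements about $j^{-1}j(\Lambda)$ apply directly to $j^{-1}(\Lambda^{\log})$ — is precisely what the non-characteristic hypothesis buys us, and it has already been extracted in the proof of Proposition~\ref{prop; genericZ}. I expect no substantial obstacle beyond making sure the two ingredients are correctly linked by this closedness observation.
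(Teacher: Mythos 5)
Your proof is correct and follows essentially the same route as the paper: weak log transversality and the closedness $\Lambda^{\log}=j(\Lambda)$ come from Proposition~\ref{prop; genericZ}, the dimension bound $\dim j^{-1}j(\Lambda)\leq n$ comes from Lemma~\ref{lemma:inverse image Lambda} together with holonomicity, and the two are combined via Remark~\ref{remark:logtrans}. Your explicit handling of the standing SEH hypothesis and of the closedness issue simply makes visible what the paper's terser proof leaves implicit.
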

\begin{proof}
We explained in the proof of Proposition \ref{prop; genericZ} that the assumption implies that $j(\Lambda)=\Lambda^{log}$. The theorem follows from Proposition \ref{prop; genericZ} and Lemma \ref{lemma:inverse image Lambda}.
\end{proof}

\begin{corollary}
\label{coro; equiSEH}
$M$ is weakly log transverse to $D$  if and only if $D$ is strongly Euler homogeneous. $M$ is log transverse to $D$ if and only if $D$ is strongly Euler homogeneous and holonomic.
\end{corollary}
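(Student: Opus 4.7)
My plan is to reduce all four implications to results already established in this section, specialized to $Z = M$ (so $\Lambda = T^*_M M$ is the zero section of $T^*M$). A key simplification is that the non-characteristic hypothesis of Definition \ref{defn; lognonchar} then holds trivially for every logarithmic stratum $D_\alpha$, since $T^*_M M \cap T^*_{D_\alpha} M = T^*_M M \subset T^*_M M$.

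Using this observation, Proposition \ref{prop; genericZ} with $Z = M$ shows that if $D$ is strongly Euler homogeneous then $M$ is weakly log transverse to $D$; Theorem \ref{theo; generic and holonomic} with $Z = M$ shows that if $D$ is moreover holonomic then $M$ is log transverse to $D$; and Proposition \ref{prop; whyEH} with $Z = M$ gives the converse of the first statement, since any failure of strong Euler homogeneity at a point $p$ produces a failure of weak log transversality of $M$ at $p$.

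This leaves only one nontrivial direction: showing that log transversality of $M$ forces $D$ to be holonomic. Using Remark \ref{remark:logtrans}, log transversality delivers the dimension bound $\dim \ker(j) \leq n$, where $\ker(j) := j^{-1}(T^*_M M(\log D))$ is the preimage of the zero section of $T^*M(\log D)$. I plan to identify $\ker(j)$ set-theoretically with
\[
\ker(j) = \bigcup_\alpha T^*_{D_\alpha} M,
\]
the union running over all logarithmic strata: at a point $p$ of the stratum $D_\alpha$, $\ker(j_p)$ is the annihilator of $\mathrm{image}(j^\vee_p) = T_p D_\alpha$, which is precisely $T^*_{D_\alpha, p} M$. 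Each $T^*_{D_\alpha} M$ is an irreducible locally closed submanifold of $T^*M$ of dimension $n$.

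The main obstacle is the dimension-counting step that promotes the bound $\dim \ker(j) \leq n$ to local finiteness of the logarithmic stratification. My plan is to argue by contradiction: if the stratification were not locally finite at some $p$, the conormal bundles of the infinitely many strata meeting a neighborhood of $p$ would provide infinitely many distinct $n$-dimensional irreducible subvarieties contained in $\ker(j)$; since an analytic set of dimension $n$ admits only finitely many $n$-dimensional irreducible components locally, this would force $\dim \ker(j) > n$ and contradict the bound. The delicate point is checking that distinct strata really produce distinct $n$-dimensional irreducible pieces, which I plan to justify from the disjointness of Saito's strata together with the fact that $\overline{T^*_{D_\alpha} M}$ projects onto $\overline{D_\alpha}$, so that distinct strata with distinct closures in $M$ contribute distinct irreducible components to $\ker(j)$.
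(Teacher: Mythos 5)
Your proposal is correct in outline and, for three of the four implications, is identical to the paper's proof: both reduce the first equivalence to Proposition \ref{prop; whyEH} and Proposition \ref{prop; genericZ} (via the observation that every logarithmic stratum is automatically non-characteristic with respect to the zero section $T^*_MM$), and both get ``SEH $+$ holonomic $\Rightarrow$ log transverse'' from Theorem \ref{theo; generic and holonomic}. The divergence is in the implication ``log transverse $\Rightarrow$ holonomic'': the paper disposes of it in one line by quoting Saito's Proposition (3.18) of \cite{MR586450}, which states precisely that $\dim j^{-1}(M)=n$ is equivalent to holonomicity, whereas you reprove this from scratch. Your identification $j^{-1}(M)=\bigcup_\alpha T^*_{D_\alpha}M$ is correct (it is the same computation the paper uses when it identifies $j\vert_S$ with the restriction $T^*M\vert_S\to T^*S$), and the contradiction with local finiteness of the irreducible decomposition of the analytic set $j^{-1}(M)$ is the right mechanism. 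The one step you flag as delicate is genuinely the place where your argument is incomplete: ``distinct strata with distinct closures contribute distinct components'' needs the prior fact that $T^*_{D_\alpha}M$ is \emph{dense} in the component $W_\alpha$ containing it (so that $W_\alpha=\overline{T^*_{D_\alpha}M}$ and $\pi(W_\alpha)\subseteq\overline{D_\alpha}$); merely knowing that $T^*_{D_\alpha}M$ contains a nonempty open subset of $W_\alpha$ does not suffice, since an irreducible analytic set can contain disjoint nonempty open subsets. Density does hold here --- because $j^{-1}(M)$ is partitioned into the locally closed $n$-dimensional pieces $T^*_{D_\gamma}M$, these trace out disjoint open subsets covering the connected regular locus of $W_\alpha$ away from the other components, forcing exactly one of them to appear --- but this argument needs to be supplied. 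In short: your route is more self-contained (it avoids the external citation) at the cost of essentially reproving one direction of Saito's result, and the reproof as written has a fixable but real gap at the distinctness step.
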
 
\begin{proof}
Since all logarithmic strata of $D$ are non-characteristic with respect to $M=T^*_MM$, the first equivalence follows from Proposition \ref{prop; whyEH} and Proposition \ref{prop; genericZ}. If $M$ is log transverse to $D$, then by Remark \ref{remark:logtrans} $\dim j^{-1}(M)=n$. By \cite[(3.18) Proposition]{MR586450} this condition is equivalent to the holonomicity of $D$.
\end{proof}

\begin{corollary}\label{coro; SEHeulercharacteristic}
Let  $D$ be a  strongly Euler homogeneous free divisor in a complex compact manifold $M$ and $U=M\setminus D$ be its complement. Then 
\[
\chi(U)= \int_M c(\textup{Der}_M(-\log D))\cap [M] \/.
\]
\end{corollary}

\begin{proof}
Combine Corollary \ref{coro; chernclass} and Corollary \ref{coro; equiSEH}.
\end{proof}

When $M$ is a homogeneous space, for any fixed  logarithmic stratum $D_\alpha$ the transversality theorem of Kleiman \cite{Kleiman74} states that a general translate of $Z$ intersects $D_\alpha$ transversely. If $M$ is compact and $D$ is holonomic then $D$ only has finitely many logarithmic strata. 
\begin{corollary}
\label{coro; generaltranslate}
Let $M$ be a compact homogeneous spac, let $D$ be a holonomic and strongly Euler homogeneous free divisor and let $Z$ be a closed subvariety of $M$. Then a general translate of $Z$  is log transverse to $D$.
\end{corollary}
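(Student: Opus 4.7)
The plan is to reduce the statement to Theorem~\ref{theo; generic and holonomic}. Since $D$ is already assumed holonomic and strongly Euler homogeneous, it suffices to arrange by translation that every logarithmic stratum $D_\alpha$ of $D$ is non-characteristic with respect to $T^*_{gZ}M$ in the sense of Definition~\ref{defn; lognonchar}. Once this is achieved, Theorem~\ref{theo; generic and holonomic} applied to $gZ$ immediately gives that $gZ$ is log transverse to $D$.

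First, I would observe that the logarithmic stratification $\{D_\alpha\}_{\alpha\in I}$ is \emph{finite}: by Definition~\ref{defn; holonomic} holonomicity gives local finiteness of the stratification, and the compactness of $M$ then forces $I$ to be finite. Second, let $G$ denote the connected algebraic group acting transitively on $M$. For each fixed stratum $D_\alpha$, Kleiman's transversality theorem \cite{Kleiman74} applied to the smooth locally closed subvariety $D_\alpha$ and the subvariety $Z$ produces a dense open subset $U_\alpha\subset G$ such that for every $g\in U_\alpha$, the smooth loci of $gZ$ and $D_\alpha$ meet transversally in $M$. By the standard conormal reformulation of Kleiman's theorem, this translates into the statement
\[
T^*_{gZ}M\cap T^*_{D_\alpha}M\subset T^*_MM,
\]
i.e.\ the non-characteristic condition for $D_\alpha$ against $T^*_{gZ}M=CC(\mathrm{Eu}^\vee_{gZ})$.

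Third, since $I$ is finite, the intersection $\bigcap_{\alpha\in I}U_\alpha$ remains dense and open in $G$. For any $g$ in this intersection, every logarithmic stratum of $D$ is simultaneously non-characteristic with respect to $T^*_{gZ}M$, and Theorem~\ref{theo; generic and holonomic} yields that $gZ$ is log transverse to $D$. The main delicate point is the conormal formulation of Kleiman transversality on the analytic (rather than smooth algebraic) strata $D_\alpha$; but each $D_\alpha$ is a smooth locally closed analytic submanifold by Saito's theorem, and its closure is an algebraic subvariety in the setting of a compact homogeneous space, so Kleiman's theorem applies to the pair $(\overline{D_\alpha},Z)$ and the conclusion on conormal spaces restricts correctly to the smooth locus $D_\alpha$. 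All other steps are formal.
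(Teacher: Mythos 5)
Your overall strategy is exactly the paper's: the corollary is stated as an application of Theorem~\ref{theo; generic and holonomic}, and the paper's own (implicit) argument is precisely ``holonomic $+$ compact $\Rightarrow$ finitely many logarithmic strata; Kleiman $\Rightarrow$ a general translate is non-characteristic with respect to each stratum; intersect finitely many dense opens.'' So the reduction, the finiteness observation, and the use of Kleiman's theorem are all as intended.

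There is, however, one step in your write-up that does not hold as literally stated: the claim that transversality of the \emph{smooth loci} of $gZ$ and $D_\alpha$ already yields $T^*_{gZ}M\cap T^*_{D_\alpha}M\subset T^*_MM$. The conormal space $T^*_{gZ}M$ is the closure of the conormal bundle of the smooth locus, and its fibers over points of $\mathrm{Sing}(gZ)$ are in general strictly larger than conormal spaces of nearby smooth points; if a singular point of $gZ$ lands on $D_\alpha$, transversality of the smooth loci controls none of these limiting covectors. This is why the paper's Remark following Definition~\ref{defn; lognonchar} phrases the sufficient condition in terms of a \emph{Whitney stratification} $\{Z_i\}$ of $Z$ being transverse to all logarithmic strata: by Whitney's condition (a) one has $T^*_{gZ}M\vert_{gZ_i}\subset T^*_{gZ_i}M$, and transversality of $gZ_i$ with $D_\alpha$ then gives $T^*_{gZ_i}M\cap T^*_{D_\alpha}M\subset T^*_MM$ over that stratum. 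The repair is routine: fix a Whitney stratification of $Z$ with finitely many strata, apply Kleiman's theorem to each pair (stratum of $Z$, logarithmic stratum of $D$), and intersect the resulting finitely many dense open subsets of $G$. With that modification your argument is complete and coincides with the paper's.
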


Next we consider the case where $D$ is a normal crossing divisor. Using a result proved in \cite{BMM00}, this case turns out to be quite simple.

\begin{theorem}
\label{theo; NCimpliesLT}
Let $D$ be a normal crossing divisor and let $Z$ be irreducible such that $Z\not\subset D$. Then $Z$ is log transverse to $D$.
\end{theorem}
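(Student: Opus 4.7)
The assertion is local on $M$. By Remark~\ref{remark:choice of function} it suffices, for each point $p \in Z \cap D$, to work in an analytic neighbourhood $W$ of $p$ on which $L$ is trivialised. Using the triviality lemma (\cite{MR586450} (3.6)) together with the normal crossing hypothesis, I choose coordinates $(x_1,\ldots,x_n)$ on $W$ in which $D \cap W = V(x_1\cdots x_k)$, and take $h = x_1 \cdots x_k$ as the defining equation. Then $D \cap W$ is strongly Euler homogeneous (with Euler derivation $\sum_{i=1}^{k} x_i \partial_{x_i}$) and holonomic: the logarithmic strata are the finitely many coordinate flats $D_I^{\circ} = \{x_i = 0 \Leftrightarrow i \in I\}$.

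By Remark~\ref{remark:logtrans}, it is enough to establish:
\textbf{(i)} $Z$ is weakly log transverse to $D$; and \textbf{(ii)} $\dim j^{-1}(\Lambda^{\log}) \leq n$. The heart of the matter is the behaviour of $\Lambda^{\log} = \overline{j(\Lambda)}$ in the normal crossing setting, and the key input is the structural result of \cite[\S 1]{BMM00}: for a normal crossing divisor, $\Lambda^{\log}$ is built from the iterated Verdier specialisations of $\Lambda$ along the coordinate hyperplanes $V(x_i)$, and is a closed conic analytic subvariety of $T^*M(\log D)$ of pure dimension $n$.

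Granting this, (ii) follows from a fibre dimension count: the kernel of $j_p: T^*_p M \to T^*_p M(\log D)$ is spanned by those $dx_i$ with $x_i(p) = 0$ and sits inside the zero section of $T^*_{D_{I}^{\circ}} M \subset T^*M$, so integrating dimensions along the stratification $\{D_I^{\circ}\}$ bounds $\dim j^{-1}(\Lambda^{\log})$ by $n$. For (i), I compute $j'$ in our coordinates via \eqref{equation:j'}: since $\langle x_i \partial_{x_i}, d\log h\rangle \equiv 1$ for every $i \leq k$, one obtains $j'(x,\xi,s) = j(x,\xi) - s\sum_{i=1}^{k}\omega_i$, so on the open set $\{\pi_2 \neq 0\}$ the map $j'$ realises $P^1_M L \otimes L^\vee$ as a rank-one affine bundle over $T^*M(\log D)$. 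Consequently $j'^{-1}(\Lambda^{\log}) \cap \{\pi_2 \neq 0\}$ is irreducible of dimension $n+1$, and by Proposition~\ref{sharpintolog} it must coincide with $\overline{\Lambda^{\sharp}} \cap \{\pi_2 \neq 0\}$; this delivers (i).

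The main obstacle is extracting the BMM00 dimension and specialisation statement in the precise form required here, and translating it from the language of analytic specialisations of conormal cycles into the language of logarithmic conormal spaces and jet sharp constructions used in \S \ref{sec; sharp}. Once this translation is made, both (i) and (ii) follow from straightforward coordinate computations, explaining why the NC case bypasses the non-characteristic hypothesis that is indispensable in Theorem~\ref{theo; generic and holonomic}.
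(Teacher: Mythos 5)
Your reduction via Remark~\ref{remark:logtrans} to (i) weak log transversality plus (ii) the bound $\dim j^{-1}(\Lambda^{\log})\leq n$ is legitimate, and your sketch of (ii) is essentially Lemma~\ref{lemma:inverse image Lambda} applied to the (holonomic) coordinate-flat stratification. The gap is in (i). You claim that, because $j'(x,\xi,s)=j(x,\xi)-s\sum_{i=1}^k\omega_i$, the map $j'$ realises $P^1_ML\otimes L^\vee$ as a rank-one affine bundle over $T^*M(\log D)$ on the open set $\{\pi_2\neq 0\}$, whence $j'^{-1}(\Lambda^{\log})\cap\{\pi_2\neq 0\}$ is irreducible of dimension $n+1$. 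This is false. Proposition~\ref{sharpintolog} gives the rank-one bundle structure only over $\pi_1^{-1}(U)$, i.e.\ away from $D$ in the \emph{base}, not away from the zero slice $\{s=0\}$ of the parameter. At a point $p\in D$ lying on $m\geq 2$ branches, the linear map $j'_p:T^*_pM\times\mathbb{C}\to T^*_pM(\log D)$ of \eqref{equation:j'} has image of dimension $n+1-m<n+1$ and is not surjective, so $j'^{-1}(\Lambda^{\log})$ can pick up fibres of dimension $m$ sitting entirely inside a slice $\{s=s_0\}$ with $s_0\neq 0$; ruling out such extra components over the deep strata is exactly the content of the theorem, and your argument assumes it. The failure is not cosmetic: your argument for (i) never uses normal crossings beyond the identity $\delta_i(dh/h)\equiv 1$, and the analogous identity holds (with the basis normalised as before Lemma~\ref{lemma:curvetest}) for any strongly Euler homogeneous free divisor; so the same reasoning would ``prove'' weak log transversality of every $Z$ to the cuspidal cubic, contradicting Example~\ref{counterexample}.

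The paper's proof gets around precisely this point by passing to the multi-parameter sharp construction $\Lambda_k^\sharp\subset T^*\mathcal{U}\times\mathbb{C}^k$ of \cite{KK79} and the map $\rho(x,\xi,s_1,\ldots,s_k)=(x,j(\xi)-\sum_i s_i\frac{dt_i}{t_i})$: with one parameter per branch, $\rho$ \emph{is} a surjective bundle map everywhere, so $\rho^{-1}(\Lambda^{\log})=\Lambda_k^\sharp$ follows from a dimension count and irreducibility. The single-parameter statement $j'^{-1}(\Lambda^{\log})=\overline{\Lambda^\sharp}$ is then obtained by restricting along the small diagonal $i:(x,\xi,t)\mapsto(x,\xi,t,\ldots,t)$, and the assertion that this restricted square is Cartesian is exactly \cite[Corollary 1]{BMM00} --- this, rather than a specialisation or dimension statement about $\Lambda^{\log}$ itself, is the input you need from that reference. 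If you want to keep your two-step structure, you could replace your (i) by the curve test of Lemma~\ref{lemma:curvetest} (as in Example~\ref{coro; NCimpliesWLT}), which does yield weak log transversality for normal crossing $D$; combined with your (ii) this gives an alternative proof of Theorem~\ref{theo; NCimpliesLT}.
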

\begin{proof} 
We prove that $Z$ is log transverse to $D$ at any $p\in Z\cap D$. Let $\mathcal{U}$ be a small neighbourhood of $p$ in which the divisor $D$ is cut by   $x_1 \cdots  x_k=0$. Set $\Lambda=T_{Z\cap \mathcal{U}}^*\mathcal{U}$, we can form the following multi-parameter sharp construction introduced in \cite{KK79}.
\[
 \Lambda_k^\sharp :=
\overline{
\left\lbrace
\left(p, \xi+s_1\frac{dt_1}{t_1}+\cdots +s_k\frac{dt_k}{t_k}, s_1, \cdots ,s_k\right)\Big\vert x\notin D \text{ and } (p, \xi)\in \Lambda
\right\rbrace}\subset T^*\mathcal{U}\times \mathbb{C}^k.
\]
Similar to the local map $j'$ defined in \eqref{diagram; sharp} we define $\rho\colon T^*\mathcal{U}\times \mathbb{C}^k \to  T^*\mathcal{U}(\log D)$ that sends $(x, \xi, s_1, \cdots ,s_k)$ to $(x, j(\xi)-\sum_{i=1}^r s_i\frac{dt_i}{t_i})$.  
By definition  $\Lambda^{\log}$ is the closure of $\rho(\Lambda_k^\sharp )$. 
The map $\rho$ is a surjective map of vector bundles, therefore $\dim \rho^{-1}(\Lambda^{\log})= n+k=\dim \Lambda_k^\sharp $. Since both spaces are irreducible, we have $\Lambda_k^\sharp=\rho^{-1}(\Lambda^{\log})\/.$  Consider the following diagram.

\[
\begin{tikzcd}
\overline{\Lambda^\sharp} \arrow{r}\arrow{d} & \Lambda_k^\sharp \arrow{r} \arrow{d}  & \Lambda^{\log} \arrow{d} \\
T^*\mathcal{U}\times \mathbb{C} \arrow[r, "i"] &  T^*\mathcal{U}\times \mathbb{C}^r \arrow[r, "\rho"]  & T^*\mathcal{U}(\log D)
\end{tikzcd}
\]
Here $i\colon (x, \xi, t)\mapsto (x, \xi, t, \cdots ,t)$ is the small diagonal embedding. By what we have shown above, the right square is Cartesian. The left square is also Cartesian by \cite[Corollary 1]{BMM00}, where an elegant conceptional proof involving only basic analytic geometry and the definition of Lagrangian variety is provided.
Since $j'=\rho\circ i$,  we have 
 ${j'}^{-1}(\Lambda^{\log})=\overline{\Lambda^\sharp}$.  
\end{proof}

Finally we give a general criterion which says that weak log transversality can be tested by analytic paths.

Let $D$ be a strongly Euler homogeneous free divisor and let $p\in D$ be any point. Let $D_\alpha$ be the logarithmic stratum containing $p$ and suppose $\dim D_\alpha=n-k$. Choose an analytic neighbourhood $\mathcal{U}_p$ of $p$ with coordinate functions $x_1,\ldots, x_n$ such that $D_\alpha \cap \mathcal{U}_p$ is given by $x_1=\ldots = x_k=0$ and the equation of $D\cap \mathcal{U}_p$ is given by a strongly Euler homogeneous equation $h$ involving only the variables $x_1,\ldots,x_k$. Let $\chi$ be a Euler vector field for $h$, i.e. $\chi(h)=h$. A basis $\{\delta_1,\ldots,\delta_n\}$ of $\textup{Der}_{M,p}(-\log D)$ and a dual basis $\{\omega_1,\ldots,\omega_n\}$ of $\Omega^1_{M,p}(-\log D)$ can be chosen such that
\begin{enumerate}[label=(\roman*)]
\item $\delta_1=\chi$ and $\omega_1=\frac{dh}{h}$. 
\item $\delta_i=\partial_{x_i}$ and $\omega_i=dx_i$ for $i=k+1,\ldots,n$.
\end{enumerate}

\begin{lemma}[Curve Test]\label{lemma:curvetest}
Let $Z$ be an irreducible subvariety of $M$, let $p\in Z\cap D$ be any point and let $\Delta$ be a small disc in $\mathbb{C}$ centered at $0$ with radius $\epsilon$. Assume that for every complex analytic path germ $\phi\colon (\Delta, 0)\to (Z,p)$ such that $\phi(\Delta\setminus 0)\subset (Z\setminus D)$,  
the differential map $d\phi$ induces a morphism of logarithmic $1$-forms 
\[
\phi^*\colon \Omega^1_{M, p}(\log D)\to \Omega^1_{\Delta, 0}(\log \{0\})\/.
\] 
Then the subvariety $Z$ is weakly log transverse  to $D$ at $x$. 
\end{lemma}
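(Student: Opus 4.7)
The plan is to argue by contradiction. By the explicit formula \eqref{equation:j'} in the chosen basis, the map $j'_p$ takes the form $j'_p(\xi,s)=-s\omega_1+\sum_{i>k}\xi_i\omega_i$, so weak log transversality at $p$ amounts to the following statement: if $\eta=-s\omega_1+\sum_{i>k}\xi_i\omega_i$ lies in $\Lambda^{log}|_p$, then $s=0$. I fix such an $\eta$ and deduce $s=0$ by combining analytic curve selection with the curve test hypothesis.

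Since $\Lambda^{log}$ is a complex analytic set by Proposition~\ref{analyticity:logimage} and the image $j(\Lambda|_{Z_{\textup{reg}}\cap U})$ is Zariski dense in $\Lambda^{log}$, the analytic curve selection lemma yields an analytic path germ $\tilde\psi:(\Delta,0)\to\Lambda^{log}$ with $\tilde\psi(0)=(p,\eta)$ and $\tilde\psi(\Delta\setminus\{0\})\subset j(\Lambda|_{Z_{\textup{reg}}\cap U})$. Projecting to $M$ produces an analytic germ $\phi:(\Delta,0)\to(Z,p)$ with $\phi(\Delta\setminus\{0\})\subset Z_{\textup{reg}}\cap U$, and for $t\neq 0$ a unique lift $\alpha(t)\in T^*_{\phi(t),Z}M$ via the isomorphism $j|_U$. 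Write $\tilde\psi(t)=(\phi(t),\beta(t))$ where $\beta(t)=j(\alpha(t))$ for $t\neq 0$; then $\beta$ is an analytic section of $\phi^*T^*M(\log D)$ over $\Delta$ with $\beta(0)=\eta$.

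The curve test hypothesis, dualised, ensures that $t\phi'(t)$ extends analytically across $t=0$ as an element $\tilde v\in T_pM(-\log D)$; expanding $\tilde v=\sum_i B_i\,\delta_i|_p$ identifies $B_i$ with the residue $\textup{Res}_{t=0}(\phi^*\omega_i)$. Here the basis choice becomes decisive. For $i>k$, $\omega_i=dx_i$ is holomorphic, so $\phi^*\omega_i$ is holomorphic and $B_i=0$. For $i=1$, $\phi^*\omega_1=d(h\circ\phi)/(h\circ\phi)=m\,dt/t+(\textup{holomorphic})$, where $m\geq 1$ is the order of vanishing of $h\circ\phi$ at $0$, so $B_1=m>0$. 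The values of $B_i$ for $2\leq i\leq k$ will turn out to be irrelevant.

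For $t\neq 0$, the conormality $\alpha(t)(\phi'(t))=0$ translates, under the identification $T^*M|_U\cong T^*M(\log D)|_U$, into $\beta(t)(t\phi'(t))=0$. Since both factors extend analytically across $t=0$, so does the pairing, and it must vanish identically on $\Delta$. Evaluation at $t=0$ yields $\eta(\tilde v)=\sum_i\eta_iB_i=0$. Substituting $\eta_1=-s$, $\eta_i=0$ for $2\leq i\leq k$, $\eta_i=\xi_i$ for $i>k$, together with $B_i=0$ for $i>k$ and $B_1=m$, this collapses to $-sm=0$, giving $s=0$. The main technical hurdle is precisely this analytic extension of the pairing across $t=0$: the curve test is invoked exactly to guarantee that the coefficients of $\phi'(t)$ in the log-tangent basis $\delta_i$ have at worst simple poles at $t=0$, so that $t\phi'(t)$ becomes an honest analytic section of $\phi^*TM(-\log D)$ at the origin.
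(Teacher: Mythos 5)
Your proof is correct and follows essentially the same route as the paper's: choose by curve selection an analytic arc in $\Lambda^{\log}$ through $(p,\eta)$ lying over $Z_{reg}\setminus D$, pull back the conormal pairing along the projected curve $\phi$, and observe that the curve-test hypothesis forces every term except the $\omega_1=\frac{dh}{h}$ component to contribute no pole (or no residue), while that component contributes $\pm s\cdot\mathrm{ord}_0(h\circ\phi)\neq 0$ unless $s=0$. Your repackaging — multiplying by $t$ so that $t\phi'(t)$ extends to a logarithmic tangent vector and then reading off the residue $\sum_i\eta_iB_i$ — is the dual formulation of the paper's statement that $\phi^*(\varphi(t))$ would have pole order exactly $-1$, so the two arguments coincide.
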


\begin{proof}
Let $\Lambda=T^*_ZM$ and we prove by contradiction. Suppose $Z$ is not weakly log transverse to $D$ at $p\in Z\cap D$, then therer exists $s\neq 0$ and $\xi\in T^*_pM$ such that $j'_p(\xi,s)=\eta \in \Lambda^{log}$. Since $j'_p$ is linear and $\Lambda^{log}$ is a cone, we can assume $s=1$ for simplicity. Our choice of basis for $\Omega^1_{M,p}(\log D)$ implies that $\delta_1(\frac{dh}{h})=1$ and  $\delta_i(\frac{dh}{h})=0$ for $i=2,\ldots,k$. By \eqref{equation:j'} we have
\[
\eta=j'_p(\xi_1,\ldots,\xi_n,1)=\omega_1+\sum^n_{i=k+1} \xi_i\omega_i.
\]
Let $\pi$ be the natural projection $T^*M(\log D)\to M$. Choose a curve $\varphi:(\Delta,0) \to (\Lambda^{log},(p,\eta))$ such that $\pi\circ \varphi(\Delta\setminus 0)\subset (Z_{reg}\setminus D)$. Denote $\pi\circ \varphi$ by $\phi$. The map $\varphi$ can be written as
\[
\varphi(t)=\varphi_1(t)\omega_1+\ldots + \varphi_n(t)\omega_n,
\]
satifiying the properties 
\begin{enumerate}[label=(\roman*)]
\item $\lim_{t\to 0}\varphi_1(t)=1$, 
\item $\lim_{t\to 0}\varphi_i(t)=0$ for $i=2,\ldots,k$,
\item $\lim_{t\to 0}\varphi_i(t)=\xi_i$ for $i=k+1,\ldots,n$.
\end{enumerate}
Since the map $j$ induces an isomorphism between $\Lambda\vert_{U}$ and $\Lambda^{log}\vert_{U}$, for $t\neq 0$ the form $\varphi(t)$ can be regarded as a conormal vector to $Z$ expanded in terms of the logarithmic basis at $p$. Since $\phi(\Delta\setminus 0) \subset Z_{reg}$, the form $\varphi(t)$ should be conormal to $\phi(\Delta\setminus 0)$ as well. Pulling back to $\Delta$ we see that
\[
\phi^*(\varphi(t))=\varphi_1(t)\frac{dh(t)}{h(t)}+\varphi_2(t)\phi^*\omega_2+\ldots+\varphi_n(t)\phi^*\omega_n
\]
must be identically zero on $\Delta\setminus 0$. However, the assumption and the limits of $\varphi_i(t)$ imply that $\varphi_i(t)\phi^*\omega_i$ is holomorphic for any $i=2,\ldots,n$ so that
\[
\textup{ord}_t\Big(\phi^*(\varphi(t))\Big)=\textup{ord}_t\Big(\frac{dh(t)}{h(t)}\Big)=-1.
\]
This leads to a contradiction.
\end{proof}

\begin{example}\label{coro; NCimpliesWLT}
As an application of the curve test lemma, we can easily show the weak transversality of $Z$ and $D$ in the context of Theorem \ref{theo; NCimpliesLT}. Let $p\in Z\cap D$ be any point. Take an analytic neighbourhood $\mathcal{U}_p$ of $p$ such that the equation of $\mathcal{U}_p\cap D$ is given by $x_1\cdots x_k=0$. We take $\{\frac{dx_1}{x_1}, \ldots , \frac{dx_k}{x_k},dx_{k+1},\ldots,dx_n \}$ as a basis for $\Omega^1_{M, p}(\log D)$.  Let $\phi: (\Delta,0)\to (Z,p)$ be any analytic path germ such that $\phi(\Delta\setminus 0)\subset Z\setminus D$. Since $\phi^*(\frac{dx_i}{x_i})=\frac{d(x_i(t))}{x_i(t)}$ for $i=1,\ldots,k$ and $\phi^*(dx_j)=d(x_j(t))$ for $j=k+1,\ldots,n$, the pole orders of these forms are no greater than $1$. By Lemma \ref{lemma:curvetest}, $Z$ is weakly log transverse to $D$ at $p$. Since this holds for any $p\in Z\cap D$, $Z$ is weakly log transverse to $D$.
\end{example}

\begin{remark}
Lemma \ref{lemma:curvetest} provides an effective method to test weak log transversality of a curve to $D$, but when $\dim Z>1$ the condition in the lemma is too strong. For example, $M$ is always weakly log transverse to a strongly Euler homogeneous free divisor $D$, but there can be plenty of curves in $M$ not satisfying the curve test. See Example \ref{counterexample}.
\end{remark}

\begin{conj}
\label{conj; SNC}
Let $D$ be a reduced free divisor of $M$. If any irreducible subvariety $Z$ such that $Z\not\subset D$ is weakly log transverse to $D$, then $D$ is normal crossing.
\end{conj}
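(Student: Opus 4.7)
My plan is to prove the contrapositive: if $D$ is a free divisor that fails to be normal crossing at some point $p$, I would exhibit an irreducible subvariety $Z \not\subset D$ containing $p$ which is not weakly log transverse to $D$ at $p$. By Corollary~\ref{coro; equiSEH} the hypothesis already forces $D$ to be strongly Euler homogeneous, so I assume this throughout. Saito's triviality lemma (\cite{MR586450}~(3.6)) lets me further reduce to the case where $p$ is a $0$-dimensional logarithmic stratum: indeed if the stratum through $p$ has positive dimension then $(M,D,p)$ factors as a product $(\mathbb{C}^k,D',0)\times (\mathbb{C}^{n-k},0)$ with $0$ a $0$-dim log stratum of $D'$, and both weak log transversality and the normal crossing property descend to the first factor. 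Under this reduction, normal crossing at $p$ amounts to $D=\{x_1\cdots x_n=0\}$ in suitable local coordinates.

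The first step is to establish a converse of Lemma~\ref{lemma:curvetest} for $1$-dimensional $Z$: an irreducible curve germ $(Z,p)$ with parametrisation $\phi:(\Delta,0)\to(Z,p)$ such that $\phi(\Delta\setminus\{0\})\subset Z\setminus D$ is weakly log transverse to $D$ at $p$ if and only if $\phi^*\Omega^1_{M,p}(\log D)\subset \Omega^1_{\Delta,0}(\log\{0\})$. For the non-obvious direction I assume some $\omega\in \Omega^1_{M,p}(\log D)$ has $\phi^*\omega$ of pole order $m\geq 2$ at $0$, and define, for $t\in \Delta\setminus\{0\}$, a family of logarithmic cotangent vectors
\[
\omega_t := \omega(\phi(t)) - c(t)\cdot \omega_1(\phi(t)),
\]
where $\omega_1=dh/h$ for a strongly Euler equation $h$ of $D$ at $p$, and $c(t)$ is the unique meromorphic function making $\omega_t$ annihilate $\phi'(t)$; a direct check shows $\omega_t \in j(T^*_Z M)\vert_{\phi(t)}$ and that $c$ has a pole of order exactly $m-1$ at $0$. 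Rescaling by $t^{m-1}$ and letting $t\to 0$ exhibits $\omega_1(p)$ as a nonzero element of $\Lambda^{log}\vert_p$. Combined with formula~\eqref{equation:j'} applied at the strongly Euler equation, which gives $j'_p(\mathbf{0},s)=-s\cdot \omega_1(p)$, this produces $(\mathbf 0,1)\in j'^{-1}(\Lambda^{log})$ with $s=1\neq 0$, contradicting weak log transversality.

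Applying this equivalence and the hypothesis to every curve germ through $p$ then yields the valuative statement: for every analytic arc $\phi:(\Delta,0)\to(M,p)$ with $\phi(\Delta\setminus\{0\})\not\subset D$ and every $\omega\in\Omega^1_{M,p}(\log D)$, the pullback $\phi^*\omega$ has pole of order at most one at $t=0$. The remaining step is to show that this universal pole bound implies $D$ is normal crossing at $p$. My intended strategy is branchwise: for each local irreducible component $D_i$ of $D$ at $p$, specialising to arcs of very high contact with $D_i$ (but not contained in $D_i$) should force $D_i$ to be smooth at $p$, since the non-$dh/h$ basis elements of $\Omega^1(\log D)$ would otherwise develop poles of order $\geq 2$ along such arcs; once all $D_i$ are smooth, specialising to arcs transverse to prescribed intersections $D_i\cap D_j$ should force the $D_i$ to meet pairwise transversely. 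A count of branches against the $0$-dim log stratum hypothesis then produces exactly $n$ smooth branches in general position, giving $D=\{x_1\cdots x_n=0\}$ in suitable analytic coordinates.

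The main obstacle is this last step. The tangential-branch example $D=\{y(y-x^2)=0\}$, which is strongly Euler homogeneous and free but not normal crossing, shows the pole condition genuinely detects non-normal-crossing singularities: the arc $\phi(t)=(t,t^2+t^3)$ produces a pole of order two for the second generator of $\Omega^1(\log D)$, exhibiting the expected failure. However a uniform argument in arbitrary dimension for arbitrary free $D$ appears delicate and likely requires the refined weak log transversality criteria promised in the forthcoming work \cite{LZ24-2}, possibly combined with Saito's characterisation of normal crossing free divisors through the shape of a generating set of $\Omega^1(\log D)$.
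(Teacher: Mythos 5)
The first thing to say is that the statement you are proving is Conjecture~\ref{conj; SNC}: the paper explicitly leaves it open and offers no proof, so there is nothing to compare your argument against, and a complete argument here would be a new theorem rather than a recovered one. Your proposal is not that; by your own account the decisive step is missing. The parts you do carry out look sound and are worth keeping. The reduction to a zero-dimensional logarithmic stratum via Saito's triviality lemma is fine, and the converse of Lemma~\ref{lemma:curvetest} for one-dimensional $Z$ appears correct: with a strongly Euler equation $h$ (which you rightly extract from Corollary~\ref{coro; equiSEH} before doing anything else) the pairing $\langle\omega_1(\phi(t)),\phi'(t)\rangle$ has pole order exactly one, so $c(t)$ has pole order exactly $m-1\geq 1$, the rescaled family $t^{m-1}\omega_t$ converges inside the closed cone $\Lambda^{log}$ to a nonzero multiple of $\omega_1(p)$, and \eqref{equation:j'} then places $(\mathbf{0},1)$ in $j'^{-1}(\Lambda^{log})$ over $p$, destroying weak log transversality. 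This legitimately reduces the conjecture to the valuative statement that a universal pole bound of one for all arcs through $p$ forces $D$ to be normal crossing at $p$. One caveat: the conjecture quantifies over irreducible subvarieties of $M$, and in the analytic category an arc germ need not extend to a closed subvariety of $M$, so what you would actually be proving is a local variant of the contrapositive.

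The genuine gap is the last step, which after your reduction \emph{is} the conjecture. The sketch --- high-contact arcs force each branch to be smooth, then pairwise transversality, then a branch count --- is not an argument. No reason is given why a singular branch must admit an arc along which some logarithmic form develops a pole of order at least two; your computation for $D=\{y(y-x^2)=0\}$ is evidence for one example, obtained from the explicit shape of the second basis form, not a mechanism that works for an arbitrary free divisor. More seriously, even granting that all branches are smooth, pairwise transversality is strictly weaker than normal crossing: already for $D=V(xy(x+y))\subset\mathbb{C}^2$, a free, strongly Euler homogeneous divisor whose origin is a zero-dimensional logarithmic stratum, the three branches are smooth and pairwise transverse yet $D$ is not normal crossing, so the ``count of branches against the $0$-dimensional stratum hypothesis'' must do all the remaining work and none of it is supplied. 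As it stands the proposal is a useful reduction of an open conjecture to another open statement, together with a correct auxiliary lemma (the curve-test converse for curves), but it is not a proof.
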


\section{Restriction of Hypersurface Complement to Free Divisor Complement}
\label{sec; doubleres}
Let $L$ and $L'$ be two line bundles  on $M$.
Suppose  we  are  given a  reduced hypersurface $V=V(g)$ and  a free divisor $D=V(f)$ cut by global sections $g\in H^0(M, L)$ and $f\in H^0(M, L')$.  
We now adapt our  global sharp constructions to investigate the restriction of characteristic cycles to the complement $M\setminus D\cup V$. 
Our strategy is to  restrict the characteristic cycle to $M\setminus V$ first using Ginzburg's sharp operation, and then restrict to $M\setminus D$ using logarithmic completion.

Similar to the definition of $\mathcal{P}^1_M\mathcal{L}$ in \S\ref{sec; sharp}, we consider the $\mathbb{C}_M$-module $\mathcal{L}\oplus (\mathcal{L}\otimes\Omega^1_M(\log D))$ endowed with a $\mathscr{O}_M$-module structure given by
\[
h\cdot (s,\alpha)= (hs, s\otimes dh + h\alpha).
\]
Since $D$ is assumed to be a free divisor, we obtain a locally free sheaf of rank $n+1$. We denote the associated vector bundle by $P^1_ML(\log D)$ and call it the \textbf{bundle of logarithmic principal parts of $L$ along $D$}. The natural inclusion $\Omega^1_M \to\Omega^1_M(\log D)$ induces the Cartesian diagram with exact rows below 

\begin{equation}
\begin{tikzcd}
\label{diagram; P^1LlogD}
0 \arrow{r} & T^*M\otimes L \arrow["i_1"]{r}\arrow["j_L"]{d} &  P^1_ML \arrow["u"]{d}   \arrow["q_1"]{r} & L \arrow{r}\arrow{d} & 0\\
0 \arrow{r} & T^*M(\log D)\otimes L \arrow["i_2"]{r} & P^1_ML(\log D)\arrow["q_2"]{r} & L \arrow{r} & 0
\end{tikzcd} \/.
\end{equation}  

The notation looks similar to some notations introduced in \S\ref{sec; sharp}, and the reader should keep in mind the difference in their meanings. In \S\ref{sec; sharp}, $\mathscr{O}(D)\cong L$ but here $\mathscr{O}(D)\cong L'$ which is not assumed to have any relation to $L$.  

\begin{remark}
\label{logsplitting}
In the context of Lemma~\ref{lemma; logconnection}, we have a splitting of $\mathcal{O}_M$-modules
\[
\mathcal{P}^1_M\mathcal{L}(\log D) \cong \mathcal{L} \oplus (\mathcal{L}\otimes\Omega^1_M(\log D)),
\]
since by the definition of the $\mathscr{O}_M$-structure of $\mathcal{P}^1_M\mathcal{L}(\log D)$, a splitting $\mathcal{L} \to \mathcal{P}^1_M\mathcal{L}(\log D)$ is equivalent to a logarithmic connection $\mathcal{L} \to \mathcal{L}\otimes\Omega^1_M(\log D)$.

When $L\not\cong L'$ we can't expect a splitting as above exists. 
However, we note that the splitting does not necessarily imply $L\cong L'$, since such splitting always exist when $D$ is a projective hyperplane arrangement  (see Proposition~\ref{hyparrlogjet}).  
\end{remark}

Let $Z\not\subset V$ be an irreducible subvariety of $M$ and let $\Lambda$ be its conormal space. 

\begin{definition} 
\label{defn:JsharplogD}
We define the $(\log D)$-jet sharp construction of $\Lambda$ to be
\[
J\Lambda^{\sharp,log}_{D}:= \overline{u\left(\overline{J\Lambda^{\sharp}_g}\right)}\subset P^1_ML(\log D) \/.
\]
In other words, we first perform the global jet sharp operation using $g\in H^0(M,L)$, and then perform the logarithmic completion along the free divisor $D=V(h)$. 
\end{definition}
We will use the shorthand notation $J\Lambda^{\sharp,log}$ for $J\Lambda^{\sharp,log}_{D}$ in the rest of this section.

We denote by $\tau_1\colon \overline{J\Lambda^\sharp}\to M$ and $\tau_2\colon J\Lambda^{\sharp,log}\to M$  the natural projections. By Proposition \ref{global-Ginzburg} we have $[i_1^{-1}(\overline{J\Lambda^{\sharp}})]=\textup{JCC}(\textup{Eu}_Z^\vee\cdot \ind_{M\setminus V})$  in  $T^*M\otimes L$. It's intriguing to ask what if we pullback $[J\Lambda^{\sharp,log}]$ along $i_2\circ j_L$. The following lemma answers this question when $D$ is  holonomic.

\begin{lemma}
\label{lemma; i2log}  
Assume that $D$ is a holonomic free divisor.
\begin{enumerate}[label=(\roman*)]
\item  
If $\textup{JCC}(\textup{Eu}_Z^\vee\cdot \ind_{M\setminus V})=\sum_\alpha m_\alpha [T_{Z_\alpha}^*M\otimes L]=\sum_\alpha m_\alpha [J\Lambda_\alpha]$, then
\[
\left[i_2^{-1}\left(J\Lambda^{\sharp,log}\right)\right]=
\sum_{Z_\alpha\not\subset D} 
m_\alpha \left[J\Lambda_\alpha^{log}\right] \/.
\]
In other words, $\left[i_2^{-1}\left(J\Lambda^{\sharp,log}\right)\right]$ is the log completion of  $\textup{JCC}(\textup{Eu}_Z^\vee\cdot \ind_{M\setminus V})$ along $D$.
\item If those $Z_\alpha$ such that $Z_\alpha\not\subset D$ are log transverse to $D$, we have:
\[  
\textup{JCC}(\textup{Eu}_Z^\vee\cdot \ind_{M\setminus D\cup V})=[(i_2\circ j_L)^{-1}(J\Lambda^{\sharp,log})] \/.
\]
\end{enumerate}
\end{lemma}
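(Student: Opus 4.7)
The plan is to first prove (i) by restricting to the open locus $M\setminus D$, where the maps $u$ and $j_L$ are isomorphisms, exploit the Cartesian structure of the left square in diagram~\eqref{diagram; P^1LlogD}, and then show that taking closures back into all of $P^1_ML(\log D)$ produces exactly the logarithmic cycle on the right-hand side. Part (ii) will then follow from (i) by pulling back along $j_L$ and invoking the log transversality hypothesis together with the formula \eqref{relation; CCnonequivariant}.

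For (i), over $M\setminus D$ the inclusion $\Omega^1_M \to \Omega^1_M(\log D)$ is an isomorphism, so both $u$ and $j_L$ restrict to isomorphisms onto their images. The Cartesianness of the left square in~\eqref{diagram; P^1LlogD} then gives the set-theoretic identity
\[
i_2^{-1}\!\bigl(u(\overline{J\Lambda^\sharp})\bigr)\big|_{M\setminus D} \;=\; j_L\!\bigl(i_1^{-1}(\overline{J\Lambda^\sharp})\bigr)\big|_{M\setminus D},
\]
and Proposition~\ref{global-Ginzburg} identifies the right-hand side with $j_L\!\bigl(\textup{JCC}(\textup{Eu}_Z^\vee\cdot\ind_{M\setminus V})\bigr)\big|_{M\setminus D}$, i.e.\ with $\sum_{Z_\alpha \not\subset D} m_\alpha\, j_L(J\Lambda_\alpha)|_{M\setminus D}$, since summands with $Z_\alpha \subset D$ restrict to the empty set off $D$. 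Taking closures in $T^*M(\log D)\otimes L$, each $\overline{j_L(J\Lambda_\alpha)}$ is by Definition~\ref{defn; logjetconormal} exactly $J\Lambda_\alpha^{\log}$, so I obtain the desired expression $\sum_{Z_\alpha \not\subset D} m_\alpha [J\Lambda_\alpha^{\log}]$ after closure.

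The remaining task for (i) is to verify that no spurious contributions appear over $D$ when closing up, i.e.\ that every top-dimensional component of $i_2^{-1}(J\Lambda^{\sharp,\log})$ is already the closure of a component found above $M\setminus D$. This is where the holonomicity of $D$ enters: by \cite[(3.18) Proposition]{MR586450} every logarithmic conormal space $J\Lambda_\alpha^{\log}$ is of pure dimension $n$, so the right-hand side is a well-defined $n$-cycle. On the other hand, since $u$ drops rank exactly along $D$, the image $u(\overline{J\Lambda_\beta^\sharp})$ for $Z_\beta \subset D$ has dimension strictly less than $n+1$, and is therefore absorbed into the closure $J\Lambda^{\sharp,\log}=\overline{u(\overline{J\Lambda^\sharp})}$ without contributing a distinct top-dimensional component. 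A dimension count, together with the fact that $i_2$ is a regular embedding of codimension one in a bundle of rank $n+1$, then shows that any component of $i_2^{-1}(J\Lambda^{\sharp,\log})$ not of the form $J\Lambda_\alpha^{\log}$ has dimension $<n$ and hence does not appear in the cycle class.

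For (ii), I apply $j_L$-pullback to the identity of (i) to get
\[
[(i_2\circ j_L)^{-1}(J\Lambda^{\sharp,\log})] \;=\; \sum_{Z_\alpha \not\subset D} m_\alpha\, [j_L^{-1}(J\Lambda_\alpha^{\log})].
\]
The assumed log transversality of each $Z_\alpha \not\subset D$ to $D$, together with the jet analogue of \eqref{relation; CCnonequivariant}, identifies each $[j_L^{-1}(J\Lambda_\alpha^{\log})]$ with $\textup{JCC}(\textup{Eu}_{Z_\alpha}^\vee\cdot \ind_{M\setminus D})$. Since $\textup{Eu}_Z^\vee\cdot \ind_{M\setminus(D\cup V)} = \sum_{Z_\alpha \not\subset D} m_\alpha\, \textup{Eu}_{Z_\alpha}^\vee\cdot \ind_{M\setminus D}$ (contributions with $Z_\alpha\subset D$ vanish on $M\setminus D$), summing gives the claim. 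The main obstacle is the closure step in (i): verifying that the excess over $D$ introduced when taking $\overline{u(\overline{J\Lambda^\sharp})}$ is of strictly lower dimension, and that the holonomicity of $D$ forces all $n$-dimensional contributions to $i_2^{-1}(J\Lambda^{\sharp,\log})$ to already be accounted for by the logarithmic conormals $J\Lambda_\alpha^{\log}$ with $Z_\alpha \not\subset D$.
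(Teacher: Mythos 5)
Your overall strategy coincides with the paper's: match components and multiplicities over $M\setminus D$, where $u$ and $j_L$ are isomorphisms, then rule out extra top-dimensional components over $D$; part (ii) is then immediate from the definition of log transversality. The first half and part (ii) are fine. The problem is that the decisive step of (i) — ruling out contributions from the components of $i_1^{-1}(\overline{J\Lambda^\sharp})$ supported over $D$ — is not actually proved. First, the object you estimate, ``$u(\overline{J\Lambda_\beta^\sharp})$ for $Z_\beta\subset D$,'' is not well defined here: $\overline{J\Lambda^\sharp}$ is a single irreducible $(n+1)$-fold and does not decompose by $\beta$; the relevant objects are the $n$-dimensional components $J\Lambda_\beta=T^*_{Z_\beta}M\otimes L$ of the fibre $i_1^{-1}(\overline{J\Lambda^\sharp})$ with $Z_\beta\subset D$, and what must be shown is $\dim j_L(J\Lambda_\beta)<n$. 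Second, the justification ``since $u$ drops rank exactly along $D$, the image has dimension strictly less than $n+1$'' is not valid: a bundle map dropping rank along a divisor does not automatically collapse the dimension of a subvariety lying over that divisor — one must know that the kernel of $j_L$ actually meets the conormal space $T^*_{W}M$ in a nonzero direction at generic points of $W$. Without that, the claim fails (and indeed it is false without holonomicity).

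This is precisely where holonomicity enters, and not where you place it (pure-dimensionality of $J\Lambda_\alpha^{\log}$ for $Z_\alpha\not\subset D$ is automatic and needs no hypothesis). The paper's argument is: since the logarithmic stratification is locally finite and $W\subset D$, for generic $x\in W$ the logarithmic stratum $D_x$ through $x$ satisfies that $D_x\cap W_{reg}$ is a neighbourhood of $x$ in $W$; hence $T^*_{D_x,x}M\subset T^*_{W,x}M$, and since $j\vert_{D_x}$ is the restriction $T^*M\vert_{D_x}\to T^*D_x$ with kernel $T^*_{D_x}M\neq 0$, the map $j_L$ kills a nonzero conormal vector at generic points of $W$, forcing $\dim j_L(T^*_WM\otimes L)\leq n-1$. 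Combined with the fact that every component of $i_2^{-1}(J\Lambda^{\sharp,\log})$ has dimension at least $n$ (as $i_2$ is a codimension-one regular embedding) and is the image closure of a component of $i_1^{-1}(\overline{J\Lambda^\sharp})$ (because locally both fibres are cut out by $s=0$ and $u$ preserves $s$), this shows no component over $D$ survives. You need to supply this stratification argument; as written, the proposal has a genuine gap at exactly the point the lemma is nontrivial.
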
 
\begin{proof}
We have to show that  $i_2^{-1}(J\Lambda^{\sharp,log})$ is the logarithmic completion  of the irreducible components of $i_1^{-1}(\overline{J\Lambda^{\sharp}})$ which are not supported on $D$. It suffices to check locally. 

In local coordinates, $i_1^{-1}(\overline{J\Lambda^{\sharp}})$ and  $i_2^{-1}(J\Lambda^{\sharp,log})$ are both cut by $s=0$ from $\overline{J\Lambda^{\sharp}}$ and $J\Lambda^{\sharp,log}$ respectively. We conclude that $i_2^{-1}(J\Lambda^{\sharp,log})=\overline{u(i_1^{-1}(\overline{J\Lambda^{\sharp}}))}$. Therefore, every irreducible component of $i_2^{-1}(J\Lambda^{\sharp,log})$ is the image closure of some irreducible component of $i_1^{-1}(\overline{J\Lambda^{\sharp}})$.

Since $u$ is an isomorphism on
$\overline{J\Lambda^{\sharp}}\setminus \tau_1^{-1}(D)\to J\Lambda^{\sharp,log} \setminus \tau_2^{-1}(D)$, there is a bijection between irreducible components of $i_1^{-1}(\overline{J\Lambda^{\sharp}})$ which are not contained in $\tau_1^{-1}(D)$ and irreducible components of $i_2^{-1}(J\Lambda^{\sharp,log})$ which are not contained in $\tau_2^{-1}(D)$. The multiplicities of the corresponding irreducible components are also the same. This shows that $[i_2^{-1}(J\Lambda^{\sharp,log})]$ contains the logarithmic completions of irreducible components of $i_1^{-1}(\overline{J\Lambda^{\sharp}})$ which are not supported on $D$ with desired multiplicity. 

To finish the proof, we still need to show that $i_2^{-1}(J\Lambda^{\sharp,log})$ has no undesirable irreducible components supported on $D$. Since both $i_1^{-1}(\overline{J\Lambda^{\sharp}})$ and $i_2^{-1}(J\Lambda^{\sharp,log})$ are equidimensional of the same dimension $n$, it suffices to show that, if $T^*_WM \otimes L$ is an irreducible component  of $i_1^{-1}(\overline{J\Lambda^{\sharp}})$ such that $W \subset D$, then $\dim j_L(T^*_WM \otimes L)<n$. It suffices to show that $j:T^*M\to T^*M(\log D)$ kills some non-zero conormal vector on generic points of $W$.

Consider the logarithmic stratification of the divisor $D$. 
For any $x\in W_{reg}$ we denote by $D_x$ the logarithmic stratum containing $x$. 
Since the logarithmic stratification is locally finite by our assumption and $W\subset D$, $D_x\cap W_{reg}$ is a neighborhood of $x$ in $W$ for generic $x\in W$. So any covector conormal to $D_x$ at $x$ is automatically conormal to $W$ at generic $x$. Since $j\vert_{D_x}$ can be identified with the natural restriction $T^*M\vert_{D_x} \to T^*D_x$ with kernel $T^*_{D_x}M$, our conclusion follows. 
 
The proof of the first statement is now complete. The second statement follows directly from the definition of log transversality. 
\end{proof}

Next we focus on the special case where $\gamma=\textup{Eu}^\vee_M=(-1)^n \ind_M$, $V$ is a reduced hypersurface and $D$ is a holonomic free divisor. We explained in the proof of Proposition \ref{prop; lineartype} that
$$\overline{J\Lambda^\sharp}=\textup{Spec} \left( \textup{Rees}_{\mathscr{O}_M}  \mathcal{J}_V \right) 
 =\text{Spec} \left( \mathscr{O}_M\oplus  \mathcal{J}_Vt \oplus \mathcal{J}^2_Vt^2 \ldots \right)  
 \subset P_M^1L $$ 
where $\mathcal{J}_V$ is the image of the morphism $(\mathcal{P}^1_M\mathcal{L})^\vee \to \mathscr{O}_M$ induced by $j(g)$.

The map $u:P^1_ML \to P^1_ML(\log D)$ corresponds to a morphism of locally free sheaves 
\[
u^\vee:(\mathcal{P}^1_M\mathcal{L}(\log D))^\vee \to (\mathcal{P}^1_M\mathcal{L})^\vee.
\]

\begin{definition}
\label{defn; J_VlogD}
We define $\mathcal{J}_V(\log D)$, the ideal sheaf of logarithmic Jacobian of $V$ with respect to $D$, as the image of the composition
\[
(\mathcal{P}^1_M\mathcal{L}(\log D))^\vee \xrightarrow{u^\vee} (\mathcal{P}^1_M\mathcal{L})^\vee \xrightarrow{j(g)} \mathscr{O}_M.
\]
\end{definition} 

Let $\{U_\alpha\}$ be an open cover of $M$ such that $L$, $T^*M$ and $T^*M(\log D)$ are trivialised. If $g_\alpha$ is a local equation of $g$ on an open subset $\mathcal{U}_\alpha\subset M$, and $\{\delta_1, \cdots ,\delta_n\}$ is a basis for $\textup{Der}_M(-\log D)(\mathcal{U}_\alpha)$, then the ideal $\mathcal{J}_V(\log D)(\mathcal{U}_\alpha)$ is $(\delta_1(g_\alpha), \cdots ,\delta_n(g_\alpha), g_\alpha)$.

\begin{proposition}
Let $\Lambda=M=T^*_MM$. We have
\[
J\Lambda^{\sharp,log}=\textup{Spec} \Big( \textup{Rees}_{\mathscr{O}_M}  \mathcal{J}_V(\log D) \Big) 
 \subset P_M^1L(\log D) \/.
\]
\end{proposition}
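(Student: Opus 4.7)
The plan is to mimic the proof of Proposition~\ref{prop; lineartype}, replacing $f$ by $g$ and the ambient bundle $P_M^1L$ by the logarithmic principal parts bundle $P_M^1L(\log D)$. The global section $j^1(g)\in H^0(M,P_M^1L)$ determines, by composition with the vector bundle morphism $u\colon P_M^1L\to P_M^1L(\log D)$, a global section $u\circ j^1(g)\in H^0(M,P_M^1L(\log D))$. Under duality this global section corresponds to the morphism of sheaves
\[
(\mathcal{P}_M^1\mathcal{L}(\log D))^\vee \xrightarrow{u^\vee} (\mathcal{P}_M^1\mathcal{L})^\vee \xrightarrow{j^1(g)} \mathscr{O}_M,
\]
whose image is precisely $\mathcal{J}_V(\log D)$ by Definition~\ref{defn; J_VlogD}.

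Next, I would apply the same general principle used in the proof of Proposition~\ref{prop; lineartype}, namely Fulton's identification of the total space of the deformation to the normal cone of a section with the spectrum of its Rees algebra (\cite{MR1644323} Remark 5.1.1). Applied to the section $u\circ j^1(g)$ of $P_M^1L(\log D)$, this identification gives
\[
\textup{Spec}\bigl(\textup{Rees}_{\mathscr{O}_M}\mathcal{J}_V(\log D)\bigr) \;=\; \overline{\bigl\{(x,\ t\cdot u(j^1(g))(x))\ :\ x\in M,\ t\in\mathbb{C}\bigr\}} \;\subset\; P_M^1L(\log D).
\]

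The remaining task is to match this closure with $J\Lambda^{\sharp,\log}=\overline{u(\overline{J\Lambda^\sharp})}$. Since $u$ is $\mathscr{O}_M$-linear one has $u(t\cdot j^1(g)(x))=t\cdot u(j^1(g))(x)$, so the image under $u$ of the defining family of $\overline{J\Lambda^\sharp}$ (before closure) coincides with the defining family of the Rees cone (before closure). Because $u$ is continuous, taking closures on both sides yields
\[
J\Lambda^{\sharp,\log}=\overline{u(\overline{J\Lambda^\sharp})}=\overline{\bigl\{(x,\ t\cdot u(j^1(g))(x))\bigr\}},
\]
and combining with the previous paragraph gives the desired identification.

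I do not anticipate a serious obstacle: the argument is essentially a transcription of the proof of Proposition~\ref{prop; lineartype} with $\mathcal{J}_V(\log D)$ in place of $\mathcal{J}_D$. The only point that warrants care is unwinding the duality to confirm that $u^\vee\circ j^1(g)$ locally produces the expected generators $(\delta_1(g_\alpha),\ldots,\delta_n(g_\alpha),g_\alpha)$ of $\mathcal{J}_V(\log D)$ on a trivializing chart for $L$; this follows from the splitting $\mathcal{P}_M^1\mathcal{L}(\log D)\cong \mathcal{L}\oplus(\mathcal{L}\otimes\Omega_M^1(\log D))$ available locally after trivializing $L$ (Remark~\ref{logsplitting}), together with the local expression for the $\mathscr{O}_M$-structure.
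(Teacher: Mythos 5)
Your argument is correct and is essentially the same approach the paper intends: the paper's own proof is just the one-line remark that the statement follows from Definitions~\ref{defn:JsharplogD} and~\ref{defn; J_VlogD}, and the details it leaves implicit are exactly the ones you supply — the identification of $\textup{Spec}(\textup{Rees}_{\mathscr{O}_M}\mathcal{J}_V(\log D))$ with the closure of the scaled-section family of $u\circ j^1(g)$ (the same Rees-algebra/deformation-to-the-normal-cone identification already used in the proof of Proposition~\ref{prop; lineartype}), together with the $\mathscr{O}_M$-linearity of $u$ and the compatibility of taking closures with the continuous map $u$. No gap.
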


\begin{proof}
The proof follows easily from Definition \ref{defn:JsharplogD} and Definition \ref{defn; J_VlogD}.
\end{proof}

We will compute $c_*(\ind_{V\setminus D})$ and $c_*(\ind_{U})$ with the help of this construction.
\begin{theorem}
\label{theo; csmdoubleres}
If $D$ is a holonomic strongly Euler homogoneous free divisor and $\ind_V$ is log transverse to $D$, then
\begin{align*}
&c_{*}(\ind_{V\setminus D})= 
c(TM(-\log D))c(L)^{-1}\cap 
\Big(
[V]+s\left(\mathcal{J}_V(\log D),M\right)^\vee\otimes_M L
\Big),  \\
&c_{*}(\ind_{M\setminus(V\cup D)}) = c(TM(-\log D))c(L)^{-1}\cap \Big([M]-s\left(\mathcal{J}_V(\log D), M\right)^\vee\otimes_M L \Big).
\end{align*}
Here $s\left(\mathcal{J}_V(\log D), M\right)^\vee$ and $\otimes_{M}$ use the dual and tensor notation introduced in \cite{Aluffi03}. 
\end{theorem}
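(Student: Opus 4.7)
The plan is to prove the second formula and deduce the first by a direct algebraic manipulation. Since $V=V(g)$ with $g\in H^{0}(M,L)$ we have $[V]=c_{1}(L)\cap [M]$, and since $D$ is holonomic and strongly Euler homogeneous, $M$ is log transverse to $D$ by Corollary~\ref{coro; equiSEH}, so Corollary~\ref{coro; chernclass} gives $c_{*}(\ind_{M\setminus D})=c(TM(-\log D))\cap [M]$. Combining these with the identity $\ind_{M\setminus D}=\ind_{V\setminus D}+\ind_{M\setminus(V\cup D)}$ and the linearity of $c_{*}$ yields the equivalence of the two formulas by a short calculation.

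To prove the second formula, I would apply Lemma~\ref{lemma; i2log}(ii) with $Z=M$. The hypothesis holds because the components of $\textup{CC}(\ind_{M\setminus V})=\textup{CC}(\ind_{M})-\textup{CC}(\ind_{V})$ not contained in $D$ are $M$ itself (log transverse by Corollary~\ref{coro; equiSEH}) together with those of $\textup{CC}(\ind_{V})$ (log transverse by assumption). Using $\textup{Eu}^{\vee}_{M}=(-1)^{n}\ind_{M}$, the lemma yields
\[
\textup{JCC}(\ind_{M\setminus(V\cup D)})=(-1)^{n}\bigl[(i_{2}\circ j_{L})^{-1}(J\Lambda^{\sharp,\log})\bigr],
\]
where $\Lambda=T^{*}_{M}M$ and $J\Lambda^{\sharp,\log}=\textup{Spec}(\textup{Rees}_{\mathscr{O}_M}\mathcal{J}_V(\log D))\subset P^{1}_{M}L(\log D)$ by the discussion preceding the theorem.

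The third step is to extract the CSM class from this cycle. The pullback $(i_{2}\circ j_{L})^{-1}(J\Lambda^{\sharp,\log})$ should split into a zero-section component $[M]$ (arising from the $t=0$ fiber of the Rees construction) and a logarithmic normal cone component supported on $V$ (arising from the exceptional part of the Rees construction). Converting to Chern classes via the Aluffi-style projective-bundle computation --- project into $\mathbf{P}(T^{*}M(\log D)\otimes L\oplus L)\cong \mathbf{P}(T^{*}M(\log D)\oplus 1)$, apply the tautological line bundle manipulation of Theorem~\ref{theo;logCCtochern}(ii) in its jet variant, and use the dual-tensor formalism of \cite{Aluffi03} --- the zero-section contributes $c(TM(-\log D))c(L)^{-1}\cap [M]$ and the logarithmic normal cone contributes $-c(TM(-\log D))c(L)^{-1}\cap s(\mathcal{J}_V(\log D),M)^{\vee}\otimes_{M}L$, assembling to the second formula.

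The main obstacle will be the explicit cycle analysis of $(i_{2}\circ j_{L})^{-1}(J\Lambda^{\sharp,\log})$, especially over $V\cap D$, where $\mathcal{J}_V(\log D)$ may degenerate and $j_{L}$ acquires a non-trivial kernel along logarithmic strata. The holonomicity of $D$ is crucial here: Lemma~\ref{lemma; i2log}(i) ensures that no spurious components supported on $D$ appear in the pulled-back cycle, so the only contributions are the zero section and the logarithmic normal cone described above. A careful treatment of the dual-tensor bookkeeping in the final step, reducing to the shape of Proposition~\ref{prop; muldegandsegre}, will then yield the precise Segre-class expression stated in the theorem.
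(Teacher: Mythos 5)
Your proposal is essentially the paper's own proof run in the opposite order: the paper first computes $c_*(\ind_{V\setminus D})$ by identifying $[\tau_2^{-1}(V)]-[\tau_2^{-1}\textup{Sing}(V,\log D)]$ as the log completion of $\textup{JCC}((-1)^{n+1}\ind_V)$ (Lemma~\ref{lemma; i2log} together with Lemma~\ref{lemma; hyper-log}) and then applies the twisted projective-bundle Segre-class computation, obtaining the second formula by subtraction from $c_*(\ind_{M\setminus D})=c(TM(-\log D))\cap[M]$, whereas you run the same two ingredients with the roles of the two formulas swapped. One small caveat on your final step: the cycle $[i_2^{-1}(J\Lambda^{\sharp,\log})]$ decomposes into \emph{three} terms $[M]+[\tau_2^{-1}(V)]-[\tau_2^{-1}\textup{Sing}(V,\log D)]$, and a term-by-term application of the Chern-class formula sends the zero section $[M]$ to $c(TM(-\log D))\cap[M]$ rather than $c(TM(-\log D))c(L)^{-1}\cap[M]$; your stated attributions sum to the correct total only because the contribution of the omitted term $[\tau_2^{-1}(V)]$ exactly accounts for the discrepancy, so this bookkeeping should be made explicit when the plan is carried out.
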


Let $\tau_1: \overline{J\Lambda^\sharp}  \to M$ and $\tau_2: J\Lambda^{\sharp,log}  \to M$ be the natural projections. 
The map $u$ restricts to $u: \overline{J\Lambda^\sharp}  \to J\Lambda^{\sharp,log} $ commuting with projections $\tau_1, \tau_2$. We denote by $\textup{Sing}(V,\log D)$ the subscheme of $M$ defined by $\mathcal{J}_V(\log D)$. Theorem \ref{theo; csmdoubleres} will be proved after the following preparatory lemma.

\begin{lemma} \ 
\label{lemma; hyper-log} 
\begin{enumerate}[label=(\roman*)]
\item $[i_1^{-1}(\overline{J\Lambda^\sharp} )]=\textup{JCC}(\ind_{M\setminus V}\cdot\textup{Eu}^\vee_{M}) = [M]+[\tau_1^{-1}(V)]-[\tau_1^{-1}\textup{Sing}(V)].$
\item  
$ 
[i_2^{-1}(J\Lambda^{\sharp,log} )] =[M] + [\tau_2^{-1}(V)] - [\tau_2^{-1}\textup{Sing}(V,\log D)]$
\end{enumerate}
\end{lemma}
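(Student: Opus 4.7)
The plan is to exploit the Rees-algebra descriptions $\overline{J\Lambda^\sharp} = \textup{Spec}_M(\textup{Rees}(\mathcal{J}_V))$ and $J\Lambda^{\sharp,log} = \textup{Spec}_M(\textup{Rees}(\mathcal{J}_V(\log D)))$ (established in the proof of Proposition~\ref{prop; lineartype} and in the proposition stated immediately before this lemma) and to compute each cycle $[i_k^{-1}(\cdot)]$ as a principal Weil divisor on the relevant integral scheme, decomposed multiplicatively via a natural factorisation $s = g\cdot t$.

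Set $X = \overline{J\Lambda^\sharp}$ with ideal $\mathcal{I} = \mathcal{J}_V$ in part (i), and $X = J\Lambda^{\sharp,log}$ with $\mathcal{I} = \mathcal{J}_V(\log D)$ in part (ii); write $\tau\colon X\to M$ for the projection and $R$ for the Rees algebra. In the short exact sequence $0 \to T^*M \otimes L \to P^1_M L \to L \to 0$ (or its logarithmic analogue in diagram~\eqref{diagram; P^1LlogD}), the embedding $i_k$ realises $T^*M\otimes L$ (resp.\ $T^*M(\log D)\otimes L$) as the kernel of the quotient map to $L$; consequently $i_k^{-1}(X)$ is the zero scheme in $X$ of the section $s \in \Gamma(X,\tau^*L)$ obtained by restricting this quotient. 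In a local trivialisation on which $L$ is free and $g$ becomes a regular function, the presentation used in the proof of Proposition~\ref{prop; lineartype} identifies $s$ with the Rees-algebra element $g\cdot t$, where $t \in \textup{Frac}(R)$ is the tautological parameter characterised by the identity $R_+ = t\cdot \mathcal{I}R$ of $R$-ideals (equivalently, $R_+$ is generated by the $g_i t$ for generators $g_i$ of $\mathcal{I}$).

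Since $X$ is integral, additivity of principal Weil divisors applied to the factorisation $s = g\cdot t$ gives
\[\operatorname{div}(s) = \operatorname{div}(g) + \operatorname{div}(t)_0 - \operatorname{div}(t)_\infty.\]
Here $\operatorname{div}(s) = [i_k^{-1}(X)]$ and $\operatorname{div}(g) = [\tau^{-1}(V)]$, the latter with its scheme-theoretic multiplicity. The zero divisor $\operatorname{div}(t)_0 = [M]$ arises because $t$ vanishes to order one along $M = V(R_+) \subset X$: each generator $g_i t$ of $R_+$ vanishes simply along $M$ while the $g_i$ are generically units there. For the pole divisor, the equality $\mathcal{I}R = t^{-1}R_+$ of fractional ideals shows that on the open set $X\setminus M$, where $R_+$ is the unit ideal, the pullback $\mathcal{I}R$ is principal with generator $t^{-1}$; consequently the pole locus of $t$ agrees on $X\setminus M$, and therefore globally on $X$ by integrality, with $V(\mathcal{I}R)$ endowed with its scheme-theoretic multiplicities. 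In other words $\operatorname{div}(t)_\infty = [\tau^{-1}(\textup{Sing}_\bullet)]$, where $\textup{Sing}_\bullet = V(\mathcal{J}_V)$ in (i) and $V(\mathcal{J}_V(\log D))$ in (ii).

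Substituting into the divisor decomposition yields the second equality in each of (i) and (ii); the first equality in (i), identifying the left-hand side with $\textup{JCC}(\ind_{M\setminus V}\cdot \textup{Eu}^\vee_M)$, is immediate from Proposition~\ref{global-Ginzburg}. The main technical obstacle is to verify that the scheme-theoretic multiplicities of $\tau^{-1}(\textup{Sing}_\bullet)$ coincide with the pole multiplicities of $t$ along each codimension-one component. Concretely, one must check at each codim-one generic point $P$ of $V(\mathcal{I}R)$ that the length of $\mathscr{O}_{X,P}/\mathcal{I}R \cdot \mathscr{O}_{X,P}$ equals the order of the pole of $t$ at $P$; the identity $\mathcal{I}R = (t^{-1})$ as fractional ideals on $X\setminus M$ makes this transparent there, but additional care is required for components of $V(\mathcal{I}R)$ meeting $M$ and for generic points at which $X$ fails to be normal, so that classical DVR-valuation arguments have to be replaced by a direct length computation.
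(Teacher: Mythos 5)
Your proof is correct and follows essentially the same route as the paper's: both rest on the Rees-algebra presentations of $\overline{J\Lambda^\sharp}$ and $J\Lambda^{\sharp,log}$, the identification of the cutting section with $g_\alpha t$, and the three-term decomposition into the zero section, $\tau^{-1}(V)$, and the exceptional divisor $\tau^{-1}(\textup{Sing})$. The paper encodes the same decomposition by reading the ideal identity $(g_\alpha t)=(g_\alpha\cdot Jt):J$ schematically, which is just other bookkeeping for your zero/pole decomposition of $\operatorname{div}(t)$; your length-based treatment of multiplicities at non-normal codimension-one points makes explicit a step the paper leaves implicit.
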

\begin{proof}
First we prove the formula (i). 
The first equality is due to the global Ginzburg's contruction. The second equality is the reformulation of a well-known result of Aluffi \cite{MR1697199} and Parusinski-Pragacz \cite{MR1795550} in terms of the jet conormal space. Here we give a simple and illuminating argument based on global Ginzburg's construction. On $U_\alpha$, $\overline{J\Lambda^\sharp}$ is given by the Rees algebra
\begin{equation*}
\textup{Rees}(J) = \mathscr{O}_{U_\alpha} \oplus J t \oplus J ^2t^2\oplus \ldots
\end{equation*}
where $J$ is the ideal  generated by $g_\alpha$ and all its first partial derivatives. We have 
\[
\textup{Spec}(\textup{Rees}(J))=\overline{J\Lambda^\sharp}\vert_{U_\alpha} \cong \overline{(T^*_{U_\alpha}U_\alpha)^{\sharp}_{g_\alpha}}
\]
according to Proposition \ref{prop:trivialisation of JLambda}, where we also showed the function $s$ on the latter corresponds to the function $g_\alpha t$ on the former. Since Proposition \ref{CC-G} says that $\textup{CC}(\textup{Eu}^\vee_{U_\alpha}\cdot \ind_{U_\alpha \setminus V})$ is obtained by settting $s=0$ in the latter, we must set $g_\alpha t=0$ in $\textup{Spec}(\textup{Rees}(J))$ to obtain the jet characteristic cycle
\[
\textup{JCC}(\textup{Eu}^\vee_M \cdot \ind_{M\setminus V})\vert_{U_\alpha}=[i_1^{-1}\overline{J\Lambda^{\sharp}}]\vert_{U_\alpha}.
\] 
In the Rees algebra, $g_\alpha$ defines $\tau_1^{-1}(U_\alpha \cap V)$, $J$ defines the normal cone $\tau_1^{-1}(\textup{Sing}(U_\alpha \cap V))$ and $Jt$ defines the zero section $U_\alpha$, reading the equality of ideals $(g_\alpha t)=(g_\alpha \cdot Jt):J$ schematically, we have
\begin{equation*}
[i_1^{-1}(\overline{J\Lambda^\sharp} )]\vert_{U_\alpha} = [\tau_1^{-1}(V\cap U_\alpha)] + [U_\alpha] - [\tau_1^{-1}\textup{Sing}(V\cap U_\alpha)]. 
\end{equation*}
Since $\{U_\alpha\}$ is an open cover of $M$, formula (i) follows. Formula (ii) can be proved analogously. We leave the details to the reader.   
\end{proof}

\begin{proof}[Proof of Theorem \ref{theo; csmdoubleres}] 
Let $B=\mathbf{P}\left(J\Lambda^{\sharp,log}\right)$. Denote the natural projection $B\to M$ by $p$ and denote the tautological subbundle  of $\mathbf{P}(P^1_{M}L(\log D))$ by $\xi\otimes L$. Hence the tautological bundle of $\mathbf{P}(T^*M(\log D))$ can be written as $\xi$. By Lemma \ref{lemma; i2log} and Lemma \ref{lemma; hyper-log}, $[M] + [\tau_2^{-1}(V)] - [\tau_2^{-1}\textup{Sing}(V,\log D)]$ is the log completion of $[M]+[\tau_1^{-1}(V)]-[\tau_1^{-1}\textup{Sing}(V)]$. So $[\tau_2^{-1}(V)] - [\tau_2^{-1}\textup{Sing}(V,\log D)]$ is the log completion of $[\tau_1^{-1}(V)]-[\tau_1^{-1}\textup{Sing}(V)]$. Note that 
\[
[\tau_1^{-1}(V)]-[\tau_1^{-1}\textup{Sing}(V)]=\textup{JCC}(\textup{Eu}^\vee_M \cdot \ind_{M\setminus V}-\textup{Eu}^\vee_M)=\textup{JCC}((-1)^{n+1}\cdot \ind_V).
\]

By Theorem \ref{theo;logCCtochern} we have
\begin{equation}\label{equation:chernVminusD}
c_*(\ind_{V\setminus D})=c(TM(-\log  D))\cap p_*\bigg(c(\xi^\vee)^{-1}\Big(c_1(L)-c_1(\xi \otimes L)\Big)\cap [B]\bigg)
\end{equation} 

To simplify this expression, let $\mathcal{X}=c_1(L)$ and $\mathcal{Y}=c_1(\xi\otimes L)$. We have 
\begin{align*}
c(\xi^\vee)^{-1}\Big(c_1(L)-c_1(\xi \otimes L)\Big)&=\frac{\mathcal{X}-\mathcal{Y}}{1+\mathcal{X}-\mathcal{Y}}  
 =\frac{\mathcal{X}}{1+\mathcal{X}}-\frac{\mathcal{Y}}{(1+\mathcal{X})(1+\mathcal{X}-\mathcal{Y})} \\
&=\frac{\mathcal{X}}{1+\mathcal{X}} - \frac{\mathcal{Y}}{(1+\mathcal{X})^2}\frac{1}{1-\frac{\mathcal{Y}}{1+\mathcal{X}}} \\
&=\frac{\mathcal{X}}{1+\mathcal{X}} - \frac{1}{1+\mathcal{X}}\Big(\frac{\mathcal{Y}}{1+\mathcal{X}}+\frac{\mathcal{Y}^2}{(1+\mathcal{X})^2}+\ldots\Big).
\end{align*}

Capping this equation with $[B]$ and applying $p_*$, we have
\begin{align*}
&p_* \bigg(c(\xi^\vee)^{-1}\Big(c_1(L)-c_1(\xi \otimes L)\Big) \cap [B]\bigg) \\
=& c(L)^{-1}\cap \Big([V] - \frac{s_{n-1}}{c(L)}+\frac{s_{n-2}}{c(L)^2}-\ldots \Big)
\end{align*}
where $p_*(\mathcal{Y}^{i}\cap [B]) = (-1)^{i-1}s_{n-i}$ and $s=s(\mathcal{J}_V(\log D), M)=s_0 + s_1 +\ldots$ is the Segre class of the cone $\tau_2^{-1}(\textup{Sing}(V,\log D))$. Reindexing the Segre class by codimension, i.e. $s_{n-i}=s^i$, using the dual and tensor notation in \cite{Aluffi03} we have
\begin{align*}
&p_* \bigg(c(\xi^\vee)^{-1}\Big(c_1(L)-c_1(\xi \otimes L)\Big) \cap [B]\bigg) \\
=& c(L)^{-1}\cap \Big([V] - \frac{s^1}{c(L)}+\frac{s^2}{c(L)^2}-\ldots \Big) \\
=& c(L)^{-1}\cap \Big([V]+s^\vee \otimes_{M} c(L)\Big) \/.
\end{align*}

Combining with eqution \eqref{equation:chernVminusD} we obtain the formula for $c_*(\ind_{V\setminus D})$. 

By Corollary \ref{coro; equiSEH} we have $c_*(\ind_{M\setminus D})=c(TM(-\log  D))\cap [M]$, therefore
\begin{equation*}
c_*(\ind_{M\setminus(V\cup D)}) = c_*(\ind_{M\setminus D})-c_*(\ind_{V\setminus D})=c(TM(-\log D))c(L)^{-1}\cap \Big([M]-s^\vee\otimes_{M} c(L) \Big).
\end{equation*}
\end{proof}

\section{Non-negativity of Euler Characteristics}
\label{sec; nonnegative}
As an application of  previous constructions  we discuss some non-negative results concerning the Euler characteristics of constructible functions. To start we recall the following key proposition  from \cite[Proposition 2.3]{DPS94}:
\begin{proposition} 
\label{prop; nefimplynonnegative}
Let $M$ be a $n$-dimensional projective manifold and  $E$ be a  holomorphic vector bundle of rank $n$. 
If $E$ is nef, then for any $n$-dimensional cone $C$ in $E$  we have 
\[
\deg \left( [C]\cdot [M] \right) \geq 0 \/.
\]
Here  $M$ denotes the zero section of $E$.
\end{proposition}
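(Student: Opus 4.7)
The plan is to recast $\deg([C]\cdot[M])$ as the degree of a top Chern class of a nef vector bundle paired against an effective cycle of matching dimension, and then invoke the positivity theorem of Fulton and Lazarsfeld for nef vector bundles.

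First I would compactify $E$ to $\bar{E}=\mathbf{P}(E\oplus\mathcal{O}_M)$, with projection $\pi\colon \bar{E}\to M$, and let $\bar{C}$ denote the closure of $C$ in $\bar{E}$. Since $C\subset E$ is effective of dimension $n$, so is $\bar{C}$. The zero section $M\subset E\subset \bar{E}$ is disjoint from the infinity divisor $\mathbf{P}(E)\subset \bar{E}$, so the number $\deg([C]\cdot[M])$ is unchanged when we view both cycles inside the smooth projective variety $\bar{E}$ and take their intersection there.

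Next I would exhibit $M\subset \bar{E}$ as the zero locus of a natural section of the universal quotient bundle $Q$. From the tautological exact sequence
\begin{equation*}
0 \to \mathcal{O}_{\bar{E}}(-1) \to \pi^{*}(E\oplus\mathcal{O}_M) \to Q \to 0,
\end{equation*}
the rank of $Q$ equals $n$. Pulling back the inclusion $\mathcal{O}_M\hookrightarrow E\oplus\mathcal{O}_M$ and composing with the quotient map produces a global section $\sigma\in H^{0}(\bar{E},Q)$; a direct fiberwise computation shows that $\sigma$ vanishes precisely along $M$. Since the codimension of $M$ in $\bar{E}$ equals $\mathrm{rank}\,Q$, this yields $[M]=c_n(Q)\cap[\bar{E}]$ in the Chow group of $\bar{E}$, and hence
\begin{equation*}
\deg\bigl([C]\cdot[M]\bigr)\;=\;\deg\bigl(c_n(Q)\cap[\bar{C}]\bigr).
\end{equation*}

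Finally I would deduce positivity from nefness of $Q$. Since $E$ is nef, so is the trivial extension $E\oplus\mathcal{O}_M$, its pullback $\pi^{*}(E\oplus\mathcal{O}_M)$, and therefore also the quotient $Q$. By the theorem of Fulton and Lazarsfeld on positive polynomials for nef vector bundles, every Schur polynomial in the Chern classes of a nef bundle evaluates nonnegatively on any effective cycle of matching dimension; in particular the top Chern class $c_n(Q)$ of the rank $n$ nef bundle $Q$ does so against the effective $n$-cycle $\bar{C}$, giving $\deg([C]\cdot[M])\geq 0$. The main difficulty concentrates in this last step: the Fulton--Lazarsfeld positivity theorem (which rests on the Bloch--Gieseker covering trick) is invoked as a black box, while the compactification argument and the identification of $M$ with the vanishing locus of a section of $Q$ are entirely standard.
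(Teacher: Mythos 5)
Your argument is correct, but note that the paper itself offers no proof of this proposition: it is quoted verbatim from \cite[Proposition 2.3]{DPS94}, so there is no internal argument to compare against. Your derivation is the standard one and all the steps check out: the closure $\bar{C}$ in $\mathbf{P}(E\oplus\mathcal{O}_M)$ is an effective $n$-cycle meeting the zero section only inside $E$ (so the intersection number is unchanged by compactifying); the section $\sigma$ of the rank-$n$ universal quotient $Q$ induced by $\mathcal{O}_M\hookrightarrow E\oplus\mathcal{O}_M$ vanishes scheme-theoretically, with reduced structure, exactly along the zero section (in a local trivialization it is $v\mapsto -v$), so $[M]=c_n(Q)\cap[\bar{E}]$ and $\deg([C]\cdot[M])=\deg(c_n(Q)\cap[\bar{C}])$; nefness passes from $E$ to $E\oplus\mathcal{O}_M$, to its pullback, and to the quotient $Q$; and the nef form of the Fulton--Lazarsfeld theorem (with $c_n=s_{(1^n)}$ a Schur polynomial) gives the non-negativity against the effective cycle $\bar{C}$. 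Two sanity checks confirm the normalization: taking $C=M$ recovers $\int_M c_n(E)$, and taking $C$ a fibre gives $1$. The only point worth making explicit in a written version is the reducedness of $Z(\sigma)$, which you dispatch with the fibrewise computation; as it stands the proof is complete and, if anything, more self-contained than the paper's bare citation.
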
 
We refer to \cite[\S 6]{MR2095472} for the precise definition and basic properties of ample and  nef vector bundles.  Here we mainly focus on the following examples.
\begin{example}\ 
\begin{enumerate} 
\item Globally generated vector bundles are nef.
\item Extension, quotient and tensor product of nef vector bundles are nef.  
\item Given any $E$, $E\otimes L^d$ is nef for any ample line bundle $L$ with $d$ sufficiently large. 
\end{enumerate}  
\end{example}

\begin{definition} Let  $\gamma=\sum_{\alpha} n_\alpha \textup{Eu}^\vee_{Z_\alpha}$ be a constructible function on $M$.
\begin{enumerate}
\item We say $\gamma$  is effective if $n_\alpha\geq 0$ for all $\alpha$.
\item  Given  any constructible subset $Y$, we define  the Euler characteristic of $\gamma$ on $Y$ by
$\chi(Y, \gamma):= \chi(M, \gamma\cdot \ind_Y)= \sum_{\beta} m_\beta\chi(S_\beta)$
providing that $\gamma\cdot \ind_Y=\sum_{\beta} m_\beta \ind_{S_\beta}$.
\end{enumerate}
\end{definition}

\begin{remark} 
The constructible functions assigned to  perverse sheaves are effective, but not every effective constructible function comes from a perverse sheaf. The effective constructible functions are more general in the discussion of Euler characteristics.
\end{remark}

An immediate consequence of Proposition~\ref{prop; genericZ} and Theorem~\ref{theo; NCimpliesLT} is the following.
\begin{corollary}
\label{coro; perverseisnonnegative}
Assume   $M$ is a projective manifold, $D$ is a strongly Euler homogeneous  free divisor of $M$ and  $T^*M(\log D)$ is nef.   Let $\gamma$ be an effective constructible function on $M$.
If either the logarithmic strata of $D$ are non-characteristic with respect to  
$\textup{CC}(\gamma)$, or  $D$ is normal crossing, 
then   $\chi(M\setminus D, \gamma)\geq 0 $.
\end{corollary}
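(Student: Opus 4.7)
The plan is to express $\chi(M\setminus D,\gamma)$ as an effective sum of intersection numbers of $n$-dimensional cones with the zero section inside the rank-$n$ bundle $T^*M(\log D)$, and then conclude by Proposition~\ref{prop; nefimplynonnegative} via the nefness hypothesis.

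First I would secure weak log transversality of $\gamma$ and $D$ under either case of the hypothesis. Writing $\textup{CC}(\gamma)=\sum_\alpha n_\alpha[\Lambda_\alpha]+\sum_\beta m_\beta[\Lambda_\beta]$ with $Z_\alpha\not\subset D$ and $Z_\beta\subset D$, effectiveness gives $n_\alpha,m_\beta\geq 0$. If $D$ is normal crossing then each $Z_\alpha$ is (fully) log transverse to $D$ by Theorem~\ref{theo; NCimpliesLT}; if instead the logarithmic strata of $D$ are non-characteristic with respect to $\textup{CC}(\gamma)$ then each $Z_\alpha$ is weakly log transverse to $D$ by Proposition~\ref{prop; genericZ}. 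Either way, equation~\eqref{relation; CCnonequivariant} applies and yields
$$[\textup{CC}(\gamma\cdot \ind_U)]=\sum_\alpha n_\alpha\, j^*[\Lambda_\alpha^{\log}]$$
in the Borel-Moore homology of $T^*M$.

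Next I would invoke the microlocal index formula $\chi(M,\delta)=\deg s_0^*\textup{CC}(\delta)$ for a constructible function $\delta$, where $s_0\colon M\to T^*M$ is the zero section. This is simply the degree-$0$ component of $\breve{c}_*(\delta)=c(T^*M)\cap\sum n_i\, s(T^*_{Z_i}M,T^*M)$ recorded in \S\ref{sec; chern}, combined with Fulton's zero-section formula for a pure $n$-dimensional cone in a rank-$n$ vector bundle. Since $j\circ s_0=s_0'$, where $s_0'\colon M\to T^*M(\log D)$ is the zero section of the log cotangent bundle, applying this with $\delta=\gamma\cdot \ind_U$ gives
$$\chi(M\setminus D,\gamma)=\sum_\alpha n_\alpha\deg(s_0')^*[\Lambda_\alpha^{\log}].$$
Each $\Lambda_\alpha^{\log}$ is an $n$-dimensional cone in $T^*M(\log D)$ by Proposition~\ref{analyticity:logimage}. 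Since $T^*M(\log D)$ is nef of rank $n$, Proposition~\ref{prop; nefimplynonnegative} yields $\deg(s_0')^*[\Lambda_\alpha^{\log}]\geq 0$ for every $\alpha$; combined with $n_\alpha\geq 0$, the claim follows.

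The main obstacle I expect is not any individual step — each invokes a cited result — but the bookkeeping needed to glue them: verifying properness of the supports so that the refined zero-section pullback is well-defined on $T^*M$ (automatic since $M$ is projective), and confirming the sign-free identity $\chi=\deg s_0^*\textup{CC}$ in the paper's conventions (where the dual sign disappears in degree $0$). Once these pieces are assembled, nefness of $T^*M(\log D)$ converts every summand into a non-negative integer and effectiveness of $\gamma$ concludes the argument.
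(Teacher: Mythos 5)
Your proposal is correct and follows essentially the same route as the paper: establish weak log transversality in both cases via Theorem~\ref{theo; NCimpliesLT} and Proposition~\ref{prop; genericZ}, invoke equation~\eqref{relation; CCnonequivariant} to express $\chi(M\setminus D,\gamma)$ as the intersection of the effective $n$-dimensional conical cycle $\sum_\alpha n_\alpha[\Lambda_\alpha^{\log}]$ with the zero section of the nef rank-$n$ bundle $T^*M(\log D)$, and conclude by Proposition~\ref{prop; nefimplynonnegative}. The only difference is presentational: you route the index formula through $T^*M$ and the identity $j\circ s_0=s_0'$, whereas the paper states the intersection number directly in the logarithmic bundle.
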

\begin{proof}  
From Proposition~\ref{prop; genericZ} and Theorem~\ref{theo; NCimpliesLT} we know that in both cases $\gamma$ is weakly log transverse to $D$.  Thus Equation~\eqref{relation; CCnonequivariant} applies and we have, setting $U=M\setminus D$,
\[
\chi(U, \gamma)=\chi(M, \gamma \cdot \ind_U)= \deg \left( [\textup{CC}(\gamma)^{\log}]\cdot [M] \right) \/.
\]
As $\gamma$ is effective,  
$\textup{CC}(\gamma)^{\log}$ is an effective $n$-dimensional conical cycle in $T^*M(\log D)$. Since $T^*M(\log D)$ is nef, the intersection number is non-negative by Proposition~\ref{prop; nefimplynonnegative}.
\end{proof}

\begin{example} 
Assume that $D\subset \mathbb{P}^n$ is  a  strongly Euler homogeneous linear free divisor. 
Then  $T^*\mathbb{P}^n(\log D)$ is  trivial and hence nef.   
Thus an effective constructible function on $\mathbb{P}^n$  has non-negative Euler characteristic if the closure of its support is  in  general position. 

If $Z\subset \mathbb{P}^n\setminus D$ is an algebraic  complete intersection subvariety, then  $\mathbb{C}_Z[\dim Z]$ is perverse and  $(-1)^{\dim Z}\ind_Z$ is effective. Then if $\overline{Z}$ is in  general position, we have $(-1)^{\dim Z}\chi(Z) \geq 0$.

The coordinate  arrangement $D=V(x_0\cdots x_n)$  is both linear free  and normal crossing, and the complement $U=(\mathbb{C}^*)^n$ is   the   affine torus.  Then any effective algebraic constructible function on $(\mathbb{C}^*)^n$  has non-negative Euler characteristic.  Similarly the Euler characteristics of  effective algebraic constructible functions on semi-abelian varieties are non-negative.
This was first proved in \cite[Corollary 1.2]{MR1769729}.
\end{example}

Proposition~\ref{prop; nefimplynonnegative} may fail if the log cotangent bundle is not  nef.   
The nefness can be settled by tensoring with  ample line bundles, but the intersection number in the twisted vector bundle changes. The following proposition addresses this twisted intersection number.

Let $L$ be a very ample line bundle on the projective  manifold $M$.  
Recall from Definition~\ref{defn;jetconormal} that the jet characteristic cycle   is a $n$-dimensional conical cycle in $T^*M\otimes L$. 
\begin{proposition}
\label{prop; Jchi} 
Denote by $M$ the zero  section of $T^*M\otimes L$. Given any  constructible function $\gamma$ on $M$, for a generic hypersurface $V$ in the linear system $|L|$ we have:
\[
\deg \left([\textup{JCC}(\gamma)]\cdot [M] \right) =   \chi(M\setminus V, \gamma) \/.
\]
\end{proposition}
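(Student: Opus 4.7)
The plan is to reduce by linearity of both sides to the case $\gamma=\textup{Eu}^\vee_Z$ for an irreducible subvariety $Z\subseteq M$, and then to identify the two sides with the same Chern--class integral.

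For the right-hand side, I would invoke Bertini together with Kleiman's transversality theorem: since $L$ is very ample, a generic $V\in|L|$ is transverse to every stratum of a Whitney stratification adapted to $\gamma$. The generic hyperplane formula for CSM classes (checked stratum by stratum via additivity of $c_*$ and the adjunction $c(TS)=c(TM|_S)/c(N_{S/M})$ on each smooth locally closed stratum) then yields
\[
c_*(\gamma\cdot \ind_{M\setminus V})=\frac{c_*(\gamma)}{c(L)}\cap [M],\qquad \chi(M\setminus V,\gamma)=\int_M\Bigl\{\frac{c_*(\gamma)}{c(L)}\Bigr\}_0.
\]

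For the left-hand side with $\Lambda=T^*_ZM$, I would apply Fulton's formula for the intersection of a conical cycle with the zero section of the rank-$n$ bundle $T^*M\otimes L$:
\[
\deg\bigl([\Lambda\otimes L]\cdot [M]\bigr)=\int_M\bigl\{c(T^*M\otimes L)\cap s(\Lambda\otimes L, T^*M\otimes L)\bigr\}_0,
\]
and resolve the singular cone by pulling back along the Nash blowup $\nu\colon\tilde Z\to Z$. Letting $\widetilde{TZ}\subseteq \nu^*TM|_{\tilde Z}$ be the tautological rank-$d$ subbundle with $d=\dim Z$, the strict transform of $\Lambda$ becomes the subbundle $\tilde N^\vee:=\ker(\nu^*T^*M|_{\tilde Z}\to\widetilde{TZ}^\vee)$. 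Birational invariance of Segre classes, the projection formula, and the exact sequence
\[
0\to\tilde N^\vee\otimes L\to \nu^*(T^*M\otimes L)\to\widetilde{TZ}^\vee\otimes L\to 0
\]
reduce the left-hand side to $\int_{\tilde Z}c_d(\widetilde{TZ}^\vee\otimes\nu^*L)$. MacPherson's Nash-blowup formula $c_*(\textup{Eu}^\vee_Z)=(-1)^d\nu_*(c(\widetilde{TZ})\cap[\tilde Z])$ rewrites the right-hand side as $(-1)^d\int_{\tilde Z}\{c(\widetilde{TZ})/c(\nu^*L)\}_d$, and the two sides coincide via the splitting-principle identity $c_d(F^\vee\otimes L)=(-1)^d\{c(F)/c(L)\}_d$ applied with $F=\widetilde{TZ}$.

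The hard part will be the singular case: one must justify the Nash-blowup reduction for the Segre class of $\Lambda\otimes L$ and for the CSM class of $\textup{Eu}^\vee_Z$ simultaneously, and verify the generic hyperplane formula for $c_*(\textup{Eu}^\vee_Z)$ through all strata. A cleaner but more formal alternative would bypass Nash blowups by applying Aluffi's dual/tensor operations of \cite{Aluffi03} directly to the identity $\breve c_*(\textup{Eu}^\vee_Z)=c(T^*M)\cap s(\Lambda,T^*M)$ recalled in \S\ref{sec; chern}, translating the $L$-tensor twist into the operation $\alpha\mapsto \alpha\otimes_M L$ and collapsing the whole argument into a single formal manipulation.
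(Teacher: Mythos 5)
Your proposal is correct in substance but reaches the identity by a genuinely different route on the key step. For the right-hand side you and the paper both land on $\chi(M\setminus V,\gamma)=\int_M\bigl\{c_*(\gamma)/c(L)\bigr\}_0$: the paper gets there by pushing forward along the embedding $\nu\colon M\to\mathbb{P}^N$ defined by the very ample $L$ and quoting Aluffi's generic-hyperplane formula \cite[Proposition 2.6]{Aluffi13}, whereas you re-derive it by stratumwise transversality on $M$ itself; both work, though the pushforward trick spares you from establishing the full class identity on $M$ (only its degree-zero part is needed). The real divergence is on the left-hand side. The paper writes the signed Chern-Mather class $\breve c_*(\textup{Eu}^\vee_Z)=\sum_i(-1)^{n-i}\alpha^i$ as the shadow of $[T^*_ZM]$ and quotes \cite[Lemma 2.1]{AMSS23} to obtain $[\textup{JCC}(\textup{Eu}^\vee_Z)]\cdot[M]=\sum_i(-1)^{n-i}[V]^{n-i}\alpha^i$ in one line; you instead recompute this intersection from scratch via Fulton's zero-section formula, birational invariance of Segre classes of cones under the Nash blowup, and the splitting-principle identity $c_d(F^\vee\otimes L)=(-1)^d\bigl\{c(F)/c(L)\bigr\}_d$. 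Your bookkeeping is correct (the exact sequence for $\widetilde{TZ}$ and the splitting identity do match the two sides), and it amounts to a self-contained proof of the quoted shadow lemma in this special case; the ``formal alternative'' you sketch at the end, applying Aluffi's $\otimes/\vee$ calculus to $\breve c_*(\textup{Eu}^\vee_Z)=c(T^*M)\cap s(T^*_ZM)$, is essentially what the AMSS lemma packages, so that variant is the one closest to the paper. What your route buys is independence from the external reference; what the paper's buys is brevity. Two minor points to tidy: the genericity of $V$ must be chosen simultaneously for the finitely many $Z_\alpha$ occurring in $\gamma$ (harmless, by linearity of both sides), and your displayed $\bigl\{c(\widetilde{TZ})/c(\nu^*L)\bigr\}_d$ should be read as the codimension-$d$ part capped with $[\tilde Z]$, i.e.\ the dimension-zero component, to match the other side.
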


\begin{proof}
It suffices to prove for $\gamma=\textup{Eu}^\vee_Z$, where $Z\subset M$ is an irreducible closed subvariety.  We denote by $U$ the  complement $M\setminus V$ 
and by $\breve{c}_*(\textup{Eu}_Z^\vee)=\sum_i (-1)^{n-i}\alpha^i$ the signed Chern-Mather class of $Z$. Here $\alpha^i\in H_{n-i}(M)$ denotes the  codimension  $i$ piece. 
Since $\breve{c}_*(\textup{Eu}_Z^\vee)$ is the shadow of $CC(\gamma)=T_Z^*M$ in $T^*M$, by \cite[Lemma 2.1]{AMSS23} we have 
\[
 [\textup{JCC}(\textup{Eu}_Z^\vee)]\cdot [M] = \sum_{i=0}^n  (-1)^{n-i} [V]^{n-i}\alpha^i   \/.  
\]
Let $\nu\colon M\to \mathbb{P}^N$ be the embedding induced by $L$. Then $[V]=\nu^*[\mathcal{H}]$ for a generic hyperplane $\mathcal{H}$ of $\mathbb{P}^N$. 
By \cite[Proposition 2.6]{Aluffi13} we have 
\begin{align*}
\chi(U, \textup{Eu}^\vee_Z\cdot \ind_U) =&
\int_{M} c_*(\textup{Eu}^\vee_Z\cdot \ind_U) = \int_{\mathbb{P}^N} \nu_*c_*(\textup{Eu}^\vee_Z\cdot \ind_U)\\
=& 
\int_{\mathbb{P}^N}  
c_*(\textup{Eu}^\vee_Z \cdot \ind_{\mathbb{P}^N\setminus \mathcal{H}}) 
=  \int_{\mathbb{P}^N} \frac{\nu_*c_*(\textup{Eu}^\vee_Z)}{1+[\mathcal{H}]}\\
=& \int_{M} \frac{c_*(\textup{Eu}^\vee_Z)}{1+[V]}
=\deg \left(  \sum_{i=0}^n  (-[V])^{n-i} \alpha^i \right)  \/.    
\end{align*}   
\end{proof}

\begin{example}
When $M=\mathbb{P}^n$, for $d\geq 2$ the twisted cotangent bundle $T^*\mathbb{P}^n\otimes \mathscr{O}_{\mathbb{P}^n}(d)$ is globally generated and hence nef. Thus given any effective constructible function $\gamma$, for a generic degree $d\geq 2$ hypersurface $V$ we have 
$\chi(\mathbb{P}^n\setminus V, \gamma)\geq 0$.
\end{example}

Motivating from  \cite{MRWW24} we  prove the following non-negative result. 
 Let $M$ be a $n$-dimensional projective   manifold  and  $D\subset M$ be a holonomic strongly Euler homogeneous free divisor.  
\begin{theorem}
\label{theo; nonnegativeperversetriple}
Given an effective constructible function   $\gamma$ on $M$.
Let $V$ be a reduced hypersurface of $M$ such that $L:=\mathscr{O}_M(V)$ is very ample and $T^*M(\log D)\otimes L$ is nef. 
Let $V'$ be a generic  hypersurface in the linear system $|L|$.  
\begin{enumerate}
\item If $\gamma$  is weakly log transverse to $D$,    we have  
\[
 \chi(M\setminus (D\cup V'), \mathcal{F}^\bullet)\geq 0 \/.
\]
\item If  $\gamma\cdot \ind_{M\setminus V}$ is weakly log transverse to $D$,   we have  
\[
 \chi(M\setminus (D\cup V\cup V'), \mathcal{F}^\bullet)\geq 0 \/.
\]
\end{enumerate}
\end{theorem}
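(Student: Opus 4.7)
The strategy combines three ingredients: Proposition~\ref{prop; Jchi} converts the Euler characteristic over $M\setminus V'$ into a zero-section intersection number in $T^*M\otimes L$; the $L$-twisted jet analogue of equation~\eqref{relation; CCnonequivariant} expresses the restricted jet characteristic cycle via the logarithmic jet conormal spaces in $T^*M(\log D)\otimes L$; and Proposition~\ref{prop; nefimplynonnegative} turns the nefness hypothesis on $T^*M(\log D)\otimes L$ into non-negativity of the resulting intersection numbers.

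For part~(1), apply Proposition~\ref{prop; Jchi} to the constructible function $\gamma\cdot\ind_{M\setminus D}$ and the generic $V'\in|L|$:
\[
\chi\bigl(M\setminus(D\cup V'),\gamma\bigr)=\deg\bigl([\textup{JCC}(\gamma\cdot\ind_{M\setminus D})]\cdot[M]\bigr)
\]
in $T^*M\otimes L$. Running the derivation of Section~\ref{sec:formal consequence} in the $L$-twisted bundle (diagram~\eqref{diagram; globaljetsharp} and Proposition~\ref{sharpintolog} transfer verbatim, since only the scalar action on the vector bundle is used), the weak log transversality of $\gamma$ to $D$ yields
\[
[\textup{JCC}(\gamma\cdot\ind_{M\setminus D})]=\sum_{Z_\alpha\not\subset D} n_\alpha\, j_L^*[J\Lambda_\alpha^{\log}]
\]
where $\textup{CC}(\gamma)=\sum n_\alpha[T^*_{Z_\alpha}M]$. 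The zero sections of $T^*M\otimes L$ and of $T^*M(\log D)\otimes L$ both coincide with the copy of $M$, and $j_L$ is the identity on them, so the projection formula identifies each term with $n_\alpha\deg\bigl([J\Lambda_\alpha^{\log}]\cdot[M]\bigr)$ computed inside $T^*M(\log D)\otimes L$. Each $J\Lambda_\alpha^{\log}$ is an $n$-dimensional cone in the nef bundle $T^*M(\log D)\otimes L$, so Proposition~\ref{prop; nefimplynonnegative} makes each such intersection non-negative. Combined with $n_\alpha\geq 0$, this proves~(1).

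For part~(2), set $\eta=\gamma\cdot\ind_{M\setminus V}$ and rerun the same chain with $\eta$ in place of $\gamma$. The only new point is that the expansion $\textup{JCC}(\eta)=\sum_\beta m_\beta[J\Lambda_\beta]$ must have $m_\beta\geq 0$, so that the resulting $n$-cycle in $T^*M(\log D)\otimes L$ is effective and Proposition~\ref{prop; nefimplynonnegative} applies. This effectivity follows from Proposition~\ref{global-Ginzburg}: for each $Z_\alpha\not\subset V$,
\[
\textup{JCC}(\textup{Eu}^\vee_{Z_\alpha}\cdot\ind_{M\setminus V})=[i_L^{-1}(\overline{J\Lambda^\sharp_\alpha})]
\]
is the fundamental class of the scheme-theoretic pullback of the irreducible $(n+1)$-dimensional variety $\overline{J\Lambda^\sharp_\alpha}\subset P^1_M L$ along the regular embedding $T^*M\otimes L\hookrightarrow P^1_M L$, hence effective. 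Summing with the non-negative weights $n_\alpha$ shows that $\textup{JCC}(\eta)=\sum_{Z_\alpha\not\subset V} n_\alpha[i_L^{-1}(\overline{J\Lambda^\sharp_\alpha})]$ is effective, forcing $m_\beta\geq 0$, and the argument of part~(1) applies.

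The main technical hurdle is the $L$-twisted jet analogue of equation~\eqref{relation; CCnonequivariant}: one must check that the homological manipulation behind that equation transports to $P^1_M L$ and $T^*M\otimes L$. The critical input is the triviality of the normal bundle in the argument of Section~\ref{sec:formal consequence}, which is already set up in $P^1_M L\otimes L^\vee$ where $T^*M\subset P^1_M L\otimes L^\vee$ has trivial normal bundle; tensoring with $L$ then delivers the jet statement. The only other step requiring care is the identification $(j_L)_*[M_{T^*M\otimes L}]=[M_{T^*M(\log D)\otimes L}]$, which is immediate from $j_L$ being the identity on the common zero section.
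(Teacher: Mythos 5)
Your overall architecture matches the paper's: convert $\chi(M\setminus(D\cup V'),\cdot)$ into a zero-section intersection number via Proposition~\ref{prop; Jchi}, transfer that number to the logarithmic bundle via the restriction identity, and invoke Proposition~\ref{prop; nefimplynonnegative}. Your part~(2) is a mild variant: you establish effectivity of $\textup{JCC}(\gamma\cdot \ind_{M\setminus V})$ directly from Proposition~\ref{global-Ginzburg} and then reapply part~(1) to $\eta=\gamma\cdot\ind_{M\setminus V}$, whereas the paper works one level up in $P^1_ML(\log D)$ (nef as an extension of nef bundles) and descends via Lemma~\ref{lemma; i2log}; both reductions are legitimate and yours is arguably cleaner, needing only the nefness of $T^*M(\log D)\otimes L$.

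There is, however, a genuine gap at the step you yourself flag as the main technical hurdle, and your resolution of it is incorrect. The vanishing of the excess term in the derivation of \eqref{relation; CCnonequivariant} uses that $T^*M$ has \emph{trivial} normal bundle inside $P^1_ML_D\otimes L_D^\vee$ (where $L_D=\mathscr{O}_M(D)$), so that $i^*i_*\Theta=c_1(\mathscr{O}_M)\cap\Theta=0$. After tensoring the whole diagram by $L=\mathscr{O}_M(V)$, the normal bundle of $T^*M\otimes L$ becomes $L$, and the excess term becomes $c_1(L)\cap\Theta_L$; this is supported exactly on the extra components of $j'^{-1}(\Lambda^{\log}_\alpha)$ lying inside $T^*M$, which are precisely what \emph{weak} (as opposed to full) log transversality still allows, and it has no reason to vanish. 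So ``tensoring with $L$ then delivers the jet statement'' is false as written. Put differently: weak log transversality yields only the non-equivariant identity \eqref{relation; CCnonequivariant}, i.e.\ the $t=0$ specialization of \eqref{relation; CCequivariant}, and hence by Theorem~\ref{theo;logCCtochern}(i) controls only the dimension-$0$ piece of $c_*(\gamma\cdot\ind_{M\setminus D})$; but $\chi(M\setminus(D\cup V'),\gamma)=\int_M c_*(\gamma\cdot\ind_{M\setminus D})/(1+[V])$ involves the pieces in every dimension, which are controlled only under full log transversality by Theorem~\ref{theo;logCCtochern}(ii). Your argument closes completely if ``weakly log transverse'' is upgraded to ``log transverse'' in both hypotheses, since then $j'^{-1}(\Lambda^{\log}_\alpha)$ is irreducible and $\Theta_L=0$; under the weak hypothesis an additional argument is required to kill $\deg\bigl(c_1(L)\cap s^*\Theta_L\bigr)$, and the paper's own one-line proof does not supply one either.
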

\begin{proof}
Let $j\colon  M\setminus D\to M$ be the open immersion. 
By assumption $T^*M(\log D)\otimes L$ is nef, then $P_{M}^1L(\log D)$ is also nef since it is an extension of nef vector bundles. By Proposition~\ref{prop; nefimplynonnegative}, the intersection number of $[\textup{JCC}(\gamma)^{\log}]$  with the zero section in $T^*M(\log D)\otimes L$
and  the intersection number of  $[\textup{JCC}(\gamma)^{\sharp, \log}]$  with the zero section in $P_{M}^1L(\log D)$ are both non-negative.  Then the theorem follows from Lemma~\ref{lemma; i2log} and Proposition~\ref{prop; Jchi}.
\end{proof}



\begin{corollary}
\label{corollary; Dktriple}
Let $D_k=V(x_0\cdots x_k)\subset \mathbb{P}^n$ be the union of $k$ coordinate hyperplanes. 
Given an  effective constructible function $\gamma$ on $\mathbb{P}^n$ and  a   reduced projective hypersurface $V$, for a generic hypersurface $V'$ in the linear system $|\mathscr{O}_{\mathbb{P}^n}(V)|$  we have
$$
 \chi(\mathbb{P}^n\setminus (D_k \cup V'), \gamma)\geq 0 \text{ and }  \chi(\mathbb{P}^n\setminus (D_k\cup V\cup V'), \gamma)\geq 0 \/.
$$
\end{corollary}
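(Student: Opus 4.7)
The plan is to realise this corollary as a direct application of Theorem~\ref{theo; nonnegativeperversetriple} with $M=\mathbb{P}^n$ and $D=D_k$, so the task reduces to verifying the hypotheses of that theorem in this setting.

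I would begin with the structure of $D_k$. Being a simple normal crossing divisor, $D_k=V(x_0\cdots x_k)$ is free, its local Euler fields $x_i\partial_{x_i}$ witness strong Euler homogeneity, and its logarithmic stratification (the finite collection of coordinate strata) is locally finite, so $D_k$ is holonomic. Theorem~\ref{theo; NCimpliesLT} (or equivalently Example~\ref{coro; NCimpliesWLT}) then guarantees that every constructible function on $\mathbb{P}^n$ is weakly log transverse to $D_k$; in particular this applies both to $\gamma$ and to $\gamma\cdot \ind_{\mathbb{P}^n\setminus V}$, which settles the transversality hypotheses in parts (1) and (2) of Theorem~\ref{theo; nonnegativeperversetriple}.

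Setting $L:=\mathscr{O}_{\mathbb{P}^n}(V)=\mathscr{O}_{\mathbb{P}^n}(d)$ with $d=\deg V\ge 1$, very ampleness of $L$ is immediate. The remaining (and principal) hypothesis to verify is nefness of $T^*\mathbb{P}^n(\log D_k)\otimes L$, which I regard as the main obstacle. Since $D_k$ is a centrally free hyperplane arrangement, applying Proposition~\ref{prop; chernderlog}, dualising, and twisting by $\mathscr{O}(d)$ produces
\[
0\to \Omega^1_{\mathbb{P}^n}(\log D_k)\otimes \mathscr{O}(d)\to \mathscr{O}(d)^{k+1}\oplus \mathscr{O}(d-1)^{n-k}\to \mathscr{O}(d)\to 0.
\]
I would deduce nefness from global generation of the subsheaf via Castelnuovo-Mumford $0$-regularity: the required vanishings $H^i(\Omega^1_{\mathbb{P}^n}(\log D_k)\otimes \mathscr{O}(d-i))=0$ for $1\le i\le n$ follow from the long exact cohomology sequence associated to the displayed twist, and reduce to the standard facts $H^i(\mathbb{P}^n,\mathscr{O}(m))=0$ for $0<i<n$ (any $m$) and for $i=n$ with $m\ge -n$, all of which hold provided $d\ge 1$. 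A globally generated coherent sheaf is a quotient of a trivial bundle, hence nef.

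With every hypothesis of Theorem~\ref{theo; nonnegativeperversetriple} in place, its parts (1) and (2) deliver the two inequalities for a generic $V'\in |L|$ directly. The conceptual content of the proof is concentrated in the nefness check; the remaining verifications are structural consequences of $D_k$ being simple normal crossing.
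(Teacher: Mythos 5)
Your proposal is correct and follows the same route as the paper: the paper's proof is exactly ``$D_k$ is normal crossing, so weak log transversality holds by Theorem~\ref{theo; NCimpliesLT}; $T^*\mathbb{P}^n(\log D_k)\otimes\mathscr{O}(d)$ is nef for $d\ge 1$ by Proposition~\ref{prop; chernderlog}; apply Theorem~\ref{theo; nonnegativeperversetriple}.'' The only place you diverge is the nefness check, which you treat as the main obstacle and attack via Castelnuovo--Mumford regularity. This is much more work than needed: Proposition~\ref{prop; chernderlog} gives the splitting $\textup{Der}_{\mathbb{P}^n}(-\log D_k)\cong\bigoplus_{i=1}^n\mathscr{O}(1-d_i)$ with exponents $d_1=\cdots=d_k=1$ and $d_{k+1}=\cdots=d_n=0$ (cf.\ Example~\ref{exam; logcothyparr}), so $T^*\mathbb{P}^n(\log D_k)\otimes\mathscr{O}(d)\cong\mathscr{O}(d)^{k}\oplus\mathscr{O}(d-1)^{n-k}$ is a direct sum of nef line bundles whenever $d\ge 1$, and nefness is immediate. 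Your regularity argument does go through, but note one imprecision: the vanishing $H^1\bigl(\Omega^1_{\mathbb{P}^n}(\log D_k)\otimes\mathscr{O}(d-1)\bigr)=0$ does not reduce purely to vanishing of cohomology of line bundles --- from the long exact sequence it is the cokernel of $H^0(\mathscr{O}(d-1)^{k+1}\oplus\mathscr{O}(d-2)^{n-k})\to H^0(\mathscr{O}(d-1))$, so you also need surjectivity of that map on global sections. It holds here (the surjection onto $\mathscr{O}(d)$ in your displayed sequence is split --- it is projection onto the Euler factor), but that is precisely the observation that makes the whole Castelnuovo--Mumford detour unnecessary.
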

When $k=0$, $D=H_\infty$ is the hyperplane at infinity and $V'=H$ is a generic hyperplane, this recovers \cite[Corollary 1.2]{MRWW24} and hence the classical result that $\chi(\mathbb{C}^n\setminus H, \gamma)\geq 0$.  

\begin{proof}
The divisor $D_k$  is normal crossing  and  the weakly log transverse assumption is satisfied by Theorem~\ref{theo; NCimpliesLT}. Set $d=\deg V$,  by Proposition~\ref{prop; chernderlog} $T^*\mathbb{P}^n(\log D_k)\otimes \mathscr{O}(d)$ is nef whenever $d\geq 1$. Then Theorem~\ref{theo; nonnegativeperversetriple} applies.
\end{proof}

\begin{example}
Let $D=\mathcal{A}$ be a centrally free projective hyperplane arrangement. If all the exponents $d_i$ of $\mathcal{A}$  are $\geq 1$,   
$T^*\mathbb{P}^n( \log \mathcal{A})$  is   nef by Proposition~\ref{prop; chernderlog}.  Then by Corollary~\ref{coro; perverseisnonnegative} a  `general' effective constructible function on $\mathbb{P}^n$  has non-negative Euler characteristic. 

In general $T^*\mathbb{P}^n(\log \mathcal{A})\otimes \mathscr{O}(d)$ is nef whenever $d\geq 1$.  
Thus replacing $D_k$ by $\mathcal{A}$, under the  assumption of weakly log transverse Corollary~\ref{corollary; Dktriple} also holds.
\end{example}

\begin{remark}
Notice that all the divisor complements in $\mathbb{P}^n$  are affine.    
Based on this  the  following  result on perverse sheaves was proved in \cite{SSW25}  using generic vanishing. 

Given any two reduced hypersurfaces $V_1, V_2$ in $\mathbb{P}^n$ and any perverse sheaf $\mathcal{F}^\bullet$ on $\mathbb{P}^n\setminus V_1$. For a generic translate $gV_2$ of $V_2$ one has
$\chi(\mathbb{P}^n\setminus V_1\cup gV_2, \mathcal{F}^\bullet)\geq 0$.
\end{remark}

\section{Chern classes of the Complement of  Centrally Free Projective Hypersurfaces}
\label{generalisation-FMS}
In this section we apply earlier results to the case $M= \mathbb{P}^n$, $V=V(g)$ is a reduced hypersurface and $D$ is either a holonomic linear free divisor or a free projective hyperplane arrangement. 

\subsection{The Case of Linear Free Divisors}
\label{sec; linearfree}
First we discuss   a generalization of a recent result \cite[Theorem 1.2]{FMS} of  Fassarella, Medeiros and Salomao. 
Let $\hat{D}$ be the affine cone of $D$ and assume $\hat{D}$ is  linear free (see Definition~\ref{defn; linearfreedivisor}). Let $\mathcal{B}=\{\delta_0, \cdots ,\delta_n\}$ be a basis of $\textup{Der}_{\mathbb{C}^{n+1}}(-\log \hat{D})$ where each $\delta_i$ is homogeneous of polynomial degree $1$. The homogeneous ideal $(\delta_0(g),\ldots,\delta_n(g))$ of $\mathbb{C}[x_0,\ldots,x_n]$ generates the ideal sheaf $\mathcal{J}_V(\log D)$ (see Definition \ref{defn; J_VlogD}), and it also defines a rational map
\[
T_\mathcal{B}(g)\colon \mathbb{P}^n \dashrightarrow \mathbb{P}^n, \ p \mapsto [\delta_0(g)(p):\ldots : \delta_n(g)(p)]
\]
The multi-degrees (see Definition \ref{defn; multidegree}) of $T_{\mathcal{B}}(g)$ are independent of the choice of $\mathcal{B}$.
\begin{corollary}
\label{coro; linearfreecsm}
Let  $m_k(V,\log D)$ be the $k$th multi-degree of the rational map $T_\mathcal{B}(g)$.
Assume $\hat{D}$ is a holonomic linear free divisor and $\ind_V$ is log transverse to $D$. We have
\[
c_*(\ind_{\mathbb{P}^n\setminus V\cup D}) = \sum_{k=0}^n (-1)^k m_k(V,\log D)\cdot [\mathbb{P}^{n-k}] \in H_*(\mathbb{P}^n)\/.
\]
In particular, the degree of  $T_\mathcal{B}(g)$ equals $(-1)^n \chi(\mathbb{P}^n\setminus V\cup D)$. 
\end{corollary}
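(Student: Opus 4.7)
The plan is to apply the Double Restriction Formula (Theorem~\ref{theo; csmdoubleres}) with $M=\mathbb{P}^n$ and $L=\mathscr{O}(r)$, where $r=\deg V$, and then translate the Segre-class expression into multi-degrees via Proposition~\ref{prop; muldegandsegre}.

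First, I would record two simplifications special to the linear free case. Since $\hat{D}$ is linear free, every exponent $d_i$ equals $1$, so Proposition~\ref{prop; chernderlog} gives $T\mathbb{P}^n(-\log D)\cong \mathscr{O}_{\mathbb{P}^n}^{\oplus n}$, and in particular $c(T\mathbb{P}^n(-\log D))=1$. Next, I would identify the logarithmic Jacobian sheaf $\mathcal{J}_V(\log D)$ with the base-ideal sheaf $\mathcal{I}_{T_{\mathcal{B}}(g)}$ of the rational map $T_{\mathcal{B}}(g)$. The derivations $\delta_0,\ldots,\delta_n$ on $\mathbb{C}^{n+1}$ are homogeneous of polynomial degree $1$ and descend to global sections of $\textup{Der}_{\mathbb{P}^n}(-\log D)$ which generate this locally free sheaf on every affine chart (the Euler field $\delta_0=\sum x_i\partial_{x_i}$ projects to zero but gives $\delta_0(g)=rg$, so it reproduces the generator $g$ in the definition of $\mathcal{J}_V(\log D)$). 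Hence $\mathcal{J}_V(\log D)$ is the homogeneous ideal generated by $\delta_0(g),\ldots,\delta_n(g)$, all of degree $r$, i.e.\ exactly $\mathcal{I}_{T_{\mathcal{B}}(g)}$.

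With these two ingredients in hand, Theorem~\ref{theo; csmdoubleres} simplifies to
\[
c_{*}(\ind_{\mathbb{P}^n\setminus(V\cup D)}) = c(\mathscr{O}(r))^{-1}\cap \Big([\mathbb{P}^n]-s\!\left(\mathcal{I}_{T_{\mathcal{B}}(g)}, \mathbb{P}^n\right)^{\vee}\otimes_{\mathbb{P}^n}\mathscr{O}(r)\Big).
\]
Proposition~\ref{prop; muldegandsegre}, applied to the rational map $T_{\mathcal{B}}(g)$ whose defining forms all have common degree $r$, rewrites precisely the right-hand side as $\sum_{k=0}^n(-1)^k m_k(V,\log D)\,[\mathbb{P}^{n-k}]$, which is the desired identity. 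Reading off the zero-dimensional component gives $\chi(\mathbb{P}^n\setminus V\cup D)=(-1)^n m_n(V,\log D)$, and $m_n$ is by definition the degree of $T_{\mathcal{B}}(g)$.

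The only nontrivial step is the identification $\mathcal{J}_V(\log D)=\mathcal{I}_{T_{\mathcal{B}}(g)}$: one must verify that the local generators of $\mathcal{J}_V(\log D)$ from Definition~\ref{defn; J_VlogD}, taken with respect to a local basis of $\textup{Der}_{\mathbb{P}^n}(-\log D)$ on a standard affine chart, generate the same ideal as the global sections $\delta_i(g)$. This is where the linear-free hypothesis is essential, because it makes the sections $\delta_0,\ldots,\delta_n$ all live in $H^0(\mathbb{P}^n,\textup{Der}_{\mathbb{P}^n}(-\log D))$ and generate this sheaf globally by the exact sequence of Proposition~\ref{prop; chernderlog}; the Euler relation $\delta_0(g)=rg$ then takes care of the extra generator $g_\alpha$ appearing in the local description of $\mathcal{J}_V(\log D)$. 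Everything else is a formal assembly of Theorem~\ref{theo; csmdoubleres}, Proposition~\ref{prop; chernderlog}, and Proposition~\ref{prop; muldegandsegre}.
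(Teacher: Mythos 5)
Your proof is correct and follows essentially the same route as the paper: combine Theorem~\ref{theo; csmdoubleres} (with $c(T\mathbb{P}^n(-\log D))=1$ from Proposition~\ref{prop; chernderlog} in the linear free case) with Proposition~\ref{prop; muldegandsegre}. The only difference is that you spell out the identification $\mathcal{J}_V(\log D)=\mathcal{I}_{T_{\mathcal{B}}(g)}$ via the Euler relation $\delta_0(g)=rg$, which the paper simply asserts in the paragraph preceding the corollary; this is a welcome clarification but not a different argument.
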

\begin{proof}
The projective divisor $D$ is holonomic by Proposition~\ref{prop:coneproperties}. We also have
\[
c(T\mathbb{P}^n(-\log D))=1 
\]
by Proposition \ref{prop; chernderlog}. From Theorem~\ref{theo; csmdoubleres} we obtain
\[
c_*(\ind_{\mathbb{P}^n\setminus V\cup D})=c(\mathscr{O}_{\mathbb{P}^n}(V))^{-1}\cap 
\Big(
[\mathbb{P}^n]-s\left(\mathcal{J}_V(\log D), \mathbb{P}^n\right)^\vee\otimes \mathscr{O}_{\mathbb{P}^n}(V)
\Big) \/.
\]
Applying Proposition~\ref{prop; muldegandsegre} we have 
\[
\sum_{k=0}^n (-1)^k m_k(V,\log D)\cdot [\mathbb{P}^{n-k}]=c(\mathscr{O}_{\mathbb{P}^n}(V))^{-1}\cap 
\Big(
[\mathbb{P}^n]-s\left(\mathcal{J}_V(\log D), \mathbb{P}^n\right)^\vee\otimes \mathscr{O}_{\mathbb{P}^n}(V)
\Big) \/.
\]
Combining the two equations, the corollary follows.
\end{proof}

\begin{example}  
\label{exam; FMS}
Let $D=V(x_0\cdots x_n)$ be the union of all coordinate hyperplanes in $\mathbb{P}^n$ and let $U=\mathbb{P}^n\setminus D$. The affine cone $\hat{D}$ is a normal crossing divisor hence is also linear free. For any hypersurface $V$, $\ind_V$ is log transverse to $D$ by Theorem \ref{theo; NCimpliesLT}. 

Choose $\mathcal{B}=\{x_0\partial_{x_0}, \ldots ,x_n\partial_{x_n}\}$ so that $\mathcal{J}_V(\log D)=(x_0g_{x_0}, \ldots, x_ng_{x_n})$. The map $T_{\mathcal{B}}(g)$ defined by the generators of $\mathcal{J}_V(\log D)$ is called the toric polar map of $g$ in the terminology of \cite{FMS}. In this case Corollary \ref{coro; linearfreecsm} is reduced to \cite[Theorem 1.2]{FMS}. 
\end{example}

When the affine cone $\hat{D}=V(f)$ is a linear free divisor we may also consider  the gradient map of $D$. Notice that $f$ is  necessarily homogeneous of degree $n+1$ and  $f_{x_i}$ is homogeneous of degree $n$ for each $i$. Denote by $\{g_i\}$ the set of multi-degrees of the gradient map
$$
\nabla_f\colon \mathbb{P}^n \dashrightarrow \mathbb{P}^n\/,  x\mapsto [f_{x_0}(x):\cdots :f_{x_n}(x)] \/.
$$
Let $\mathcal{J}_D=(f_{x_0}, \cdots, f_{x_n})$ be the ideal sheaf of $\textup{Sing}(D)$. By Proposition~\ref{prop; muldegandsegre} we have
\begin{equation}
\label{eq, aa}
\sum_{i=0}^n (-1)^i g_i [\mathbb{P}^{n-i}] = \frac{1}{1+nH}\cap \Big([\mathbb{P}^n]-s(\mathcal{J}_D, \mathbb{P}^n)^\vee\otimes \mathscr{O}(n)  \Big).
\end{equation} 

In practice it could  be hard  to check whether a linear free divisor is strongly Euler homogeneous, since it requires searching for local strongly Euler vector fields around every point. 
The following result provides a necessary numerical condition to check strongly Euler homogeneity through a global computation of multi-degrees or Segre classes, which can be effectively performed using   computer algebras.

\begin{corollary}\label{coro; numerical test for linearfree}
Let $\hat{D}=V(f)\subset \mathbb{C}^{n+1}$ be a strongly Euler homogeneous holonomic linear free divisor and let $\{g_i\}$  be the multi-degrees of the gradient map $\nabla_f$. We have 
\[
(1+nH)(1-H)^n \cap [\mathbb{P}^n]=[\mathbb{P}^n]-s(\mathcal{J}_D, \mathbb{P}^n)^\vee\otimes \mathscr{O}(n).
\]
This is equivalent to $g_i=\binom{n}{i}$ for  $i=0, \cdots ,n$.
\end{corollary}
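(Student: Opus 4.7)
The plan is to compute $c_*(\ind_{\mathbb{P}^n\setminus D})$ in two different ways and equate the results. First, since $D$ is holonomic and strongly Euler homogeneous, Corollary~\ref{coro; equiSEH} ensures that $\mathbb{P}^n$ is log transverse to $D$, so by Corollary~\ref{coro; chernclass}, $c_*(\ind_{\mathbb{P}^n\setminus D}) = c(T\mathbb{P}^n(-\log D))\cap [\mathbb{P}^n]$. Because $\hat{D}$ is linear free, Proposition~\ref{prop; chernderlog} gives $T\mathbb{P}^n(-\log D)\cong \mathscr{O}_{\mathbb{P}^n}^{\oplus n}$, so $c(T\mathbb{P}^n(-\log D)) = 1$ and therefore $c_*(\ind_{\mathbb{P}^n\setminus D}) = [\mathbb{P}^n]$.

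On the other hand, viewing $D$ as a reduced hypersurface of degree $n+1$ in $\mathbb{P}^n$ and applying Aluffi's classical formula for CSM classes of hypersurface complements (i.e.\ the degenerate case of Theorem~\ref{theo; csmdoubleres} where the free divisor is empty), one obtains
\[
c_*(\ind_{\mathbb{P}^n\setminus D}) = \frac{(1+H)^{n+1}}{1+(n+1)H}\cap \Big([\mathbb{P}^n] - s(\mathcal{J}_D,\mathbb{P}^n)^\vee \otimes \mathscr{O}_{\mathbb{P}^n}(n+1)\Big).
\]
Combining with the previous paragraph yields
\[
[\mathbb{P}^n] - s(\mathcal{J}_D,\mathbb{P}^n)^\vee \otimes \mathscr{O}_{\mathbb{P}^n}(n+1) = \frac{1+(n+1)H}{(1+H)^{n+1}}\cap [\mathbb{P}^n].
\]

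The remaining task is to convert the twist from $\mathscr{O}(n+1)$ to $\mathscr{O}(n)$ in this identity; although algebraically elementary, this is the main technical point. Writing $s(\mathcal{J}_D,\mathbb{P}^n) = \sum_{i\geq 1} s_i[\mathbb{P}^{n-i}]$ and setting $\Phi(u) := 1 - \sum_{i\geq 1}(-1)^i s_i u^i$, the dual-tensor notation of \cite{Aluffi03} unwinds to
\[
[\mathbb{P}^n] - s(\mathcal{J}_D,\mathbb{P}^n)^\vee \otimes \mathscr{O}_{\mathbb{P}^n}(d) = \Phi\!\left(\tfrac{H}{1+dH}\right)\cap [\mathbb{P}^n]
\]
for every integer $d$. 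Substituting $u = H/(1+(n+1)H)$ into the preceding display (so that $1+H = (1-nu)/(1-(n+1)u)$ and $1+(n+1)H = 1/(1-(n+1)u)$) identifies the intrinsic series as $\Phi(u) = (1-(n+1)u)^n/(1-nu)^{n+1}$. Substituting back $u = H/(1+nH)$ (so that $1-(n+1)u = (1-H)/(1+nH)$ and $1-nu = 1/(1+nH)$) yields $\Phi(H/(1+nH)) = (1+nH)(1-H)^n$, which is precisely the first formula of the corollary. Finally, plugging it into equation~\eqref{eq, aa} shows that both sides there equal $(1-H)^n \cap [\mathbb{P}^n] = \sum_{i=0}^n(-1)^i\binom{n}{i}[\mathbb{P}^{n-i}]$, proving the equivalence $g_i = \binom{n}{i}$.
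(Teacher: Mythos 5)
Your proof is correct, and it reaches the conclusion by a route that is recognizably different from the paper's. Both arguments begin identically: strong Euler homogeneity and holonomicity give log transversality of $\mathbb{P}^n$ to $D$, and triviality of $T\mathbb{P}^n(-\log D)$ then forces $c_*(\ind_{\mathbb{P}^n\setminus D})=[\mathbb{P}^n]$. From there the paper invokes \cite[Theorem 2.1]{Aluffi03} in its \emph{multi-degree} form, $\sum_{i=0}^n g_i(-H)^i(1+H)^{n-i}\cap[\mathbb{P}^n]=c_*(\ind_{\mathbb{P}^n\setminus D})$, reads off $g_i=\binom{n}{i}$ directly from $\sum_i g_i(-H)^i(1+H)^{n-i}=1$, and only then deduces the Segre-class identity from equation \eqref{eq, aa}. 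You instead start from the Segre-class form of Aluffi's hypersurface formula (twisted by $\mathscr{O}(n+1)$, the degree of $f$), extract the intrinsic series $\Phi$ encoding the Segre coefficients, and carry out an explicit change of twist from $\mathscr{O}(n+1)$ to $\mathscr{O}(n)$ to land on the displayed identity, recovering $g_i=\binom{n}{i}$ from \eqref{eq, aa} at the end. The two routes are logically equivalent --- the multi-degree and Segre forms of Aluffi's theorem are interchanged precisely by Proposition \ref{prop; muldegandsegre} --- but the paper's choice of the multi-degree form lets it skip your substitution computation entirely, while your version has the merit of making the notationally opaque dual-tensor manipulation completely explicit (and your computation checks out: $\Phi(u)=(1-(n+1)u)^n/(1-nu)^{n+1}$ does evaluate to $(1+nH)(1-H)^n$ at $u=H/(1+nH)$). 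One cosmetic point: you justify the $\mathscr{O}(n+1)$-twisted formula as a degenerate case of Theorem \ref{theo; csmdoubleres} with empty free divisor; it is cleaner to cite it as Aluffi's classical hypersurface formula (used by the paper itself in \S\ref{sec; hyperplanes} via \cite[Theorem 2.4]{Aluffi13}), since Theorem \ref{theo; csmdoubleres} is stated only for an actual free divisor $D$.
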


\begin{proof} 
By Proposition~\ref{prop:coneproperties}, Corollary \ref{coro; chernclass} and Corollary \ref{coro; equiSEH} we have
\[
c_*(\ind_{\mathbb{P}^n\setminus D}) = c(T\mathbb{P}^n(-\log D))\cap [\mathbb{P}^n]=[\mathbb{P}^n] \/.
\]
Recall  from  \cite[Theorem 2.1]{Aluffi03} that
$$\sum_{i=0}^n g_i (-H)^i(1+H)^{n-i}\cap [\mathbb{P}^n]= c_*(\ind_{\mathbb{P}^n\setminus D})\/,$$ 
then we have
$
\sum_{i=0}^n g_i (-H)^i(1+H)^{n-i}=1 \/.
$ This forces $g_i=\binom{n}{i}$ for each $i$. The other statement follows from equation $(\ref{eq, aa})$.
\end{proof}

\begin{corollary}
\label{coro; linearfree}
Let $\hat{D}=V(f)\subset \mathbb{C}^{n+1}$ be a holonomic linear free divisor. Let $\{g_i\}$ be the multi-degrees of the gradient map $\nabla_f$. If $g_i\neq \binom{n}{i}$ for some $i$, then
the logarithmic comparison theorem does NOT hold for $\hat{D}$. 
\end{corollary}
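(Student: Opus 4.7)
The plan is to prove this statement by contraposition, assembling already-established implications into a short chain. Assume for contradiction that the logarithmic comparison theorem holds for the holonomic free divisor $\hat{D}$. By the theorem of Granger-Schulze cited above as \cite[Corollary 1.5]{MR2231201}, holonomic LCT forces strong Euler homogeneity, so $\hat{D}$ must be strongly Euler homogeneous. Hence $\hat{D}$ becomes a holonomic strongly Euler homogeneous linear free divisor, which is precisely the setting of Corollary~\ref{coro; numerical test for linearfree}.

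Applying that numerical criterion forces $g_i=\binom{n}{i}$ for every $i=0,\ldots,n$. This directly contradicts the hypothesis that $g_i\neq\binom{n}{i}$ for some $i$, and we conclude that LCT cannot hold for $\hat{D}$.

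There is essentially no obstacle, since all the substantive content has been built up in the earlier sections: the link SEH $\Leftrightarrow$ WLT from Corollary~\ref{coro; equiSEH}, the vanishing of $c(T\mathbb{P}^n(-\log D))$ for linear free divisors from Proposition~\ref{prop; chernderlog}, and the translation between the Chern class identity $c_*(\ind_{\mathbb{P}^n\setminus D})=[\mathbb{P}^n]$ and the multi-degrees of the gradient map via Aluffi's formula, all combine in Corollary~\ref{coro; numerical test for linearfree}. The only point to verify in writing up is that the implication ``holonomic LCT $\Rightarrow$ SEH'' from \cite{MR2231201} is being applied in the affine (not projective) setting, which is exactly how it is stated there, so no extra work is needed.
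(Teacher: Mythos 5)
Your proof is correct and follows essentially the same route as the paper: the paper's proof is a one-line citation of Corollary~\ref{coro; equiSEH}, Corollary~\ref{coro; LCT and holonomicity imply LT} (which is itself the combination of the Granger--Schulze implication ``holonomic LCT $\Rightarrow$ SEH'' with the earlier corollaries), and Corollary~\ref{coro; numerical test for linearfree}, which is exactly the chain you assemble. Your closing remark about applying the LCT~$\Rightarrow$~SEH implication to the affine cone $\hat{D}$ is the right point to check, and it is handled as you say.
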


\begin{proof}
This is a direct consequence of Corollary \ref{coro; equiSEH}, Corollary \ref{coro; numerical test for linearfree} and \cite[Theorem 4.7]{MR3366865}.
\end{proof}

\begin{example}[\cite{MR2521436}]
\label{exam; linearfree}
 Let $\hat{D}:=V(f)\subset \mathbb{C}^4$ be the reductive linear free divisor defined by
\[
f:=y^2z^2-4xz^3-4y^3 w+ 18xyzw - 27w^2x^2 \/.
\]
It was shown in \cite[Theorem 7.10]{MR2521436} that $\hat{D}$ is locally quasi-homogeneous, 
and hence is holonomic by \cite[Theorem 4.3]{CN02}.  Thus $\tilde{D}$ is a  holonomic strongly Euler homogeneous linear free divisor, since locally quasi-homogeneity clearly  implies  strongly Euler homogeneity. 

Let $D=\mathbf{P}(\hat{D})$. Following Proposition~\ref{coro; numerical test for linearfree} a direct computation shows that
\[
s(\mathcal{J}_D, \mathbb{P}^3)= -24[\mathbb{P}^0]+6[\mathbb{P}^1] \/.
\]
 
The module  $\textup{Der}_{\mathbb{C}^4}(-\log \hat{D})$ has a free basis $\mathcal{B}$ as follows:
\[
\begin{bmatrix}
\delta_0\\ \delta_1\\ \delta_2\\ \delta_3 
\end{bmatrix}=\begin{bmatrix}
x & y & z & w \\
3x &  y & -z & -3w \\
0 & 3x & 2y & z \\
y & 2z & 3w & 0
\end{bmatrix}\cdot\begin{bmatrix}
\partial_x \\\partial_y \\\partial_z\\\partial_w 
\end{bmatrix} \/.
\]

If we take a generic linear form $l=ax+by+cz+w$ that cuts a generic hyperplane $\mathcal{H}$, then $\mathcal{J}_\mathcal{H}(\log D)=(x,y,z,w)$. Since $\ind_\mathcal{H}$ is log transverse to $D$, we have
\[
\sum_{k=0}^3 (-1)^k m_k(\mathcal{H}, \log D)\cdot [\mathbb{P}^{3-k}]= c_*(\ind_{\mathbb{P}^3\setminus \mathcal{H}\cup D}) 
=\sum_{k=0}^3 (-1)^{3-k}[\mathbb{P}^{k}] \/.
\]
In other words, $m_k(\mathcal{H},\log D)=1$ for $k=0,1,2,3$. Another way to obtain these numbers is to observe that the rational map $T_{\mathcal{B}}(l)$ is an isomorphism. 

On the other hand, one may also view $\mathcal{H}$ as the free divisor and consider the logarithmic restriction of $D$ to $\mathbb{P}^3\setminus \mathcal{H}$. In this case the logarithmic ideal is
\[
\mathcal{J}_D(\log \mathcal{H})=(f,f_z-cf_w, f_y-bf_w, f_x-af_w ) \/.
\]
Compare with the previous calculation, this ideal is however  very complicated. Its Segre class and multi-degrees are also  difficult to compute. 

This example suggests that, when we restrict a characteristic cycle to the complement of the union of two free divisors, choosing which one to operate the log completion could be sensitive. 
\end{example}

\subsection{The Case of Hyperplane Arrangements}
\label{sec; hyperplanes}
In this subsection we let $D=\mathcal{A}$ be a projective hyperplane arrangement. We assume $\mathcal{A}$ is centrally free, i.e., its affine cone  $\hat{\mathcal{A}}$ is a free (central) hyperplane arrangement in $\mathbb{C}^{n+1}$. Since $\mathcal{A}$ is locally quasi-homogeneous and holonomic, Theorem \ref{theo; csmdoubleres} applies if $\ind_V$ is log transverse to $\mathcal{A}$. We then have the formula
\begin{equation}
\label{eq; csmhyperplane}
c_*(\ind_{\mathbb{P}^n\setminus V\cup \mathcal{A}}) = c(T\mathbb{P}^n(- \log \mathcal{A}))c(L)^{-1}\cap \Big([\mathbb{P}^n]-s\left(\mathcal{J}_V(\log \mathcal{A}), \mathbb{P}^n\right)^\vee\otimes L \Big).
\end{equation}
where $\mathcal{L}\cong\mathscr{O}(V)$. In \S \ref{sec; centralfree} we explained that $H^0(\mathbb{C}^{n+1},\textup{Der}_{\mathbb{C}^{n+1}}(-\log \hat{\mathcal{A}}))$ is freely generated by homogeneous derivations of degree $d_i-1$, $i=0,\ldots,n$. The numbers $d_0=1,d_1,\ldots,d_n$ are usually called the exponents of $\hat{\mathcal{A}}$. By Proposition \ref{prop; chernderlog}, $T\mathbb{P}^n(- \log \mathcal{A})$ splits. Though we will not use it, we can show that the bundle of logarithmic principal parts also splits.

\begin{proposition}\label{hyparrlogjet}
Let $\mathcal{A}$ be a centrally free projective hyperplane arrangement. For any line bundle $L$ on $\mathbb{P}^n$  we have
\[
P^1_{\mathbb{P}^n}L(\log \mathcal{A}) \cong (T^*\mathbb{P}(\log \mathcal{A})\otimes L) \oplus L \/.
\]
\end{proposition}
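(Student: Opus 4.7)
The plan is to reduce the statement to the construction of a logarithmic connection on $L$. As observed in Remark~\ref{logsplitting}, the $\mathscr{O}_{\mathbb{P}^n}$-module structure on $\mathcal{P}^1_{\mathbb{P}^n}\mathcal{L}(\log \mathcal{A})$ is such that an $\mathscr{O}_{\mathbb{P}^n}$-linear splitting $\mathcal{L} \to \mathcal{P}^1_{\mathbb{P}^n}\mathcal{L}(\log \mathcal{A})$ of the bottom row of diagram~\eqref{diagram; P^1LlogD} amounts to a logarithmic connection $\nabla\colon \mathcal{L} \to \mathcal{L}\otimes \Omega^1_{\mathbb{P}^n}(\log \mathcal{A})$. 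Although that remark is phrased under the hypothesis $\mathcal{L}\cong \mathscr{O}(\mathcal{A})$, the equivalence between splittings and log connections is an immediate consequence of the Leibniz rule $h\cdot (s,\alpha) = (hs, s\otimes dh + h\alpha)$ defining the $\mathscr{O}_{\mathbb{P}^n}$-structure and holds for any $L$. Thus it suffices to exhibit a logarithmic connection on an arbitrary line bundle $L$ along $\mathcal{A}$.

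To build such a connection, I would exploit that $\mathcal{A}$ is a hyperplane arrangement: pick any irreducible component $H=V(\ell)\subset \mathcal{A}$ with $\ell$ a linear form, so that $\mathscr{O}_{\mathbb{P}^n}(1)\cong \mathscr{O}_{\mathbb{P}^n}(H)$. Applying the proof of Lemma~\ref{lemma; logconnection} to the smooth divisor $H$ produces a logarithmic connection
\[
\nabla_H\colon \mathscr{O}_{\mathbb{P}^n}(1) \to \mathscr{O}_{\mathbb{P}^n}(1)\otimes \Omega^1_{\mathbb{P}^n}(\log H),
\]
given locally by $g\ell^{-1}\mapsto \ell^{-1}\otimes(dg - g\cdot d\log \ell)$. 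Composing with the natural inclusion $\Omega^1_{\mathbb{P}^n}(\log H)\hookrightarrow \Omega^1_{\mathbb{P}^n}(\log \mathcal{A})$ coming from $H\subset \mathcal{A}$, I obtain a logarithmic connection $\nabla_1$ on $\mathscr{O}_{\mathbb{P}^n}(1)$ along $\mathcal{A}$. Since $\textup{Pic}(\mathbb{P}^n)=\mathbb{Z}\cdot[\mathscr{O}(1)]$, every line bundle $L$ is isomorphic to $\mathscr{O}_{\mathbb{P}^n}(1)^{\otimes k}$ for some $k\in \mathbb{Z}$, and the usual Leibniz-rule tensor product (together with the dual connection construction when $k<0$) turns $\nabla_1$ into a logarithmic connection $\nabla$ on $L$ along $\mathcal{A}$. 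By the preceding paragraph this yields the desired splitting.

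The main obstacle is essentially conceptual rather than technical: one must verify that the splitting--connection correspondence of Remark~\ref{logsplitting} is genuinely valid for an arbitrary line bundle $L$, not only for $L\cong \mathscr{O}(\mathcal{A})$. Once this is granted, no further difficulty arises, because the existence of the connection reduces to the observation that $\textup{Pic}(\mathbb{P}^n)$ is generated by $\mathscr{O}(H)$ for some hyperplane $H$ already contained in $\mathcal{A}$. In fact the same argument shows that $P^1_ML(\log D)\cong (T^*M(\log D)\otimes L)\oplus L$ for any free divisor $D$ on any complex manifold $M$ whose Picard group is generated by line bundles of the form $\mathscr{O}_M(E)$ with $\textup{supp}(E)\subset D$.
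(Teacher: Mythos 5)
Your proposal is correct, and it follows the same overall strategy as the paper's proof --- both reduce the statement to producing a logarithmic connection $\nabla\colon \mathcal{L}\to\mathcal{L}\otimes\Omega^1_{\mathbb{P}^n}(\log\mathcal{A})$, using the observation of Remark~\ref{logsplitting} that an $\mathscr{O}$-linear splitting of the bottom row of \eqref{diagram; P^1LlogD} is the same thing as a logarithmic connection (an equivalence that indeed holds for arbitrary $L$, exactly as you argue from the Leibniz-type $\mathscr{O}_M$-structure). Where you diverge is in how the connection is built. The paper writes down a single symmetric formula: for $g$ a homogeneous degree-$l$ rational function representing a local section of $L=\mathscr{O}(l)$, it sets $\nabla g = dg-\frac{lg}{k}\cdot\frac{d(l_1\cdots l_k)}{l_1\cdots l_k}$ and checks directly that this form is killed by the Euler derivation (so it descends to $\mathbb{P}^n$) and satisfies Leibniz. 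You instead take one hyperplane $H=V(\ell)\subset\mathcal{A}$, use Lemma~\ref{lemma; logconnection} to get a connection on $\mathscr{O}(1)\cong\mathscr{O}(H)$ with poles along $H$ only, push it into $\Omega^1(\log\mathcal{A})$ via the inclusion $\Omega^1(\log H)\hookrightarrow\Omega^1(\log\mathcal{A})$, and propagate to all of $\mathrm{Pic}(\mathbb{P}^n)$ by tensor powers and duals. In homogeneous terms your connection is $\nabla g = dg - lg\cdot\frac{d\ell}{\ell}$, which differs from the paper's by a global section of $\Omega^1_{\mathbb{P}^n}(\log\mathcal{A})$ --- both are valid, since logarithmic connections form a torsor over $H^0(\Omega^1_{\mathbb{P}^n}(\log\mathcal{A}))$. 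What your route buys is the cleaner structural statement at the end: the splitting of $P^1_ML(\log D)$ holds whenever $\mathrm{Pic}(M)$ is generated by classes of divisors supported in $D$, with no need for the denominator $k$ or for $\mathcal{A}$ to be an arrangement; what the paper's route buys is a one-line verification that never leaves the arrangement itself.
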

\begin{proof}
It is enough to construct a logarithmic connection $\nabla: \mathcal{L} \to \Omega^1_{\mathbb{P}^n}(\log \mathcal{A})\otimes \mathcal{L}$. Let $l_1\ldots l_k$ be a defining equation of the affine central arrangement $\hat{\mathcal{A}}$. Let $l$ be the degree of $L$, then any local section of $L$ can be written as a homogeneous degree $l$ rational function $g\in\mathbb{C}(x_0,\ldots,x_n)$. Let $\chi=x_0\partial_0 + \ldots + x_n\partial_n$ be the standard Euler derivation. One can check immediately that $dg-\frac{lg}{k}\cdot\frac{d(l_1\ldots l_k)}{l_1\ldots l_k}$ is a degree $l$ rational form (the differentials $dx_0,\ldots,dx_n$ have degree $1$) and $\chi(dg-\frac{lg}{k}\cdot\frac{d(l_1\ldots l_k)}{l_1\ldots l_k})=0$. Therefore $dg-\frac{lg}{k}\cdot\frac{d(l_1\ldots l_k)}{l_1\ldots l_k}$ is a well-defined local section of $T^*\mathbb{P}^n(\log \mathcal{A})\otimes L$. Define the connection $\nabla$ by
\begin{equation*}
\nabla g= dg-\frac{lg}{k}\cdot\frac{d(l_1\ldots l_k)}{l_1\ldots l_k}.
\end{equation*} 
For any degree $0$ homogeneous function $h\in \mathbb{C}(x_0,\ldots,x_n)$, we can easily check that $\nabla(hg)=h\nabla g+gdh$. Therefore $\nabla$ defines a logarithmic connection.
\end{proof}

As an application we now reformulate a recent result \cite[Theorem 1.1]{RW24} in the centrally free setting.
Let $\mathcal{A}=V(l_0\cdots l_m)$ be a  centrally free projective hyperplane arrangement 
and $\chi_{\mathcal{A}}(t):=\chi_{\tilde{\mathcal{A}}}(t)$  be the characteristic polynomial  of its affine cone $\tilde{\mathcal{A}}$. In  \cite[Theorem 1.2, Theorem 1.3]{MR3047491} Aluffi proved that, writing $c(T\mathbb{P}^n(-\log \mathcal{A}))=\sum_{k=0}^n c_k\cdot H^k$ we have
\[
t\cdot \sum_{k=0}^n c_{n-k}\cdot t^k =  \chi_{\mathcal{A}}(t+1)  
\]
 
We can   view $\mathcal{H}_0:=V(l_0)$ as the hyperplane at infinity and take the affine complement $U_0:=\{l_0\neq 0\}\cong \mathbb{C}^n$.
Let $A:=V(l_1\cdots l_m)\cap U_0$  be the restricted affine hyperplane arrangement in $U_0$ and $\chi_{A}(t)=\sum_{k=0}^n a_k \cdot t^k$ be its characteristic polynomial. 
Then $\mathcal{A}$ is the coning of $A$ and it's   known that $P_{\mathcal{A}}(t)=P_{A}(t)\cdot (t+1)$ for the Poincar\'e polynomials. 
Recall that the Poincar\'e polynomials are related to the characteristic polynomials by 
$$P_{\mathcal{A}}(t)=(-t)^{n+1}\chi_{\mathcal{A}}(-t^{-1})\/; \quad P_{A}(t)=(-t)^{n}\chi_{A}(-t^{-1})\/.$$  
Then we have 
\[
\sum_{k=0}^n c_{k}\cdot t^{k}=t^n\cdot \chi_{A}(1+\frac{1}{t})= \sum_{k=0}^n a_k \cdot (1+t)^kt^{n-k}  \/.
\]
Given a degree $d$ hypersurface $V\subset \mathbb{P}^n$  such that $\ind_V$ is log transverse to $\mathcal{A}$. 
We define  integer $b_k(V, \log \mathcal{A})$ as the  coefficient of $x^k$ in polynomial $L_{V,k}(x)$, where 
\[
L_{V,k}(H)\cap [\mathbb{P}^n]:=(1+H)^k\cdot (1+dH)^{-1} \cap \Big([\mathbb{P}^n]-s\left(\mathcal{J}_V(\log \mathcal{A}), \mathbb{P}^n\right)^\vee\otimes \mathscr{O}_{\mathbb{P}^n}(d) \Big) \/.
\] 
Since $\chi(\mathbb{P}^n\setminus V\cup \mathcal{A})$ is the coefficient of $[\mathbb{P}^0]$ in $c_*(\ind_{\mathbb{P}^n\setminus V\cup \mathcal{A}})$, substituting $L =\mathscr{O}_{\mathbb{P}^n}(d)$ in \eqref{eq; csmhyperplane} and taking degree we have
\begin{corollary}
\label{coro; akbk}
Let $V^\circ:=V\cap U_0$ be the affine hypersurface, then we have:
$$\chi(\mathbb{P}^n\setminus V \cup \mathcal{A})=\chi(\mathbb{C}^n \setminus V^\circ \cup A) =\sum_{k=0}^n a_k\cdot b_k(V,\log \mathcal{A}) \/.$$ 
\end{corollary}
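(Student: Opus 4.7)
The plan is to establish the two equalities separately, both ultimately by direct manipulation of the formula in Theorem~\ref{theo; csmdoubleres}. For the first equality, I observe that since $\mathcal{H}_0 = V(l_0)$ is one of the hyperplanes of $\mathcal{A}$, any point outside $\mathcal{A}$ is automatically in $U_0$. Consequently the set-theoretic identity $\mathbb{P}^n\setminus(V\cup\mathcal{A}) = U_0\setminus(V^\circ\cup A)$ holds on the nose, and since $U_0\cong\mathbb{C}^n$, the first equality of Euler characteristics is immediate.

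For the second equality, the idea is to feed the CSM formula \eqref{eq; csmhyperplane} into the polynomial identity already recorded in the paragraph preceding the statement, namely $\sum_{k=0}^n c_k t^k = \sum_{k=0}^n a_k(1+t)^k t^{n-k}$ (coming from Aluffi's theorem together with the coning relation $P_{\mathcal{A}}(t) = P_A(t)(1+t)$ for Poincaré polynomials). Substituting the right-hand side of this identity for $c(T\mathbb{P}^n(-\log\mathcal{A}))$ in \eqref{eq; csmhyperplane}, grouping the factor $H^{n-k}$ outside, and using the defining relation $L_{V,k}(H)\cap[\mathbb{P}^n] = (1+H)^k(1+dH)^{-1}\cap\bigl([\mathbb{P}^n] - s(\mathcal{J}_V(\log\mathcal{A}),\mathbb{P}^n)^\vee\otimes\mathscr{O}_{\mathbb{P}^n}(d)\bigr)$, I would rewrite
\[
c_*(\ind_{\mathbb{P}^n\setminus V\cup\mathcal{A}}) = \sum_{k=0}^n a_k\, H^{n-k}\cdot L_{V,k}(H)\cap [\mathbb{P}^n].
\]

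Finally, to extract the Euler characteristic as the degree of the dimension-zero part, I would use $\int_{\mathbb{P}^n} H^m\cap[\mathbb{P}^n] = \delta_{m,n}$. For each $k$, the integral $\int_{\mathbb{P}^n} H^{n-k}\cdot L_{V,k}(H)\cap[\mathbb{P}^n]$ extracts the coefficient of $H^n$ in $H^{n-k}L_{V,k}(H)$, which is the coefficient of $H^k$ in $L_{V,k}(H)$ — precisely $b_k(V,\log\mathcal{A})$ by definition. Summing over $k$ yields $\chi(\mathbb{P}^n\setminus V\cup\mathcal{A}) = \sum_{k=0}^n a_k\, b_k(V,\log\mathcal{A})$.

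There is no substantial obstacle in this argument: it is a pure manipulation of characteristic classes. The only point requiring care is the coefficient bookkeeping, which is engineered so that the exponent $H^{n-k}$ in front of $L_{V,k}(H)$ exactly cancels against the coefficient extraction to produce $b_k$ rather than some other coefficient of $L_{V,k}$. The assumption that $\ind_V$ is log transverse to $\mathcal{A}$ is needed precisely to invoke Theorem~\ref{theo; csmdoubleres}; $\mathcal{A}$ being holonomic and strongly Euler homogeneous is automatic for hyperplane arrangements (locally quasi-homogeneous).
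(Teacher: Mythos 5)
Your proposal is correct and follows essentially the same route as the paper: the paper's proof is the one-line instruction to substitute $L=\mathscr{O}_{\mathbb{P}^n}(d)$ into \eqref{eq; csmhyperplane}, rewrite $c(T\mathbb{P}^n(-\log\mathcal{A}))=\sum_k a_k(1+H)^kH^{n-k}$ via the characteristic-polynomial identity, and take degree, which is exactly the computation you carry out explicitly. The first equality via $\mathbb{P}^n\setminus(V\cup\mathcal{A})=U_0\setminus(V^\circ\cup A)$ and the coefficient bookkeeping identifying $\int_{\mathbb{P}^n}H^{n-k}L_{V,k}(H)\cap[\mathbb{P}^n]$ with $b_k(V,\log\mathcal{A})$ are both right.
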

 
In \cite{RW24} the authors considered  an arbitrary affine hyperplane arrangement $A\subset \mathbb{C}^n$ which is not necessarily free, and $r$ generic polynomials $g_1, \cdots ,g_r$ on $\mathbb{C}^n$ of degrees $d_1, \cdots ,d_r$. 
They defined integers $b_k[d_1, \cdots ,d_r]$  and proved  in \cite[Theorem 1.1]{RW24} that:
$$\chi(\mathbb{C}^n\setminus A\cup V(g_1\cdots g_r))=\sum_{k=0}^n a_k\cdot b_k[d_1, \cdots ,d_r]\/.$$ 
Here $\chi_{A}(t)=\sum_{k=0}^n a_k \cdot t^k$ is the characteristic polynomial of $A$.

Embed $\mathbb{C}^n$ into $\mathbb{P}^n$,  the coning of $A$
is  $\mathcal{A}:=\bar{A}\cup \mathcal{H}_\infty$, the union   of the closure of $A$ with the hyperplane at infinity $\mathcal{H}_\infty$.  
Let $\bar{g}_i$ be the homogenization of $g_i$, we have  
$$V(\bar{g}_1\cdots \bar{g}_r)\cap \mathbb{C}^n=V(g_1\cdots g_r) \text{ and } \mathbb{C}^n\setminus A\cup V(g_1\cdots g_r)=\mathbb{P}^n\setminus \mathcal{A}\cup V(\bar{g}_1\cdots \bar{g}_r)\/.$$ 
Thus when the coning  $\mathcal{A}$ is  centrally free,  the formula in Corollary~\ref{coro; akbk} has the same format as in \cite[Theorem 1.1]{RW24}.

The two formulas agree when $A$  is SNC. 
Let $\textup{Sing}(V)$ be the singular subscheme of $V:=V(\bar{g}_1\cdots \bar{g}_r)$.
The coning $\mathcal{A}$ is also SNC and hence  $s\left(\mathcal{J}_V(\log \mathcal{A}), \mathbb{P}^n\right)=s(\textup{Sing}(V), \mathbb{P}^n)$ (see Theorem~\ref{theo; Segreimpliesintegral}).  Then by \cite[Theorem 2.1]{Aluffi03} and \cite[Proposition 2.6]{Aluffi13} we have 
\begin{align*}
H^{n-k}L_{V,k}(H)\cap [\mathbb{P}^n]=& \frac{H^{n-k}(1+H)^k}{(1+dH)}\cap \Big([\mathbb{P}^n]-s(\textup{Sing}(V), \mathbb{P}^n)^\vee\otimes \mathscr{O}_{\mathbb{P}^n}(d) \Big)   \\
=&  \frac{H^{n-k}}{(1+H)^{n-k}}\cdot \frac{(1+H)^{n}}{(1+dH)} \cap \Big([\mathbb{P}^n]-s(\textup{Sing}(V), \mathbb{P}^n)^\vee\otimes \mathscr{O}_{\mathbb{P}^n}(d) \Big) \\
=& \frac{H^{n-k}}{(1+H)^{n-k}} \cap c_*(\ind_{\mathbb{P}^n\setminus V\cup \mathcal{H}})  
=  c_*\left(\ind_{\mathbb{P}^k\setminus V\cup \mathcal{H}}\right) \in H_*(\mathbb{P}^n) 
\end{align*}
for a generic linear subspace $\mathbb{P}^k$ and a generic hyperplane $\mathcal{H}$. 
Then $b_k(V, \log \mathcal{A})$  equals the coefficient of $[\mathbb{P}^0]$ in $c_*(\ind_{\mathbb{P}^k\setminus V\cup \mathcal{H}})$  and hence equals $\chi(\mathbb{P}^k\setminus V\cup \mathcal{H})$. From \cite[Remark 2.9]{RW24} we see that
\[
b_k(V, \log \mathcal{A})=b_k[d_1, \cdots d_r] \text{ for } k=0, \cdots ,n  \/.
\]
So Corollary~\ref{coro; akbk} coincides with \cite[Theorem 1.1]{RW24} when $A$ is simple normal crossing.

\begin{question}
Let $\mathcal{A}$ be any centrally free projective hyperplane arrangement and let $V:=V(\bar{g}_1\cdots \bar{g}_r)$ be a generic hypersurface as in \cite[Theorem 1.1]{RW24}. Do we still have $b_k(V, \log \mathcal{A})=b_k[d_1, \cdots d_r]$? Since $b_k[d_1, \cdots d_r]$ is independent of $\mathcal{A}$, is $b_k(V, \log \mathcal{A})$ also independent of $\mathcal{A}$ when $\mathcal{A}$ is centrally free?
\end{question}

\begin{remark}
In fact,  Theorem 1.1 and  Theorem 1.3 in \cite{RW24} can be directly obtained from the transverse product formula   \cite[Theorem 1.2]{MR3706222}. Let $g_1, \cdots ,g_r$ be $r$ generic polynomials on $\mathbb{C}^n$ and $\bar{g}_1, \cdots, \bar{g}_r$ be their homogenizations. 
The key observation is that 
\[
 b_j[d_1, \cdots ,d_r] = \text{ the coefficient of } [\mathbb{P}^0] \text{ in } 
 \frac{H^{n-j}}{(1+H)^{n-j}}\cap c_*\left(\ind_{\mathbb{P}^n\setminus V(\bar{g}_1\cdots \bar{g}_r)} \right) \/.
\]
Given $f_1, \cdots ,f_k$ as in \cite[Theorem 1.3]{RW24}, following \cite{Aluffi03} we may write
\[
c_*\left(\ind_{\mathbb{C}^n\setminus V(f_1\cdots f_k)}  \right)=c_*\left(\ind_{\mathbb{P}^n\setminus V(\bar{f}_1\cdots \bar{f}_k)\cup \mathcal{H}_\infty}  \right)= (1+H)\sum_{i=0}^n \mu_{n-i} \cdot H^{i}(1+H)^{n -i} \cap [\mathbb{P}^n]\/.
\]
Since  $g_1, \cdots ,g_r$ are generic,  \cite[Theorem 1.2]{MR3706222}  applies:
\begin{align*}
& c_*\left(\ind_{\mathbb{P}^n\setminus \mathcal{H}_\infty \cup V(\bar{f}_1\cdots \bar{f}_r)\cup V(\bar{g}_1\cdots \bar{g}_r)} \right)  
= c_*\left(\ind_{\mathbb{P}^n\setminus \mathcal{H}_\infty \cup V(\bar{f}_1\cdots \bar{f}_r)}\cdot \ind_{\mathbb{P}^n\setminus V(\bar{g}_1\cdots \bar{g}_r)} \right) \\
=&  \frac{1}{(1+H)^{n+1}}\cap c_*\left(\ind_{\mathbb{P}^n\setminus \mathcal{H}_\infty \cup V(\bar{f}_1\cdots \bar{f}_r)} \right) 
\cdot c_*\left(\ind_{\mathbb{P}^n\setminus V(\bar{g}_1\cdots \bar{g}_r)}\right)\\
=& \left( \sum_{i=0}^n \mu_{n-i} \cdot H^{i}(1+H)^{n+1-i} \right)\cap  \frac{c_*\left(\ind_{\mathbb{P}^n\setminus V(\bar{g}_1\cdots \bar{g}_r)}\right)}{(1+H)^{n+1}} \\
=& \left( \sum_{i=0}^n \mu_{n-i} \cdot  \frac{ H^{i}}{(1+H)^{i}} \right) \cap c_*\left(\ind_{\mathbb{P}^n\setminus V(\bar{g}_1\cdots \bar{g}_r)}\right) \/.
\end{align*}
Tracking the coefficient of $[\mathbb{P}^0]$ we then have
\[
\chi\left(\mathbb{C}^n\setminus  V(f_1\cdots f_r)\cup V(g_1\cdots g_r)\right)=
\sum_{i=0}^n \mu_i\cdot b_i[d_1, \cdots d_r] \/.
\]
These $\{\mu_i\}$ are the coefficients of the characteristic polynomial in  Theorem 1.1, and are the Milnor numbers in Theorem 1.3 of \cite{RW24}.
\end{remark}

\subsection{A Comparison of Segre Classes for SNC Divisors}

In this subsection we consider SNC divisors $\mathcal{A}_k:=V(x_0\cdots x_k)\subset \mathbb{P}^n$ for $k\leq n$.  We fix a  basis for   $\textup{Der}_{\mathbb{C}^{n+1}}(-\log \tilde{\mathcal{A}}_k)$:
$$\mathcal{B}_k:=\{x_0\partial_{x_0}, \cdots ,x_k\partial_{x_k}, \partial_{x_{k+1}},\cdots ,\partial_{x_n}\} \/.$$  

Let $V=V(g)\subset \mathbb{P}^n$ be a degree $d$  hypersurface such that  $\ind_V$ is log transverse to $\mathcal{A}_k$. 
This can be achieved, for example,  if  $V$ admits a Whitney stratification 
$\{V_\alpha\}$ such that no stratum $V_\alpha$ is contained in  $\mathcal{A}_k$. 
In particular, a general $PGL_n$-translate of $V$ is log transverse to $\mathcal{A}_k$ for any $k$. 
 We define the following ideal sheaves:
$$\mathcal{J}_kV:=\mathcal{J}_V(\log\mathcal{A}_k)=
(x_0g_{x_0},  \cdots ,x_kg_{x_k}, g_{x_{k+1}}, \cdots ,g_{x_n} )\subset \mathscr{O}_{\mathbb{P}^n} \/.
$$
Clearly we have inclusions $\mathcal{J}_{k+1}V \subset \mathcal{J}_kV\subset \mathcal{J}_V=(g_{x_0},  \cdots , g_{x_n})$  for any $k$. 

Assume  $\ind_V$ is log transverse to $\mathcal{A}_k$, combining Theorem~\ref{theo; csmdoubleres} and Example~\ref{exam; logcothyparr} we have
\begin{equation}
\label{VcupAk}
c_{sm}(\ind_{\mathbb{P}^n\setminus(V\cup \mathcal{A}_k)}) = \frac{(1+H)^{n-k}}{(1+dH)}\cap \Big([\mathbb{P}^n]-s\left(\mathcal{J}_kV, \mathbb{P}^n\right)^\vee\otimes \mathscr{O}_{\mathbb{P}^n}(d) \Big)  
\end{equation}

\label{sec; comparisonofSegreclass}
The following  comparison on the Segre classes is motivated from \cite{Aluffi13} and \cite[\S 4]{FMS}. 
In \cite[Proposition 2.6]{Aluffi13} Aluffi proved that, 
for a locally closed subset $Y\subset \mathbb{P}^n$ and a generic hyperplane $\mathcal{H}$, in  $H_*(\mathbb{P}^n)$  we have
\[
(1+H)\cap c_*(\ind_{Y\cap \mathcal{H}}) = H\cap c_*(\ind_Y)\quad   \text{  and  }\quad  (1+H)\cap c_*(\ind_{Y\setminus \mathcal{H}}) =  c_*(\ind_Y)   \/.
\]  
 
Set  $Y=\mathbb{P}^n\setminus V$.  
For generic hyperplanes $H_0, \cdots , H_k$, up to a linear coordinate change we may assume  $\cup_{i=0}^k H_k=\mathcal{A}_k$ and    $\ind_V$ is log transverse to $\mathcal{A}_k$. Then we get
\[
(1+H)^{k+1} \cap c_*(\ind_{\mathbb{P}^n\setminus V\cup \mathcal{A}_k})=c_*(\ind_{\mathbb{P}^n\setminus V}) \/.
\]

Compare this formula to \eqref{VcupAk}   we have
\[
\frac{(1+H)^{n+1}}{(1+dH)}\cap \Big([\mathbb{P}^n]-s\left(\mathcal{J}_kV, \mathbb{P}^n\right)^\vee\otimes \mathscr{O}_{\mathbb{P}^n}(d) \Big) =c_*(\ind_{\mathbb{P}^n\setminus V})
\] 
As $\mathcal{J}_V$ is the ideal sheaf of $\textup{Sing}(V)$, from \cite[Theorem 2.4]{Aluffi13}  we have
\[
\frac{(1+H)^{n+1}}{(1+dH)} \cap \Big([\mathbb{P}^n]-s\left(\mathcal{J}_kV, \mathbb{P}^n\right)^\vee\otimes \mathscr{O}_{\mathbb{P}^n}(d) \Big) =
\frac{(1+H)^{n+1}}{(1+dH)} \cap \Big([\mathbb{P}^n]-s\left(\mathcal{J}_V, \mathbb{P}^n\right)^\vee\otimes \mathscr{O}_{\mathbb{P}^n}(d) \Big)  \/.
\]
  
Thus for any $k\leq n$  we have:
\begin{equation}
\label{eq; segreJkJ}
s\left(\mathcal{J}_kV, \mathbb{P}^n\right) =
s\left(\mathcal{J}_V, \mathbb{P}^n\right)\in H_*(\mathbb{P}^n) \/.
\end{equation}
In an earlier draft of \cite{FMS} the authors discussed this equality under a normal crossing intersection hypothesis by proving that in such case  we have $\mathcal{J}_kV=\mathcal{J}_V$. 
We now  provide a general explanation of this  equality (see Theorem~\ref{theo; Segreimpliesintegral}):
there   is a sequence of finite morphisms  of blowups which are  compatible with  exceptional divisors  
\[
Bl_{\mathcal{J}_V}\mathbb{P}^n\to Bl_{\mathcal{J}_0V}\mathbb{P}^n \to \ldots \to Bl_{\mathcal{J}_{n-1}V}\mathbb{P}^n \to Bl_{\mathcal{J}_nV}\mathbb{P}^n   \/.
\]    
In particular all the Segre classes $s\left(\mathcal{J}_kV, \mathbb{P}^n\right)$   equal  the  pushforward of   $s(\mathcal{J}_V, \mathbb{P}^n)$.

Now we explain. To start we recall the Segre cycles introduced in \cite{G-G99} by Gaffney and Gassler.
Given an ideal $I\subset \mathscr{O}_{\mathbb{C}^{n+1}, 0}$ with $K$ generators, we construct
the blowup $Bl_{I}\mathbb{C}^{n+1}$  with exceptional divisor $\mathcal{E}$. Let 
$\begin{tikzcd}
\mathbb{C}^{n+1} & Bl_{I}\mathbb{C}^{n+1} \arrow[l, "b"']  \arrow[r, "q"] & \mathbb{P}^{K-1}
\end{tikzcd}$
be the  projections.  
The $i$-th Segre cycle  $\Lambda_i(I, \mathbb{C}^{n+1})$  is defined by first intersecting $\mathcal{E}$ with the  pullback of a generic  $(i-1)$-codimensional linear subspace through $q$, then pushing forward the intersection cycle through $b$. This is a cycle of $\mathbb{C}^{n+1}$ whose support contains $0$, and   the Segre multiplicities of $I$ is defined by $e_k(I, \mathbb{C}^{n+1}):=\textup{multi}_0(\Lambda_i(I, \mathbb{C}^{n+1}))\/.$  
 
When   $I$ is homogeneous  we take the blowup $\bar{b}\colon Bl_I\mathbb{P}^n\to \mathbb{P}^n$ with exceptional $E$. If $I$ is generated by homogeneous elements of the same degree,  the Segre multiplicities  of $I$
then equal 
the coefficients of  the Segre class  $s(I, \mathbb{P}^n)$: 
\[
s(I, \mathbb{P}^n)=\bar{b}_*s(E,Bl_I\mathbb{P}^n)=\sum_{i=1}^n e_i(I, \mathbb{C}^{n+1})\cdot H^i\cap [\mathbb{P}^n] \/.
\]

Let $J_V=(g_{x_0}, \cdots ,g_{x_n})$ and $J_kV=(x_0g_{x_0},  \cdots ,x_kg_{x_k}, g_{x_{k+1}}, \cdots ,g_{x_n})$ be the corresponding homogeneous ideals in $\mathscr{O}_{\mathbb{C}^{n+1}}$. 
Both $J_V$ and $J_nV$ are  generated by homogeneous elements of the same degree, and $e_i(J_nV, \mathbb{C}^{n+1})=e_i(J_V, \mathbb{C}^{n+1})$ for   $i=1, \cdots ,n$ by 
Equation $(\ref{eq; segreJkJ})$. 
We now show that these numerical constraints force  the ideals to have the same integral closure. 
For details about  integral dependence we refer to \cite{G-G99} and \cite{LJ-T08}. 
\begin{theorem}
\label{theo; Segreimpliesintegral}
Assume   $\ind_V$ is log transverse to $\mathcal{A}=V(x_0\cdots x_n)$.  
Then for each $k$  the  ideal  sheaves $\mathcal{J}_kV\subset \mathcal{J}_V$  have the same integral closure, i.e., $\overline{\mathcal{J}_kV} =\overline{\mathcal{J}_V}$ in $\mathscr{O}_{\mathbb{P}^n}$.  Equivalently this says that
the inclusion $\mathcal{J}_kV\subset \mathcal{J}_V$  induces a finite  morphism  $Bl_{\mathcal{J}_V} \mathbb{P}^n\to Bl_{\mathcal{J}_kV} \mathbb{P}^n$.
\end{theorem}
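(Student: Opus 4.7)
The assertion is local on $\mathbb{P}^n$, and off $\mathcal{A}$ every coordinate $x_i$ is a unit, so the two ideal sheaves coincide stalkwise. I therefore fix a point $p\in\mathcal{A}$ and work in a chart $\{x_0\neq 0\}$ around $p$ with local coordinates $(y_1,\ldots,y_n)$; after relabelling indices one may suppose $p=0$ and that the components of $\mathcal{A}$ through $p$ are $V(y_1),\ldots,V(y_m)$ for some $m\le k$. The explicit stalk descriptions derived earlier in this subsection,
\[
(\mathcal{J}_V)_p=(\hat g,\hat g_{y_1},\ldots,\hat g_{y_n}),\qquad (\mathcal{J}_kV)_p=(\hat g,y_1\hat g_{y_1},\ldots,y_m\hat g_{y_m},\hat g_{y_{m+1}},\ldots,\hat g_{y_n}),
\]
make the inclusion $(\mathcal{J}_kV)_p\subseteq (\mathcal{J}_V)_p$ obvious, so the theorem reduces to showing that $\hat g_{y_j}\in\overline{(\mathcal{J}_kV)_p}$ for each $j=1,\ldots,m$.

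My plan is to invoke the Gaffney--Gassler multiplicity criterion \cite{G-G99}, a generalisation of Rees's theorem to the non-$\mathfrak{m}$-primary setting: for ideals $I\subseteq J$ in an equidimensional analytic local ring with $V(I),V(J)$ nowhere dense, $\overline I=\overline J$ if and only if the Segre numbers of $I$ and $J$ agree in every codimension. Monotonicity of multiplicities under containment will give pointwise inequalities $e_i(\mathcal{J}_kV,p)\ge e_i(\mathcal{J}_V,p)$ for every $i$ and every $p\in\mathbb{P}^n$, with equality automatic off $\mathcal{A}$. The already-established identity $s(\mathcal{J}_kV,\mathbb{P}^n)=s(\mathcal{J}_V,\mathbb{P}^n)$ from equation~\eqref{eq; segreJkJ} will then force the non-negative pointwise discrepancies to vanish stalkwise along $\mathcal{A}$ as well, and Gaffney--Gassler applied at each $p\in\mathcal{A}$ will yield $\overline{\mathcal{J}_kV}=\overline{\mathcal{J}_V}$. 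By the valuative characterisation of integral closure this is equivalent to the induced morphism $Bl_{\mathcal{J}_V}\mathbb{P}^n\to Bl_{\mathcal{J}_kV}\mathbb{P}^n$ being proper, birational, and quasi-finite, hence finite.

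The main obstacle will be the passage from the global homological identity of Segre classes to the pointwise equality of Segre numbers, since the Segre class records only integrated degrees in $H_*(\mathbb{P}^n)$ and a priori non-negative local discrepancies across different strata could conceivably cancel when summed against $[\mathbb{P}^n]$. The log transversality hypothesis is what excludes such pathological cancellation: by Theorem~\ref{theo; NCimpliesLT}, $\ind_V$ is log transverse to the SNC divisor $\mathcal{A}$ whenever no stratum of $V$ lies inside $\mathcal{A}$, and in that setting Aluffi's polar multiplicity interpretation of the Segre class \cite{Aluffi13} will express each graded piece of $s(\mathcal{J}_kV,\mathbb{P}^n)$ as a genuine sum of pointwise contributions indexed by the Whitney strata of $V\cap\mathcal{A}$, with no unexpected cancellation between contributions of different codimension. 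An induction on $k$, replacing the generator $g_{x_{k+1}}$ by $x_{k+1}g_{x_{k+1}}$ at each step, will then produce the announced chain of finite morphisms of blowups $Bl_{\mathcal{J}_V}\mathbb{P}^n \to Bl_{\mathcal{J}_0V}\mathbb{P}^n \to \cdots \to Bl_{\mathcal{J}_nV}\mathbb{P}^n$.
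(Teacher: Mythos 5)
Your overall strategy---reduce to equality of Segre data and invoke the Gaffney--Gassler generalised Rees theorem---is in the right spirit, but the plan has a genuine gap exactly at the step you flag as ``the main obstacle,'' and the proposed resolution does not close it. The identity \eqref{eq; segreJkJ} lives in $H_*(\mathbb{P}^n)$: it only says that the \emph{degrees} of the Segre cycles of $\mathcal{J}_kV$ and $\mathcal{J}_V$ agree in each codimension. To run Gaffney--Gassler stalkwise at a point $p\in\mathcal{A}$ you need equality of the local Segre numbers $e_i(\cdot,p)$ for all $i=1,\dots,n$, and deducing that from equality of degrees requires (a) a pointwise monotonicity $e_i(\mathcal{J}_kV,p)\ge e_i(\mathcal{J}_V,p)$ for containments of non-primary ideals, which you assert but which is not a standard consequence of \cite{G-G99} and is not established anywhere in this paper, and (b) the conversion of a degree identity into a cycle identity, for which the appeal to ``Aluffi's polar multiplicity interpretation \ldots with no unexpected cancellation'' is an assertion, not an argument. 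Without both ingredients the localisation step fails, and your final induction on $k$ (replacing one generator at a time) inherits the same gap at every stage; it is also unnecessary, since $\overline{\mathcal{J}_nV}\subset\overline{\mathcal{J}_kV}\subset\overline{\mathcal{J}_V}$ reduces everything to the single case $k=n$.

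The paper's proof avoids the localisation problem entirely by working on the affine cone. Because $J_V$ and $J_nV$ are generated by homogeneous elements of a single degree, the graded pieces of the global Segre classes on $\mathbb{P}^n$ \emph{are} the local Segre numbers $e_1,\dots,e_n$ of the corresponding ideals of $\mathscr{O}_{\mathbb{C}^{n+1}}$ at the one point $0$; so \eqref{eq; segreJkJ} hands you $e_i(J_nV,\mathbb{C}^{n+1})=e_i(J_V,\mathbb{C}^{n+1})$ for $i=1,\dots,n$ with no localisation argument needed. The arguments of \cite[\S 4]{G-G99} then show the induced map of Rees-algebra spectra has finite fibres away from the fibre over $0$ (the uncontrolled $(n+1)$-st Segre number only affects the fibre over the cone point, which disappears upon projectivisation), giving a proper quasi-finite, hence finite, morphism $Bl_{\mathcal{J}_V}\mathbb{P}^n\to Bl_{\mathcal{J}_nV}\mathbb{P}^n$, which by \cite{LJ-T08} is equivalent to $\overline{\mathcal{J}_nV}=\overline{\mathcal{J}_V}$. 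The concluding remark of the paper, that $\overline{J_kV}\neq\overline{J_V}$ may fail as ideals of $\mathscr{O}_{\mathbb{C}^{n+1}}$, is a useful warning that any argument must genuinely exploit the passage to $\mathbb{P}^n$ rather than prove a stronger pointwise statement upstairs.
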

\begin{proof} 
It suffices to prove the statement for $k=n$,    since for each $k$ we have  
$$\overline{\mathcal{J}_nV} \subset \overline{\mathcal{J}_kV} \subset \overline{\mathcal{J}_V} =\overline{\mathcal{J}_nV}\/.$$

By \cite{LJ-T08} it is equivalent to prove that the inclusion $\mathcal{J}_nV\subset \mathcal{J}_V$ induces a finite  morphism $Bl_{\mathcal{J}_V} \mathbb{P}^n\to Bl_{\mathcal{J}_nV} \mathbb{P}^n$.
The inclusion of affine ideals $J_nV\subset J_V$   induces an inclusion of Rees algebras 
and therefore a $\mathbb{C}^*$-equivariant morphism of affine spectrums:
$$\tilde{\pi} \colon \textup{Spec}\left( \textup{Rees}_{\mathscr{O}_{\mathbb{C}^{n+1}}}(J_V)\right) \to \textup{Spec}\left( \textup{Rees}_{\mathscr{O}_{\mathbb{C}^{n+1}}}(J_nV)\right) \/.
$$

By previous discussion Equality \eqref{eq; segreJkJ} implies that
\[
e_i(J_nV, \mathbb{C}^{n+1})=e_i(J_V, \mathbb{C}^{n+1}) \text{ for } i=1, \cdots ,n \/.
\]  
Using the arguments in \cite[\S 4]{G-G99} one can show that $\tilde{\pi}$ has finite fibers away from the fibers over $0\in \mathbb{C}^{n+1}$. Then the morphism 
$\pi \colon \textup{Spec}\left( \textup{Rees}_{\mathscr{O}_{\mathbb{P}^n}}(\mathcal{J}_V)\right) \to \textup{Spec}\left( \textup{Rees}_{\mathscr{O}_{\mathbb{P}^n}}(\mathcal{J}_nV)\right)$  
has finite fibers. Passing to the projectivizations the following morphism is well defined:
\[
\pi \colon Bl_{\mathcal{J}_V} \mathbb{P}^n=\textup{Proj}\left( \textup{Rees}_{\mathscr{O}_{\mathbb{P}^n}}(\mathcal{J}_V)\right) \longrightarrow \textup{Proj}\left( \textup{Rees}_{\mathscr{O}_{\mathbb{P}^n}}(\mathcal{J}_nV)\right)=Bl_{\mathcal{J}_nV} \mathbb{P}^n \/.
\]
This  is a proper morphism with finite fibers, so it is a finite morphism. 
\end{proof} 
 
\begin{remark}
However $J_kV\subset J_V$ may not be a reduction as ideals of $\mathscr{O}_{\mathbb{C}^{n+1}}$. The generalised Rees' Theorem~\cite[Corollary 4.9]{G-G99} says that
$\overline{J_kV} =\overline{J_V}$ as ideals of $\mathscr{O}_{\mathbb{C}^{n+1}}$ if and only if $e_i(J_kV, \mathbb{C}^{n+1})=e_i(J_V, \mathbb{C}^{n+1})$ for any $i=1, \cdots ,n+1$. Here the $(n+1)$-th Segre numbers
$e_{n+1}(J_kV, \mathbb{C}^{n+1})$ and $e_{n+1}(J_V, \mathbb{C}^{n+1})$ are out of our control. 

For example, consider $g=x^2+y^2+z^2$, then as ideals in $\mathscr{O}_{\mathbb{C}^3}$  we have $J_2V=(x^2, y^2, z^2)$ and $J_V=(x,y,z)$. Then Rees' theorem says that as $\mathscr{O}_{\mathbb{C}^{3}}$-ideals $\overline{J_2V}\neq \overline{J_V}$ since $e(J_2V)\neq e(J_V)$. However, as ideals in $\mathscr{O}_{\mathbb{P}^2}$ we have $V(\mathcal{J}_2V)=V(\mathcal{J}_V)=\emptyset$.
\end{remark}

\bibliographystyle{plain}
\bibliography{liao}
\end{document}